\documentclass[12pt, reqno]{amsart}

\usepackage{amssymb, amsmath, amsthm,stmaryrd}
\usepackage[backref]{hyperref}
\usepackage{amscd}   % for commutative diagrams
\usepackage{fullpage}
\usepackage[all,cmtip]{xy} % for complicated commutative diagrams
\usepackage{graphicx}
\usepackage{mathrsfs}
\usepackage{multirow}% http://ctan.org/pkg/multirow
\DeclareFontEncoding{OT2}{}{} % to enable usage of cyrillic fonts

% Theorems
\newtheorem{lemma}{Lemma}[section]
\newtheorem{theorem}[lemma]{Theorem}
\newtheorem{proposition}[lemma]{Proposition}
\newtheorem{prop}[lemma]{Proposition}
\newtheorem{cor}[lemma]{Corollary}
\newtheorem{conj}[lemma]{Conjecture}

\newtheorem{claim*}{Claim}
\newtheorem{thm}[lemma]{Theorem}

\theoremstyle{definition}
\newtheorem{defn}[lemma]{Definition}
\newtheorem{remark}[lemma]{Remark}

\newtheorem{example}[lemma]{Example}
\newtheorem{notation}[lemma]{Notation}
\newtheorem{question}[lemma]{Question}
\newtheorem{hypothesis}[lemma]{Hypothesis}

\def\P{\mathbb{P}}

%Characters
\newcommand{\A}{{\mathbb A}}

\newcommand{\PP}{{\mathbb P}}
\newcommand{\C}{{\mathbb C}}
\newcommand{\F}{{\mathbb F}}

\newcommand{\Z}{{\mathbb Z}}
\newcommand{\N}{\mathbb N}
\newcommand{\nats}{{\mathbb N}}
\newcommand{\ints}{{\mathbb Z}}

\newcommand{\complex}{{\mathbb C}}
\newcommand{\aff}{{\mathbb A}}
\newcommand{\proj}{{\mathbb P}}

% mathcal characters

\newcommand{\calC}{{\mathcal C}}

% mathfrak characters

% Math operators

\DeclareMathOperator{\Aut}{Aut}

\DeclareMathOperator{\Spec}{Spec}

\DeclareMathOperator{\Frac}{Frac}

% Commands

\numberwithin{equation}{section}
\numberwithin{table}{section}

 % for defined terms
\newcommand{\ol}[1]{\overline{#1}}
\newcommand{\mc}[1]{\mathcal{#1}}

\begin{document}

\title{Abhyankar's Conjectures in Galois theory:\\ current status and future directions}

\author[]{David Harbater}
\address{Department of Mathematics, University of Pennsylvania, Philadelphia, PA 19104-6395}
\email{harbater@math.upenn.edu}

\author[]{Andrew Obus}
\address{Department of Mathematics, University of Virginia, Charlottesville, VA 22904}
\email{obus@virginia.edu}

\author[]{Rachel Pries}
\address{Department of Mathematics, Colorado State University, Fort Collins, CO 80523}
\email{pries@math.colostate.edu}

\author[]{Katherine Stevenson}
\address{Department of Mathematics, California State Northridge, Northridge, CA 91330}
\email{katherine.stevenson@csun.edu}

 \makeatletter
\let\@wraptoccontribs\wraptoccontribs
\makeatother

\subjclass[2010]{11-02, 14-02, 11G20, 12F12, 14G17, 14H30, 14H37, 14J50}
\keywords{Abhyankar conjecture, Galois cover, fundamental group, ramification theory, lifting problem, finite characteristic}
\dedicatory{Dedicated to Yvonne Abhyankar} 
\thanks{
We would like to thank Sudhir Ghorpade and Avinash Sathaye for inspiring us to write this survey.
Harbater was partially supported by NSF FRG grants DMS-1265290 and DMS-1463733, and by NSA grant H98230-14-1-0145.
Obus was partially supported by NSF FRG grant DMS-1265290 and
NSF grant DMS-1602054.
Pries was partially supported by NSA grant 131011 and NSF grant DMS-15-02227.}

%%%%%%%%%%%%%%%%%%%%%%%%%%%%%%%%%%%%%%%%%%%%%%%%%%%%%%%%%%%%%%%%%%%%%%%%%%%%%%%%
%%%%%%%%%%%%%%%%%%			Beginning of Document		%%%%%%%%%%%%%%%%%%%%%%%%
%%%%%%%%%%%%%%%%%%%%%%%%%%%%%%%%%%%%%%%%%%%%%%%%%%%%%%%%%%%%%%%%%%%%%%%%%%%%%%%%

	\begin{abstract}
	In this paper, we survey the major contributions of Abhyankar to the development of the theory of fundamental groups and 
	Galois covers in positive characteristic.  
	We first discuss the current status of four conjectures of Abhyankar about Galois covers in positive characteristic.  
Then we discuss research directions inspired by Abhyankar's work, 
	including many open problems.
	\end{abstract}

\maketitle

\section{Introduction}

In this paper, we discuss S.S.~Abhyankar's contributions to Galois theory in positive characteristic, and directions emerging from his work on this topic.
As in the classical case over the complex numbers, 
extensions of function fields correspond to branched covers, and the Galois groups of those field extensions are quotients of the associated fundamental groups.  But here, instead of considering function fields and covers of Riemann surfaces or other complex varieties, one works with algebraic varieties over a field of characteristic $p$.  Abhyankar's investigation of covers of varieties in positive characteristic led to the discovery of many 
unusual examples and phenomena that simply do not exist in characteristic $0$, and which were later studied further by many authors.

In fact, these phenomena emerge even when studying covers of the affine line.  Viewed as a one-dimensional complex algebraic variety, $\mathbb C$ 
is the ``complex line'', and as in topology it is simply connected.  Hence it has no non-trivial unramified covers.  But if $\mathbb C$ is replaced by an an algebraically closed field $k$ of characteristic $p$, 
the line has many unramified covers, and so it is far from simply connected.  For example, in the $x,y$-plane over $k$, the curve $y^p-y-x=0$ is a $p$-to-one unramified cover of the $x$-line, since 
$\frac{\partial}{\partial y}(y^p-y-x)$ never vanishes.  Abhyankar's explorations went well beyond this Artin-Schreier example, whose Galois group is cyclic of order $p$; he found that the fundamental group of the affine line in characteristic $p$ is not even solvable!  His explorations also included higher dimensional varieties in characteristic $p$, and also varieties over finite fields, where arithmetic considerations arise.

In searching to explain the structure underlying the examples he found, Abhyankar 
developed a philosophy emerging from two themes.  The first, which limits the possible phenomena, is that
the theory of covers of varieties should be the same in characteristic $p$ as in characteristic $0$, after eliminating
the prime $p$ from the situation.  The second, which pulls in the opposite direction, is a kind of Murphy's Law: whatever can happen in characteristic $p$, will happen.

This philosophy was in part realized in later work by Serre on embedding problems and by Grothendieck on the prime-to-$p$ fundamental group.
However, turning this philosophy into precise statements about Galois covers turned out to be surprisingly subtle.  
Abhyankar stated several conjectures about 
Galois theory in positive characteristic and gradually refined these conjectures over the course of his career.

\subsection{Abhyankar's Conjectures}
We begin by mentioning in particular four of Abhyankar's conjectures that emerged from this philosophy.  These conjectures, and the current state of knowledge about them, will be discussed in greater detail in  
Sections \ref{SConj1}--\ref{dShighdim} below.

\subsubsection{Galois covers of affine curves.} \label{intro Gal cov}

Classically, if $X$ is a smooth projective complex curve (i.e., a compact Riemann surface) of genus $g \ge 0$, and if $B \subset X$ is a set of $r>0$ distinct points,  then the fundamental group of $U = X - B$ is isomorphic to the free group on $2g+r-1$ generators.  The quotients of $\pi_1(U)$ are precisely the covering groups of Galois (i.e., regular) covering spaces of $U$, also called the Galois groups of those covers. 
By the structure of $\pi_1(U)$, the Galois groups that arise are precisely the groups that have a generating set of size at most $2g+r-1$.
In characteristic $p$, though, there are more covers, and more covering groups, over a given affine curve $U = X - B$.  For example, if $U$ is the affine line in characteristic $p$, then $2g+r-1 = 0$, yet $\mathbb Z/p\mathbb Z$ is a Galois group over $U$.   Abhyankar's philosophy says that the Galois groups of order prime to $p$ over $U$ should be the same as those that arise in the analogous situation for complex algebraic curves; and that this should be the only constraint on the types of Galois groups that arise.  

More precisely, Abhyankar posed the following conjecture, where
$p(G)$ denotes the subgroup of a finite group $G$ that is generated by the $p$-subgroups of $G$, and 
$k$ is an algebraically closed field of characteristic $p >0$.

\begin{conj} [Abhyankar's Conjecture for affine curves.] \label{Cintro1}
Let $X$ be a smooth projective curve of genus $g$ defined over $k$.  
Let $B$ be a non-empty set of points of $X$ having cardinality $r$ and let $U=X-B$.
Then a finite group $G$ is the Galois group of an unramified cover of $U$ if and only if $G/p(G)$ has a generating set of size at most $2g+r-1$.
\end{conj}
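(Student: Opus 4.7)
The plan is to treat the two implications separately. The forward (necessity) direction is a short consequence of Grothendieck's results on tame fundamental groups, while the reverse (sufficiency) direction requires the combined work of Raynaud and Harbater.

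\textbf{Necessity.} Since $p(G)$ is a normal subgroup generated by the $p$-subgroups of $G$, the quotient $G/p(G)$ is the maximal prime-to-$p$ quotient of $G$, and in particular has order prime to $p$. If $G$ arises as the Galois group of a connected \'etale cover $V \to U$, then $G/p(G)$ is a quotient of the maximal prime-to-$p$ quotient of the \'etale fundamental group $\pi_1(U)$. By Grothendieck's comparison theorem (SGA~1), this prime-to-$p$ quotient is isomorphic to the prime-to-$p$ completion of the topological fundamental group of a Riemann surface of genus $g$ with $r$ punctures. Since $r \geq 1$, that topological group is free on $2g+r-1$ generators, so any prime-to-$p$ profinite quotient, and in particular $G/p(G)$, can be generated by at most $2g+r-1$ elements.

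\textbf{Sufficiency.} Proceed in two steps following Raynaud and Harbater.

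\emph{Step 1 (Raynaud's theorem on $\A^1$).} First prove that every quasi-$p$ group $Q$ (meaning $p(Q) = Q$) is the Galois group of a connected \'etale cover of $\A^1_k$. The strategy is to work over a complete DVR of mixed characteristic $(0,p)$, construct a candidate $Q$-cover by rigid/formal techniques in characteristic zero, take a stable model, and analyze the combinatorics of the special fiber. The hypothesis $p(Q) = Q$ is precisely what forces the reduction to degenerate into a cover of $\PP^1$ unramified outside $\infty$. The argument proceeds by induction on $|Q|$, first handling the case of a nontrivial normal $p$-subgroup and then reducing the general case to it via a suitable embedding problem.

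\emph{Step 2 (Harbater's extension via formal patching).} Given $G$ with $G/p(G)$ generated by $d = 2g+r-1$ elements, pick lifts $g_1,\ldots,g_d \in G$, so that $G = \langle g_1,\ldots,g_d,\, p(G)\rangle$. Using the known structure of the tame fundamental group of $U$, construct a tame Galois cover of $U$ carrying the prime-to-$p$ information. Near each puncture $b \in B$, apply Step~1 on a formal disk around $b$ to build quasi-$p$ covers realizing the required $p$-subgroup pieces with prescribed inertia. Finally invoke Harbater's formal patching theorem to glue the global tame cover and the local quasi-$p$ covers into a single connected $G$-Galois \'etale cover of $U$.

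The main obstacle is Step~1. The delicate geometric control of stable reduction of Galois covers in mixed characteristic, combined with the induction on the group-theoretic structure of quasi-$p$ groups, is the technical heart of the proof and depends essentially on rigid analytic geometry. In Step~2 the principal difficulty is arranging the patching data so that each local quasi-$p$ cover has inertia matching the globally constructed tame cover at its puncture; once this compatibility is achieved, the gluing is formal.
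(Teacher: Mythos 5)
Your overall architecture is the paper's: necessity via Grothendieck's prime-to-$p$ comparison theorem, sufficiency via Raynaud's theorem for $\aff^1_k$ followed by a patching argument of Harbater, and your necessity argument is essentially identical to the paper's. But your description of Step 1 contains a genuine gap. Raynaud's proof is \emph{not} a single stable-reduction argument with an induction on normal $p$-subgroups; it splits into two cases according to whether $G(S) = G$, where $G(S)$ is the subgroup generated by all proper quasi-$p$ subgroups whose $p$-Sylow lies in a fixed Sylow subgroup $S$. The stable-reduction argument you sketch (construct a $G$-cover of $\proj^1$ in mixed characteristic with $p$-power branching indices, take a stable model, extract a tail component) produces a \emph{connected} $G$-cover branched at one point only under the hypotheses that $G$ has no nontrivial normal $p$-subgroup \emph{and} $G(S) \neq G$; the quasi-$p$ hypothesis alone does not force the degeneration you claim. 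When $G(S) = G$, one must argue completely differently: by induction, realize each proper quasi-$p$ subgroup $G_i$ with Sylow in $S$ as a Galois group over $\aff^1_k$, use Abhyankar's Lemma to make its inertia at $\infty$ a $p$-group inside $S$, and then glue these covers by rigid-analytic patching over $k((t))$ into a connected $G$-cover with inertia $S$ at $\infty$. Omitting this half leaves the sketch incomplete in an essential way, since connectedness of the tail cover fails precisely in the $G(S)=G$ regime. Relatedly, your induction is stated backwards: the embedding-problem step is Serre's theorem (using that $\pi_1(\aff^1_k)$ has cohomological dimension $1$, hence is projective), and it \emph{disposes of} solvable normal subgroups, reducing to the case where $G$ has \emph{no} nontrivial normal $p$-subgroup, which is an explicit hypothesis of the stable-reduction case.

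Your Step 2 is a reasonable gloss of Harbater's argument, with one caveat: one cannot literally ``apply Step 1 on a formal disk,'' since Step 1 produces covers of $\aff^1_k$, not of $\Spec k[[t]]$. What is actually done is to route through the punctured-line case ($\aff^1_k - \{0\}$, which requires a strengthening of Serre's theorem allowing prescribed local behavior): one reduces to $G = p(G) \rtimes \ol{G}$, takes a tame $\ol{G}$-cover of $U$ with cyclic inertia $C$ at some ramification point, forces a $p(G)$-cover of $\aff^1_k$ to have Sylow inertia at $\infty$ (via Abhyankar's Lemma and deformation), extracts the resulting local extension there, and patches over $K = k((t))$ an $H$-cover of $\aff^1_K - \{0\}$ (with $H$ generated by $p(G)$ and $C$, tamely ramified over $0$ with inertia $C$) against the tame cover so that the tame ramifications cancel, keeping the branch locus at $r$ points; a specialization of $t$ then descends the construction to $k$. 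Your stated difficulty (matching inertia so the gluing is consistent) is the right one, but the cancellation of tame ramification at the chosen puncture is the specific mechanism that makes the count of branch points come out to $r$.
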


In Section~\ref{SConj1} we describe the proof of Conjecture \ref{Cintro1}, which involves work of 
Serre, Raynaud, and Harbater.  The proof and progress leading up to it involved the development of 
a significant amount of new theory, including results on embedding problems for fundamental groups, formal patching of modules, and 
semi-stable reduction of Galois covers.

In particular, consider the situation when $X$ is the projective line and $B$ is the point at infinity.
Conjecture \ref{Cintro1} states that a finite group $G$ is the Galois group of an unramified cover of the affine line $U$ 
if and only if $G$ is generated by its $p$-subgroups.  The groups satisfying this property are called 
{\it quasi-$p$} groups.

\subsubsection{Inertia groups of Galois covers of the 
affine line.}

Classically, given a covering space $V$ of $U = X  -  B$, where $X$ is a compact Riemann surface and $B \subset X$ is finite, there is a Riemann surface $Y$ that compactifies $V$, together with a map 
$Y \to X$ that extends the given map $V \to U$.  If $V \to U$ is Galois with group $G$, then so is $Y \to X$.  But $Y \to X$ is a branched cover, 
with the branch locus contained in $B$ and the ramification locus contained in $Y - V$.
The map $Y \to X$ is not evenly covered over the points of $B$; instead, 
for each point $P$ of $B$, the map $Y \to X$ is conformally equivalent to the map $y^n=x$ near each point $Q$ over $P$, where $n$ is the ``number of sheets'' of the cover that coalesce at $Q$.  If $n>1$, then the subgroup of $G$ that fixes $Q$ is non-trivial; in fact it is cyclic of order $n$, whose generator is given locally near $Q$ by sending the coordinate $y$ to $\zeta_n y$.  (Here $\zeta_n = e^{2\pi i/n}$, a primitive $n$-th root of unity.)  This group is called the {\em inertia group} at $Q$.  If $Q'$ is another point over $P$, then the inertia groups at $Q$ and at $Q'$ are conjugate in the Galois group.

But over an algebraically closed field of characteristic $p$, an inertia group need not be cyclic if its 
order is divisible by $p$.  Instead, it is a semi-direct product of a $p$-group with a cyclic group of order prime to $p$; this is also referred to as an {\em extension of a cyclic group of order prime-to-$p$ by a $p$-group}.  Abhyankar's next conjecture aims to give necessary and sufficient 
conditions on a subgroup of the Galois group of a cover over $k$ to be an inertia group, in the case of the affine line.  As above, $k$ is an algebraically closed field of characteristic $p$.

\begin{conj}[Abhyankar's Inertia Conjecture] \label{Cintro2}
Let $G$ be a finite quasi-$p$ group.  Let $G_0$ be a subgroup of $G$
which is an extension of a cyclic group of order prime-to-$p$ by a $p$-group $G_1$.  
Then $G_0$ occurs as an inertia group for a $G$-Galois cover of the projective line branched only at $\infty$
if and only if the conjugates of $G_1$ generate $G$.
\end{conj}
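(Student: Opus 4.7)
The plan splits along the two implications, with asymmetric difficulty. For the necessary direction, suppose $f\colon Y \to \P^1_k$ is a connected $G$-Galois cover branched only at $\infty$, with inertia group $G_0$ above $\infty$ and wild inertia $G_1$. Let $N\trianglelefteq G$ be the normal subgroup generated by the $G$-conjugates of $G_1$. The quotient cover $Y/N \to \P^1_k$ is $G/N$-Galois, and the inertia above $\infty$ there is the image of $G_0$ in $G/N$, which is cyclic of order prime to $p$ since $G_1\subseteq N$. Thus $Y/N \to \P^1_k$ is a tame cover branched only at $\infty$, and since $\pi_1^{\mathrm{tame}}(\A^1_k) = 1$ (equivalently, $\P^1$ admits no nontrivial cyclic tame cover ramified at a single point), this cover is trivial; hence $N = G$.

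For the sufficient direction, assume $G$ is quasi-$p$ and the conjugates of $G_1$ normally generate $G$. My strategy would be to use formal patching to glue a local cover near $\infty$ having Galois group $G_0$ with the correct ramification filtration to a cover of the complementary open set whose global monodromy realizes $G$. Concretely, I would first build a local $G_0$-Galois extension of $k((t))$ with wild inertia $G_1$, using Artin--Schreier--Witt theory for the $p$-part and Kummer theory for the prime-to-$p$ quotient, producing a formal cover of a punctured disk around $\infty$. I would then select, using the normal-generation hypothesis, a finite collection of coset representatives whose conjugates of $G_1$ generate $G$, construct auxiliary local covers indexed by those representatives, and assemble the whole by Harbater's thickening and Raynaud's rigid-analytic patching so that the global cover becomes a connected $G$-torsor with decomposition group $G_0$ above $\infty$ and the prescribed inertia filtration.

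The hard part, and indeed the essential obstacle, is controlling the global Galois group during this patching. Conjecture \ref{Cintro1} already yields a $G$-Galois cover of $\A^1_k$, but Raynaud's construction affords no control of the inertia filtration at $\infty$, and naive patching typically yields either a disconnected cover or a group strictly smaller than $G$. Making the outline work amounts to solving a refined embedding problem, namely producing a $G$-cover dominating the prescribed $G_0$-cover of the formal disk at $\infty$, whose solvability is precisely the content of the conjecture. I expect that genuine progress requires: (a) sharpening formal patching so that it tracks upper-numbering ramification data rather than merely the Galois group; (b) analyzing the conjugacy pattern of $G_1$ in $G$ case by case, since the combinatorics of normal generation interacts nontrivially with the local deformation theory of wild covers; and (c) reducing to quasi-simple or almost-simple $G$ via a composition-series induction. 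Since the conjecture is at present known only for narrow families such as certain $\mathrm{PSL}_2$'s, alternating groups, and related quotients, a proof along these lines should be viewed as aspirational, and case-by-case progress for specific quasi-$p$ groups is the most realistic near-term goal.
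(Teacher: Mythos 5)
You are attempting to prove a statement that the paper itself presents as an \emph{open conjecture}: the paper explicitly says that Conjecture \ref{Cintro2} is unproven except for the ``only if'' direction, so there is no full proof to compare against. Your treatment of the necessary direction is correct and is exactly the paper's argument (given in Section \ref{SConj2}): let $N$ be the normal subgroup generated by the $G$-conjugates of $G_1$; the quotient cover $Y/N \to \P^1_k$ is Galois with group $G/N$, tamely ramified (since $G_1 \subseteq N$ makes the image of $G_0$ cyclic of prime-to-$p$ order) and branched only at $\infty$; triviality of $\pi_1^t(\A^1_k)$ then forces $N = G$. This part of your proposal is sound and matches the paper.

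The sufficient direction, however, is a research outline rather than a proof, as you yourself concede, and the outline as written would not go through. The step where you ``assemble the whole by thickening and rigid-analytic patching so that the global cover becomes a connected $G$-torsor with decomposition group $G_0$ above $\infty$ and the prescribed inertia filtration'' is precisely the open problem, not a reduction of it. Moreover, your diagnosis of the failure mode of patching is slightly off: naive patching does not typically lose connectedness or shrink the group (connectedness follows when the local groups generate $G$); rather, patching and the known inertia-modification results (\cite[Theorem 2]{Harb93}, \cite[Theorem 3.6]{Har03}, and Theorem \ref{Tsylow}) can only \emph{enlarge} the wild part of inertia, up a chain of $p$-power index, which is how one realizes Sylow $p$-subgroups as inertia. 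They give no mechanism for realizing wild inertia strictly smaller than a Sylow $p$-subgroup, nor for prescribing the tame quotient $\Z/m$. The partial results that do exist (Theorems \ref{TinerRRR}, \ref{AndrewPSL}, \ref{TinerMP}) come from quite different methods than your sketch: stable reduction of $G$-covers from characteristic zero, and Abhyankar-style explicit equations analyzed via Newton polygons. So the honest verdict is: you have reproved the one known implication by the paper's own argument, and your plan for the converse, while reasonably oriented toward the genuine difficulties (local-global control of the ramification filtration), contains no new idea that would close the gap; a complete proof along these lines would constitute a solution of the conjecture itself.
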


The motivation here is that for Galois branched covers $Y \to X$ of the Riemann sphere, all the inertia groups together generate the Galois group $G$.  This is because the intermediate cover of $X$ corresponding to the subgroup of $G$ that they generate would then be an unramified cover of the Riemann sphere and hence trivial.  Thus if we pick one point of $Y$ over each branch point in $X$, then the conjugates of the corresponding inertia groups generate $G$.  In characteristic $p$, this observation combined with the two parts of Abhyankar's philosophy suggest the above conjecture; see also Section~\ref{SSConj2}.

The status of Conjecture \ref{Cintro2} is still open 
(although the ``only if" direction is known by Grothendieck's result on the tame fundamental group).  
In Section \ref{SConj2}, we discuss the few situations in which it has been verified.
Section \ref{Sconstraint} contains a new result about the interaction between the structure of the inertia group, the ramification filtration, and the genus of a quotient of the cover.

\subsubsection{Galois covers of curves defined over finite fields.}
If $\ell$ is a finite field, then its algebraic closure $\bar \ell$ is an infinite Galois extension of $
\ell$ whose finite subextensions all have cyclic Galois groups over $\ell$.  This suggests that replacing the algebraically closed field of constants $k$ by a finite subfield $\ell$ adds a generator to the fundamental group of an affine curve, somewhat like the effect of deleting a point.  This perspective motivated:

\begin{conj}[Abhyankar's Affine Arithmetical Conjecture] \label{Cintro3}
A finite group $A$ occurs as the Galois group of an unramified cover of the affine line over $\F_p$ 
if and only if 
it occurs as the Galois group of an unramified cover of $\A^1_k - \{0\}$
(in other words, if and only if $A/p(A)$ is cyclic).
\end{conj}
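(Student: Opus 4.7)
The plan is to prove the biconditional by combining Conjecture \ref{Cintro1} with the short exact sequence
\[
1 \to \pi_1^{\et}(\A^1_{\Fbar_p}) \to \pi_1^{\et}(\A^1_{\F_p}) \to \Gal(\Fbar_p/\F_p) \to 1
\]
relating the arithmetic and geometric fundamental groups, treating the two implications by somewhat different means.

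For the ``only if'' direction, suppose $A$ is realized by a regular unramified cover $Y \to \A^1_{\F_p}$, corresponding to a surjection $\pi_1^{\et}(\A^1_{\F_p}) \twoheadrightarrow A$.  I would let $H \leq A$ denote the image of $\pi_1^{\et}(\A^1_{\Fbar_p})$ and observe two facts.  First, $A/H$ is a quotient of $\Gal(\Fbar_p/\F_p) \cong \hat{\Z}$, hence cyclic.  Second, $H$ is a quotient of $\pi_1^{\et}(\A^1_{\Fbar_p})$, so Conjecture \ref{Cintro1} with $g = 0$ and $r = 1$ forces $H$ to be quasi-$p$.  Since any quasi-$p$ subgroup is contained in $p(A)$, we get $H \subseteq p(A)$, so $A/p(A)$ is a quotient of the cyclic group $A/H$ and is therefore cyclic.

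For the ``if'' direction, assume $A/p(A)$ is cyclic and set $H := p(A)$, a normal quasi-$p$ subgroup of $A$ with cyclic quotient $C := A/H$.  The plan is two-pronged.  First, use Conjecture \ref{Cintro1} to realize $H$ as the Galois group of an étale cover of $\A^1_{\Fbar_p}$, and descend this cover to $\F_p$.  Second, assemble the full $A$-cover over $\F_p$ by incorporating a cyclic constant field extension that matches $C$.  More formally, one solves an embedding problem for $\pi_1^{\et}(\A^1_{\F_p})$ with cyclic quotient $C$ (supplied by an appropriate constant field extension) and kernel $H$, with the extension class dictated by $1 \to H \to A \to C \to 1$.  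The geometric piece of the embedding problem is solvable by Conjecture \ref{Cintro1}; the content lies in realizing the solution over $\F_p$ rather than merely over $\Fbar_p$.

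The main obstacle is the arithmetic descent in the first step of the ``if'' direction.  The formal patching and semistable reduction techniques behind Conjecture \ref{Cintro1} are carried out over the algebraically closed field $\Fbar_p$, and refining them to produce covers defined over $\F_p$ requires keeping track of the action of $\Gal(\Fbar_p/\F_p)$ on the local pieces and the gluing data.  A naive version of such arguments yields descent only to a finite subfield $\F_{p^n}$, and reducing the field of definition further down to $\F_p$—while simultaneously ensuring that the resulting cover has exactly the prescribed Galois group rather than a twist—is the heart of the conjecture and its principal difficulty.
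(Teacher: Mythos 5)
The fundamental issue is not a flaw in your reasoning but the nature of the statement itself: Conjecture \ref{Cintro3} is an \emph{open conjecture}, and the paper offers no proof of it --- Section \ref{SConj3} states explicitly that its status is open. What the paper does record is exactly your ``only if'' direction: if $\pi_1(\A^1_{\F_p})$ surjects onto $A$, then the fundamental exact sequence $1 \to \pi_1(\A^1_{\Fbar_p}) \to \pi_1(\A^1_{\F_p}) \to \Gal(\Fbar_p/\F_p) \to 1$ shows $A$ is cyclic-by-quasi-$p$, whence $A/p(A)$ is cyclic and $A$ occurs over $\A^1_k - \{0\}$ by the (proved) Conjecture \ref{Cintro1}. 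Your argument for this half is correct, with one small economy available: you only need Grothendieck's easy half of Conjecture \ref{Cintro1} (finite quotients of $\pi_1(\A^1_{\Fbar_p})$ are quasi-$p$), not the full Raynaud--Harbater theorem. A caveat about hypotheses: if you read ``regular'' in the statement literally, the geometric monodromy group is all of $A$, forcing $A$ itself to be quasi-$p$, and the literal ``if'' direction would then fail already for $A = \Z/\ell$ with $\ell \neq p$. Your argument, which allows the image $H$ of the geometric fundamental group to be a proper subgroup of $A$, is really addressed to the precise formulation (Conjecture \ref{aconjecture}), where no regularity is imposed; that is the right reading.

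For the ``if'' direction, your proposal is a plan rather than a proof, and --- to your credit --- you located the gap precisely where the conjecture is in fact open. Realizing $p(A)$ geometrically and pairing it with a cyclic constant-field extension can be carried out, but only after descending the geometric cover to a finite field, and nothing in the known patching and semistable-reduction machinery controls the field of definition well enough to land on $\F_p$ itself with the prescribed extension class $1 \to p(A) \to A \to A/p(A) \to 1$ (equivalently, to make Frobenius act with the correct image in $A/p(A)$ rather than producing a twist). The state of the art is exactly the intermediate stage at which your sketch stalls: Theorem \ref{GS-aconjecture} (from \cite{GS}) produces, for $A$ generated by $p(A)$ and one further element, a cover of the projective line over \emph{some} finite extension $\ell/\F_p$ with arithmetic Galois group $A$, geometric Galois group $p(A)$, and a single branch point. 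Pushing $\ell$ down to $\F_p$ is the unresolved heart of the conjecture, so your proposal cannot be completed by present methods --- and no blind proof of this statement could be, since none exists.
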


The status of Conjecture \ref{Cintro3} is still open.  We discuss this further in Section \ref{SConj3}, along with a related result.  

\subsubsection{Galois covers of higher-dimensional affine varieties. }

Abhyankar also made conjectures about Galois covers of higher dimensional affine varieties over algebraically closed fields of characteristic $p$.  In particular, he considered the following higher dimensional spaces:
\begin{enumerate}
\renewcommand{\theenumi}{\roman{enumi}}
\renewcommand{\labelenumi}{(\roman{enumi})}\item
$X = \Spec\bigl(k[[x_1,\dots,x_n]][x_1^{-1},\dots,x_t^{-1}]\bigr)$, where $n \ge 2$ and $t \ge 1$.
\item
$X$ is the complement of a set of $t+1$ hyperplanes in general position in $\P^n_k$, where $n \ge 2$ and $t \ge 1$  Here, ``general position'' means that the union of these hyperplanes is a normal crossing divisor.
\end{enumerate}

These situations had been studied by Zariski in the case of complex varieties; see Section~\ref{fund gps history} below for a discussion of this history and background.  For the characteristic $p$ analog, 
the statements of Abhyankar's conjectures are more complicated than those above, see Section \ref{dShighdim} for details.
In fact, some modification of the original versions of the conjectures turned out to be necessary. 

\subsection{Current research directions}

Abhyankar's conjectures in Galois theory continue to motivate new developments.
In addition to the two open conjectures highlighted above, there are many questions about 
Galois covers of varieties that arise naturally when studying his work.  
In the second part of the paper, we describe several current research directions in Galois theory that 
are inspired by Abhyankar's research and which may contribute to the solution of some of his problems.
These include:
projective curves (Section \ref{Sproj});
ramification theory (Section \ref{Sram}); and
lifting problems (Section \ref{aSoort}).
There are many other current research directions in Galois theory which we will not address here.
Also, since our focus is on Abhyankar's work in Galois theory, we do not discuss his work on other topics, such as resolution of singularities, valuation theory, computational algebraic geometry, and the Jacobian conjecture, except tangentially.  For a brief overview of the topics on which Abhyankar worked, we refer the reader to the obituary article that appeared in the Notices of the AMS \cite{Notices}.

\section{Algebraic fundamental groups}

\subsection{History and motivation} \label{fund gps history}

Abhyankar's advisor, Oscar Zariski,
had studied the fundamental groups of complex varieties, and in particular the complements of curves in the complex projective plane \cite{Zar29}.  
But whereas this study could rely on the definition of fundamental groups in terms of homotopy classes of loops, another approach is needed in order to work with varieties over more general fields.  

Namely, one can consider the inverse system of finite unramified Galois covers, and the corresponding inverse system $\pi(U)$ of Galois groups.  
In the case of varieties over $\mathbb C$, the inverse limit of these groups is the profinite completion of the classical (topological) fundamental group.  
Over a more general field, one can consider this inverse system of groups to be a replacement for the fundamental group.  
This is what was done by Abhyankar in \cite{Abh59}, where he studied the complements $U$ of curves $C$ in the projective plane over an arbitrary algebraically closed field of characteristic $p \ge 0$.  

In that paper, Abhyankar also considered the subsystem $\pi'(U)$ of Galois groups of covers that are only tamely ramified over $C$.  In particular, generalizing a result in \cite{Zar29}, he showed in Section~13 of \cite{Abh59} that  the groups in $\pi'(U)$ are abelian if $C$ is a divisor with strict normal crossings, i.e.\ a union of smooth curves that intersect transversally.  
He also stated the assertion in the case of more general normal crossings (allowing the components of $C$ to be nodal curves), and this was later shown by Fulton \cite{Ful80}; cf.\ also \cite[Remark~4.3]{HarvdP02}.

Abhyankar's motivation for studying fundamental groups from this perspective arose from a rather different problem that he had considered in his thesis, related to resolution of singularities.  
His advisor, Zariski, had suggested to him that he try to prove the local uniformization theorem for algebraic surfaces over an arbitrary algebraically closed field, by generalizing the proof that was given by Jung \cite{Jung08} in the complex case.  
For complex surfaces, Zariski had proven resolution of singularities \cite{Zar39} using Jung's ideas (by way of Walker \cite{Wal35}); and it was hoped that by carrying over Jung's proof to more general fields, resolution of singularities for surfaces could be proven over such fields too.  
But as Abhyankar liked to put it, this effort was the ``failure'' part of his thesis, because Jung's proof did not carry over to varieties over fields of characteristic $p >0$ \cite{Abh55}.  
Instead, Abhyankar was able to prove local uniformization over such fields in another way \cite{Abh56}, and that led to further work of his on resolution of singularities, including his proof of resolution for rather general surfaces (\cite{Abh65}, \cite{Abh69}).

His attempt to generalize Jung's proof, however, led Abhyankar to study unramified covers of varieties.  
He had observed that Jung's proof relied on two key facts about branched covers $V \to W$ of complex surfaces, with $W$ smooth and $V$ normal: that $V$ is smooth at each point lying over over a smooth point of the branch locus; 
and that for Galois covers, the local Galois group is abelian at any point of the ramification locus over an ordinary double point of the branch locus.  
But he discovered that these properties no longer hold in characteristic~$p$.  

In fact, as Abhyankar observed in \cite[Section~3]{Abh55}, 
it is possible for $V$ to be singular at a point $P$ over a smooth point of the branch locus, and in fact for the ramification locus to have multiple branches at $P$.  
He also observed there that it is possible for the inertia group to be non-solvable at a ramification point $P$ lying over an ordinary double point of the branch locus, or even over a smooth point of the branch locus.  
As Abhyankar later related in \cite[Section~7]{Abh01}, this was quite contrary to Zariski's expectations.  In \cite[Section~1]{Abh57}, Abhyankar referred to these characteristic $p$ phenomena as ``local splitting of a simple branch variety by itself.''  (This phenomenon later caused difficulty in an early attempt by the first author \cite{Har91} to prove Abhyankar's Conjecture on the fundamental group of the affine line in characteristic $p$; that was bypassed a few years later by Raynaud, whose proof avoided this problem through the use of semi-stable models \cite{Ray94}.)
 
In order to understand these phenomena better, Abhyankar restricted branched covers of surfaces $V \to W$ to curves $C$ in $W$, and then studied the resulting branched covers of curves.  This led to his study of the fundamental group of the affine curve $U$ given by the complement of the branch locus in $C$.  In addition to considering the inverse system $\pi(U)$ mentioned above, he also considered the set of finite groups $\pi_A(U)$ that arise in this inverse system; i.e., which are Galois groups of finite unramified Galois covers of $U$.  He said \cite[Section~4.2]{Abh57} that in the case of complex curves, this set can be regarded as ``a good algebraic approximation to [the topological] $\pi_1$''; and that in the general case, 
``eventually one may have to consider the Galois group'' of the compositum of all the extensions of the function field that are unramified over $U$.  This in fact came to pass, in the approach of Grothendieck using the notion of the \'etale fundamental group.  We discuss this next.

\subsection{\'Etale fundamental groups}\label{Sfundgroups}
In the classical situation of complex algebraic varieties, covering spaces can be defined as usual in topology, in terms of points having evenly covered neighborhoods.  But those neighborhoods are taken in the usual complex metric topology, not in the Zariski topology.  In contrast, a more general field is not equipped with a metric, and so only the Zariski topology is available.  But that topology, whose closed sets are generated by the subvarieties of the given space, is extremely coarse.  As a result, there are never evenly covered neighborhoods except in trivial cases.  

To remedy that, a different characterization of covering spaces $Y \to X$ is needed, which carries over well to more general fields.  The one that works uses the notion of being {\em \'etale}.  This condition says that the hypothesis of the implicit function theorem is satisfied.  That is, if $Y$ is locally given over $X$ at a point by equations $f_1=\cdots=f_n=0$ in variables $y_1,\dots,y_n$, then the Jacobian matrix $(\partial f_i/\partial y_j)$ is invertible.  Here the derivatives are taken formally, without reference to a metric; one simply {\em defines} the derivative of $\sum a_iy^i$ to be $\sum ia_i y^{i-1}$.  If $Y \to X$ satisfies the Jacobian condition at every point, it is said to be unramified, or \'etale.  (For more about the notion of \'etale, see \cite[Section~III.5]{redbook}.  According to a footnote there: ``The word apparently refers to the appearance of the sea at high tide under a full moon in certain types of weather.'')  A shorter and equivalent condition, using commutative algebra, is that  $Y \to X$ is flat and unramified.  Here flatness is automatic under mild hypotheses, for example if $X$ is a regular curve or surface, and $Y$ is normal.

To correspond to a covering space, a finite-to-one map $Y \to X$ also should not ``miss any points'' over $X$ (i.e., it should be topologically proper).  This is equivalent to assuming that the map is {\em finite}, in the sense of the ring of polynomial functions on $Y$ being a finitely generated module over the ring of such functions on $X$.  Like \'etale, this condition is purely algebraic.  Thus one can speak of a (finite) \'etale cover $Y \to X$ even when working over a quite general base field; and if that field is $\mathbb C$ then this is equivalent to being a covering space of finite degree.

In order to define fundamental groups in terms of covering spaces, one classically invokes the universal cover, which is a manifold if the given space is.  But a given algebraic variety generally does not have another variety as its universal cover.  Instead, one works with the full inverse system of finite 
\'etale covers $Y \to X$,
which is well defined once one picks a base point $P$ on $X$ and a point on each $Y$ above $P$.
The automorphism group of this inverse system is then called the {\em \'etale fundamental group} $\pi_{1,{\rm et}}(X)$.  As in topology, the isomorphism class of this profinite group does not depend on the choice of $P$.  Alternatively, this group can be regarded as the inverse limit of the Galois groups of the Galois \'etale covers $Y \to X$ in the inverse system of covers.  

In the case of a smooth affine curve $U$, say with function field $F$, there is a unique smooth projective model $X$ of $F$, and $U = X - B$ for some finite set $B$ of closed points.  In this situation, $\pi_{1,{\rm et}}(U)$ can be identified with the Galois group of the compositum of the finite extensions of $F$ that are ramified only over $B$; this recaptures the description proposed by Abhyankar.  The finite quotients of $\pi_1(X)$ are precisely the groups in Abhyankar's $\pi_A(X)$; but $\pi_1(X)$ also contains information about how these finite groups ``fit together''.

In the case that $X$ is a complex variety, this group is the profinite completion of the topological fundamental group, and thus ``approximates'' the classical $\pi_1$, as suggested by Abhyankar.  When working algebraically, especially over fields other than the complex numbers, we have only the \'etale fundamental group at our disposal, and one generally writes $\pi_1(X)$ for short instead of $\pi_{1,{\rm et}}(X)$.  There are other ways as well of defining this group, e.g.\ via fiber functors; see \cite{SGA1} for more details.

It was expected that the algebraic fundamental group of a variety over an algebraically closed field $k$ of characteristic zero should not depend on the choice $k$; and so the group should be the same as over $\C$, viz.\ the profinite completion of the topological fundamental group.  This was a central result shown in \cite{SGA1}.  Namely, according to 
\cite[XIII, Corollary 2.12]{SGA1}, if $X$ is a smooth projective $k$-curve of genus $g$ and $B \subset X$ consists of $r$ closed points,  
then the fundamental group $\pi_1(X-B)$ is isomorphic to the profinite completion of the topological 
fundamental group of a Riemann surface of genus $g$ with $r$ punctures.  
This is the profinite group on generators $a_1,\dots,a_g,b_1,\dots,b_g,c_1,\dots,c_r$ subject to the single relation $\prod_{i=1}^g [a_i,b_i] \prod_{j=1}^r c_j = 1$.  
If $r > 0$, then $\pi_1(X - B)$ is a free profinite group on $2g+r-1$ generators. 
Thus every group generated by $2g+r-1$ elements is a quotient of $\pi_1(X-B)$. 
Also, the freeness implies that all {\em embedding problems} can be solved:  
given an $H$-Galois cover $\psi: Y \to X$, a group $G$ generated by $2g+r-1$ elements, and a surjection 
$G \twoheadrightarrow H$, there exists a $G$-Galois cover of $X$ that factors through $\psi$.   Equivalently, every $H$-Galois extension of $F$ that is unramified over $B$ can be embedded in a $G$-Galois extension of $F$ that is unramified over $B$, where $F$ is the function field of $X$.

There are further results in \cite{SGA1} concerning the case of non-zero characteristic.  For a prime $p$, we can consider the prime-to-$p$ fundamental group $\pi_1^{p'}$ of a variety, in which one instead takes the inverse system of \'etale covers whose Galois group has order prime to $p$.  This is the maximal prime-to-$p$ quotient of the full \'etale $\pi_1$.  It was shown in \cite[XIII, Corollary 2.12]{SGA1} that if $k$ is an algebraically closed field of characteristic $p \ne 0$, and $U$ is obtained by deleting $r>0$ points from a smooth projective curve of genus $g$, then $\pi_1^{p'}(U)$ is isomorphic to $\pi_1^{p'}$ of the analogous curve over $\C$.  This is consistent with what Abhyankar had proposed in \cite[Section~4.2]{Abh57}, concerning which finite groups are Galois groups of unramified covers of affine curves in characteristic $p$.

There is also a partial result for {\em tamely ramified} covers, i.e.\ those covers in characteristic $p$ whose ramification indices are all prime to $p$.  Namely, the above theorem also asserted that the corresponding fundamental group $\pi_1^t(U)$ is a quotient of the \'etale fundamental group of the analogous complex curve (and in fact is a quotient of the {\em $p$-tame} fundamental group over $\C$, which is the group obtained by taking complex covers of ramification index prime to $p$).  This theorem thus shows that $\pi^t(X-B)$ and $\pi_1^{p'}(X-B)$ are finitely generated as profinite groups, which in turn implies that they
are determined by their finite quotients \cite[Prop.\ 15.4]{FJ}.  This will be discussed further in Section \ref{Sproj}.  But the full structure of $\pi_1^t(U)$ remains unknown.

One can also consider the pro-$p$ fundamental group $\pi_1^p$,
where one restricts to the Galois groups that are $p$-groups. 
By a result of Shafarevich \cite{shaf}, if $X$ is a projective $k$-curve then
$\pi_1^p(X)$ is a free pro-$p$ group on $s_X$ generators, where $X$ has $p$-rank equal to $s_X$.   
If $C$ is an affine $k$-curve, then the pro-$p$ fundamental group $\pi_1^p(C)$ is infinitely generated, also by a result of Shafarevich \cite{shaf}.

\subsection{Decomposition and inertia groups} \label{Sdecomptame}

In characteristic $p \ne 0$, the structure of ramification is more complicated than in characteristic zero, as discussed in Section~\ref{fund gps history} above.  
We describe this structure in some detail below, for the sake of later sections, where it is needed in the study of fundamental groups and Galois covers.

Let $\phi:Y \to X$ be a Galois cover of curves with group $G$.  
Let $B$ be the branch locus of $\phi$ and let $\xi \in B$.  Let $r_{\xi}$ be the number of
ramification points $\eta \in \phi^{-1}(\xi)$.  
Let $\pi_{\eta}$ be a uniformizer (local parameter) of $Y$ at $\eta$.

\begin{defn} \label{decomp inertia def}
\begin{enumerate}

\item The {\it decomposition group} $D$ of $\phi$ at $\eta$ is the
subgroup $D \subset G$ which fixes $\eta$ (i.e.\ $D = \{g \in G: g(\eta)=\eta\})$.

\item The {\it inertia group} $I$ of $\phi$ at $\eta$ is the
subgroup $I \subset D$ which acts by the identity on $\hat{{\mathcal O}}_{Y, \eta}/\pi_{\eta}$.
(If $k$ is an algebraically closed field and $\eta \in Y$ is a $k$-point, then the inertia 
group and decomposition group at $\eta$ are the same.)

\item The cover $\phi$ is {\it wildly ramified} at $\eta$ if $p$ divides the order of the inertia group at $\eta$.

\item The cover $\phi$ is {\it tamely ramified} at $\eta$ if $p$ does not divide the order of the inertia group at $\eta$.  (In characteristic zero, all ramification is tame.)
\end{enumerate}
\end{defn}

The decomposition and inertia groups at a point $\eta \in Y$ over $\xi \in X$ can be studied by means of the corresponding extensions of complete local rings at those points, or equivalently in terms of their fraction fields, which are complete discretely valued fields.  Namely, the Galois group of this field extension is the decomposition group at $\eta$, which is the same as the inertia group if the ground field $k$ is algebraically closed.  See \cite{Serr}, Chapter~II, Section~2.

Tame inertia groups are thus cyclic, 
while wild inertia groups can be more complicated (though still solvable).  The prime-to-$p$ part of inertia can 
be eliminated after a tame pullback, by the following result of Abhyankar (taking $n=m)$, generalizing behavior in characteristic zero.

\begin{lemma}[Abhyankar's Lemma, \cite{SGA1}, XIII, Proposition 5.2] \label{Lablemma}
Suppose $\phi:Y \to X$ is a $G$-Galois cover branched over $x$ with
inertia group $I$ of the form $P \rtimes \Z/m$, where $|P|$ is a
power of $p$ and $(m, p) = 1$.
Let $\psi: X' \to X$ be a $\Z/n$-Galois cover branched over $x$ and suppose that $\phi$ and $\psi$ are linearly disjoint.
Then $\psi^* \phi$ is a $G$-Galois cover and the inertia group above points in $\psi^{-1}(x)$ is of the form $P \rtimes \Z/(m/{\rm gcd}(m,n))$.
\end{lemma}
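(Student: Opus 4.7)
The plan is to reduce the statement to a computation in complete discrete valuation rings above the branch point $x$, and then perform the classical tame-by-tame factorization that underlies Abhyankar's Lemma. All relevant data (inertia groups, branch locus above $x$) is local on $X$, so I would replace $\widehat{\O}_{X,x}$ by $k[[t]]$ for a uniformizer $t$. The disjointness hypothesis is needed only once: it says that $K_Y$ and $K_{X'}$ are linearly disjoint over $K_X$, which gives $\Gal(K_Y \cdot K_{X'}/K_{X'}) \cong G$ and hence that the normalization of $\psi^*\phi$ is a $G$-Galois cover.

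Fix a ramification point $\eta \in \phi^{-1}(x)$ with inertia $I = P \rtimes \Z/m$, and factor the local extension through the tower
\begin{equation*}
k[[t]] \;\subset\; \widehat{\O}_{Y,\eta}^P \;\subset\; \widehat{\O}_{Y,\eta}.
\end{equation*}
The bottom step is totally tamely ramified of degree $m$, so after absorbing a unit there is a uniformizer $u$ of $\widehat{\O}_{Y,\eta}^P$ with $u^m = t$; the top step is a totally wildly ramified $P$-extension of $k[[u]]$. By the same tame structure theorem applied to $\psi$, one has $\widehat{\O}_{X',x'} \cong k[[s]]$ with $s^n = t$ for any chosen $x' \in \psi^{-1}(x)$.

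Next I would do the base change of the tame part and then re-attach the wild part. With $d = \gcd(m,n)$, $m = d m'$, $n = d n'$,
\begin{equation*}
k[[s]] \otimes_{k[[t]]} k[[u]] \;=\; k[[s,u]] \bigm/ \prod_{\zeta^d = 1} \bigl(u^{m'} - \zeta s^{n'}\bigr),
\end{equation*}
and each factor $u^{m'} - \zeta s^{n'}$ is irreducible over $k[[s]]$ because $\gcd(m',n')=1$. After normalization this produces $d$ complete DVRs $R_\zeta$, each a tame totally ramified $\Z/m'$-extension of $k[[s]]$ with $m' = m/\gcd(m,n)$. For the wild part, the extension $R_\zeta/k[[u]]$ is tame of degree $n'$, coprime to the $p$-power $|P|$, so it is linearly disjoint from the $P$-extension $\widehat{\O}_{Y,\eta}/k[[u]]$. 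Hence $\widehat{\O}_{Y,\eta} \otimes_{k[[u]]} R_\zeta$ is again a complete DVR, namely a totally wildly ramified $P$-extension of $R_\zeta$, realizing the completed local ring of $Y \times_X X'$ at the corresponding point over $x'$.

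The main technical point, and the step to carry out carefully, is identifying the inertia group on the nose as a subgroup of $I = P \rtimes \Z/m$ with the correct semidirect structure. Let $\sigma$ generate $\Z/m \subset I$ with $\sigma(u) = \omega u$ for a primitive $m$th root of unity $\omega$. Then $\sigma$ sends the factor indexed by $\zeta$ to the one indexed by $\omega^{-m'}\zeta$, and since $\omega^{m'}$ has exact order $d$, the $d$ factors are permuted transitively with stabilizer $\langle \sigma^d \rangle \cong \Z/m'$. Because the $P$-action on $\widehat{\O}_{Y,\eta}$ fixes $k[[u]]$ pointwise, each factor is preserved by $P$. The stabilizer in $I$ of a chosen factor is therefore $P \rtimes \langle \sigma^d \rangle = P \rtimes \Z/m'$, with $\Z/m'$ acting on $P$ by restriction of the original $\Z/m$-action, which is the asserted form of the inertia group.
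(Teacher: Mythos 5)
The paper does not actually prove this lemma: it is quoted directly from \cite{SGA1}, XIII, Proposition~5.2, with no argument given. So your proposal can only be compared with the standard proof underlying that citation, and it matches it: reduce to complete local rings at the branch point, split the local extension at $\eta$ into its wild part over the tame Kummer subextension $k[[u]]$ with $u^m = t$, base-change the tame part explicitly, and identify inertia groups as stabilizers of branches. The computation is correct, including the two key points: the factorization $u^m - s^n = \prod_{\zeta^d=1}(u^{m'} - \zeta s^{n'})$ with each factor irreducible since $\gcd(m',n')=1$ (and note $p \nmid d$ is automatic because $d \mid m$ and $p \nmid m$, so this step never needs tameness of $n$); and the fact that $\sigma$ permutes the $d$ branches through $\omega^{m'}$, which has exact order $d$, giving stabilizer $P \rtimes \langle \sigma^d \rangle \cong P \rtimes \Z/(m/\gcd(m,n))$ with the restricted action. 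Also, the relation $\sigma(u)=\omega u$ with $\omega$ primitive is automatic once $u^m=t$, so there is no hidden compatibility to arrange.

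Two small caveats, neither fatal. First, you implicitly read the hypotheses as saying that $\psi$ is \emph{tamely} and \emph{totally} ramified over $x$, so that $\widehat{\mathcal{O}}_{X',x'} \cong k[[s]]$ with $s^n = t$. The statement as printed says only that $\psi$ is a $\Z/n$-cover branched over $x$; your reading is the intended one (the paper applies the lemma via the Kummer cover $y^n = x$, with $n$ prime to $p$), but if $p \mid n$ or if the inertia of $\psi$ at $x'$ were a proper subgroup $\Z/n_0 \subset \Z/n$, your local model would be wrong and the formula would involve $\gcd(m,n_0)$ instead; it is worth stating this hypothesis explicitly. Second, $\widehat{\mathcal{O}}_{Y,\eta} \otimes_{k[[u]]} R_\zeta$ is a complete local domain but is generally \emph{not} normal (e.g.\ $w^3 = z^2\epsilon$ when $|P|=2$ and $n'=3$), so it is its normalization, not the tensor product itself, that is the DVR giving the completed local ring of the normalized pullback $\psi^*\phi$. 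This does not affect your inertia computation: the normalization of a complete local domain is local, so there is exactly one point of $\psi^*\phi$ above each branch, and its stabilizer in $I$ equals the stabilizer of the branch, as you use.
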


\subsubsection{Higher ramification groups} \label{Shigherram}

We now focus on the case that $\phi$ is wildly ramified at some point $\eta$, in which case the inertia groups need not be cyclic.  To understand the structure of these more complicated groups,
one introduces the notion of ``higher ramification groups.''  
The following material can be found in \cite[IV]{Serr}.  While the discussion there is stated in terms of 
extensions of complete discretely valued fields, it applies as well to covers of curves by the comment above.  

\begin{defn} \label{Dlowerjump}  \cite[IV, Section 3]{Serr}
\begin{enumerate}
\item The {\it lower numbering filtration of higher ramification groups} $\{I_i:
i \in \N^+\}$ of $I$ at $\eta$ is defined as follows:  If $g \in I$ then
$g \in I_i$ if and only if $g(\pi_{\eta}) \equiv \pi_{\eta} 
\mod{\pi_{\eta}^{i+1}}$. 

\item If $g \in I$ then the {\it lower jump} for $g$ of $\phi$ at
$\eta$ is the integer $j$ such that $g \in I_j -I_{j+1}$;
in other words, the lower jump for $g$ is $j$ if and only if 
$j+1$ is the valuation of $g(\pi_{\eta})-\pi_{\eta}$.
The filtration $\{I_i\}$ has a {\it lower jump} at $j$ if 
$I_j \not = I_{j+1}$. 
\item If $|I|=pm$ with $p \nmid m$, then the {\it lower jump} of $\phi$ is defined to
be the lower jump of an element of order $p$ in $I$.
\end{enumerate}
\end{defn}

For Galois covers $Y \to X$ in characteristic zero, and more generally in the tamely ramified case, the classical Riemann-Hurwitz formula relates the genus of $Y$ to the genus of $X$, the degree of the cover, and the ramification indices, which are the orders of the (cyclic) inertia groups.  If we allow wildly ramified covers, there is the following generalization, using higher ramification groups:

\begin{lemma} \label{1SRH}
Suppose $\phi$ is as above.
\begin{enumerate}
\item \label{inertia structure part}
The inertia group $I$ of $\phi$ at $\eta$ is of the
form $I=P \rtimes \Z/m$ where $|P|$ is a power of $p$ and $(m,p)=1$.  
The higher ramification group $I_i$ is a $p$-group for $j \geq 1$.

\item (Riemann-Hurwitz formula) In the above situation,
\[2g_Y-2=\deg(\phi)(2g_X-2) + \sum_{\xi \in B} R_{\xi},\]
where \[R_{\xi}=r_{\xi}\sum_{i=0}^{\infty}(|I_i|-1) \ {\rm and} \ r_\xi=\deg(\phi)/|I|.\]

\item In particular, if $\phi:Y \to \PP_k$ is a Galois cover
branched only at $\infty$ over which it has inertia $I \simeq \Z/p \rtimes
\Z/m$ and jump $j$, then 
\[2g_Y-2=\deg(\phi)(2g_X-2)+ r_{\infty}(mp-1 + (p-1)j).\] 
\end{enumerate} 
\end{lemma}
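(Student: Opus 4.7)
The plan is to reduce each part to standard local facts about extensions of complete discretely valued fields (Serre, \emph{Local Fields}, Chapter IV), applied to the induced extension $\hat{\calO}_{X,\phi(\eta)} \hookrightarrow \hat{\calO}_{Y,\eta}$ at each ramification point $\eta$ above a branch point of $\phi$.

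For part (1), since $k$ is algebraically closed, the inertia group $I$ at $\eta$ coincides with the Galois group of this local extension. The first ramification group $I_1$, defined as the kernel of the action of $I$ on $\pi_\eta/\pi_\eta^2$, is a normal $p$-subgroup. The quotient $I/I_1$ embeds into the multiplicative group of the residue field, $k^\times$, via $g \mapsto g(\pi_\eta)/\pi_\eta \bmod \pi_\eta$, and is therefore cyclic of order $m$ prime to $p$. Schur--Zassenhaus then splits the extension, yielding $I \cong P \rtimes \Z/m$ with $P = I_1$ a $p$-group. Each higher $I_i$ for $i \geq 1$ is a subgroup of $I_1$ and so is automatically a $p$-group.

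For part (2), I would start from the usual Riemann--Hurwitz identity
\[
2g_Y - 2 = \deg(\phi)(2g_X - 2) + \deg(R),
\]
where $R$ is the ramification divisor coming from the relation $K_Y = \phi^*K_X + R$, and substitute Serre's local formula for the valuation of the different at $\eta$,
\[
v_\eta(R) = \sum_{i=0}^\infty (|I_i| - 1).
\]
Because $\phi$ is Galois, the $r_\xi = \deg(\phi)/|I|$ points lying above each $\xi \in B$ all have conjugate ramification filtrations, so they contribute equally; grouping by branch point gives $\deg(R) = \sum_{\xi \in B} R_\xi$ in the form stated.

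For part (3), I would specialize to $X = \PP^1_k$ with sole branch point $\infty$ and inertia $I \simeq \Z/p \rtimes \Z/m$. The unique subgroup of order $p$ is the wild inertia $I_1 \simeq \Z/p$, so by the definition of the lower jump one has $I_1 = \cdots = I_j = \Z/p$ and $I_{j+1} = 1$, while $I_0 = I$ has order $pm$. Summing,
\[
\sum_{i \geq 0} (|I_i| - 1) = (pm - 1) + j(p - 1) = mp - 1 + (p-1)j,
\]
and plugging into part (2) produces the claimed formula. The only substantive step in the whole argument is the identification $I/I_1 \hookrightarrow k^\times$ in part (1), which rests on the residue field being perfect; once this is in hand, parts (2) and (3) are essentially bookkeeping.
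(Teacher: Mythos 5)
Your proof is correct and takes essentially the same route as the paper, whose entire proof consists of citing Serre's \emph{Local Fields} (Chapter IV, Corollaries 3 and 4 for part (1), and Proposition 4 on the different combined with the standard Riemann--Hurwitz formula for parts (2) and (3)); you have simply unpacked those citations, and your filtration computation $I_0 = I$, $I_1 = \cdots = I_j \simeq \Z/p$, $I_{j+1} = 1$ in part (3) is exactly the intended bookkeeping. The one cosmetic remark is that the semidirect-product splitting is already part of Serre's cited corollary, so your appeal to Schur--Zassenhaus is a legitimate substitute rather than an extra ingredient.
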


\begin{proof}
The first item is found in \cite[IV, Corollaries 3 \& 4]{Serr} and the others follow from 
\cite[IV, Proposition 4]{Serr} combined with the standard Riemann-Hurwitz formula.
\end{proof}

There is also an {\it upper numbering filtration of higher ramification
groups} with upper jumps and an explicit formula of Herbrand relating
the upper and lower filtrations \cite[IV, Section 3]{Serr}.  
The upper jumps are preserved for
quotients (\cite[IV, Proposition 14]{Serr}) and the lower jumps are preserved for
subgroups (\cite[IV, Proposition 2]{Serr}).
In the simplest case that $I \simeq \Z/p \rtimes \Z/m$ has lower jump $J$,
the upper jump $\sigma$ satisfies $\sigma=J/m$.  In general, there is

\begin{defn} \label{Dherbrand}
If $j$ is a lower jump in the ramification filtration, 
then $\varphi(j)=|I_0|^{-1}\sum_{i=1}^j|I_i|$ is an {\it upper jump}.
\end{defn}

Using Definition \ref{Dherbrand}, one can precisely define 
the upper numbering filtration of higher ramification groups using linear interpolation between the values of the upper jumps, 
but this will not be needed in this paper.  

\section{Abhyankar's Conjecture on affine curves} \label{SConj1}

\subsection{History and motivation}\label{SAbhHist}

In \cite[Section~4.2]{Abh57}, Abhyankar stated his conjecture on the fundamental group of affine curves over an algebraically closed field of finite characteristic.  He first posed it rather informally as follows.  Let $p(G)$ denote the subgroup of a finite group $G$ that is generated by its $p$-subgroups.  Note that $p(G)$ is characteristic in $G$ and thus normal in $G$ as well.

\begin{conj} [Informal form of Abhyankar's Conjecture] \label{dInformalAC} Let $U$ be an affine variety over algebraically closed field of characteristic $p>0$ and let $G$ be a finite group.  Then $G$ is the Galois group of an unramified Galois cover of $U$ if and only if $G/p(G)$ is the Galois group of an unramified Galois cover of ``the corresponding variety in characteristic zero.''
\end{conj}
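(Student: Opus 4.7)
The plan is to split the ``if and only if'' into its two directions, focusing on the curve case (Conjecture \ref{Cintro1}), since for higher-dimensional affine varieties the authors indicate in Section \ref{dShighdim} that the informal statement requires modification.

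The ``only if'' direction follows from Grothendieck's comparison theorem.  Given a connected $G$-Galois \'etale cover $Y \to U$ of a smooth affine $k$-curve $U$ of type $(g,r)$ with $r \geq 1$, form the intermediate cover $Y/p(G) \to U$.  Because $p(G)$ contains every $p$-subgroup of $G$, the quotient $G/p(G)$ has order prime to $p$, so this intermediate cover is tame.  By \cite[XIII, Corollary 2.12]{SGA1}, the prime-to-$p$ fundamental group $\pi_1^{p'}(U)$ in characteristic $p$ is isomorphic to the corresponding group over $\C$, which is the prime-to-$p$ completion of a free group on $2g+r-1$ generators.  Hence $G/p(G)$ is realized as the Galois group of an \'etale cover of the corresponding complex curve.

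For the ``if'' direction, I would first reduce to the case where $G$ is quasi-$p$, i.e., $G = p(G)$.  Assuming $G/p(G)$ is a Galois group over the analogous complex curve, the same comparison \cite[XIII, Corollary 2.12]{SGA1} realizes it also as the Galois group of a tame \'etale cover $V \to U$ in characteristic $p$.  One then solves an embedding problem $G \twoheadrightarrow G/p(G)$ with quasi-$p$ kernel $p(G)$ over $V$.  By an argument of Serre on embedding problems, together with Abhyankar's Lemma (Lemma \ref{Lablemma}) to absorb any tame ramification that gets introduced, this reduces to showing that every quasi-$p$ group occurs as the Galois group of a connected \'etale cover of every smooth affine $k$-curve.

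The hard part, and the main obstacle, is precisely this quasi-$p$ case.  The strategy due to Raynaud and Harbater is: (i) write $G$ as generated by a family of proper subgroups $H_1,\dots,H_s$ for which the realization problem is inductively tractable---either because $H_i$ is itself quasi-$p$ of strictly smaller order, or because its prime-to-$p$ quotient $H_i/p(H_i)$ is cyclic so that $H_i$ requires fewer generators than $G$; (ii) build the corresponding $H_i$-covers over a semi-stable degeneration of $U$ whose special fiber is a tree-like configuration of components, each carrying one of the partial covers; and (iii) patch this local data into a connected $G$-cover of the generic fiber via formal or rigid-analytic methods.  Raynaud \cite{Ray94} carried out step (iii) for $U=\A^1$, introducing the semi-stable-model technique that side-steps the ``local splitting'' pathologies \cite[Section~3]{Abh55} which had obstructed an earlier attempt \cite{Har91}; Harbater then extended the argument to arbitrary affine curves.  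The technical core is controlling wild ramification on the special fiber so that the patched cover has Galois group exactly $G$, is connected, and extends to an \'etale cover of the generic fiber $U$; this is what makes the theorem substantially deeper than Grothendieck's prime-to-$p$ analog.
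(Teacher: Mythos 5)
Your decision to restrict the informal conjecture to the curve case is sound: the paper itself shows in Section~\ref{dShighdim} that the informal statement is \emph{false} as stated for the higher-dimensional spaces Abhyankar considered, so only the curve formulation (Conjecture~\ref{dAbhConj}) admits a proof, and your ``only if'' direction matches the paper's (quotient by $p(G)$, lift the tame cover, invoke \cite[XIII, Corollary 2.12]{SGA1}). The genuine gap is in your reduction of the ``if'' direction to the quasi-$p$ case. You propose to solve the embedding problem $G \twoheadrightarrow G/p(G)$ with kernel $p(G)$ over the tame cover $V$ ``by an argument of Serre on embedding problems, together with Abhyankar's Lemma.'' But Serre's theorem (Theorem~\ref{aTserre}) applies only when the kernel is \emph{solvable}, and $p(G)$ is a quasi-$p$ group that is typically non-solvable (take $G = A_5 \times \Z/\ell$ with $p=5$, so $p(G)=A_5$). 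Projectivity of $\pi_1$ of an affine curve gives only a \emph{weak} solution --- a lift of $\pi_1 \twoheadrightarrow G/p(G)$ to a homomorphism into $G$ that need not be surjective --- and surjectivity is exactly where Serre's cohomological argument uses the solvable filtration of the kernel, one elementary abelian layer at a time. Proper solvability of embedding problems with arbitrary quasi-$p$ kernel over affine curves in characteristic $p$ is a strictly stronger theorem (due to Pop, itself proved by patching, of comparable depth to the conjecture); it cannot be extracted from Serre plus Abhyankar's Lemma, so your reduction step assumes something at least as hard as what remains to be proved.

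The paper's route avoids this entirely: Harbater first reduces group-theoretically to $G \cong p(G) \rtimes \ol{G}$ with $\ol{G}$ of prime-to-$p$ order generated by $2g+r-1$ elements, takes a $\ol{G}$-cover $f$ of $U$ from \cite{SGA1} with cyclic inertia $C$ at some ramification point, forms $H = \langle p(G), C\rangle$, realizes $H$ over $\A^1_K - \{0\}$ tamely ramified at $0$ with inertia $C$ there (Theorem~\ref{aTharbater}), and then formally patches $f$ and this $H$-cover so that the two tame ramification loci cancel; connectedness of the patched cover comes from $H$ and $\ol{G}$ generating $G$, with no embedding problem over $V$ ever solved. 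Separately, your sketch of the quasi-$p$ case conflates Raynaud's two mutually exclusive arguments: the strategy of generating $G$ by proper subgroups and patching their covers is Theorem~\ref{aTraynaud1} and works only under the hypothesis $G(S) = G$; in the complementary case $G(S) \neq G$ (reached after Serre's theorem eliminates nontrivial normal $p$-subgroups) there is no patching of subgroup covers at all --- one instead degenerates a characteristic-zero $G$-cover of $\PP^1$ and extracts an \'etale tail component from its stable reduction (Theorem~\ref{aTraynaud2}). Your steps (i)--(iii), which present semi-stable degeneration and patching of subgroup covers as a single pipeline applicable to every quasi-$p$ group, would fail precisely for the $p$-pure groups, where the subgroup-generation hypothesis of step (i) does not hold.
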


Of course, it needs to be explained what is meant by that last phrase, and Abhyankar pointed out that there need not always be such a ``corresponding variety.'' But as he explained by examples, if $U$ is an affine curve in characteristic $p$ of type $(g,r)$, i.e.\ the complement of $r>0$ points in a smooth projective curve of genus $g$, then the corresponding variety would be an affine curve of type $(g,r)$ in characteristic zero.  And if $U$ is the complement in $\mathbb P^n$ of $r$ hyperplanes in general position, then the corresponding variety would be such a complement in characteristic zero.  (Here all the ground fields are assumed algebraically closed.)  

In the case of affine curves, it is classical that the fundamental group of a complex curve of type $(g,r)$ is free of rank $2g+r-1$ (see Section 
\ref{intro Gal cov}), and this holds when the complex numbers are
replaced by any algebraically closed field of characteristic $0$ (Section \ref{Sfundgroups}).  Thus the finite Galois groups of unramified covers are precisely the finite groups that can be generated by a set of at most $2g+r-1$ elements.  In this key case, the above conjecture can be formulated more precisely as follows:

\begin{conj} [Abhyankar's Conjecture on affine curves]\label{dAbhConj} 
Let $X$ be a smooth projective curve of genus $g$ defined over an algebraically closed field of characteristic $p>0$.  
Let $B$ be a non-empty set of points of $X$ having cardinality $r$ and let $U=X-B$.
Then a finite group $G$ is the Galois group of an unramified cover of $U$ if and only if $G/p(G)$ has a generating set of size at most $2g+r-1$.
\end{conj}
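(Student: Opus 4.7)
The plan is to treat the two directions separately. The ``only if'' is a direct consequence of Grothendieck's description of $\pi_1^{p'}$ from Section~\ref{Sfundgroups}; the ``if'' decomposes as a formal-patching reduction that combines Raynaud's realization of quasi-$p$ groups over $\mathbb A^1_k$ with Grothendieck's lifting of tame covers from characteristic zero.

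First I would handle ``only if''. Since $p(G)$ contains every $p$-subgroup of $G$, a Sylow argument shows that $G/p(G)$ has trivial Sylow $p$-subgroup, and hence has order prime to $p$. Given an unramified $G$-Galois cover $Y \to U$, the intermediate $(G/p(G))$-cover is a prime-to-$p$ unramified cover of $U$, so it is classified by a quotient of $\pi_1^{p'}(U)$. By the Grothendieck theorem cited above, this fundamental group is the free pro-prime-to-$p$ group on $2g+r-1$ generators, forcing $G/p(G)$ to admit a generating set of that size.

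For the converse, the strategy is to realize $G$ over $U$ by patching a quasi-$p$ cover over the formal neighborhood of a branch point to a tame cover over the complement. Concretely, I would apply Raynaud's theorem to $p(G)$, which is itself quasi-$p$, to obtain an unramified $p(G)$-Galois cover $\psi: Z_0 \to \mathbb A^1_k$; and separately use that $G/p(G)$ has the required generating set to realize an unramified $(G/p(G))$-Galois cover $\phi_0: Y_0 \to U$ via Grothendieck's theorem on the prime-to-$p$ fundamental group together with the lifting of tame covers from characteristic zero. Next, choose a branch point $b \in B$ and pass to a formal neighborhood: the restriction of $\psi$ to a formal disk at infinity can be induced up to a $G$-Galois cover of that formal disk whose behavior on the punctured disk matches the restriction of $\phi_0$ at $b$. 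Harbater's formal patching theorems then glue this local $G$-cover to $\phi_0$ over $U \setminus \{b\}$ to produce an unramified $G$-Galois cover of $U$. The only nontrivial input beyond the two already cited ingredients is an embedding-problem solution in the formal setting; this is supplied by the Serre--Harbater formal patching machinery.

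The main obstacle, and the deepest input, is Raynaud's theorem: every quasi-$p$ group $Q$ is the Galois group of an unramified cover of $\mathbb A^1_k$. Here the plan is to dispatch solvable $Q$ by an explicit layered construction from Artin--Schreier--Witt theory combined with Abhyankar's Lemma (Lemma~\ref{Lablemma}), and then to tackle general $Q$ by semi-stable reduction in mixed characteristic. Specifically, one starts from a tame $Q$-cover of $\mathbb P^1$ in characteristic zero branched at many points, considers a stable model over a complete discrete valuation ring with residue characteristic $p$, and analyzes the special fiber as a $Q$-Galois cover of a tree of projective lines. By contracting the components not carrying essential generators of $Q$, which is where the quasi-$p$ hypothesis is crucial, one extracts a $Q$-Galois cover of $\mathbb A^1_k$ branched only at infinity. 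The hardest part will be controlling the combinatorics of the semi-stable reduction to ensure that the contraction does not destroy the group-theoretic hypotheses; as Section~\ref{fund gps history} records, this is precisely where semi-stable models, rather than naive formal patching, are required to overcome the ``local splitting'' phenomenon that obstructed earlier attempts.
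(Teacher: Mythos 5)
Your ``only if'' direction is exactly the paper's argument (pass to the prime-to-$p$ quotient cover, lift to characteristic zero, invoke Grothendieck's description of $\pi_1^{p'}(U)$), and your global strategy of patching a quasi-$p$ piece against a tame $(G/p(G))$-cover is close in spirit to Harbater's proof. But even at the global level you elide two points the actual proof needs: a group-theoretic reduction to the case $G \cong p(G) \rtimes \overline{G}$ (a general $G$ with $G/p(G)$ generated by $2g+r-1$ elements is not such a semidirect product, and one cannot simply ``induce up'' the local $p(G)$-cover to a $G$-cover otherwise), and connectedness of the patched cover. In the paper, connectedness is arranged by first solving the case $\mathbb{A}^1_k - \{0\}$ for the subgroup $H = \langle p(G), C\rangle$, where $C$ is the tame cyclic inertia of the $\overline{G}$-cover at a chosen ramification point $\xi$, and then gluing so that the tame ramification of the $H$-cover at $0$ \emph{cancels} the tame ramification of the $\overline{G}$-cover at $\xi$; since $H$ and $\overline{G}$ generate $G$, the result is connected with branch locus still inside $B$. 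Your matching ``at a formal disk at $b$'' does not account for this cancellation, so the glued cover could acquire extra branching or fail to have full Galois group $G$.

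The genuine gap is in your treatment of the key input, Raynaud's theorem for $\mathbb{A}^1_k$. First, solvable quasi-$p$ groups are not dispatched by Artin--Schreier--Witt theory plus Abhyankar's Lemma: that machinery realizes abelian $p$-groups, but extending a given $(G/H)$-cover through a solvable kernel $H$ is an embedding problem, and Serre's proof requires the projectivity of $\pi_1(\mathbb{A}^1_k)$ (cohomological dimension $1$) together with, in the split case with $\ell \neq p$, the pullback by the $m$-th power map to force the inclusion $H^1(G/H,H) \subseteq H^1(\pi_1(\mathbb{A}^1_k),H)$ to be strict; no layered explicit construction is known. Second, and more seriously, semi-stable reduction does \emph{not} handle ``general $Q$'': Raynaud's analysis of the stable model yields a tail component carrying a $Q$-cover of $\mathbb{P}^1_k$ branched at one point only under the hypotheses that $Q$ has no nontrivial normal $p$-subgroup \emph{and} $Q(S) \neq Q$, where $S$ is a Sylow $p$-subgroup and $Q(S)$ is generated by the proper quasi-$p$ subgroups with Sylow $p$-subgroup in $S$ --- the paper remarks that this hypothesis is absolutely necessary. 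The complementary case $Q(S) = Q$ is proved by induction on $|Q|$ combined with rigid patching over $k((t))$ (after using Abhyankar's Lemma to reduce the inertia of the inductively obtained covers to $p$-groups inside $S$), and this entire half of the argument is missing from your plan. Finally, your characteristic-zero input cover must be chosen with $p$-\emph{power} branching indices, not merely ``tame'': a cover with prime-to-$p$ inertia has good reduction, its reduction remains branched at all $r+1$ points, and no usable tail appears.
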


The forward direction of Conjecture \ref{dAbhConj} follows from the work of Grothendieck.  
If there is an unramified cover of $U$ with Galois group $G$, then it has a quotient cover which is an unramified 
cover of $U$ with Galois group $G/p(G)$.  The fact that the order of $G/p(G)$ is prime-to-$p$ implies that 
the quotient cover lifts to characteristic $0$.  As mentioned above, the well-known restrictions on Galois covers in 
characteristic $0$ imply that $G/p(G)$ has a generating set of size at most $2g+r-1$.

\begin{remark}\label{dRproj}
The affine hypothesis in Abhyankar's conjecture is crucial as the
corresponding statement is \emph{not} true for projective curves.
Indeed, if $X/k$ is a projective curve of genus $g$ and
$\text{char}(k) = p$, then $\pi_1(X)$ is a \emph{quotient} of the
group $$\Sigma_g := \langle a_1, b_1, \ldots, a_n, b_n | \prod_{i=1}^n
[a_i, b_i] = 1 \rangle$$ (\cite[XIII, Cor. 2.12]{SGA1}).  So, for instance, $G := (\ints/p)^N$ is not a quotient of $\pi_1(X)$ for large $N$, yet $G/p(G)$ is trivial.
\end{remark}

Conjecture \ref{dAbhConj} says in particular that the finite Galois groups over the affine line are precisely the {\em quasi-$p$ groups}, i.e.\ the finite groups $G$ that are generated by their $p$-subgroups.  This includes, for example, all simple groups of order divisible by $p$.  More generally, Conjecture \ref{dAbhConj} says that every quasi-$p$ group is a Galois group over every affine curve in characteristic $p$.  In addition, it says that the set $\pi_A(U)$ of finite Galois groups over an affine curve of type $(g,r)$ is {\em independent} of the curve $U$, and also of the choice of algebraically closed ground field of characteristic $p$.  

Conjecture \ref{dAbhConj} for an affine curve $U$ can be regarded as having two parts (where we write $\pi_A(U)$ for the set of finite Galois groups over $U$, as in \cite{Abh55} and \cite{Abh57}):  
\begin{enumerate}
\item $G \in \pi_A(U)$ if and only if $G/p(G) \in \pi_A(U)$. \label{dAC char p part}
\item If $U$ has type $(g,r)$ and $G$ has order prime to $p$,
then $G \in \pi_A(U)$ if and only if $G \in \pi_A(U_0)$, where $U_0$ is a complex curve of type $(g,r)$. \label{dAC prime to p part}
\end{enumerate}

The motivation for the converse direction of Conjecture \ref{dAbhConj} came from the examples that Abhyankar had computed in \cite{Abh55} and \cite{Abh57}.  These examples showed that in characteristic $p$ there are non-solvable unramified covers of the affine line; that every finite group is a subgroup of some group in $\pi_A(U)$ if $U$ is the complement of any set of at least three points in the projective line; and that every finite group is a subquotient of some group in $\pi_A({\mathbb A}^1)$.  These examples, together with the phenomena discussed in Section~\ref{fund gps history} above, suggested a form of the Murphy's Law mentioned in the introduction: that every type of cover that cannot be ruled out must occur. 

Two cases of the conjecture were known soon after.  Part~(\ref{dAC prime to p part}) above, on prime-to-$p$ groups, was shown in \cite{SGA1} and in \cite{Popp67}.  At the opposite extreme, the conjecture asserts that all finite $p$-groups
are Galois groups over every affine curve in characteristic $p$.  This case follows from the fact that the pro-$p$ part of the \'etale fundamental group is free, which holds because all elementary abelian $p$-groups are Galois groups (by Artin-Schreier theory), together with the fact that the curve has $p$-cohomological dimension one 
\cite[Expos\'e~X, Th\'eor\`eme~5.1]{SGA4}.

But even knowing these two extreme cases, it remained far from obvious that Conjecture \ref{dAbhConj} was true.  For example, in \cite{Kam82}, Kambayashi asked whether every connected degree $p$ unramified cover of the affine line is Galois.  As noted at \cite[p.~356]{Har84}, an affirmative answer to that question would contradict Abhyankar's Conjecture \ref{dAbhConj}.  

In retrospect, there are other reasons why Conjecture \ref{dAbhConj} is remarkable.
While it implies that $\pi_A$ of an affine curve depends only on its type $(g,r)$, it turns out that almost the opposite is true for the \'etale fundamental group $\pi_1$ of the curve.  Namely, by results of Tamagawa (\cite{Tam99}, \cite{Tam03}), $\pi_1(U)$ (and even its tame quotient $\pi_1^t(U)$) determines $U$ up to isomorphism as a scheme, if $U$ is an affine dense open subset of the projective line over $\overline{\mathbb F}_p$.  In addition, while the philosophy that motivated Abhyankar's Conjecture \ref{dAbhConj} for affine curves also suggests that the same should hold for complements of hyperplanes in general position in $\mathbb P^n$, that turned out not to be the case (see Section~\ref{dShighdim} below).  Nevertheless, the conjecture for affine curves over an algebraically closed field did turn out to be true in general.

\subsection{Examples}
Let $k$ be algebraically closed of characteristic $p$.  

\subsubsection{Basic examples}
It is easy to see explicitly that there are infinitely many nonisomorphic \'{e}tale $\ints/p$-covers of $\aff^1_k$, as already alluded to in the introduction.  For example, there are the covers given 
by the equations
$$y^p - y = x^{-n},$$
where $x$ is a coordinate on $\aff^1_k$ and $n$ ranges through $\nats - p \nats$. 
Taking fiber products gives infinitely many \'{e}tale $(\ints/p)^m$-covers of $\aff^1_k$ in characteristic $p$ for any $m$.  
For any \emph{abelian} $p$-group $G$, it is not difficult to extend this idea and to write down \'{e}tale $G$-covers of $\aff^1_k$ using
Artin-Schreier-Witt theory (see, e.g., \cite{La:al}).

\subsubsection{Abhyankar's examples}\label{aSAbhEx}

Abhyankar exhibited many examples of $G$-covers of $\aff^1_k$ in characteristic $p$ for non-abelian quasi-$p$ groups $G$.  Much of this work is 
collected in \cite{Abh92}.  For instance, let $x$ be a coordinate on $\aff^1_k$, and let $f: Y \to \aff^1_k$ be given by the equation
\begin{equation}\label{aEgenab}
y^n - ax^sy^t + 1 = 0,
\end{equation}
where $a \in k$ is not zero, where $s$, $t$, and $n$ are positive integers such that $t \not \equiv 0 \pmod{p}$, and where $n = p + t$.  One can
readily verify that $f$ is indeed an \'{e}tale map, and the Galois closure  $g: Z \to \aff^1_k$ of $f$ is 
an \'{e}tale $G$-cover of $\aff^1_k$.
Abhyankar (\cite[Section 11]{Abh92}) calculated the groups $G$ for various values of $n$, $s$, and $t$.
\begin{thm}[Abhyankar]\label{aTabex}
The Galois group of the cover $Z \to \aff^1_k$ given by taking the Galois closure of the cover of $\aff^1$ given by (\ref{aEgenab}) is 
$$ \begin{cases}
{\rm PSL}_2(p) = {\rm PSL}_2(n-1) & \text{ if } t = 1 \\
{\rm PSL}_2(8) = {\rm PSL}_2(n-1) & \text{ if } t = 2 \text{ and } p = 7 \\
S_n & \text{ if } p = 2 \\
A_n & \text{otherwise}
\end{cases}
$$
\end{thm}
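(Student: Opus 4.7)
The plan is to view $G := \Gal(Z/\aff^1_k)$ as a transitive subgroup of $S_n$ acting on the $n$ sheets of $f$, to extract a $p$-cycle from wild inertia at $\infty$, to establish primitivity, and then to apply Jordan's theorem when it is available ($t \geq 3$) and direct group-theoretic recognition in the small cases ($t=1$, and $t=2$ with $p=7$).

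First I would verify that $f$ is étale over $\aff^1_k$ and compute the ramification at $\infty$. Writing $F(x,y) = y^n - ax^s y^t + 1$ and using $n \equiv t \pmod p$, we have $F_y = t y^{t-1}(y^p - ax^s)$; substituting either $y = 0$ or $y^p = ax^s$ into $F$ gives $1$, so $F$ and $F_y$ share no zeros over $k(x)$. Since $\gcd(n,p) = \gcd(t,p) = 1$, the fiber over $x = 0$ is also reduced, and $f$ is étale on all of $\aff^1$. Treating $F$ as a polynomial in $y$ over $k(\!(1/x)\!)$, its Newton polygon has vertices $(0,0)$, $(t,-s)$, $(n,0)$: the segment of slope $s/p$ produces one wild place of $Y$ above $\infty$ of ramification index $p$, while the segment of slope $-s/t$ produces $\gcd(s,t)$ tame places of index $t/\gcd(s,t)$.

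Next I would produce a $p$-cycle, establish primitivity, and apply Jordan's theorem. Irreducibility of $F$ in $y$ follows because any nontrivial factor over $k(x)$ would cut out a proper subcover of $Y$ that is tamely ramified at $\infty$ and étale elsewhere, and the only such cover of $\aff^1$ is trivial. Hence $G$ acts transitively on the $n$ sheets. A generator $\sigma$ of the $p$-Sylow of inertia at a point of $Z$ above the wild place acts on the sheets as a single $p$-cycle together with $t$ fixed points (the sheets lying over tame places of $Y$). Since $p$ is prime, any nontrivial block system for $G$ would force each block either to be disjoint from the support of $\sigma$ (leaving that support uncovered) or to contain the entire $p$-orbit (forcing block size $b \geq p$ with $b \mid n = p + t$, i.e.\ $p \mid t$), both of which are impossible; so $G$ is primitive. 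Whenever $t \geq 3$, one has $p \leq n - 3$, and Jordan's theorem yields $A_n \leq G$. To distinguish $A_n$ from $S_n$: in characteristic $2$ the $p$-cycle is a transposition and is odd, so $G = S_n$; in odd characteristic a direct discriminant calculation shows $\Disc_y(F) \in k(x)^{\times 2}$, placing $G \subseteq A_n$ and hence $G = A_n$.

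Finally I would treat the small-$t$ exceptions by direct recognition. For $t = 1$ and $n = p + 1$, the Newton polygon shows that at $\infty$ the cover $f$ has one wild place of ramification index $p$ and one unramified place, so the inertia in $Z$ above the wild place has shape $\Z/p \rtimes \Z/e$ with $e \mid p - 1$, exactly matching the Borel normalizer of a Sylow $p$-subgroup of ${\rm PSL}_2(p)$ acting on $\mathbb P^1(\F_p)$. Combined with transitivity, primitivity, and the $p$-cycle, the classification of primitive subgroups of $S_{p+1}$ containing an element of order $p$ forces $G \subseteq \PGL_2(p)$; a further analysis of the upper and lower ramification jumps (Section~\ref{Shigherram}) together with Abhyankar's Lemma~\ref{Lablemma} applied after a tame base change rules out $G = \PGL_2(p)$, yielding $G = {\rm PSL}_2(p)$. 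The case $t = 2$, $p = 7$, $n = 9$ is parallel, using the $2$-transitive action of ${\rm PSL}_2(8)$ on the nine points of $\mathbb P^1(\F_8)$. The principal obstacle is this recognition step: producing a $p$-cycle, transitivity, and primitivity narrow $G$ only to a short list of candidates in ${\rm P\Gamma L}_2$, and excluding the extras requires exploiting the rigid trinomial shape of~\eqref{aEgenab}---this is precisely the ground covered by the explicit computations in \cite[Section~11]{Abh92}.
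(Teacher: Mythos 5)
First, a point of comparison: the paper itself gives no proof of Theorem~\ref{aTabex} --- it is quoted from Abhyankar's explicit computations in \cite[Section~11]{Abh92} --- so your outline can only be measured against that strategy, which it does broadly reconstruct (\'etaleness check, Newton polygon at $\infty$, a $p$-cycle from wild inertia, primitivity plus Jordan's theorem, discriminant for the $A_n$/$S_n$ dichotomy). Several of your steps are sound and even tighter than you claim: since $\gcd(n,t)=\gcd(p+t,t)=\gcd(p,t)=1$, the trinomial discriminant formula collapses in characteristic $p$ to a nonzero \emph{constant}, which both reproves \'etaleness and gives $\Disc_y(F)\in k(x)^{\times 2}$ for odd $p$. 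Also, your observation that the image of the wild inertia $P$ acts trivially on the tame orbits and transitively on the $p$-point orbit actually shows the image of $P$ in $S_n$ is exactly $\langle\sigma\rangle$, a fact you should exploit (see below). Your claim of ``one wild place of index $p$'' and your exclusion of linear factors both silently assume $p\nmid s$ (the degree count $pe=s$ is soluble when $p\mid s$, and the slope $s/p$ is then integral); this is repairable by Frobenius untwisting --- write $s=p^e s_0$ with $p\nmid s_0$, set $z=x^{p^e}$, so $x^s=z^{s_0}$ and $k(x)/k(z)$ is purely inseparable, hence harmless to the Galois group --- but it should be said.

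The genuine gap is in your primitivity step. The inference ``block size $b\ge p$ with $b\mid n=p+t$, i.e.\ $p\mid t$'' is a non sequitur: take $b=4$, $p=3$, $t=5$, $n=8$. Your block argument in fact proves primitivity only when $t<p$, so as written it fails for odd $p$ with $t>p$ and, more seriously, for the \emph{entire} characteristic-$2$ case with $t\ge 3$, where $\sigma$ is a transposition; transitive imprimitive groups containing a $p$-cycle supported inside one block genuinely exist (e.g.\ $S_4\wr S_2\le S_8$ contains $3$-cycles), so no amount of massaging the single cycle $\sigma$ can close this. The fix needs a global input you already have on the table: since $\pi_1^t(\aff^1_k)$ is trivial, $G$ is the normal closure of its wild inertia, i.e.\ of $\langle\sigma\rangle$; every conjugate of $\sigma$ is a $p$-cycle, and your own block-versus-support analysis (corrected) shows the support of any $p$-cycle lies inside a single block, so all these generators stabilize every block setwise, contradicting transitivity. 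Separately, your recognition step for $t=1$ overstates what the classification gives: a $2$-transitive group of degree $p+1$ containing a $(p,1)$-cycle need not lie in $\PGL_2(p)$ --- the list also contains $A_{p+1}$, $S_{p+1}$, and for $p=11,23$ the Mathieu groups $M_{12}$, $M_{24}$; your square discriminant eliminates the odd groups ($S_{p+1}$ and $\PGL_2(p)$, whose $(p-1)$-cycles are odd) but not $A_{p+1}$ or the Mathieu groups, which sit inside $A_{p+1}$. Excluding those requires the trinomial-specific analysis (e.g.\ Abhyankar's throwing away of a root to show a one-point stabilizer is solvable of order $p(p-1)/2$, reflecting the projective-polynomial structure of $y^{p+1}-ax^sy+1$), which you explicitly defer to \cite{Abh92}; the same applies to $t=2$, $p=7$. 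So the exceptional cases in your write-up are an outline of where the proof lives, not a proof.
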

Furthermore, Abhyankar gave other examples of \'{e}tale $S_n$ and $A_n$-covers of $\aff^1_k$ using the equation
$$y^n - ay^t + x^s = 0,$$ with $a$, $s$, $t$, and $n$ as above.  Abhyankar also gave many explicit
examples of covers of $\aff^1_k$ for various linear algebraic groups over $\F_q$,
with $q$ a power of $p$.  See Section \ref{SArithmeticalConj} for more details. 

\subsubsection{A result of Nori}
We mention here a result of Nori (see, e.g., \cite[Section 3.2]{Ser92})
that, although not used in the proof of Abhyankar's conjecture, gave more evidence for its truth.

\begin{theorem}\label{aTnori}
Let $\Sigma$ be any semisimple, simply connected algebraic group over $\F_q$, where $q$ is a power of $p$.  Then
$\Sigma(\F_q) \in \pi_A(\aff^1_k)$.
\end{theorem}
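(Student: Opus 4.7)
The construction is based on Lang's isogeny. Let $F : \Sigma \to \Sigma$ be the $q$-power Frobenius of $\Sigma$ (viewed as an $\F_q$-variety), and set $L : \Sigma \to \Sigma$, $L(g) = g^{-1} F(g)$. By Lang's theorem, $L$ is a finite étale surjection; letting $\Sigma(\F_q)$ act on the source by left translation makes $L$ into a connected étale $\Sigma(\F_q)$-Galois cover. Base-changing to $k$ produces a $\Sigma(\F_q)$-Galois étale cover $L_k : \Sigma_k \to \Sigma_k$, and the plan is to pull $L_k$ back along a well-chosen morphism $\phi_k : \aff^1_k \to \Sigma_k$ and check that the pullback is connected.

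Since $\Sigma$ is semisimple and simply connected, it is generated as an algebraic group over $\F_q$ by its one-parameter root subgroups $U_{\alpha_i} \cong \G_a$, where the $\alpha_i$ can be taken to range over the positive and negative simple roots relative to a fixed $\F_q$-pinning. Fix parameterizations $u_i : \G_a \xrightarrow{\sim} U_{\alpha_i}$ and define
\[
\phi : \aff^1_{\F_q} \longrightarrow \Sigma_{\F_q}, \qquad \phi(t) \;=\; u_1(f_1(t)) \cdot u_2(f_2(t)) \cdots u_N(f_N(t)),
\]
with polynomials $f_i \in \F_q[t]$ chosen generically enough that $\phi(\aff^1)$ is Zariski-dense in $\Sigma$. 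The pullback $Y := \phi_k^* L_k \to \aff^1_k$ is then an étale $\Sigma(\F_q)$-torsor, and the theorem reduces to showing that $Y$ is connected; equivalently, that the monodromy image $H \subseteq \Sigma(\F_q)$ of $\pi_1(\aff^1_k) \to \Sigma(\F_q)$ is all of $\Sigma(\F_q)$.

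Two features combine to identify $H$. First, because each $U_{\alpha_i}$ is $F$-stable, the restriction $L|_{U_{\alpha_i}}$ coincides with the Lang map of $\G_a$, namely the Artin-Schreier cover $t \mapsto t^q - t$, whose geometric monodromy over $\aff^1_k$ is the full group $U_{\alpha_i}(\F_q)$. One aims to transfer this observation to $\phi_k^* L_k$ by an asymptotic analysis at $\infty \in \P^1$, thereby exhibiting every $U_{\alpha_i}(\F_q)$ inside $H$. Second, a standard fact about Chevalley groups over finite fields says that $\Sigma(\F_q)$ is generated by its root subgroup $\F_q$-points $U_{\alpha_i}(\F_q)$, which then forces $H = \Sigma(\F_q)$.

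The main obstacle is the asymptotic analysis: extracting the monodromy contribution from each $u_i$ inside that of the product $\phi$. A naive restriction of $\phi$ to a sub-$\aff^1$ need not preserve connectivity of the pullback, so one cannot simply read off $U_{\alpha_i}(\F_q) \subseteq H$ from the Artin-Schreier cover pulled back along $u_i$. Resolving this requires a Nori-style rigidity result classifying Zariski-dense subgroups of $\Sigma(\F_q)$ that are generated by $p$-elements, and it is here that the genuine technical content of the theorem lies.
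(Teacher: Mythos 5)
Your proposal starts correctly (the Lang isogeny $L(g) = g^{-1}F(g)$ is indeed the cover everyone, including the paper, begins with), but it contains a genuine gap, and you have in fact located it yourself: the connectivity of the pullback along $\phi_k : \aff^1_k \to \Sigma_k$ is never established, and your final paragraph concedes that the ``asymptotic analysis at $\infty$'' needed to exhibit each $U_{\alpha_i}(\F_q)$ in the monodromy is missing. Worse, the genericity hypothesis you impose on the $f_i$ --- that $\phi(\aff^1)$ be Zariski-dense in $\Sigma$ --- cannot suffice even in principle. Already for the Artin--Schreier cover $y^q - y = x$ of $\aff^1$ (the Lang cover of $\G_a$), pulling back along the dominant map $t \mapsto t^q - t$ splits the cover completely; similarly, precomposing your $\phi$ with the Lang map of $\Sigma$ itself yields a morphism with dense image whose pullback of $L_k$ is trivial. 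So density of the image places no constraint on the monodromy subgroup $H \subseteq \Sigma(\F_q)$, and no ``rigidity result for Zariski-dense subgroups generated by $p$-elements'' can rescue the argument, since $H$ is just a subgroup of a finite group and is invisible to the Zariski topology of the image curve.

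The paper's sketch closes exactly this gap in two steps that your proposal lacks. First, rather than mapping $\aff^1$ into $\Sigma$ by a product of root groups, one pulls $L$ back along the closed immersion $\iota : U^+ \times U^- \to \Sigma$, where $U^{\pm}$ are the unipotent radicals of opposite Borel subgroups. Here connectivity \emph{can} be checked: restricting to the $F$-stable slices $U^+ \times \{e\}$ and $\{e\} \times U^-$ recovers the Lang covers of the connected groups $U^{\pm}$, which are connected, so the monodromy over $U^+ \times U^-$ contains $U^+(\F_q)$ and $U^-(\F_q)$; these generate $\Sigma(\F_q)$ because $\Sigma$ is semisimple and simply connected. (This is the rigorous form of your correct observation that $L$ restricted to an $F$-stable root subgroup is an Artin--Schreier cover --- the point is to restrict to slices that genuinely sit inside unipotent subgroups, which your product parameterization destroys.) Second, since $U^+ \times U^-$ is isomorphic to affine space, a Bertini-type theorem produces an affine line $\ell \subset U^+ \times U^-$ over which the cover \emph{remains connected}; this Bertini step is precisely the replacement for your unworkable asymptotic analysis, and it is where the connectivity survives the cut down to dimension one. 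Without some substitute for it, your construction yields only an \'etale $\Sigma(\F_q)$-torsor over $\aff^1_k$ with no control on its monodromy, which does not prove $\Sigma(\F_q) \in \pi_A(\aff^1_k)$.
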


\proof[Sketch of proof]
Consider the Lang isogeny $L: x \mapsto x^{-1} F(x)$ on $\Sigma$, where $F$ is the Frobenius map.  By looking at the fiber above the identity,
one sees that this is a $\Sigma(\F_q)$-cover. 
If $U^+$ and $U^-$ are the unipotent radicals of opposite Borel subgroups of $\Sigma$, then
the map
$$\iota: U^+ \times U^- \to \Sigma$$ is a closed immersion.  Pulling back $L$ by $\iota$ gives a $\Sigma(\F_q)$-cover of $U^+ \times U^-$ (one can
show it remains connected).  Now, $U^+ \times U^-$ is isomorphic to affine space.  Then one shows, using a version of Bertini's theorem, that
there exists an affine line $\ell$ in $U^+ \times U^-$ such that the cover remains connected when restricted above $\ell$.  This is the
$\Sigma(\F_q)$-cover we seek.
\qed

\subsection{Proof of Abhyankar's conjecture}\label{aSabproof}

Assume throughout Section \ref{aSabproof} that $k$ is an algebraically closed field of characteristic $p>0$.
The proof of Abhyankar's Conjecture for affine curves (Conjecture~\ref{dAbhConj}) involves separate contributions from Serre, Harbater, and Raynaud.  Serre proved results about the cohomological dimension of curves, and his contribution proved Conjecture \ref{dAbhConj} for solvable group covers of the affine line.
Harbater developed formal patching methods which allowed for an inductive approach for realizing Galois groups of 
affine covers.  Raynaud, using semi-stable reduction of curves as well as another form of patching, proved Conjecture \ref{dAbhConj} 
for the affine line.  This, together with formal patching, proved Conjecture \ref{dAbhConj} for all affine curves.

\subsubsection{The case $U = \aff^1_k$}
Let $G$ be a quasi-$p$ group.  To show that $G \in \pi_A(\aff^1_k)$, one uses induction on $|G|$, the case $|G| = 1$ being 
trivial.  The first major breakthrough was due to Serre.

\begin{theorem}[Serre, \cite{Ser90}]\label{aTserre}
Let 
\begin{equation}\label{aEserre}
1 \to H \to G \to G/H \to 1
\end{equation}
be an exact sequence of finite groups, with $H$ solvable and $G$ quasi-$p$.  If $G/H \in \pi_A(\aff^1_k)$, then $G \in \pi_A(\aff^1_k)$.
\end{theorem}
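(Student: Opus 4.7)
My plan is to prove the theorem by induction on $|H|$, reducing via dévissage on the solvable group $H$ to an elementary abelian base case that is then resolved using the étale cohomology of $\aff^1_k$ together with Artin-Schreier flexibility.

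Since $H$ is nontrivial and solvable, the last nontrivial term of its derived series is an abelian characteristic subgroup of $H$, hence normal in $G$. Taking its $\ell$-torsion subgroup for some prime $\ell \mid |H|$ yields a nontrivial elementary abelian $\ell$-group $H_0 \trianglelefteq G$. The quotient $G/H_0$ is again quasi-$p$, and fits in
\[
1 \to H/H_0 \to G/H_0 \to G/H \to 1
\]
with $H/H_0$ solvable of strictly smaller order. By the inductive hypothesis $G/H_0 \in \pi_A(\aff^1_k)$, so it suffices to handle the base case in which $H$ is itself elementary abelian; passing to a minimal $G$-invariant subgroup of $H$ if necessary, we may moreover take $H$ to be minimal normal in $G$.

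Fix a surjection $\rho: \pi_1(\aff^1_k) \twoheadrightarrow G/H$ corresponding to a given $(G/H)$-cover $Y \to \aff^1_k$. The obstruction to lifting $\rho$ to a continuous homomorphism $\tilde\rho: \pi_1(\aff^1_k) \to G$ is the class in $H^2(\pi_1(\aff^1_k), H)$ obtained by pulling back the extension $1 \to H \to G \to G/H \to 1$ along $\rho$. By \cite[Exp.\ X, Thm.\ 5.1]{SGA4}, the étale cohomological dimension of a smooth affine curve over an algebraically closed field is at most $1$ at every prime, so this $H^2$ vanishes and a weak lift $\tilde\rho$ always exists. Its image $K$ satisfies $KH = G$, and $K \cap H$ is a $G$-invariant subgroup of $H$, hence equals $0$ or $H$ by minimality. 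The task is to arrange $K \cap H = H$, so that $K = G$ and the resulting $G$-torsor is connected.

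When $\ell = p$, the set of weak lifts of $\rho$ is a torsor under the infinite-dimensional $\F_p$-vector space $H^1(\pi_1(\aff^1_k), H)$ supplied by Artin-Schreier(-Witt) theory; a twist by a cocycle whose restriction to $\pi_1(Y)$ is a $G/H$-equivariant surjection onto $H$ produces a new $\tilde\rho$ meeting $H$ nontrivially, hence surjective by minimality. When $\ell \ne p$, by contrast, the analogous $H^1$ is small and this twisting route is blocked; instead one must vary the starting cover $Y$. Since $G$ is quasi-$p$ and $|H|$ is prime to $p$, each Sylow $p$-subgroup of $G$ injects isomorphically into $G/H$ and the images generate $G/H$; choosing $Y$ with wild inertia at $\infty$ realizing a Sylow $p$-subgroup of $G/H$---using Artin-Schreier(-Witt) to prescribe the wild ramification---forces the lift of this inertia into $G$ to be an isomorphic Sylow $p$-subgroup, and its normal closure inside the image of $\tilde\rho$ generates all of $G$ by the quasi-$p$ hypothesis. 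The main obstacle is precisely this prime-to-$p$ case, where cohomological dimension supplies only a weak lift and not a surjective one; the quasi-$p$ structure of $G$, together with the richness of wild ramification on $\aff^1_k$, is what makes it possible to engineer a starting cover whose unique lift is in fact surjective.
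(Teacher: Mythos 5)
Your overall architecture matches Serre's proof as the paper sketches it: the d\'evissage by induction on $|H|$ to an elementary abelian minimal normal $H$, the use of cohomological dimension $\le 1$ (projectivity of $\pi_1(\aff^1_k)$) to obtain a weak lift $\tilde\rho$, the observation that $K\cap H$ is $G$-invariant hence trivial or all of $H$, and the $\ell = p$ case via the infinitude of $H^1(\pi_1(\aff^1_k),H)$ compared with the finite inflated part $H^1(G/H,H)$ are all exactly the paper's argument (one small slip: the set of lifts of $\rho$ is a torsor under the cocycles $Z^1(\pi_1(\aff^1_k),H)$, becoming a torsor under $H^1$ only after passing to $H$-conjugacy classes; this is harmless).

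The genuine gap is in your $\ell \ne p$ case, which is precisely the hard (split) case. Your claim that a lift whose image contains a Sylow $p$-subgroup of $G$ must be surjective ``by the quasi-$p$ hypothesis'' is a non sequitur: quasi-$p$-ness says the $G$-conjugates of a Sylow $p$-subgroup generate $G$, but the image $K$ of $\tilde\rho$ only contains $K$-conjugates, and the normal closure of a Sylow $p$-subgroup \emph{inside} $K$ is just $K$ (since $G/H$, a quotient of a quasi-$p$ group, is itself quasi-$p$). Indeed, since here $p\nmid |H|$, \emph{every} complement $K\cong G/H$ to $H$ already contains a full Sylow $p$-subgroup of $G$; concretely, for $G = A_4$ with $p=3$ and $H = (\ints/2)^2$, the lift through the splitting has image $\ints/3$, a Sylow $3$-subgroup of $G$, yet is far from surjective. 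So engineering Sylow wild inertia at $\infty$ (which, moreover, would require inertia-modification results such as \cite[Theorem~2]{Harb93} that go well beyond the bare hypothesis $G/H \in \pi_A(\aff^1_k)$) cannot rule out the lift landing in a complement. Serre's actual device at this point is cohomological, not group-theoretic: replace $\ol{\varphi}$ by its pullback under the $m$th power map $x \mapsto x^m$ on $\aff^1_k$ with $p \nmid m$ and $m>1$; one computes that $\dim_{\F_\ell} H^1(\pi_1(\aff^1_k),H)$ is multiplied by $m$ under this operation, while the inflated subspace $H^1(G/H,H)$ is unchanged, so the inclusion $H^1(G/H,H) \subseteq H^1(\pi_1(\aff^1_k),H)$ becomes strict and any class outside it produces a surjective lift. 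You need this (or some substitute that genuinely excludes complements) to close the argument.
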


\begin{cor}
Abhyankar's conjecture for the affine line is true for solvable groups, in particular, for $p$-groups.
\end{cor}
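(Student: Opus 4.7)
The plan is to prove the corollary by induction on $|G|$, reducing a general solvable quasi-$p$ group to a proper quotient and invoking Serre's Theorem \ref{aTserre}. The forward (``only if'') direction of Conjecture \ref{dAbhConj} for the affine line is already known from the discussion of Grothendieck's result, so the task is to show that every solvable quasi-$p$ group $G$ lies in $\pi_A(\aff^1_k)$.

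The base case $|G| = 1$ is trivial. For the inductive step, let $G$ be a nontrivial solvable quasi-$p$ group and let $H$ be a minimal normal subgroup of $G$. Because $G$ is solvable, $H$ is characteristically simple and solvable, and hence elementary abelian of $\ell$-power order for some prime $\ell$; in particular $H$ itself is solvable. The quotient $G/H$ is solvable (being a quotient of a solvable group), and it is quasi-$p$ because the images of a family of $p$-subgroups generating $G$ are $p$-subgroups of $G/H$ that generate $G/H$. Since $|G/H| < |G|$, the induction hypothesis yields $G/H \in \pi_A(\aff^1_k)$. Applying Theorem \ref{aTserre} to the exact sequence
\[
1 \to H \to G \to G/H \to 1,
\]
with $H$ solvable and $G$ quasi-$p$, one obtains an unramified $G$-Galois cover of $\aff^1_k$, completing the induction.

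The parenthetical ``in particular, for $p$-groups'' is then immediate, since every finite $p$-group is solvable (and automatically quasi-$p$ when nontrivial). The whole argument is soft in the sense that all of the genuine analytic and geometric content is packaged inside Theorem \ref{aTserre}; the main thing to verify is that the two hypotheses of that theorem, namely solvability of the kernel and the quasi-$p$ property of the middle term, propagate correctly at each inductive step, which is exactly what the choice of a minimal normal subgroup in a solvable group guarantees.
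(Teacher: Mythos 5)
Your argument is correct, but it is more elaborate than what the paper intends. The corollary is meant to be an immediate, one-step consequence of Theorem~\ref{aTserre}: since the corollary concerns \emph{solvable} quasi-$p$ groups $G$, you may simply take $H = G$ in the exact sequence $1 \to H \to G \to G/H \to 1$, so that $G/H$ is the trivial group, which trivially lies in $\pi_A(\aff^1_k)$; both hypotheses of the theorem ($H$ solvable, $G$ quasi-$p$) hold at once, and no induction is required. Your induction on $|G|$ via a minimal normal subgroup is valid in every step --- a minimal normal subgroup of a solvable group is indeed elementary abelian, hence solvable, and the quotient of a quasi-$p$ group is quasi-$p$ --- but it is superfluous structure: you are effectively unpacking one layer of the reduction that already lives \emph{inside} the proof of Theorem~\ref{aTserre} (whose sketch begins precisely by reducing to an elementary abelian kernel with irreducible conjugation action). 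The one thing your version buys is that the theorem is only ever invoked with elementary abelian kernels, which makes the dependence on Serre's cohomological input slightly more transparent; the cost is that a reader may wonder why you rebuilt machinery the theorem's hypothesis ($H$ solvable, not merely elementary abelian) was formulated to make unnecessary. Your handling of the parenthetical about $p$-groups is fine as stated.
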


\proof[Sketch of proof of Theorem \ref{aTserre}]  
By induction, one reduces to the case where $H$ is elementary abelian of exponent $\ell$, for a prime $\ell$, with the conjugation action of $G/H$
on $H$ irreducible.  Now, one can show, using \'{e}tale cohomology, that $\pi_1(\aff^1_k)$ has cohomological dimension $1$, and is thus projective.
In particular, a surjection $\ol{\varphi}: \pi_1(\aff^1_k) \to G/H$ lifts to a morphism $\varphi: \pi_1(\aff^1_k) \to G$.  
Now, if (\ref{aEserre}) is not split, then $\varphi$ is surjective, and thus $G \in \pi_A(\aff^1_k)$.  
If (\ref{aEserre}) is split, then $\varphi$ might not be surjective.  However, a group cohomology argument shows that there will exist a choice of surjective
lift $\varphi$, provided that the natural inclusion 
\begin{equation}\label{aEinclusion}
H^1(G/H, H) \subseteq H^1(\pi_1(\aff^1_k), H)
\end{equation} induced by $\ol{\varphi}: \pi_1(\aff^1_k) \to G/H$ is strict, where $G/H$ acts on $H$ via conjugation in 
$G$.

The proof then breaks into two cases, depending on whether or not $\ell = p$.  If $\ell = p$, then one shows that
$H^1(\pi_1(\aff^1_k), H)$ is infinite, and thus the inclusion in (\ref{aEinclusion}) must be strict.  If $\ell \neq p$, then 
the inclusion in (\ref{aEinclusion}) might not be strict as is, but we can make it strict by replacing $\ol{\varphi}$ by some $\ol{\varphi}'$.
Namely, $\ol{\varphi}$ corresponds to an \'{e}tale $G/H$-cover of $\aff^1_k$.  We can pull this cover back by the $m$th power map on
$\aff^1_k$, where $m$ is any natural number prime to $p$.  This gives a new \'{e}tale $G/H$-cover of $\aff^1_k$, corresponding
to some $\ol{\varphi}': \pi_1(\aff^1_k) \to G/H$.  One then shows that $$d' = md,$$ 
where $d$ (resp.\ $d'$) is the dimension of $H^1(\pi_1(\aff^1_k), H)$ as an $\F_{\ell}$-vector space, where the action on $G/H$ is 
via the map $\ol{\varphi}$ (resp.\ $\ol{\varphi}'$).
Thus we get a strict inclusion in (\ref{aEinclusion}) whenever $m > 1$.
\qed

\medskip

Given Theorem \ref{aTserre}, we may assume that $G$ has no nontrivial normal $p$-subgroup.  Raynaud's proof of Abhyankar's conjecture for $\A^1_k$
(\cite{Ray94}) is subdivided into two cases, addressed in Theorems \ref{aTraynaud1} and \ref{aTraynaud2} below.

\begin{theorem}[Raynaud]\label{aTraynaud1}
Fix a $p$-Sylow subgroup $S$ of $G$, and let $G(S) \subseteq G$ be generated by all proper quasi-$p$ subgroups
of $G$ with a $p$-Sylow group contained in $S$. 
 If $G(S) = G$, then $G \in \pi_A(\aff^1_k)$.
\end{theorem}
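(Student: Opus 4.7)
The plan is to proceed by induction on $|G|$, the base case $G = 1$ being trivial. Since $G(S) = G$, one may choose proper quasi-$p$ subgroups $H_1, \dots, H_n \subsetneq G$ whose Sylow $p$-subgroups are contained in $S$ and which together generate $G$. By the inductive hypothesis each $H_i$ is realized as the Galois group of an \'etale cover $f_i : Y_i \to \A^1_k$, necessarily branched only over $\infty$, with inertia there contained in a $p$-Sylow of $H_i$, and hence in $S$.

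The heart of the argument is a formal (equivalently, rigid) patching construction that assembles the $f_i$'s into one $G$-Galois cover of $\P^1_k$. Working over a complete discrete valuation ring $R$ with residue field $k$, one builds a semistable model $\widetilde{X}$ of $\P^1_R$ whose special fiber is a tree of projective lines: a central component $X_0$ carrying the point $\infty$, together with outer components $X_1, \dots, X_n$, each meeting $X_0$ at a single node $x_i$. On each $X_i$ one places a cover induced from $f_i$ (so that on $X_i \setminus \{x_i\}$ the cover is a disjoint union of $[G:H_i]$ copies of $Y_i$), while on $X_0$ one puts a disjoint union of copies of a trivial cover. These pieces are glued over formal neighborhoods of the nodes by tame cyclic covers whose local inertia lies in a prime-to-$p$ subgroup of $G$. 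Grothendieck's existence theorem then algebraizes the formal data into a connected $G$-Galois cover $f : Y \to \P^1_k$.

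By design, $f$ is branched only over $\infty$, where the inertia is contained in $S$, and over finitely many points $Q_1, \dots, Q_m \in \A^1_k$ lying under the nodes, at each of which the inertia is tame. One now applies Abhyankar's Lemma (Lemma~\ref{Lablemma}): pulling back $f$ along a suitable tame cover $\psi : \A^1_k \to \A^1_k$ of degree prime to $p$, branched at the $Q_j$'s with appropriate ramification indices, clears the tame inertia above those points. The resulting $G$-Galois cover is \'etale over $\A^1_k$, which places $G$ in $\pi_A(\A^1_k)$.

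The principal obstacle is the patching step. One must choose the generating subgroups $H_i$ and the tame gluing data at each node coherently enough to guarantee both that the patched cover is connected and that its global Galois group is exactly $G$, rather than a proper subgroup containing the $H_i$'s. These properties rest squarely on the hypothesis $G(S) = G$: this condition lets one exhibit the $H_i$ as Sylow-compatible generators of $G$, so that as one traverses the tree of components the Galois action accumulates to all of $G$, and the local tame inertia contributions at the nodes fit together into a global prime-to-$p$ structure that Abhyankar's Lemma can eliminate.
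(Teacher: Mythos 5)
Your top-level strategy---induction on $|G|$, patching over a complete discrete valuation ring, and Abhyankar's Lemma---is the same as Raynaud's, but two of your concrete steps fail. First, the covers $f_i$ produced by induction do \emph{not} automatically have inertia at $\infty$ contained in a $p$-Sylow subgroup: by Lemma~\ref{1SRH}(1) the inertia is of the form $P_i \rtimes \ints/m_i$ with $m_i$ possibly nontrivial, so your claim that it lies in a Sylow of $H_i$ is unjustified. The actual proof applies Abhyankar's Lemma~\ref{Lablemma} \emph{before} patching, pulling each $f_i$ back along the $m_i$-th power map on $\A^1_k$ to reduce the inertia to the $p$-group $P_i \subseteq S$. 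Second, your gluing data is inconsistent: on $X_i$ you place a cover \'etale away from the node $x_i$, so its germ at $x_i$ is the \emph{wildly} ramified germ of $f_i$ at $\infty$, with inertia $P_i$; this cannot be matched across the node to a disjoint union of trivial covers on $X_0$ by a \emph{tame cyclic} gluing, since patching requires the two sides to restrict to isomorphic covers of the punctured formal neighborhood of the node. In Raynaud's construction the gluing is wild---torsors under subgroups of $S$, which is precisely where the Sylow compatibility $P_i \subseteq S$ enters---and the central piece carries a nontrivial cover wildly ramified at the nodes and at $\infty$ with inertia $S$. Consequently the generic fiber over $K = k((t))$ is branched \emph{only} at $\infty$ (all other ramification is concentrated at the nodes of the special fiber), and one finishes by specializing $t$; no interior branch points ever arise, and the patched cover lives over $\P^1_K$, not directly over $\P^1_k$ as you assert.

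Your endgame would fail even if the patched cover existed as described: tame ramification at $m \geq 2$ finite points $Q_1, \dots, Q_m$ cannot be cleared by Abhyankar's Lemma while keeping a single branch point at infinity. For the pullback to remain a cover of the line branched at one point, the auxiliary tame cover $\psi \colon \P^1_k \to \P^1_k$ must be totally ramified over $\infty$, and if $\psi$ has index divisible by $n_j$ at every point over $Q_j$ (as needed to kill inertia $\ints/n_j$ there), Riemann--Hurwitz gives $2\deg\psi - 2 \geq (\deg\psi - 1) + \sum_j \deg\psi \,(1 - 1/n_j)$, forcing $\sum_j (1 - 1/n_j) < 1$ and hence $m \leq 1$. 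This is why the classical application (the Kummer cover $x \mapsto x^N$, branched at $0$ and $\infty$) removes tame inertia at a \emph{single} point; clearing your $Q_j$ one at a time fares no better, since each pullback multiplies the remaining tame branch points by the degree. Finally, the hypothesis $G(S) = G$ is used more concretely than your closing paragraph suggests: it provides proper quasi-$p$ subgroups generating $G$ whose $p$-Sylows lie in the common $S$, so the wild local data at $\infty$ can be merged inside $S$ (yielding connectedness and inertia exactly $S$ at the unique branch point); it has nothing to do with a ``global prime-to-$p$ structure'' at the nodes.
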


\begin{remark}
Since $G(S)$ only depends on the particular $p$-Sylow subgroup $S$ up to conjugacy, the hypothesis in Theorem \ref{aTraynaud1} depends
only on $G$.
\end{remark}

\begin{remark}
In this case, the assumption that $G$ has no nontrivial normal $p$-subgroup is unnecessary.
\end{remark}

\proof[Sketch of proof of Theorem \ref{aTraynaud1}]
By induction, each proper subgroup $G_i$ of $G$ whose $p$-Sylow subgroup is contained in $S$ occurs as the Galois group of an \'{e}tale cover
$f_i: Y_i \to \aff^1_k$. By Lemma \ref{1SRH} (1), the inertia groups of $f_i$ at $\infty$
are isomorphic to some $P_i \rtimes \ints/m_i$, where
$P_i$ is a $p$-group and $p \nmid m_i$.  One then takes the pullback of $f_i$ via the map $\aff^1_k \to \aff^1_k$ given by raising to the $m_i$th power.
Since $p \nmid m_i$, Abhyankar's Lemma \ref{Lablemma} shows that this pullback is an \'{e}tale $G_i$-cover with inertia groups
$P_i$.  Thus we may assume that the inertia groups of $f_i$ are $p$-groups (which, by assumption, are contained in $S$).

In this case, Raynaud shows, using the technique of rigid patching, that there is an \'{e}tale $G$-cover $f: Y \to 
\aff^1_k$ with inertia groups at $\infty$
isomorphic to $S$.  In order to use rigid patching, Raynaud must work over the field $K := k((t))$, rather than $k$.  
Once the desired $G$-cover is constructed over $K$, an appropriate specialization of $t$ gives a $G$-cover defined over $k$.  For an introduction
to rigid patching, see, e.g., \cite{Ha:pg}, Section~4. \qed

\begin{theorem}[Raynaud]\label{aTraynaud2}
In the situation of Theorem \ref{aTraynaud1}, assume that $G(S) \neq G$, and that $G$ has no nontrivial normal $p$-subgroup. 
Then $G \in \pi_A(\aff^1_k)$.
\end{theorem}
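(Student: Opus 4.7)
The plan is to proceed by strong induction on $|G|$, combining a lift of the problem to characteristic zero with the theory of semi-stable reduction of Galois covers. By induction, every proper quasi-$p$ subgroup $H\subsetneq G$ already arises as the Galois group of an \'etale cover of $\aff^1_k$; moreover, by the method used in the proof of Theorem~\ref{aTraynaud1}---in particular an application of Abhyankar's Lemma~\ref{Lablemma}---one may arrange for the inertia over $\infty$ to be a $p$-group contained in any prescribed $p$-Sylow of $H$. The hypothesis $G(S)\neq G$ for every $p$-Sylow $S$ rules out the direct patching strategy of Theorem~\ref{aTraynaud1}, so instead we must build the cover as one ``component'' of the reduction of an auxiliary cover lifted to mixed characteristic.

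Concretely, fix a complete discrete valuation ring $R$ of mixed characteristic $(0,p)$ with residue field $k$ and fraction field $K$. Choose a tamely ramified $G$-Galois cover $Y_K\to\PP^1_K$---such a cover exists by Riemann's existence theorem, since the prime-to-$p$ fundamental group of a punctured $\PP^1$ in characteristic zero is free profinite on the appropriate number of generators---with branch locus carefully positioned so that some branch points specialize together to a single point $\xi$ of $\PP^1_k$ while the remaining ones specialize to $\infty$. After a finite extension of $R$, pass to a stable model, obtaining a semi-stable $R$-curve $\mathcal{X}$ carrying a $G$-action $\mathcal{Y}\to\mathcal{X}$ whose special fiber is a tree of projective lines. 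The cover restricted to each component of $\mathcal{X}_k$ is governed by a decomposition group in $G$ together with inertia data at the ordinary double points, which can be analyzed using the higher ramification filtration of Section~\ref{Shigherram}; in particular, the wild ramification introduced at the new nodes produces $p$-subgroups that must jointly generate $G$.

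To conclude, one applies the induction hypothesis to every component other than the distinguished one that meets the specialization of $\infty$: the hypothesis $G(S)\neq G$ together with the absence of nontrivial normal $p$-subgroups forces each such decomposition group to be a \emph{proper} quasi-$p$ subgroup of $G$, so induction supplies an \'etale cover of $\aff^1_k$ realizing it. These covers are then glued, by Raynaud's formal/rigid patching, along the wild inertia at the double points of $\mathcal{X}_k$ to produce a $G$-Galois cover on the distinguished component that is branched only over $\infty$---which, restricted to $\aff^1_k$, is exactly the desired cover. The principal obstacle, and the technical heart of Raynaud's argument, is to choose the initial branch configuration in characteristic zero so that three requirements hold simultaneously: (i) the distinguished component of the stable reduction supports a $G$-cover, not a cover by a proper subgroup; (ii) the inertia at the nodes is of $p$-power order, so that Abhyankar's Lemma~\ref{Lablemma} strips away any tame parts; and (iii) the covers arising on the other components really do have proper subgroup Galois groups, so that the inductive hypothesis genuinely applies. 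Engineering this simultaneous compatibility, using $G(S)\neq G$ and the absence of normal $p$-subgroups in $G$, is the delicate step that justifies the whole construction.
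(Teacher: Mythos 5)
Your proposal correctly identifies the broad shape of the argument (lift to characteristic zero, pass to a stable model, extract the cover from the special fiber of the quotient tree), but two of its load-bearing steps are wrong, and they are exactly the two points where the paper's proof does something different. First, the characteristic-zero input: the paper's proof does \emph{not} start from a cover with coalescing branch points, but from a $G$-cover $Y_{\ol{K}} \to \proj^1_{\ol{K}}$ branched at $r+1$ points \emph{with $p$-power branching indices}, the branch points being chosen pairwise non-congruent modulo the maximal ideal, i.e.\ specializing to \emph{distinct} points of the special fiber. The $p$-power indices are what force bad reduction and create wild inertia on the tails of the stable model. Your cover, by contrast, is tamely ramified with branch points made to collide: the degeneration of such a cover is a tame admissible cover, whose inertia at the nodes is cyclic of prime-to-$p$ order, so no wild ramification is ever created. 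This contradicts your own intermediate claim that ``the wild ramification introduced at the new nodes produces $p$-subgroups,'' and it is fatal: since $G$ is quasi-$p$, a reduction carrying only tame inertia can never yield a nontrivial connected $G$-cover of $\proj^1_k$ branched at one point. The engine of the proof is absent from your construction.

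Second, you have inverted the role of the hypothesis $G(S) \neq G$. In Raynaud's argument there is no patching step and no appeal to the induction hypothesis on the non-distinguished components: the special fiber of $f_R$ already \emph{is} a cover of the tree $P_k$ of projective lines, and the point is to show that some tail component $D_k$ has \emph{full} decomposition group $G$, so that $f_R \times_{P_R} D_k$ is a connected $G$-cover of $\proj^1_k$ branched only at the node. The hypothesis $G(S) \neq G$ is what guarantees this: Raynaud's analysis shows the decomposition groups over the tails are quasi-$p$ subgroups whose Sylow $p$-subgroups lie (up to conjugacy) in $S$, so if all of them were proper they would generate a subgroup of $G(S) \subsetneq G$, contradicting the connectedness of the special fiber. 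Your plan --- force every other component to carry a proper subgroup, realize those by induction, and glue them by rigid patching along wild node inertia --- is precisely the strategy of Theorem~\ref{aTraynaud1}, and that strategy produces a \emph{connected} $G$-cover only when the glued subgroups generate $G$, which is exactly what $G(S) \neq G$ obstructs. The remark in the paper following Theorem~\ref{aTraynaud2} makes this point explicitly: the two theorems require \emph{opposite} hypotheses because they use genuinely different mechanisms, and your proposal collapses the second mechanism back into the first, where it cannot work.
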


\proof[Sketch of proof]
The field $K_0 := \Frac(W(k))$ is a complete discretely valued field in characteristic zero with residue field $k$, where $W(k)$ is the ring of Witt vectors over $k$ 
(see \cite{Serr}, Chapter~II, Section~6).  Let $\ol{K}$ be its algebraic closure.
Since $G$ is quasi-$p$, it can be generated by, say, $r$ elements of order a power of $p$.  
Since $\ol{K}$, like $\complex$, is an algebraically closed field of characteristic zero, the Lefschetz principle shows that there is a 
branched $G$-cover $f_{\ol{K}}: Y_{\ol{K}} \to \proj^1_{\ol{K}}$ which is branched above $r+1$ points, with $p$-power branching indices 
(just as if we were working over $\complex$).  For non-trivial $G$, by taking $r$ large enough we can force the genus of $Y$ to be at least $2$. 
Furthermore, we may assume that the branch points lie in $\aff^1_{\ol{K}}$, and that they are 
pairwise non-congruent modulo the maximal ideal of the ring of integers of 
$\ol{K}$.  This cover descends to a cover $f_K : Y_K \to \proj^1_K$ 
over some finite extension $K/K_0$.

After possibly replacing $K$ with a further finite extension, one can show that $f_K$ has a \emph{stable model} over the valuation ring $R$ of $K$.
That is, there exists a relative $R$-curve $Y_R$ that has stable special fiber (i.e., only nodes for singularities) 
and for which there exists an $R$-linear $G$-action on $Y_R$ such that the quotient morphism $f_R: Y_R \to P_R := Y_R/G$ has generic fiber $f_K$.
The special fiber $P_k$ of $P_R$ will have a tree of $\proj^1_k$'s for irreducible components (and this tree will consist of more than one component).
In \cite[Section 6]{Ray94}, a detailed analysis of the properties of stable models of 
Galois covers of curves shows that, under the assumptions in Theorem \ref{aTraynaud2}, there is a ``tail" irreducible component, say, 
$D_k = \proj^1_k$, for which $f_R \times_{P_R} D_k$ is a $G$-cover of $\proj^1_k$ branched at one point.  This gives us the \'{e}tale $G$-cover of
$D_k$ we seek. \qed

\begin{remark}
It is interesting and perhaps unexpected that the hypothesis $G(S) \neq G$ is \emph{absolutely necessary} for the proof of Theorem \ref{aTraynaud2},
whereas the opposite is necessary for the proof of Theorem \ref{aTraynaud1}!
\end{remark}

\subsubsection{Patching and the general case}
Harbater's proof of the general case of Abhyankar's Conjecture (\cite{Har94}, simplified in \cite{Har03}) is divided into two steps.   
The first step is to prove the conjecture for the case $\aff^1_k - \{0\}$, and the second is to use that to obtain the general case.
\begin{theorem}[Harbater]\label{aTharbater}
If $G$ is a finite group such that $G/p(G)$ is cyclic of prime-to-$p$ order, then $G \in \pi_A(\aff^1_k - \{0\})$.
\end{theorem}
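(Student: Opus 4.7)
The plan is to realize $G$ as the Galois group of a $G$-Galois cover of $\PP^1_k$ branched only above $\{0,\infty\}$, which upon restriction yields an étale $G$-cover of $\aff^1_k - \{0\}$. Write $H = p(G)$, so by hypothesis $G/H \cong \ints/m$ with $\gcd(m,p) = 1$; the case $H = \{1\}$ is immediate from the $m$-th power map on $\G_{m,k}$, so we focus on nontrivial $H$.

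Since $H$ is a quasi-$p$ group, Theorems \ref{aTraynaud1} and \ref{aTraynaud2} of Raynaud produce an étale $H$-Galois cover $\phi \colon Y \to \aff^1_k$. By Lemma \ref{1SRH}(\ref{inertia structure part}), the inertia of $\phi$ above $\infty$ has the form $P \rtimes \ints/n$ with $P$ a $p$-group and $\gcd(n,p) = 1$. Pulling back via the $n$-th power self-map of $\aff^1_k$ and invoking Abhyankar's Lemma (Lemma \ref{Lablemma}), we may assume the inertia of $\phi$ above $\infty$ is the $p$-group $P$ itself.

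Next we apply formal patching. View $\PP^1_k$ as the special fiber of $\PP^1_R$ with $R = k[[t]]$, and decompose this special fiber into three overlapping formal pieces: the formal disc $D_\infty$ around $\infty$, the formal disc $D_0$ around $0$, and a ``core'' $U$ covering the complement of smaller closed discs. Over $D_\infty$, install the induced cover $\Ind_H^G(\phi|_{D_\infty})$, a disjoint union of $m$ copies of the local cover on which $H$ preserves each copy and a chosen lift of a generator of $G/H$ cyclically permutes them. Over $D_0$, install $\Ind_{\ints/m}^G$ of a totally ramified tame $\ints/m$-cover of $D_0$, a disjoint union of $|H|$ copies of a tamely ramified disc. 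Over $U$, install the trivial $G$-torsor. By the formal patching theorem (see \cite{Har03}), any compatible choice of gluing isomorphisms over the two overlapping formal annuli produces a normal $G$-Galois cover of $\PP^1_R$ ramified only over the sections $\{0\}$ and $\{\infty\}$; specializing $t \to 0$ yields the desired cover of $\PP^1_k$.

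The principal obstacle is ensuring that the resulting cover is \emph{connected}: each of the three local pieces is highly disconnected, and connectivity depends sensitively on the gluing data. Concretely, the subgroup of $G$ generated by $H$ (preserving components over $D_\infty$), the chosen lift of a generator of $G/H$ (acting on $D_0$), and the transition automorphisms over $U$ must be all of $G$. Since $H$ together with any lift of a generator of $G/H$ already generates $G$, appropriate patching data exist, and connectivity then reduces to a finite combinatorial check of how the transition elements permute the cosets of $H$ in $G$. This produces the desired étale $G$-Galois cover of $\aff^1_k - \{0\}$.
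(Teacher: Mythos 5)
You have the right raw materials (Raynaud's theorem for $p(G)$, Abhyankar's Lemma to kill tame inertia, patching over $k[[t]]$, generation of $G$ by $H$ and a lift of a generator of $G/H$), but the step you wave through --- ``any compatible choice of gluing isomorphisms over the two overlapping formal annuli produces a normal $G$-Galois cover'' --- is exactly where the proof lives, and as you have set things up it fails: no gluing isomorphisms exist at all. Formal patching requires the covers installed on overlapping pieces to be \emph{isomorphic} on the overlaps. On the annulus between $D_\infty$ and the core $U$, the restriction of $\Ind_H^G(\phi|_{D_\infty})$ is $\Ind_P^G$ of the boundary of the local $P$-extension at $\infty$; since that local extension is totally ramified at the center of the disc, its restriction to the punctured disc is a \emph{connected} $P$-cover, whereas the trivial $G$-torsor on $U$ restricts to a totally split cover. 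The same failure occurs on the other annulus: $y^m = x$ restricted to the boundary of $D_0$ is again connected. Generation of $G$ by $H$ and a lift is the correct condition for \emph{connectivity} of an already-patched cover, but it does not produce the gluing data; existence of gluing data is a geometric matching condition on boundary restrictions, not a ``finite combinatorial check.'' (Two smaller gaps: a lift of a generator of $G/H$ need not have order $m$, and your ``$\Ind_{\ints/m}^G$, a disjoint union of $|H|$ copies'' over $D_0$ presupposes a complement to $H$; the paper first performs a group-theoretic reduction to $G \cong p(G) \rtimes \ol{G}$ precisely to license this.)

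This is why the paper's proof contains an ingredient your proposal is missing: a strengthening of Serre's Theorem \ref{aTserre} solving embedding problems over $\aff^1_K - \{0\}$, $K = k((t))$, \emph{with prescribed local behavior at finitely many points}. With it one builds a genuinely nontrivial global piece in the middle --- a $P \rtimes \ol{G}$-cover $g$ of $\aff^1_K - \{0\}$, $P$ a Sylow $p$-subgroup of $G$, tame at $0$ --- whose restriction above a chosen $K$-point is forced to be $\coprod_{i=1}^{|\ol{G}|} \Spec L \to \Spec K$, where $L/K$ is the $P$-extension at a ramification point of the $p(G)$-cover $h$ of $\aff^1_k$. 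For the boundaries to match one must also enlarge the inertia of $h$ at $\infty$ to the full Sylow $P$, which requires deformation results (as in Theorem \ref{Tsylow}); Abhyankar's Lemma can only shrink the tame part, not grow the wild part. Only after this prescribed agreement on the overlap can one patch $g$ with a thickening of $h$, with connectivity following because $p(G)$ and $P \rtimes \ol{G}$ generate $G$. Without this matching mechanism (or a substitute for it), your patching step cannot be carried out.
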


\proof[Sketch of proof] 
We construct an \'{e}tale $G$-cover over $\aff^1_K - \{0\}$, where $K = k((t))$.  An appropriate specialization of $t$ then yields the desired 
$G$-cover of $\aff^1_k - \{0\}$.

First, a group-theoretic simplification allows us to assume that $G \cong p(G) \rtimes \ol{G}$, where $\ol{G}$ is cyclic of prime-to-$p$ order.  Then, one
proves a strengthening of Theorem \ref{aTserre}, showing that, if $H$ is a $p$-group, then not only does Theorem \ref{aTserre} 
apply to $\aff^1_K - \{0\}$ as well, but also that one can specify the local behavior of this cover over the complete local ring at a point (in fact, finitely 
many points).  Since a $\ol{G}$-cover of $\aff^1_K - \{0\}$ is easy to construct (use the equation $y^{|\ol{G}|} = x$), the above strengthening yields
a $P \rtimes \ol{G}$-cover $g: V \to \aff^1_K - \{0\}$, where $P$ is a $p$-Sylow group of $G$; and this can be chosen with specified local behavior.  

On the other hand, the $\aff^1_k$-case of Abhyankar's conjecture can be used as a black box to build an \'{e}tale $p(G)$-cover $h: W \to \aff^1_k$.
In fact, we can force the inertia groups at $\infty$ to be isomorphic to $P$, using Abhyankar's Lemma and deformations of covers.  Taking the smooth projective completion of $h$, and 
looking at the (fraction fields of the) complete local ring at a ramification point with inertia group $P$, one obtains a $P$-Galois extension $L/K$.  By choosing $g$ above so that its restriction
above some $K$-point of $\aff^1_K - \{0\}$ is the map
$$\coprod_{i=1}^{|\ol{G}|} \Spec L \to \Spec K,$$ we are able to use ``formal patching" (see \cite{Ha:pg}) to glue $g$ and a thickening of $h$ together, 
giving an \'{e}tale $G$-cover of $\aff^1_K - \{0\}$.  The cover is connected because $G$ is generated by $p(G)$ and $P \rtimes \ol{G}$. \qed

\begin{remark}
In fact, the cover constructed in the proof above will be \emph{tamely} ramified above $0$.
\end{remark}

\proof[Sketch of the proof of the general case of Abhyankar's conjecture] 
Consider the general case, where $U$ is a curve of type $(g, r)$, with $r > 0$, and $G$ is a group such that $G/p(G)$ can be generated by $2g + r - 1$
elements.   As in Theorem
\ref{aTharbater}, one can reduce to the case where $G = p(G) \rtimes \ol{G}$, where $p \nmid |\ol{G}|$ and $\ol{G}$ is generated by
$2g + r -1$ elements.  By \cite[XIII, Corollary 2.12]{SGA1}, there is a 
$\ol{G}$-cover $f: Y \to U$.  Let $C$ be an inertia group of $f$ at some ramification point $\xi$; this is a prime-to-$p$ cyclic group.   
Let $H$ be the subgroup of $G$ generated by $p(G)$ and $C$; using group theory we may reduce to the case that this is a semi-direct product.  By Theorem~\ref{aTharbater} and the remark afterwards, there is an $H$-Galois cover $h: W \to \aff^1_K - \{0\}$, tamely pramified at the point $0$, with inertia group $C$ there.  One then patches the covers $f$ and $h$.  This is done so that the tame ramification of $h$ above $0$  ``cancels out" the
tame ramification of $f$ at $\xi$, thus leaving a total of $r$ branch points of the (projective completion of the) final cover.  
Since $H$ and $\ol{G}$ generate $G$, patching these together yields a $G$-cover of type $(g, r)$ curve over
$K$.  Again, an appropriate specialization yields the desired cover over $k$, showing that $G \in \pi_A(U)$. \qed

\begin{remark}
As in the previous remark, the cover constructed in the proof above will be \emph{tamely} ramified above all but one point of $X$, where $X$ is the smooth projective 
completion of $U$.
\end{remark}

A related proof of the general case of Conjecture~\ref{dAbhConj}, also relying on the case of the affine line, can be found in \cite{P95}; see the corollary to Theorem~B there.

\section{Abhyankar's Inertia Conjecture} \label{SConj2}

Let $k$ be an algebraically closed field of characteristic $p > 0$.
Earlier, we discussed the fact that the prime-to-$p$ fundamental group of the affine line ${\mathbb A}^1_k$ is trivial, which implies that
only quasi-$p$ groups can occur as Galois groups of covers $\phi:Y \rightarrow \PP^1$
branched only at $\infty$.  As we have seen, Abhyankar's Conjecture \ref{dAbhConj} implies that all quasi-$p$ groups do, in fact, occur.

\subsection{Abhyankar's Inertia Conjecture} \label{SSConj2}

Now, consider the fact that the tame fundamental group of the affine line ${\mathbb A}^1_k$ is trivial.
This places restrictions on the subgroups of $G$ which can occur as inertia groups of a $G$-Galois cover $\phi:Y \to \PP^1$ branched only at infinity.
The inertia group $G_0$ at a ramified point of $\phi$
is a semi-direct product of the form $G_1 \rtimes \Z/m$ where $G_1$ is a $p$-group and $p \nmid m$ (Lemma \ref{1SRH} (1)).  
The subgroup $J \subset G$ generated by the conjugates of $G_1$ is normal.
Since the $G/J$-Galois quotient cover is a tame Galois cover of $\PP^1$ branched only at $\infty$, it must be trivial; thus $J=G$.
Based on this, Abhyankar stated the currently unproven Inertia Conjecture.

\begin{conj}[Abhyankar's Inertia Conjecture, \cite{Abh01}, Section 16]\label{Iconjecture}
Let $G$ be a finite quasi-$p$ group.  Let $G_0$ be a subgroup of $G$
which is an extension of a cyclic group of order prime-to-$p$ by a $p$-group $G_1$.  
Then $G_0$ occurs as the inertia group of a ramified point of a $G$-Galois cover $\phi:Y \to \PP^1$
branched only at $\infty$ if and only if the conjugates of $G_1$ generate $G$.
\end{conj}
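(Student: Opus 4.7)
The plan is to establish the ``if'' direction of Conjecture~\ref{Iconjecture}, since the ``only if'' is immediate from Grothendieck's theorem that the tame fundamental group of $\A^1_k$ is trivial: for any cover $\phi\colon Y\to\PP^1_k$ as in the statement, all wild inertia groups above $\infty$ are $G$-conjugates of a single wild part $G_1$, and the normal subgroup $J \triangleleft G$ they generate must equal $G$, since otherwise $G/J$ would give a nontrivial tame connected Galois cover of $\A^1_k$.

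For the ``if'' direction, assume $G_0 = G_1 \rtimes C$ with $C$ cyclic of order $m$ coprime to $p$, and that the $G$-conjugates of $G_1$ generate $G$. First I would reduce to controlling only the wild part: using Abhyankar's Lemma~\ref{Lablemma}, it suffices to build a $G$-Galois cover of $\PP^1_k$ branched only at $\infty$ whose wild inertia above $\infty$ is precisely (a conjugate of) $G_1$, since the prescribed tame cyclic group $C$ can then be reinstalled by pulling back along a degree-$m$ tame cyclic cover of $\PP^1_k$ and then colliding the auxiliary branch point with $\infty$.

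Next I would attempt to produce such a cover by combining Harbater's formal patching with Raynaud's semi-stable reduction analysis. For each proper subgroup $H < G$ generated by conjugates of $G_1$ contained in $H$, one would inductively produce an $H$-Galois cover of $\PP^1_k$ branched only at $\infty$ whose wild inertia is the chosen $G_1$; these local models would then be patched over the formal completion of $\PP^1_k$ at $\infty$, or, in Raynaud's framework, realized as restrictions to tail components of the stable reduction of a characteristic-zero $G$-cover of $\PP^1$. The hypothesis that conjugates of $G_1$ generate $G$ is exactly what would guarantee that the patched global cover is connected with Galois group $G$.

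The main obstacle will be prescribing the wild inertia group itself, rather than merely producing \emph{some} cover up to enlargement of inertia. In the proofs of Conjecture~\ref{dAbhConj}, the wild inertia above $\infty$ is produced as a byproduct of the construction: a $p$-Sylow of $G$ in Raynaud's Theorem~\ref{aTraynaud1}, and an inertia subgroup of a chosen tail component determined by a characteristic-zero lift in Theorem~\ref{aTraynaud2}. There is no mechanism in the existing patching/deformation toolkit forcing a designated non-Sylow $p$-subgroup $G_1$ to appear as wild inertia. A successful attack would likely require both a strengthening of Raynaud's analysis of tail components of stable models, to identify which $p$-subgroups of $G$ are realizable there as wild inertia, together with a local deformation-theoretic argument showing that a $G_0$-Galois extension of $k((t))$ with prescribed higher ramification filtration arises as the completion at a ramified point of such a tail. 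Even for specific families such as $G = \mathrm{PSL}_2(p)$ or alternating groups, where the Abhyankar covers of Theorem~\ref{aTabex} are known explicitly, controlling the wild inertia has only been achieved in scattered cases, so a general proof would need a new organizing principle that is uniform in $G$ and $G_0$.
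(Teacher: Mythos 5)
First, a point of framing: the statement you set out to prove is Conjecture \ref{Iconjecture}, which the paper records as \emph{open}; the only portion for which the paper supplies an argument is the ``only if'' direction, and your argument for it is exactly the paper's --- the subgroup $J$ generated by the $G$-conjugates of $G_1$ is normal, the $G/J$-Galois quotient cover is a connected tame cover of $\PP^1$ branched only at $\infty$, hence trivial since the tame fundamental group of $\A^1_k$ is trivial, so $J = G$. Your honest assessment that the ``if'' direction lies beyond the existing patching and semi-stable reduction toolkit, and that even for ${\rm PSL}_2(p)$ and alternating groups only scattered cases are known, also matches the paper's discussion of Theorems \ref{TinerRRR}, \ref{AndrewPSL}, and \ref{TinerMP}.

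However, your proposed reduction of the ``if'' direction to the wild part alone rests on a genuine error: Abhyankar's Lemma \ref{Lablemma} runs in the opposite direction from the way you use it. Pulling back a cover with inertia $P \rtimes \Z/m$ along a tame $\Z/n$-cover branched at $0$ and $\infty$ yields inertia $P \rtimes \Z/(m/\gcd(m,n))$ --- it can only \emph{shrink} the tame cyclic part, never ``reinstall'' it. Similarly, the known inertia-modification results (\cite[Theorem 2]{Harb93}, \cite[Theorem 3.6]{Har03}, and Theorem \ref{Tsylow}) enlarge inertia only by $p$-power index, i.e., they manipulate the wild part. There is no known mechanism for passing from wild inertia exactly $G_1$ to inertia $G_1 \rtimes C$ with a prescribed nontrivial tame quotient; this is precisely why the paper reformulates the conjecture as two logically independent claims (Conjecture \ref{Iconjecture2}), with the ``tame part'' a separate open problem, and why the literature values constructions realizing \emph{large} tame inertia (as in Theorem \ref{AndrewPSL} and \cite{BP}), from which smaller tame parts then follow by Abhyankar's Lemma. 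So even granting your (already conjectural) patching/stable-reduction construction of a cover with wild inertia $G_1$, your plan would at best address the wild part of the conjecture, not the full statement about $G_0 = G_1 \rtimes C$. Moreover, Theorem \ref{BPyesno} shows the realizable ramification data depend subtly on $G$ and not just on the local structure of $G_0$, so any ``uniform in $G$ and $G_0$'' organizing principle of the kind you envision would also have to account for such non-local constraints.
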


There are not many classes of groups for which Abhyankar's Inertia Conjecture has been verified.  
Abhyankar's Inertia Conjecture is trivially true when $G$ is a $p$-group because a $p$-group cover of the affine line must be totally ramified over infinity.
In this section, we first reformulate the inertia conjecture into two distinct parts and discuss evidence
for it.  Then we discuss potential constraints on the inertia group, including a new result relating the structure of the inertia group 
with a congruence condition on the genus of a subquotient. 
 
\subsection{Reformulation of Abhyankar's Inertia Conjecture}

In essence, Abhyankar's Inertia conjecture consists of two separate claims: one about the possibilities for the Sylow $p$-subgroup of the inertia group and one about possibilities for the tame part of the inertia group.

\begin{conj}[Inertia Conjecture, \cite{Abh01}, Section 16]\label{Iconjecture2}
Let $G$ be a finite quasi-$p$ group.
\begin{enumerate}
\item{Wild part:} 
A $p$-group $G_1 \subset G$ occurs as an inertia group for a $G$-Galois cover $\phi:Y \to \PP^1$
branched only at $\infty$ if and only if the conjugates of $G_1$ generate $G$.
\item{Tame part:} 
Suppose a $p$-group $G_1 \subset G$ occurs as an inertia group for a $G$-Galois cover $\phi:Y \to \PP^1$ branched only at $\infty$.
Suppose $\beta \in G$ has prime-to-$p$ order and is contained in the normalizer of $G_1$ in $G$.
Then the semi-direct product $G_0=G_1 \rtimes \langle \beta \rangle$
occurs as an inertia group for a $G$-Galois cover $\phi:Y \to \PP^1$ branched only at $\infty$.
\end{enumerate}
\end{conj}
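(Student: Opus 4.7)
The plan is to separate the conjecture into its constituent parts, handling first the ``only if'' direction of the wild part, then proposing a stable-reduction strategy for the ``if'' direction, and finally a patching strategy for the tame part. The main obstacles lie in obtaining fine control of the specific inertia subgroup that arises under each technique.

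For the ``only if'' direction of (1), let $J$ be the normal closure of $G_1$ in $G$ (so $J$ is the subgroup generated by the conjugates of $G_1$). The quotient cover $Y/J \to \PP^1_k$ is branched only at $\infty$, and its inertia there is a quotient of $G_0/G_1$, which has order prime to $p$. Hence $Y/J \to \PP^1_k$ is tame, and triviality of $\pi_1^t(\A^1_k)$ forces $J=G$.

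For the ``if'' direction of (1), I would attempt to refine Raynaud's stable-reduction argument (Theorem \ref{aTraynaud2}) so as to control which $p$-subgroup appears as the tail inertia. The plan is to lift to characteristic zero, producing a branched $G$-cover $f_{\overline{K}}: Y_{\overline{K}} \to \PP^1_{\overline{K}}$ whose inertia generators are chosen so that, after base change to some finite extension $K/K_0$ and passage to the stable model $f_R: Y_R \to P_R$, the inertia attached to a chosen tail component $D_k \cong \PP^1_k$ is forced by Raynaud's local analysis to be a conjugate of the prescribed $G_1$. Restricting $f_R$ to $D_k$ would then give the desired $G$-Galois cover of $\PP^1_k$ branched only at $\infty$ with wild inertia $G_1$. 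In the subcase $G(S)=G$, one would instead refine the rigid patching of Theorem \ref{aTraynaud1} by specifying the inertia of each building-block cover (using Abhyankar's Lemma \ref{Lablemma} to trim tame parts), then assembling so that the resulting cover has inertia exactly $G_1$ rather than the full Sylow $S$.

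For the tame part (2), I would start from a $G$-cover $\phi: Y \to \PP^1_k$ branched only at $\infty$ with inertia $G_1$, provided by (1), and enlarge its inertia using formal patching in the style of Theorem \ref{aTharbater}. Working over $K=k((t))$, construct a local $G_0$-Galois extension of $k((1/x))$ extending the local $G_1$-extension of $\phi$ at $\infty$ by a tame cyclic $\langle \beta\rangle$-cover; this is possible because $\beta$ normalizes $G_1$, so the action of $\beta$ on the wild tower makes sense. Then patch this local datum with the restriction of $\phi$ to the complement of a formal disc around $\infty$, and specialize $t$. The global monodromy remains $G$ because the conjugates of $G_1$ already generate $G$, while the inertia at $\infty$ becomes $G_0 = G_1\rtimes\langle\beta\rangle$ by construction.

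The hard part, and the genuine content of the conjecture, is the ``if'' direction of (1): Raynaud's methods reliably produce a cover whose $p$-inertia is \emph{some} $p$-subgroup whose normal closure is $G$, but pinning down the conjugacy class demands a much finer dictionary between branch cycles in characteristic zero and wild inertia in the reduction. A secondary obstacle in (2) is ensuring that the tame twist by $\beta$ can be performed \emph{compatibly with the residue action}; the normalizer hypothesis is necessary but it is not obvious that it is sufficient for the local patching datum to exist, and some further group-theoretic input (for instance, the vanishing of a cohomological obstruction in $H^2(\langle\beta\rangle, Z(G_1))$) may be required.
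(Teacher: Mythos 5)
The statement you are trying to prove is a \emph{conjecture} (Conjecture \ref{Iconjecture2}), and the paper contains no proof of it: it explicitly states that Abhyankar's Inertia Conjecture remains open, with only scattered verifications (Theorems \ref{TinerRRR}, \ref{AndrewPSL}, \ref{TinerMP}) for particular groups. The only portion that is actually a theorem is the necessity (``only if'') direction, and there your argument is correct and coincides with the paper's: the subgroup $J$ generated by the conjugates of $G_1$ is normal, the quotient $G/J$-cover is a tame cover of $\PP^1$ branched only at $\infty$, and triviality of the tame fundamental group of $\A^1_k$ forces $J=G$. Everything else in your proposal is a research program, not a proof, and you partly acknowledge this; but the gaps are more severe than your closing paragraph suggests.

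Concretely: for the ``if'' direction of (1), the stable-reduction machinery of Theorem \ref{aTraynaud2} gives essentially no control over \emph{which} $p$-subgroup appears as tail inertia --- the analysis in \cite{Ray94} only guarantees that some tail carries a connected $G$-cover of $\PP^1_k$ branched at one point, and even this requires the group-theoretic hypotheses ($G$ $p$-pure, no nontrivial normal $p$-subgroup); there is no known ``dictionary between branch cycles and wild inertia'' of the kind you posit, and producing it is precisely the open problem. For part (2), your patching plan runs in the wrong direction: the known modification results (\cite{Harb93}, \cite{Har03}, summarized before Theorem \ref{Tsylow}) let one \emph{enlarge the wild part} of an inertia group (index $[I':I]$ a power of $p$) or \emph{trim the tame part} via Abhyankar's Lemma \ref{Lablemma}, but no patching technique is known that enlarges tame inertia while keeping a single branch point --- formal patching in the style of Theorem \ref{aTharbater} inevitably introduces an auxiliary (tame) branch point, and one can at best cancel existing tame ramification, not graft new tame ramification onto the wild point. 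Moreover, your suggestion that the only residual obstacle is a local cohomological obstruction in $H^2(\langle\beta\rangle, Z(G_1))$ is refuted in spirit by Theorem \ref{BPyesno}: the realizable ramification data at $\infty$ depend on the global group $G$ in a non-local way ($A_p$ versus ${\rm PSL}_2(p)$ admit the same inertia groups but different conductors), so no construction that patches a purely local datum at $\infty$ against the normalizer condition alone can settle the tame part.
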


\subsection{Changing the inertia group}

Abhyankar's Lemma \ref{Lablemma} allows one to decrease the order of the tame part of an inertia group by pulling back by a tame cover.
Because of Abhyankar's Lemma, it is valuable to search for inertia groups with large tame order.  Here is a restatement of the lemma for Galois covers of the affine line.

\begin{lemma}[Abhyankar's Lemma]
Suppose $\phi:Y \to \PP^1_k$ is a $G$-Galois cover branched only over $\infty$ with inertia group $I$ of the form $P \rtimes \Z/m$.
Let $\psi: \PP^1_k \to \PP^1_k$ be a $\Z/n$-Galois cover branched at $0$ and $\infty$.
Then the pullback $\psi^* \phi$ of $\phi$ by $\psi$
is a $G$-Galois cover branched only over $\infty$ with inertia group of the form $P \rtimes \Z/(m/{\rm gcd}(m,n))$.
\end{lemma}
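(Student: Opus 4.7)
The plan is to deduce this from the general Abhyankar's Lemma (Lemma~\ref{Lablemma}) applied with $X = X' = \PP^1_k$ and $x = \infty$. Two preliminary points must be verified before that lemma can be invoked: (a) that $\phi$ and $\psi$ are disjoint, and (b) that the only branch point of $\psi^*\phi$ on the source $\PP^1_k$ is $\infty$.

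For (a), every nontrivial subcover of $\psi$ is a $\Z/d$-cover of $\PP^1_k$ with $1 < d \mid n$, and since $\psi$ is totally ramified over $0$, so is every such subcover. On the other hand, every subcover of $\phi$ is branched only over $\infty$ and is therefore unramified over $0$. Hence $\phi$ and $\psi$ share no nontrivial subcover, so they are disjoint. It then follows that the function field of $\psi^*\phi$ is the compositum of the function fields of $Y$ and of the source $\PP^1_k$, and this compositum is $G$-Galois over the function field of the source.

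For (b), on $\PP^1_k \setminus \{0,\infty\}$ both $\psi$ and the restriction of $\phi$ to the image are \'etale, so the pullback is \'etale there. Above the source point $0$, the map $\psi$ is totally ramified of index $n$ but $\phi$ is \'etale above the target $0$; passing to completions, each of the $|G|$ points $y$ of $Y$ over the target $0$ satisfies $\OO_{Y,y}^{\wedge} \cong \OO_{\PP^1,0}^{\wedge}$, and hence the local tensor product with the source $\OO_{\PP^1,0}^{\wedge}$ collapses to a single copy of $\OO_{\PP^1,0}^{\wedge}$, so the pullback is \'etale above the source point $0$ as well. The only remaining source point is $\infty$, which lies above the target $\infty$; here Lemma~\ref{Lablemma} supplies the inertia group $P \rtimes \Z/(m/\gcd(m,n))$.

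The substantive content is the inertia calculation inside Lemma~\ref{Lablemma}, which is a purely local statement about complete discretely valued rings. Setting $A = \OO_{\PP^1,\infty}^{\wedge}$, $B = \OO_{Y,y}^{\wedge}$ (an $I$-extension of $A$), and $A'$ the tame $\Z/n$-extension of $A$ coming from $\psi$, one normalizes $B \otimes_A A'$. The wild subgroup $P \leq I$ is unchanged because $\gcd(n, |P|) = 1$; inside the tame quotient, the pulled-back $\Z/n$ meets the original $\Z/m$ in a cyclic subgroup of order $\gcd(m,n)$, so the residual tame inertia shrinks to $\Z/(m/\gcd(m,n))$. This local computation is the only step with genuine mathematical content; the remainder of the argument is verification of hypotheses and tracking of the branch locus.
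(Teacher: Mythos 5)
Your proposal is correct and takes the same route the paper intends: the paper states this lemma without separate proof, as a direct specialization of its Lemma~\ref{Lablemma}, and your argument applies that lemma at $\infty$ while supplying exactly the implicit verifications (disjointness, since every nontrivial subcover of $\psi$ is totally ramified over $0$ while every subcover of $\phi$ is \'etale there, and the branch-locus bookkeeping via \'etale base change away from $\infty$). The only point worth flagging is that $p \nmid n$ is tacitly assumed throughout --- in the paper's statement as well as in your use of the word ``tame'' and of $\gcd(n,|P|)=1$ --- since otherwise the inertia formula would fail.
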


The main results about changing inertia groups are found in \cite[Theorem 2]{Harb93} and \cite[Theorem 3.6]{Har03}.  
These are long results to state in full generality, but here is a special case.
Suppose $\phi:Y \to X$ is a $G$-Galois cover of curves having inertia group $I$ of the form $P \rtimes \Z/m$ above a branch point $b$.  
Suppose $I \subset I' \subset G$ and the index $[I':I]$ is a power of $p$.
Then it is possible to modify the cover $\phi$ to another $G$-Galois cover $\phi':Y' \to X$ with the same branch locus as $\phi$, whose ramification at branch points other than $b$ is the 
same as for $\phi$ and whose inertia groups above $b$ are isomorphic to $I'$.
 
In particular, choosing $\phi$ to be a Galois cover of the affine line, one deduces the following theorem.
Because of this theorem, it is valuable to search for inertia groups whose Sylow $p$-subgroup has smaller order than the Sylow $p$-subgroup of $G$.

\begin{theorem}[\cite{HaICM}, Section 4.1] \label{Tsylow} 
Suppose $G$ is a quasi-$p$ group.  Then there exists a $G$-Galois cover $\phi: Y \to \PP^1_k$ branched only over $\infty$ whose inertia groups are the Sylow $p$-subgroups of $G$.
\end{theorem}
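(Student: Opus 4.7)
The plan is to produce $\phi$ by starting from any cover supplied by Abhyankar's Conjecture on the affine line and then modifying the inertia at $\infty$ in two stages: first kill the prime-to-$p$ part, then enlarge the remaining $p$-group up to a Sylow subgroup of $G$.

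First, since $G$ is quasi-$p$, Conjecture~\ref{dAbhConj} (now a theorem, per Section~\ref{aSabproof}) produces a $G$-Galois \'etale cover of $\A^1_k$, which extends to a $G$-Galois cover $\phi_0: Y_0 \to \PP^1_k$ branched only over $\infty$. By Lemma~\ref{1SRH}(\ref{inertia structure part}), the inertia group above $\infty$ has the form $P_0 \rtimes \Z/m$ with $P_0$ a $p$-group and $\gcd(m,p)=1$. To annihilate the $\Z/m$-factor, let $\psi: \PP^1_k \to \PP^1_k$ be the $m$-th power map $x \mapsto x^m$, a $\Z/m$-Galois cover branched only at $0$ and $\infty$. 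Because $G = p(G)$ has no nontrivial prime-to-$p$ quotient, $\phi_0$ and $\psi$ are automatically disjoint, so the pullback $\phi_1 := \psi^*\phi_0$ is a connected $G$-Galois cover of $\PP^1_k$; by the restatement of Abhyankar's Lemma just before Theorem~\ref{Tsylow}, $\phi_1$ is branched only over $\infty$, with inertia group there equal to $P_0 \rtimes \Z/(m/\gcd(m,m)) = P_0$.

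Next, enlarge $P_0$ to a full Sylow $p$-subgroup. By Sylow's theorem, pick a Sylow $p$-subgroup $P$ of $G$ with $P_0 \subseteq P$; the index $[P:P_0]$ is then automatically a power of $p$. The inertia-modification result of the first author recalled just before the theorem (\cite[Theorem~2]{Harb93}, \cite[Theorem~3.6]{Har03}), applied with $I = P_0$, $I' = P$, and branch point $\infty$, yields a $G$-Galois cover $\phi: Y \to \PP^1_k$ with the same branch locus $\{\infty\}$ as $\phi_1$ and with inertia above $\infty$ isomorphic to $P$. Since any two Sylow $p$-subgroups are conjugate in $G$ and the inertia groups over a single branch point of a Galois cover form one conjugacy class, the inertia groups of $\phi$ are precisely the Sylow $p$-subgroups of $G$, as required.

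The hard part is the third step: enlarging a $p$-group inertia along a $p$-power index inclusion rests on the substantial formal/rigid patching and deformation machinery developed in \cite{Harb93,Har03}. Once that, together with Abhyankar's Conjecture for $\A^1_k$, is invoked as a black box, the argument is essentially a short assembly. The one bookkeeping point---disjointness in Abhyankar's Lemma---is free here precisely because $G$ is quasi-$p$, a pleasant reflection of Abhyankar's philosophy that the prime $p$ vanishes from the picture as soon as one restricts to quasi-$p$ groups.
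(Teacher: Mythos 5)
Your proposal is correct and matches the paper's own (implicit) derivation: the discussion immediately preceding Theorem~\ref{Tsylow} deduces it in exactly your way, by starting from a $G$-cover of the affine line supplied by Abhyankar's Conjecture, removing the tame part of the inertia via Abhyankar's Lemma (where, as you note, disjointness is automatic since a quasi-$p$ group has no nontrivial prime-to-$p$ quotient), and then enlarging the resulting $p$-group inertia along the $p$-power-index inclusion into a Sylow $p$-subgroup using \cite[Theorem~2]{Harb93} and \cite[Theorem~3.6]{Har03}. Your final conjugacy observation correctly upgrades ``some Sylow $p$-subgroup'' to ``the Sylow $p$-subgroups,'' so there is no gap.
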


\subsection{Realizing inertia groups via reduction} \label{Sinertiareduction}

As we saw in the proof of Theorem \ref{aTraynaud2}, one strategy for realizing covers of the affine line with given inertia group is to
study the reduction of $G$-Galois covers of the projective line in characteristic $0$.
If $p$ divides one of the ramification indices, then the cover may have bad reduction to characteristic $p$.
Under some conditions on the reduction and on the Galois group, there is a component of the stable
reduction which is a (connected) $G$-Galois cover of $\PP^1_k$ wildly ramified above only one point.

The following two results are based on the reduction method:

\begin{theorem}[\cite{BP}, Theorem 2] \label{TinerRRR} 
The inertia conjecture is true for the alternating group $G=A_p$ and the projective special linear group $G={\rm PSL}_2(p)$ for $p \geq 5$.
\end{theorem}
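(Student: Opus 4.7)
My plan is to follow the reduction strategy outlined in Section~\ref{Sinertiareduction}: start from a carefully chosen three-point $G$-Galois cover in characteristic zero, take its stable model, and read off the inertia of a tail component of the special fiber.

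First I would classify the possible inertia subgroups. For $G=A_p$ (with $p\geq 5$) and $G=\mathrm{PSL}_2(p)$, every Sylow $p$-subgroup is cyclic of order $p$, and the normalizer of such a Sylow has order $p(p-1)/2$. Hence a subgroup $G_0=G_1\rtimes \Z/m$ with $G_1$ a $p$-group and $m$ prime-to-$p$ must satisfy either $G_1=\{1\}$ (in which case the conjugates of $G_1$ certainly do not generate the nonabelian simple group $G$) or $G_1\cong \Z/p$. In the latter case, the conjugates of $G_1$ automatically generate $G$ (since $G$ is simple and contains a nontrivial $p$-subgroup, and $G$ is quasi-$p$), and $m$ ranges over divisors of $(p-1)/2$. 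Thus the ``if'' direction of Conjecture~\ref{Iconjecture} reduces to realizing $G_0=\Z/p\rtimes \Z/m$ as an inertia group of some $G$-Galois cover $\phi:Y\to\PP^1_k$ branched only at $\infty$, for each $m\mid (p-1)/2$. The ``only if'' direction is immediate from the quasi-$p$ restriction on $\pi_1(\A^1_k)$ discussed earlier.

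Next, for each such $m$, I would construct a three-point $G$-Galois cover $\phi_K:Y_K\to\PP^1_K$ over a suitable finite extension $K$ of $\Frac(W(k))$, with branching of order $p$ at one point and branching of prime-to-$p$ orders at the other two, chosen so that the three ramification classes generate $G$. For $G=A_p$, suitable triples of cycle types in the classes of $p$-cycles, transpositions-in-$A_p$, and elements of order dividing $(p-1)/2$ are available by standard rigidity/Riemann existence-type calculations; for $G=\mathrm{PSL}_2(p)$, analogous triples (involving a unipotent element, an involution, and a semisimple element) exist. After extending $K$ if necessary, $\phi_K$ acquires a stable model $\phi_R:Y_R\to P_R$ in the sense of \cite{Ray94}. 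The special fiber $P_k$ is a tree of $\PP^1_k$'s, and exactly as in the proof of Theorem~\ref{aTraynaud2} the branch point of index $p$ specializes onto a tail component $D_k\cong\PP^1_k$ on which $\phi$ restricts to a connected $G$-Galois cover branched only at the node where $D_k$ meets the rest of the tree. By construction, the inertia at that node has Sylow-$p$ part $G_1\cong \Z/p$ and tame part contained in $\Z/m$.

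The technical heart of the argument, and where I expect the real difficulty, is showing that the tame part of this inertia is \emph{exactly} $\Z/m$ and that every $m\mid(p-1)/2$ can be realized in this way. This requires computing the upper jump $\sigma$ in the higher ramification filtration of the tail cover (in the sense of Definition~\ref{Dherbrand}), relating it via Herbrand's formula $\sigma=J/m$ to the lower jump $J$, and matching it against the conductor contribution coming from the characteristic-zero ramification by the Riemann-Hurwitz calculation of Lemma~\ref{1SRH}. A judicious choice of the prime-to-$p$ ramification data in the characteristic-zero cover, together with (if needed) a tame base-change via Abhyankar's Lemma~\ref{Lablemma} to reduce $m$, will give all divisors of $(p-1)/2$. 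Once the tail cover is shown to be connected $G$-Galois with the prescribed inertia, specializing the parameter $t$ of $K=k((t))$ gives the desired cover over $k$, completing the verification of Conjecture~\ref{Iconjecture} for $G\in\{A_p,\mathrm{PSL}_2(p)\}$.
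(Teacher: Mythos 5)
Your group-theoretic setup is correct and matches \cite{BP}: for $G=A_p$ or $G=\mathrm{PSL}_2(p)$ with $p\geq 5$ the Sylow $p$-subgroups are cyclic of order $p$ with normalizer $\Z/p\rtimes\Z/((p-1)/2)$ acting faithfully, so the conjecture reduces to realizing $\Z/p\rtimes\Z/m$ for each $m\mid (p-1)/2$, and by Abhyankar's Lemma~\ref{Lablemma} it suffices to realize the maximal tame part $m=(p-1)/2$. Your overall strategy (degenerate characteristic-zero covers, analyze tails of the stable reduction) is also the method of \cite{BP}.

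However, the step you yourself flag as the ``technical heart'' is exactly where the proposal has a genuine gap, and it cannot be closed by a ``judicious choice'' of three-point ramification data. First, the stable-reduction formalism gives much less than you assume: the vanishing cycle formula controls only the \emph{sum} of the conductors over all tails, not the tame inertia on any individual tail, and connectedness of a tail cover requires the group-theoretic conditions recalled in Section~\ref{Sinertiareduction} (no nontrivial normal $p$-subgroup and $G(S)\neq G$), which you do not verify. Your picture is also internally inconsistent: if the wild branch point specializes onto $D_k$, then the tail cover is branched there in addition to the node; the one-point-branched tails in Raynaud's argument are \'{e}tale tails, disjoint from the specializations of the branch points, while the tails meeting tame branch points carry covers branched at two points. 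Second, and decisively, free choice of characteristic-zero data is genuinely obstructed: Theorem~\ref{BPyesno} of this very paper shows that no $\mathrm{PSL}_2(p)$-cover branched only at $\infty$ has $\sigma=(p-2)/m$ with $m=(p-1)/2$, even though the inertia group $\Z/p\rtimes\Z/m$ does occur --- the rigid triples available for $\mathrm{PSL}_2(p)$ in characteristic zero constrain which jumps can arise, and they differ from those for $A_p$. This is precisely why \cite{BP} must inject group-specific global inputs at this point: for $\mathrm{PSL}_2(p)$, the known (Deligne--Rapoport) reduction of the modular curve $X(2p)$, and for $A_p$, covers arising from index-$p$ subgroups, from which the tame part of the tail inertia can actually be computed. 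Without such inputs, your argument shows at best that \emph{some} tame part occurs on some tail, not that $m=(p-1)/2$ (hence every $m\mid(p-1)/2$) is realized.
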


\begin{theorem}[\cite{obusabhy}, Corollary 3.6]  \label{AndrewPSL} 
Let $p \geq 7$.
Suppose $L$ is a prime such that $G \simeq {\rm PSL}_2(L)$ has order divisible by $p$.
Suppose $I$ is cyclic of order $p^r$ or dihedral of order $2p^r$ with $1 \leq r \leq v_p(|G|)$.
Then there exists a $G$-Galois cover
$\phi: Y \to \PP^1_k$ branched at only one point with inertia groups isomorphic to $I$.
\end{theorem}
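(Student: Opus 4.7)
The plan is to use the reduction-to-characteristic-$p$ strategy underlying Theorem \ref{aTraynaud2}, refined as in the Bouw--Pries proof of Theorem \ref{TinerRRR}. One starts from a carefully chosen three-point cover of $\mathbb{P}^1$ in characteristic zero, reduces modulo $p$, and extracts the sought one-point cover from a tail of its stable reduction.

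First, working over $\bar K$ with $K=\mathrm{Frac}(W(k))$, I would realize $G\simeq \mathrm{PSL}_2(L)$ as the Galois group of a three-point cover $f_{\bar K}:Y_{\bar K}\to \mathbb{P}^1_{\bar K}$ with ramification triple $(p^r,e_2,e_3)$. Here $e_2,e_3$ are chosen so that a generating triple $(\sigma_1,\sigma_2,\sigma_3)$ with $\sigma_1\sigma_2\sigma_3=1$ of those orders exists in $G$; such triples are produced by character-theoretic structure constant computations, relying on the well-understood character table of $\mathrm{PSL}_2(L)$ (classically via hyperbolic triangle groups and rigidity). By taking $r$ large enough and $e_2,e_3$ accordingly, I arrange the genus of $Y_{\bar K}$ to be at least $2$ and the branch points to specialize to distinct points of the special fiber.

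Second, enlarging $K$ if necessary, pass to the stable model $f_R:Y_R\to P_R$. The structure theorems of Raynaud recalled in the sketch of Theorem \ref{aTraynaud2} show that $P_k$ is a tree of projective lines. Because $G$ is simple and non-abelian and $G(S)\neq G$ in the relevant Sylow setting, the arguments of \cite{Ray94} guarantee the existence of a \emph{primitive tail}, a component $D_k\simeq \mathbb{P}^1_k$ meeting the rest of $P_k$ at a single node, over which $f_R\times_{P_R} D_k$ is a connected $G$-Galois cover of $\mathbb{P}^1_k$ branched only above that node. This is the desired one-point cover in characteristic $p$.

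Third, I would identify the inertia at the unique branch point of the tail cover. Since for $L\neq p$ a Sylow $p$-subgroup of $\mathrm{PSL}_2(L)$ is cyclic with dihedral normalizer, the inertia on a tail is forced to be a subgroup of such a normalizer and hence cyclic of order $p^s$ or dihedral of order $2p^s$ for some $s\leq v_p(|G|)$. The main obstacle, and the technical heart of the argument, will be showing that \emph{every} choice of $r$ with $1\leq r\leq v_p(|G|)$ and \emph{each} of the two isomorphism types in the theorem is actually attained. This requires a careful analysis of the vanishing-cycle/conductor combinatorics on the stable model to pin down the exponent of wild ramification and the tame component of the inertia on the chosen tail. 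In cases where the ``raw'' inertia comes out larger than the target $I$, one may invoke Abhyankar's Lemma \ref{Lablemma} (to reduce the tame part) or Theorem \ref{Tsylow} together with the inertia-changing results of \cite{Har03} cited in that section to modify the tail cover without altering the branch locus. The case $L=p$ is essentially Theorem \ref{TinerRRR} combined with Theorem \ref{Tsylow}, so the genuinely new content is the $L\neq p$ case, where the uniform cyclic structure of Sylow $p$-subgroups makes the stable-reduction bookkeeping tractable.
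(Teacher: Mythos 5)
Your overall route --- realize $G$ by a three-point cover in characteristic zero, pass to a stable model, extract a one-point cover of $\PP^1_k$ from a tail, and then adjust the inertia --- is exactly the reduction method that the paper attributes to \cite[Corollary 3.6]{obusabhy} in Section \ref{Sinertiareduction}, and several of your supporting observations are sound: for $L \neq p$ the Sylow $p$-subgroup of ${\rm PSL}_2(L)$ is cyclic with dihedral normalizer, so tail inertia is cyclic or (generalized) dihedral, and the case $L = p$ does reduce to Theorem \ref{TinerRRR}. The problem lies in the step you yourself call the technical heart, and it is a \emph{directional} gap. The two modification tools you invoke move inertia in opposite, fixed directions: Abhyankar's Lemma \ref{Lablemma} can only shrink the \emph{tame} part $\Z/m$, while the results of \cite{Harb93} and \cite{Har03} behind Theorem \ref{Tsylow} can only \emph{enlarge} $I$ to an overgroup $I'$ with $[I':I]$ a power of $p$. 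Neither tool can decrease the wild part, so your fallback ``if the raw inertia comes out larger than the target $I$, modify the tail cover'' fails precisely in the hard case: a tail with wild inertia of order $p^j$, $j > r$, cannot be brought down to $p^r$ by anything you cite. The workable logic runs the other way: the crux is to produce a single one-point cover whose inertia has wild part of order exactly $p$ (say $D_p$, or $\Z/p$); then $\Z/p \subset \Z/p^r$ and $D_p \subset D_{p^r}$ have $p$-power index, so the enlargement results yield every $1 \leq r \leq v_p(|G|)$, and a tame Kummer pullback via Lemma \ref{Lablemma} strips the $\Z/2$ and any residual central tame part to pass between the dihedral and cyclic types. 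Producing that minimal tail is exactly the vanishing-cycle/conductor analysis you defer, and nothing you cite supplies it --- in particular, there is no mechanism transferring the exponent $r$ in your chosen branching index $p^r$ to the wild part of the inertia on a tail.

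Relatedly, you cannot outsource the existence of the tail to \cite{Ray94}. Theorem \ref{aTraynaud2} assumes all branching indices are $p$-powers, whereas your triple $(p^r, e_2, e_3)$ has tame branch points; their specializations land on tails that are then branched at \emph{two} points (the node plus the tame point). Such components are the \emph{primitive} tails, so your use of ``primitive tail'' for a component branched only at the node conflates it with a \emph{new} tail; a one-point cover comes from a new tail, or from a primitive tail only after Abhyankar's Lemma removes the tame point. Moreover, your blanket claim that $G(S) \neq G$ for ${\rm PSL}_2(L)$ is not automatic: if $(L-1)/2$ is a power of $p$ (e.g.\ $L = 2p+1$ prime, as with $p = 11$, $L = 23$), the Borel subgroup $\Z/L \rtimes \Z/p^a$ is a proper quasi-$p$ subgroup containing a Sylow $p$-subgroup $S$, and two opposite Borels generate $G$, so $G(S) = G$ and the $p$-purity hypothesis fails. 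All of this is why the proof in \cite{obusabhy} runs through the cyclic-Sylow stable reduction machinery developed there (which controls the wild exponent and the conductor on the tails in the presence of tame branch points), rather than through Raynaud's theorem plus after-the-fact shrinking.
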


\begin{remark}
It appears that Theorem \ref{AndrewPSL} is the first existence result for covers of the affine line whose inertia groups are strictly contained in a Sylow $p$-subgroup.
Other examples of this can be produced using the fiber product construction in \cite[Section 4]{kumar}.  
\end{remark}

\begin{remark}
For fixed $p$, the condition that $|{\rm PSL}_2(L)|$ is divisible by $p$ is satisfied by infinitely many primes $L$.
\end{remark}

\begin{remark}
It is usually difficult to obtain complete information about the stable reduction.
Typically, there is a vanishing cycle formula which gives information about the sum of the conductors for the covers of the terminal components in the stable reduction.
Sometimes it is possible to use extra information to guarantee that one of the covers has non-trivial tame inertia.
The reduction of a cover with small field of definition is more likely to produce such a cover.
The authors of \cite{BP} use additional information about the reduction of the modular curve $X(2p)$ when $G={\rm PSL}_2(p)$
and the existence of subgroups with index $p$ when $G=A_p$ to produce non-trivial tame inertia.

Also, the stable reduction method produces \emph{connected} $G$-Galois covers of the affine line only under some 
group theoretic conditions: that $G$ has no normal $p$-subgroup and that $G(S) \not = G$. 
\end{remark}

\subsection{Realizing inertia groups via explicit equations} \label{Sinertiaexplicit}

Abhyankar's Inertia Conjecture has in some cases been realized using Abhyankar's original equations.

\begin{theorem}[\cite{MP12}, Theorem 1.2] \label{TinerMP} 
If $p \equiv 2 \bmod 3$ is an odd prime, then the inertia conjecture is true for the alternating group $A_{p+2}$.
\end{theorem}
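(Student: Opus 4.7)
The plan is to realize, for each divisor $m$ of $p-1$, the group $\mathbb{Z}/p \rtimes \mathbb{Z}/m$ as an inertia group of an $A_{p+2}$-Galois cover of $\mathbb{P}^1_k$ branched only at $\infty$, using Abhyankar's explicit equations and Abhyankar's Lemma. I begin with a group-theoretic reduction. Since $p+2 < 2p$, a Sylow $p$-subgroup of $A_{p+2}$ is cyclic of order $p$. The centralizer of a $p$-cycle $\sigma$ in $S_{p+2}$ is $\langle\sigma\rangle \times S_2$, but the nontrivial element of that $S_2$ is an odd transposition, so $C_{A_{p+2}}(\langle\sigma\rangle) = \langle\sigma\rangle$ and hence $N_{A_{p+2}}(\langle\sigma\rangle) \cong \mathbb{Z}/p \rtimes \mathbb{Z}/(p-1)$. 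The conjugates of $\langle\sigma\rangle$ already generate the simple group $A_{p+2}$, so the Inertia Conjecture for $G = A_{p+2}$ asks precisely for the realization of $\mathbb{Z}/p \rtimes \mathbb{Z}/m$ for every $m \mid p-1$. By Abhyankar's Lemma \ref{Lablemma}, it suffices to realize the maximal case $m = p-1$; the smaller cases follow by pulling back along a suitable Kummer cover of the $x$-line.

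To produce this maximal cover I would use Abhyankar's polynomial $y^{p+2} - a x^s y^2 + 1 = 0$ with $a \in k^\times$ and $s$ a positive integer coprime to $2p(p+2)$. Since $p \equiv 2 \pmod 3$ is odd, in particular $p \ne 7$, and Theorem \ref{aTabex} applies with $t = 2$, $n = p+2$: the Galois closure of the resulting cover $Y_s \to \mathbb{P}^1$ has Galois group $A_{p+2}$ and is branched only at $\infty$. The ramification at $\infty$ is analyzed via the Newton polygon of the defining polynomial after the substitution $u = 1/x$. That polygon has two edges, yielding one place over $\infty$ of ramification index $p$ (with $y \sim u^{-s/p}$) and one of ramification index $2$ (with $y \sim u^{s/2}$). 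Hence the inertia group $I$ at $\infty$ in the Galois closure acts on the standard $p+2$ sheets with orbit sizes $\{p, 2\}$, and therefore $I \cong \mathbb{Z}/p \rtimes \mathbb{Z}/m(s)$ for some even $m(s) \mid p-1$ (the tame generator must exchange the two sheets in the size-$2$ orbit).

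The heart of the argument is to show that $s$ can be chosen so that $m(s) = p-1$. The parity constraint that a tame generator $\tau$, acting as an affine map $x \mapsto gx + c$ on the size-$p$ orbit and as a transposition on the size-$2$ orbit, lies in $A_{p+2}$ amounts to the congruence $(d-1)(p-1)/d \equiv 1 \pmod 2$, where $d = \mathrm{ord}(g)$; this forces $v_2(d) = v_2(p-1)$, which is compatible with $d = p-1$. The specific value of $d$ produced by Abhyankar's equation depends on $s$ through a relation involving $s$ and the exponents $p+2$ and $2$; the hypothesis $p \equiv 2 \pmod 3$, equivalently $3 \nmid p+2$, is precisely what removes the arithmetic obstruction preventing $d$ from attaining its maximum $p-1$. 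Combining the resulting cover with Abhyankar's Lemma then yields all $\mathbb{Z}/p \rtimes \mathbb{Z}/m$ with $m \mid p-1$, verifying both the wild and tame parts of Conjecture \ref{Iconjecture2} for $G = A_{p+2}$.

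The main obstacle is the third step: the explicit Newton polygon and monodromy calculation verifying that $m(s)$ attains $p-1$ for some admissible $s$, and that the congruence $p \equiv 2 \pmod 3$ is exactly what makes this possible (rather than merely allowing some proper divisor of $p-1$). The other steps are routine group theory, a direct citation of Theorem \ref{aTabex}, and an immediate application of Abhyankar's Lemma.
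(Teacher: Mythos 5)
Your proposal follows essentially the same route as the paper (i.e., that of Muskat--Pries \cite{MP12}): take the Galois closure of Abhyankar's equation $y^{p+2}-xy^{2}+1=0$, which by Theorem \ref{aTabex} is an $A_{p+2}$-cover of the affine line (valid since $p$ is odd and $p\neq 7$, as $7\equiv 1\bmod 3$); read off from the Newton polygon at $\infty$ the two places of ramification index $p$ and $2$; conclude that the inertia group is $\Z/p\rtimes\Z/m$ inside $N_{A_{p+2}}(\Z/p)\cong\Z/p\rtimes\Z/(p-1)$, with $m$ maximal exactly when $p\equiv 2\bmod 3$; and then obtain every $m\mid p-1$ by Abhyankar's Lemma. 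The one mis-aimed detail is your use of the exponent $s$ in $x^{s}$ as the degree of freedom for attaining $m(s)=p-1$: substituting $x\mapsto x^{s}$ is a tame pullback of the base, so by Abhyankar's Lemma it can only \emph{shrink} the tame part of the inertia ($m(s)=m(1)/\gcd(m(1),s)$), never enlarge it; maximality must be, and is, established for the basic equation itself. Concretely, since the degree-$(p+2)$ quotient curve $y^{p+2}-xy^{2}+1=0$ is rational, the proposition of Section \ref{Sconstraint} (with $g_{1}=0$, $n=p+2$, $T=1$) gives upper jump $\sigma=(p+2)/(p-1)$, whose denominator in lowest terms is $m'=(p-1)/\gcd(p-1,3)$; combined with $m''=L/\gcd(m',L)=1$ (as $L=2$ and $p-1$ is even), this forces $m=m'm''=p-1$ precisely when $3\nmid p-1$, i.e., $p\equiv 2\bmod 3$. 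This is the content of the Newton-polygon/conductor computation you correctly flagged as the heart of the matter, and it is where the congruence enters --- via $\gcd(p-1,p+2)=\gcd(p-1,3)$ --- rather than through any choice of $s$.
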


More generally, the authors consider the group $G=A_{p+s}$ with $2 \leq s <p$.
They use Abhyankar's explicit equation for an extension of $k(x)$ with Galois closure $A_{p+s}$:
\[g_s(y)=y^{p+s}-xy^s+1\in k(x)[y].\] 
From this explicit equation, the authors calculate a Newton polygon whose slopes determine the conductor of the cover.  
The method is in some way akin to 
Abhyankar's method of throwing away roots (\cite[Section 4]{Abh92}).
When $s=2$, there is a maximal subgroup of the form $\Z/p \rtimes \Z/m$ containing a fixed $p$-cycle and it is realized as the
inertia group of the Galois closure of this extension when $p \equiv 2 \bmod 3$ is odd. 

The authors also reprove Abhyankar's Inertia Conjecture for $A_p$ in this method, 
using Abhyankar's explicit equation for an extension of $k(x)$ with Galois closure $A_p$: 
\[f(y)=y^p-xy^{p-t}+x \in k(x)[y].\]

In addition, the proofs of Theorems \ref{TinerRRR}, \ref{AndrewPSL}, and \ref{TinerMP} determine information about the ramification filtrations which can be realized for covers with the given Galois cover; see Section \ref{Srealizingcond}.

\subsection{Higher Ramification Groups}\label{RamGrps}

The proofs of the results in Sections \ref{Sinertiareduction} and \ref{Sinertiaexplicit} 
use information about higher ramification groups, as defined in Section \ref{Shigherram}.
Suppose that $\phi:Y \to X$ is a $G$-Galois cover wildly ramified at some point $\eta$.
Here is more information about the ramification filtration in the special case that
the order of the inertia group $G_0$ is strictly divisible by $p$. 
It follows from Lemma \ref{1SRH} that $G_0$ is a semidirect product of the form $\Z/p \rtimes \Z/m$
for some prime-to-$p$ integer $m$.  

By Lemma \ref{Dlowerjump}, 
the {\it lower jump} of $\phi$ of $\eta$ is the largest integer $J$
such that $G_J\not =\{1\}$.
Recall that the lower numbering on the filtration from Definition \ref{Dlowerjump}
is invariant under sub-extensions and that 
there is a different indexing system on the filtration, whose virtue
is that it is invariant under quotient extensions.

By Definition \ref{Dherbrand}, $G^{\varphi(i)}=G_i$ where $\varphi(i)=|G_0|^{-1}\sum_{j=1}^i|G_j|$.  
In this case, $\varphi(J)=J/m$.  The rational number $\sigma=J/m$ is the
{\it upper jump}; it is the jump in the filtration of the higher
ramification groups in the upper numbering.

\begin{defn} \label{2Dalpha}
Let $G_0 \simeq \Z/p \rtimes \Z/m$ with chosen generators $\tau$ of order $p$ and $\beta$ 
of order $m$.  
Let $g$ be the homomorphism $g: \langle \beta \rangle \to \Aut(\langle \tau \rangle)$
given by $g(\beta):\tau \mapsto \beta \tau \beta^{-1}$.  
Define $m''=|{\rm Ker}(g)|$ and $m'=m/m''$.
\end{defn}

One says that $m''$ is the {\it central} part of the tame ramification since
$m''$ is the order of the prime-to-$p$ part of the center of $G_0$.
Also $\langle \beta^{m'} \rangle$ is precisely the subgroup of order $m''$ which 
acts trivially on $\Z/p$.  
One says that $m'$ is the {\it faithful} part of the tame ramification since the image of 
$g$ has order $m'$ in ${\rm Aut}(\langle \tau \rangle)$.  Note that $m'|(p-1)$.

Using ramification theory from \cite[IV, Proposition 9]{Serr} or field theory as in \cite[Lemma 1.4.1(iv)]{Pr:fam},
one can see that there is a congruence condition on the lower jump modulo $m$, namely \[{\rm gcd}(J, m)=m''.\]
In other words, $J/m$ has denominator $m'$ when written in lowest terms.

\subsection{An unexpected constraint on ramification} \label{Sbouwpriesconstraint}

There are necessary conditions on the ramification filtration arising from class field theory.
These conditions can be called local conditions in that they arise from conditions on $I$-Galois extensions of complete local rings.

However, it appears that there are some non-local conditions on the ramification filtration as well.
It turns out that the set of conductors at a ramification point with inertia group $I$ depends not only on $I$ but also 
in a subtle way on the group $G$.
In particular, there is a difference between the conductors that occur for $A_p$ and ${\rm PSL}_2(p)$ Galois covers despite the fact that the same inertia groups occur.

\begin{theorem}[\cite{BP}, Theorem 3] \label{BPyesno} 
Suppose that $p\geq 7$. Let $m=(p-1)/2.$ There exists an $A_p$-Galois
cover of $\PP^1_k$ branched only at $\infty$ with upper jump $\sigma=(p-2)/m$, but
there is no ${\rm PSL}_2(p)$-Galois cover of $\PP^1_k$ branched only at $\infty$ 
with upper jump $\sigma=(p-2)/m$.
\end{theorem}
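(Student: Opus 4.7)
The plan is to prove existence and non-existence separately. In either case the inertia group at infinity must take the form $G_0 = \Z/p \rtimes \Z/m$ with $m = (p-1)/2$, since $(p-1)/2$ is the faithful part of the tame ramification available in both $A_p$ and $\mathrm{PSL}_2(p)$ (as the normalizer of a $p$-Sylow) and the theorem specifies $\sigma = (p-2)/m$ with this $m$. The congruence condition of Section~\ref{RamGrps} demands $\gcd(J,m) = m''$; here $m$ acts faithfully on $\Z/p$ so $m'' = 1$, and the lower jump $J = p-2$ does satisfy $\gcd(p-2,(p-1)/2) = 1$ (because $p-2$ is odd and $\gcd(p-2,p-1) = 1$). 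So $\sigma = J/m = (p-2)/m$ is a priori admissible from purely local data, and the real content is to decide which of the two groups actually realizes it.

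\textbf{Existence for $A_p$.} I would construct the cover via an explicit Abhyankar-type polynomial of the form $f(y) = y^p - xy^{p-t} + x$ as in Section~\ref{Sinertiaexplicit} and \cite{MP12}, whose Galois closure has group $A_p$. Possibly after a tame base change via Abhyankar's Lemma~\ref{Lablemma} to absorb any extraneous branching away from infinity, this produces an $A_p$-Galois cover of $\PP^1_k$ branched only at $\infty$. A Newton polygon computation of $f$ in the variable $u = 1/x$ at $u = 0$ reads off both the inertia group $\Z/p \rtimes \Z/m$ at infinity and the lower jump $J$: the slope of the Newton polygon determines $m$ and the combinatorial data of the vertices determines $J$. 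Choosing the parameter $t$ so that the inertia is the full normalizer $\Z/p \rtimes \Z/((p-1)/2)$ and the resulting lower jump equals $p-2$ gives $\sigma = (p-2)/m$ as required.

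\textbf{Non-existence for $\mathrm{PSL}_2(p)$.} Set $G = \mathrm{PSL}_2(p)$ and fix a $p$-Sylow $S \cong \Z/p$. Every proper subgroup of $G$ containing $S$ is a Borel $B \cong \Z/p \rtimes \Z/m$ with $m = (p-1)/2$, and $B$ is not quasi-$p$ since $B/p(B) \cong \Z/m$ is nontrivial; therefore $G(S) = S \ne G$, and $G$ has no nontrivial normal $p$-subgroup. Raynaud's Theorem~\ref{aTraynaud2} then applies: any putative $G$-cover of $\PP^1_k$ branched only at $\infty$ arises as a tail component of the stable reduction of a characteristic-zero $G$-cover $\phi_K : Y_K \to \PP^1_K$ branched at $r+1 \ge 3$ points with $p$-power ramification indices. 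Since the only $p$-power orders in $\mathrm{PSL}_2(p)$ are $1$ and $p$, the branch-cycle description of $\phi_K$ is a generating $(r+1)$-tuple of order-$p$ elements whose product is $1$, and such tuples are extremely rigid (they are essentially governed by the modular cover $X(2p) \to X(2)$). A vanishing cycle formula of the type derived in \cite[Sect.~6]{Ray94} relates the sum of the upper jumps over the tail components of the stable model to $r$ and the combinatorics of the stable tree; combining this formula with the rigidity of generating $p$-tuples in $\mathrm{PSL}_2(p)$ produces a strict list of admissible $\sigma$ on a single tail, and one verifies by explicit computation that $(p-2)/m$ is not on that list.

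\textbf{Main obstacle.} The existence direction is a Newton-polygon calculation and is not the hard part. The core difficulty is the non-existence: one must traverse the possible stable-reduction trees of characteristic-zero $\mathrm{PSL}_2(p)$-covers, pin down which conductors can appear on which tail, and exploit the features of $\mathrm{PSL}_2(p)$ (its modular interpretation and the paucity of generating $p$-tuples) that $A_p$ does not share. It is precisely this contrast — $A_p$ is flexible enough to produce $\sigma = (p-2)/m$, while $\mathrm{PSL}_2(p)$ is too rigid — that yields the dichotomy, and carrying out the ruling-out step for $\mathrm{PSL}_2(p)$ is the technical heart of the argument.
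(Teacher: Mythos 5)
Your preliminary admissibility check ($m''=1$, $\gcd(p-2,(p-1)/2)=1$) is fine, but both main halves of the proposal have genuine gaps. For the existence half, the explicit-equation route does not work, and not merely for lack of detail. The cover defined by $f(y)=y^p-xy^{p-t}+x$ is branched at \emph{two} points: the $y$-discriminant of $f$ is $x^{2p-t-1}$ up to a unit, so the degree-$p$ map is totally and wildly ramified over one point (with different $2p-t-1$ there) and tamely ramified, with index $t$, over the other. Since the different of the degree-$p$ subcover equals $(p-1)(\sigma+1)$, the Galois closure has upper jump $\sigma=(p-t)/(p-1)<1$ for every admissible $t$ --- far from $(p-2)/m=2-2/(p-1)$. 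Nor can you ``absorb the extraneous branching'' for free: the tame pullback of Lemma~\ref{Lablemma} needed to erase the tame branch point is necessarily also branched at the wild point, where it divides the tame inertia by some $g=\gcd(m_0,n)$ and hence \emph{multiplies} $\sigma$ by $g$ (the survey makes exactly this point in its discussion of $A_{p+1}$). Reaching $2-2/(p-1)$ would force $g(p-t)=2(p-2)$ with $g\mid m_0\mid (p-1)/2$; since $\gcd(p-1,2(p-2))=2$, only $g\le 2$ is possible, and $g=2$ forces $t=2$, which makes the lower jump $J=m_0(p-2)/(p-1)$ non-integral because $\gcd(p-1,p-2)=1$ and $m_0\le(p-1)/2$. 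So no choice of $t$ realizes $\sigma=(p-2)/m$ from this family. In \cite{BP} the existence half is instead obtained by the same reduction mechanism as the nonexistence half: one exhibits a characteristic-zero three-point $A_p$-cover, coming from one of the few (almost) rigid triples of $A_p$, whose bad reduction has the desired one-point cover as a tail.

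For the nonexistence half you have the right flavor --- lift to characteristic zero and count via rigidity, which is indeed the route of \cite{BP} --- but the logic is inverted at the crucial step. Theorem~\ref{aTraynaud2} is an \emph{existence} theorem: it constructs one cover in characteristic $p$ as a tail of the stable reduction of one well-chosen characteristic-zero cover. It does not assert that every one-point ${\rm PSL}_2(p)$-cover arises this way, which is what your argument needs. The required ingredient is the reverse statement, a \emph{lifting} theorem for semi-stable covers: given the hypothetical one-point cover with $\sigma=(p-2)/m<2$, one places it as the tail of a semi-stable cover in characteristic $p$ and lifts that whole configuration (via its deformation datum) to characteristic zero; this is the nontrivial input the survey refers to as ``lifting of semi-stable covers of curves to characteristic $0$.'' Moreover, the resulting characteristic-zero cover is branched at $\lceil\sigma\rceil+1=3$ points with exactly \emph{one} ramification index divisible by $p$ and the other two tame --- not at $r+1$ points all with $p$-power indices --- so its branch-cycle data is a triple with a single wild entry, not a tuple of order-$p$ elements. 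Finally, your appeal to the ``paucity of generating $p$-tuples'' in ${\rm PSL}_2(p)$ reverses the actual mechanism: as the survey records, ${\rm PSL}_2(p)$ has \emph{many} (almost) rigid triples, and it is precisely because these account, after reduction, for all one-point covers with $\sigma<2$ that the possible conductors can be listed and $(p-2)/m$ shown absent; $A_p$ has very few such triples, one of which produces $\sigma=(p-2)/m$. (The modular curve $X(2p)$ enters the proof of Theorem~\ref{TinerRRR}, on realizing inertia groups, not this counting argument.)
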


The proof of this result uses lifting of semi-stable covers of curves to characteristic $0$.
The underlying reason behind this result is that the rigid triples
(e.g., \cite[Definition 2.15]{Vol96}) for these groups in characteristic $0$ are different;  
namely ${\rm PSL}_2(p)$ has many rigid triples whereas $A_p$ has very few. 
This idea is generalized further in \cite[Sections 3.2 and 3.3]{BP}.

\subsection{A potential new constraint on the inertia group} \label{Sconstraint}

This section states a new result relating the structure of the inertia group to the upper jump and genus of quotient extensions.
Before stating it, here is some elementary group theory.

\subsubsection{Group theory}

Let $G$ be a transitive subgroup of $S_n$.
Let $p$ be a prime such that $p \leq n < 2p$.  Then, $p^2 \nmid |G|$.
The following lemmas give an upper bound for the size of the inertia group. 

\begin{lemma}\label{NormalizerSp}
Let $\tau=(1,2,\dots, p)$. Then there exists $\theta \in S_p$ with order $p-1$ such that
$N_{S_p}(\langle\tau\rangle)=\langle\tau\rangle\rtimes\langle \theta \rangle$ and such that $\theta$ fixes $1$.
\end{lemma}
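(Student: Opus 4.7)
The plan is to realize $N_{S_p}(\langle\tau\rangle)$ concretely as the group of affine transformations of $\Z/p$ and then exhibit $\theta$ by hand.

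First I would compute the order of $N_{S_p}(\langle\tau\rangle)$ by Sylow theory: $\langle\tau\rangle$ is a Sylow $p$-subgroup of $S_p$, and the number of $p$-cycles in $S_p$ is $(p-1)!$, each generating a cyclic subgroup of order $p$ with $p-1$ generators, so the number of Sylow $p$-subgroups is $(p-2)!$. By Sylow's theorem this equals $[S_p : N_{S_p}(\langle\tau\rangle)]$, hence $|N_{S_p}(\langle\tau\rangle)| = p!/(p-2)! = p(p-1)$.

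Next I would identify the set $\{1,2,\dots,p\}$ with $\Z/p$ by $i\mapsto i-1$; under this identification $\tau=(1,2,\dots,p)$ becomes the translation $x\mapsto x+1$. Fix a primitive root $a$ modulo $p$, and define $\theta\in S_p$ to be the permutation corresponding to $x\mapsto ax$ on $\Z/p$. Then $\theta$ fixes $0$, which corresponds to $1\in\{1,\dots,p\}$, and the order of $\theta$ equals the order of $a$ in $(\Z/p)^*$, namely $p-1$.

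To see that $\theta$ normalizes $\langle\tau\rangle$, I would compute directly: $\theta\tau\theta^{-1}$ sends $x$ to $a(a^{-1}x+1)=x+a$, which is the translation $\tau^a\in\langle\tau\rangle$. Thus $\langle\tau,\theta\rangle\subset N_{S_p}(\langle\tau\rangle)$. Since $\gcd(p,p-1)=1$, we have $\langle\tau\rangle\cap\langle\theta\rangle=\{e\}$, so $|\langle\tau\rangle\langle\theta\rangle|=p(p-1)$; combined with the order count from Sylow theory, this product exhausts $N_{S_p}(\langle\tau\rangle)$, and because $\langle\tau\rangle$ is normal in it, we conclude $N_{S_p}(\langle\tau\rangle)=\langle\tau\rangle\rtimes\langle\theta\rangle$, as required.

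There is no real obstacle here: the only mild subtlety is remembering the off-by-one indexing so that $\theta$ genuinely fixes the symbol $1$ rather than $0$, which is handled by the identification $i\leftrightarrow i-1$ chosen above.
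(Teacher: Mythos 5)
Your proof is correct, and its first half (the Sylow count giving $|N_{S_p}(\langle\tau\rangle)|=p(p-1)$ via the $(p-1)!$ $p$-cycles and $n_p=(p-2)!$) is identical to the paper's. Where you diverge is in producing $\theta$: the paper obtains $\theta$ abstractly, using the fact that all $p$-cycles are conjugate in $S_p$ to find some $\theta$ with $\theta\tau\theta^{-1}=\tau^a$, then argues (somewhat tersely) that $\theta$ has order $p-1$ from the relation $\theta^r\tau\theta^{-r}=\tau^{a^r}$, and finally must conjugate $\theta$ by a power of $\tau$ to arrange that it fixes the symbol $1$. You instead identify $\{1,\dots,p\}$ with $\Z/p$ and write $\theta$ down explicitly as $x\mapsto ax$ for a primitive root $a$, i.e.\ you realize $N_{S_p}(\langle\tau\rangle)$ as the affine group of $\Z/p$. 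This buys you immediacy on exactly the two points where the paper's argument is delicate: the order of your $\theta$ is manifestly the order of $a$ in $(\Z/p)^{*}$, namely $p-1$ (whereas the paper's abstract $\theta$ a priori only has order a multiple of $p-1$ modulo the centralizer $\langle\tau\rangle$, and the displayed computation there is garbled), and your $\theta$ fixes $0\leftrightarrow 1$ by construction, so no final conjugation adjustment is needed. The concluding step --- trivial intersection by coprimality of $p$ and $p-1$, product of orders equal to $p(p-1)$, hence $N_{S_p}(\langle\tau\rangle)=\langle\tau\rangle\rtimes\langle\theta\rangle$ --- is sound and matches the paper's implicit conclusion. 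In short: same skeleton, but your explicit affine realization is cleaner and closes a small gap in the paper's order argument.
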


\begin{proof}
Let $n_p$ be the number of Sylow $p$-subgroups of $S_p$; then
$n_p=[S_p:N_{S_p}(\langle\tau\rangle)]$.
There are $(p-1)!$ different $p$-cycles in $S_p$, each generating 
a group with $p-1$ non-trivial elements.  It follows that
$n_p=(p-2)!$. Therefore, $|N_{S_p}(\langle\tau\rangle)|=p(p-1)$.

Clearly, $\langle\tau\rangle \subset N_{S_p}(\langle\tau\rangle)$.  Let $a$ be a generator of $\F_p^*$.  There exists
$\theta \in S_p$ such that $\theta \tau \theta^{-1}=\tau^a$ since all $p$-cycles in $S_p$ are conjugate.
Then $\theta \in N_{S_p}(\langle\tau\rangle)$.
Also $\theta$ has order $p-1$ since, for any $r$,
\[
\theta^{r}\tau\theta^{-r}=\tau^{a^{r}}.
\]
Conjugating $\theta$ by a power of $\tau$ gives a new choice of $\theta$ that fixes $1$.
\end{proof}

Recall that
$C_{S_n}(\langle\tau\rangle)=\langle\tau\rangle\times H \textrm{
where } H=\{\omega\in S_n:\omega \textrm{ is disjoint from }\tau\}$.

\begin{lemma}\label{NormalizerSn}
Let $G$ be a transitive subgroup of $S_n$ where $n=p+s$ and $2 \leq s <p$.
Let $\tau=(1,2,\dots, p)$ and let $\theta \in S_p$ satisfy the
properties of Lemma \ref{NormalizerSp}.  
Let $H_s \subset S_{p+s}$ be the subgroup of permutations of the set $\{p+1,p+2,\dots,p+s\}$.  
Then
$N_{G}(\langle\tau\rangle)$ is the intersection of $G$ with 
$(\langle\tau\rangle \rtimes \langle \theta \rangle) \times H_s$.
\end{lemma}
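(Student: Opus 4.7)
The plan is to compute $N_{S_n}(\langle\tau\rangle)$ explicitly as $(\langle\tau\rangle\rtimes\langle\theta\rangle)\times H_s$, after which intersecting with $G$ gives the claimed description of $N_G(\langle\tau\rangle)$.

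For the easy inclusion $(\langle\tau\rangle\rtimes\langle\theta\rangle)\times H_s\subseteq N_{S_n}(\langle\tau\rangle)$, I would note that $H_s$ commutes elementwise with $\tau$ because its elements are supported on $\{p+1,\dots,p+s\}$, disjoint from $\{1,\dots,p\}$; and $\langle\tau\rangle\rtimes\langle\theta\rangle$, viewed inside $S_n$ via the natural inclusion $S_p\hookrightarrow S_n$ fixing $\{p+1,\dots,p+s\}$ pointwise, normalizes $\langle\tau\rangle$ by Lemma \ref{NormalizerSp}. Since the two subgroups commute (disjoint supports), their product lies in $N_{S_n}(\langle\tau\rangle)$.

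For the reverse inclusion, let $\sigma\in N_{S_n}(\langle\tau\rangle)$, so $\sigma\tau\sigma^{-1}=\tau^a$ for some $a$ with $1\le a\le p-1$. Conjugation carries the support of $\tau$ onto the support of $\sigma\tau\sigma^{-1}=\tau^a$, and since $a\not\equiv 0\pmod p$ the support of $\tau^a$ equals $\{1,\dots,p\}$. Hence $\sigma$ preserves $\{1,\dots,p\}$ and therefore also $\{p+1,\dots,p+s\}$, so it factors uniquely as $\sigma=\sigma_1\sigma_2$ with $\sigma_1\in S_p$ and $\sigma_2\in H_s$. Because $\sigma_2$ commutes with $\tau$, the element $\sigma_1$ normalizes $\langle\tau\rangle$ inside $S_p$, and Lemma \ref{NormalizerSp} then places $\sigma_1$ in $\langle\tau\rangle\rtimes\langle\theta\rangle$.

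Combining the two inclusions yields $N_{S_n}(\langle\tau\rangle)=(\langle\tau\rangle\rtimes\langle\theta\rangle)\times H_s$, and intersecting with $G$ gives the statement. There is no substantive obstacle here; the argument is a routine support-preservation calculation in symmetric groups, and the transitivity hypothesis on $G$ plays no role in this particular step (it is presumably invoked elsewhere when one needs to know that $\tau\in G$).
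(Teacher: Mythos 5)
Your proof is correct, but it proceeds differently from the paper's. The paper proves the same easy inclusion $(\langle\tau\rangle \rtimes \langle \theta \rangle) \times H_s \subseteq N_{S_{p+s}}(\langle\tau\rangle)$ that you do, but then closes the argument by a Sylow count parallel to the proof of Lemma \ref{NormalizerSp}: since $s < p$ forces $p^2 \nmid (p+s)!$, the subgroup $\langle\tau\rangle$ is a Sylow $p$-subgroup of $S_{p+s}$, the number of such Sylow subgroups is $(p+s)!/(s!\,p(p-1))$ (counting $p$-cycles), and hence $|N_{S_{p+s}}(\langle\tau\rangle)| = s!\,p(p-1)$, which matches $|(\langle\tau\rangle \rtimes \langle \theta \rangle) \times H_s|$, giving equality by cardinality. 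You instead argue the reverse inclusion directly: any $\sigma$ normalizing $\langle\tau\rangle$ satisfies $\sigma\tau\sigma^{-1} = \tau^a$ with $p \nmid a$, so $\sigma$ preserves the support $\{1,\dots,p\}$ of $\tau$, factors as $\sigma_1\sigma_2$ with $\sigma_1 \in S_p$, $\sigma_2 \in H_s$, and Lemma \ref{NormalizerSp} applied to $\sigma_1$ finishes. Your support-preservation argument is the more structural of the two and, as a bonus, never uses the hypothesis $s < p$ (it identifies $N_{S_n}(\langle\tau\rangle)$ for any $n \geq p$), whereas the paper's count genuinely needs $\langle\tau\rangle$ to be Sylow; on the other hand, the paper's method delivers the explicit order $s!\,p(p-1)$ of the normalizer, which is the quantity echoed in the surrounding discussion, and keeps the proof stylistically uniform with Lemma \ref{NormalizerSp}. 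You are also right that transitivity of $G$ is idle here; the paper likewise never invokes it in this lemma, reserving it for the later coset-counting in Proposition \ref{Pramfibre}.
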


\begin{proof}
The permutation $\theta$ in the proof of Lemma \ref{NormalizerSp} has order $p-1$ and normalizes $\tau$.  
The elements of $H_s$ commute with $\tau$ and $\theta$.
Thus $G \cap (\langle\tau\rangle \rtimes \langle \theta \rangle) \times H_s \subset N_{S_{p+s}}(\langle \tau \rangle)$.
Performing a similar count as for Lemma \ref{NormalizerSp}, 
the number of Sylow $p$-subgroups in $S_{p+s}$ is $(p+s)!/(s!p(p-1))$, which must equal the 
index of $N_{S_{p+s}}(\langle\tau\rangle)$ in $S_{p+s}$.
Therefore $|N_{S_{p+s}}(\langle\tau\rangle)|=s!p(p-1)$. 
Thus $G \cap (\langle\tau\rangle \rtimes \langle \theta \rangle) \times H_s = N_{S_{p+s}}(\langle \tau \rangle)$.
The result follows by taking the intersection with $G$.
\end{proof}

\begin{notation} \label{Ncycletype}
Let $p \leq n < 2p$.
Suppose $G$ is a transitive subgroup of $S_n$ and is a quasi-$p$ group.
Without loss of generality, suppose $\tau=(1,2,\ldots, p) \in G$.
Suppose $I \subset G$ is a semi-direct product of the form
$\langle \tau \rangle \rtimes \langle \beta \rangle$ for some $\beta \in G$
having prime-to-$p$ order $m$.
Suppose the action of $\beta$ on  $\{p+1, \ldots, n\}$ breaks into $T$ disjoint cycles with lengths $n_1, \ldots, n_T$.
Then $\{n_1, \ldots, n_T\}$ is a partition of $n-p$.
Let $L={\rm lcm}(n_1, \ldots, n_T)$.
\end{notation}

The ideas in the following result were used frequently by Abhyankar.

\begin{prop} \label{Pramfibre}
With notation as in \ref{Ncycletype}, 
Suppose $\phi: Y \to \PP^1$ is a $G$-Galois cover branched only at $\infty$ 
with inertia $I$ at a ramification point $\eta$.
Consider the subgroup $S_{n-1} \subset S_n$ of elements fixing $1$.
Let $X_1$ be the quotient of $Y$ by $G \cap S_{n-1}$.
Then the fibre of $\psi: X_1 \to \PP^1$ above $\infty$ consists of $T+1$ points, with inertia groups of 
order $p, n_1, \ldots, n_T$.
\end{prop}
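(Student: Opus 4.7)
The plan is to translate the geometric question about the fibre of $\psi$ above $\infty$ into purely group-theoretic data about the action of $I=\langle\tau\rangle\rtimes\langle\beta\rangle$ on $\{1,\ldots,n\}$, via the standard dictionary between intermediate covers of a Galois cover and subgroups of its Galois group. Since $G$ is transitive on $\{1,\ldots,n\}$ with stabilizer of $1$ equal to $H=G\cap S_{n-1}$, the $G$-set $G/H$ is identified with $\{1,\ldots,n\}$. Points of $X_1=Y/H$ above $\infty$ are then in bijection with the double cosets $H\backslash G/I$, equivalently with the $I$-orbits on $G/H\cong\{1,\ldots,n\}$.

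First I would show that the ramification index of $\psi$ at the point $x$ corresponding to an $I$-orbit $O$ equals $|O|$. Fixing a point $\eta\in Y$ above $\infty$ with inertia group $I$, any other point $y=g\eta$ of $Y$ above the corresponding $x\in X_1$ has inertia group $gIg^{-1}\cap H$ in the $H$-Galois cover $Y\to X_1$, so the ramification index of $X_1\to\PP^1$ at $x$ equals $|I|/|gIg^{-1}\cap H|=|I|/|I\cap g^{-1}Hg|$, which by orbit--stabilizer is the size of the $I$-orbit of $g^{-1}H$ in $G/H$. Because each such orbit will turn out to have size either $p$ or prime to $p$, the corresponding local extension is either a wild $\mathbb{Z}/p$-extension or a tame cyclic extension, in either case producing an inertia group of the stated order.

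The remaining combinatorial task is then to enumerate the $I$-orbits on $\{1,\ldots,n\}$. The generator $\tau=(1,2,\ldots,p)$ acts transitively on $\{1,\ldots,p\}$ and fixes every element of $\{p+1,\ldots,n\}$. By Lemma~\ref{NormalizerSn}, since $\beta\in N_G(\langle\tau\rangle)$ we have $\beta\in(\langle\tau\rangle\rtimes\langle\theta\rangle)\times H_s$, so $\beta$ preserves each of the sets $\{1,\ldots,p\}$ and $\{p+1,\ldots,n\}$. Thus $\{1,\ldots,p\}$ is a single $I$-orbit of size $p$ (already transitive under $\tau$), and on $\{p+1,\ldots,n\}$ the $I$-orbits coincide with the $\langle\beta\rangle$-orbits (since $\tau$ acts trivially there), giving $T$ orbits of sizes $n_1,\ldots,n_T$. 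Combining with the previous step yields $T+1$ points of $X_1$ above $\infty$ with inertia groups of orders $p,n_1,\ldots,n_T$.

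The proof is a bookkeeping exercise once Lemma~\ref{NormalizerSn} has been invoked; no geometric or ramification-theoretic input beyond Abhyankar's Lemma-style formalism is needed. The two points that require genuine care are the orbit--stabilizer identification of the ramification index with the orbit size, and the verification that $\{1,\ldots,p\}$ is not split into smaller $I$-orbits -- the latter being immediate from the transitivity of $\tau$ on this set.
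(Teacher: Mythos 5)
Your proposal is correct and takes essentially the same route as the paper's proof: both identify the points of $X_1$ above $\infty$ with the double cosets $H\backslash G/I$, hence with the $I$-orbits on $\{1,\ldots,n\}$, equate orbit lengths with ramification indices, and then enumerate the orbits (one of size $p$ under $\tau$, plus the $\langle\beta\rangle$-orbits of sizes $n_1,\ldots,n_T$) using Lemma~\ref{NormalizerSn}. Your explicit orbit--stabilizer justification that the index at the point corresponding to an orbit $O$ equals $|I|/|I\cap g^{-1}Hg|=|O|$ is a step the paper merely asserts, so your writeup is, if anything, slightly more complete.
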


\begin{proof}
Let $H = G \cap S_{n-1}$, where $S_{n-1}$ is the subgroup fixing $1$. 
The symmetric group acts on $\{1, ..., n\}$ via a right action and the 
set of cosets $H \backslash G$ is in bijection with $\{1, ..., n\}$, via the image of $1$.
Let $\psi: X_1 \to \PP^1$ be the quotient of $Y$ by $H$.
Then $\psi$ has degree $n$ and the fibers of $\psi$ can be indexed by the numbers $1, \ldots, n$.  

After labeling a point of $Y$ in $\phi^{-1}(\infty)$ to correspond to the identity coset, 
there is a bijection from $G/I$ to the fiber of $\phi$ above $\infty$.  
The action of $G$ on $\phi^{-1}(\infty)$ is given by left multiplication.

Now, the points of $X_1$ in $\psi^{-1}(\infty)$ correspond to the orbits of $H$ on $G/I$ (via left action).  
In other words, they correspond to double cosets $H \backslash G/I$, which correspond to orbits of $I$ acting on the right on $\{1, \ldots, n\}$.  Additionally, the lengths of the orbits correspond to the ramification indices.  
From Notation \ref{Ncycletype} and Lemma \ref{NormalizerSn},
there is one orbit of length $p$, and $T$ additional orbits of lengths $n_1, \ldots, n_T$.

\end{proof}

The following proposition is a new result which 
gives a relationship between the central and faithful parts of the tame
ramification of a Galois cover of the affine line.  It is inspired by \cite{bouwen}. It is not immediately apparent whether this 
relationship places new constraints on inertia groups of covers of the affine line since it depends on the genus of a quotient of the cover.

\begin{proposition}
Assume the context of Notation \ref{Ncycletype} and Proposition \ref{Pramfibre}, and let $g_1$ be the genus of $X_1$.  Then
the upper jump $\sigma$ (see Section~\ref{RamGrps})
satisfies the relationship:
\[\displaystyle \sigma = \frac{2g_1 + n + T-1}{p-1}.\]
In particular, the order of the faithful part of the tame ramification satisfies the relationship:
\[\displaystyle m'=\frac{p-1}{{\rm gcd}(p-1, 2g_1+n+T-1)}\]
and the order of the central part of the tame ramification satisfies the relationship:
\[\displaystyle m''=\frac{L}{{\rm gcd}(m',L)},\]
where $L={\rm lcm}(n_1, \ldots, n_T)$.
\end{proposition}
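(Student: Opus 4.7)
The plan is to compute the upper jump $\sigma$ by applying the Riemann-Hurwitz formula to the quotient cover $\psi:X_1\to\PP^1$, then extract $m'$ from $\sigma$ using the denominator characterization at the end of Section~\ref{RamGrps}, and finally extract $m''=m/m'$ from a group-theoretic computation of $\ord(\beta)$.

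Write $H=G\cap S_{n-1}$ and let $\pi:Y\to X_1=Y/H$ be the quotient map. Since $\phi$ is branched only at $\infty$, so is $\psi$. By Proposition~\ref{Pramfibre}, the fiber $\psi^{-1}(\infty)$ consists of one wild point $x_0=\pi(\eta)$ of ramification index $p$ together with $T$ tame points $x_1,\dots,x_T$ of ramification indices $n_1,\dots,n_T$ (all prime to $p$, since they divide $m$). At each tame point the different exponent is $n_i-1$. For $d_{x_0}$, the condition $e(x_0)=p=|I|/|H\cap I|$ forces $|H\cap I|=m$, so $H\cap I$ is a prime-to-$p$ complement of $\langle\tau\rangle$ in $I=\langle\tau\rangle\rtimes\langle\beta\rangle$. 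Hence the local extension $\widehat{\mathcal{O}}_{Y,\eta}/\widehat{\mathcal{O}}_{X_1,x_0}$ is tame Galois of degree $m$ with different exponent $m-1$, while Lemma~\ref{1SRH} gives $v_\eta(\mathfrak{d}_{Y/\PP^1})=(pm-1)+J(p-1)$. The tower formula for the different,
\[v_\eta(\mathfrak{d}_{Y/\PP^1})=v_\eta(\mathfrak{d}_{Y/X_1})+m\cdot d_{x_0},\]
then yields $d_{x_0}=(p-1)(1+\sigma)$ after substituting $\sigma=J/m$.

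Plugging into Riemann-Hurwitz for the degree-$n$ cover $\psi$ gives
\[2g_1-2=-2n+(p-1)(1+\sigma)+\sum_{i=1}^T(n_i-1)=-n-1-T+(p-1)\sigma,\]
using $\sum_i n_i=n-p$. Solving for $\sigma$ yields $\sigma=(2g_1+n+T-1)/(p-1)$. Since $\sigma$ written in lowest terms has denominator $m'$, one reads off $m'=(p-1)/\gcd(p-1,\,2g_1+n+T-1)$.

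Finally, for $m''=m/m'$ I determine $m=\ord(\beta)$. By Lemma~\ref{NormalizerSn}, $\beta$ decomposes as a commuting product $\alpha\gamma$ with $\alpha\in\langle\tau\rangle\rtimes\langle\theta\rangle$ and $\gamma\in H_s$ of cycle type $(n_1,\dots,n_T)$, so $\ord(\gamma)=L$. Because $\gamma$ commutes with $\tau$, the image of $\alpha$ in $\Aut(\langle\tau\rangle)$ coincides with that of $\beta$, which has order $m'$ by Definition~\ref{2Dalpha}. A direct computation in the Frobenius group $\F_p\rtimes\F_p^*$ then shows $\ord(\alpha)=m'$: when $m'>1$, any element whose image in $\F_p^*$ has order $m'$ has order exactly $m'$; when $m'=1$, the constraint $p\nmid\ord(\beta)$ forces $\alpha=1$. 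Hence $m=\lcm(m',L)$, and $m''=m/m'=L/\gcd(m',L)$. The main obstacle is the tower-formula computation of $d_{x_0}$, which requires correctly identifying $H\cap I$ as a prime-to-$p$ complement of the wild inertia and tracking the Herbrand transition $J\mapsto\sigma=J/m$.
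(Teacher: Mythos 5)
Your proof is correct, and it reaches the main identity by a genuinely different (though closely related) route from the paper's. The paper computes $2g_Y-2$ twice and never touches the non-Galois cover $\psi$ directly: once via the $G$-cover $\phi:Y\to\PP^1$ using Lemma \ref{1SRH}(3), and once through the Galois subcover $\phi_1:Y\to X_1$, whose ramification over all $T+1$ points $P_0,\dots,P_T$ it must determine --- tame of index $m$ over $P_0$, wild of index $pm/n_i$ with lower jump still $J$ over each $P_i$, the latter requiring the invariance of lower jumps under subgroups (\cite[IV, Proposition 2]{Serr}) --- and then equates the two expressions and multiplies by $n/|G|$. You instead apply Riemann--Hurwitz directly to the degree-$n$ quotient $\psi:X_1\to\PP^1$, obtaining the wild different exponent $d_{x_0}=(p-1)(1+\sigma)$ from transitivity of the different in the tower $Y\to X_1\to\PP^1$ at the single point $\eta$; your identification of $H\cap I$ as a prime-to-$p$ complement of $\langle\tau\rangle$ of order $m$ (forced by $e(x_0)=p$ and orbit-stabilizer) correctly justifies the tame contribution $m-1$ there, and your arithmetic checks out. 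Algebraically the two derivations encode the same identity, but yours concentrates all wild bookkeeping at one point and so avoids the jump-preservation input at the $T$ wildly ramified points of $\phi_1$, at the cost of invoking the tower formula for differents and Riemann--Hurwitz for a non-Galois map. The extraction of $m'$ as the denominator of $\sigma$ is identical in both. For $m''$ your treatment is in fact more careful than the paper's: the paper's proof disposes of the third claim with the one-line assertion that ``$m=L$ is the order of the tame inertia of $\phi$,'' which as stated holds only when $m'\mid L$; your decomposition $\beta=\alpha\gamma$ via Lemma \ref{NormalizerSn}, with $\ord(\gamma)=L$ and $\ord(\alpha)=m'$ established by the Frobenius-group computation in $\langle\tau\rangle\rtimes\langle\theta\rangle$, gives the correct general statement $m=\lcm(m',L)$, from which $m''=m/m'=L/\gcd(m',L)$ follows (equivalently, $m''=\ord(\beta^{m'})=\ord(\gamma^{m'})$), so your argument both proves the stated formula and repairs the paper's terse justification.
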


\begin{proof}
The strategy of the proof is to calculate the genus of $Y$ in two ways using the Riemann-Hurwitz formula.
First, by hypothesis, the $G$-Galois cover $\phi: Y \to \PP^1$ is branched only at $\infty$ 
with inertia $I$ at a ramification point.  Recall that $I \subset G$ is a semi-direct product of the form
$\Z/p \rtimes \Z/m$.  Let $J$ be the jump in the ramification filtration of $\phi$ in the lower numbering.  
By Theorem \ref{1SRH}(3), 
\begin{equation} \label{Ehurwitz1}
2g_Y-2=|G|(-2) + \frac{|G|}{|I|}(|I|-1 + (p-1) J).
\end{equation}

By Proposition \ref{Pramfibre}, 
the fibre of $\psi: X_1 \to \PP^1$ above $\infty$ consists of $T+1$ points, with inertia groups of 
order $p, n_1, \ldots, n_T$.  Denote these points by $P_0, P_1, \ldots, P_T$.

Consider the subcover $\phi_1:Y \to X_1$ which is Galois with degree $|G|/n$.
Above $P_0$, the cover $\phi_1$ is tamely ramified with ramification index $m$.
Above $P_i$ for $1 \leq i \leq T$, the cover $\phi_1$ is wildly ramified with ramification index $pm/n_i$.
By \cite[IV, Proposition 2]{Serr}, the lower jump of $\phi_1$ above $P_i$ is still $J$.
By Theorem \ref{1SRH}(2), 
\begin{equation} \label{Ehurwitz2}
2g_Y - 2 = \frac{|G|}{n} (2g_1 -2) + \frac{|G|}{nm}(m-1) + 
\sum_{i=1}^T \frac{|G|n_i}{npm}\bigl(\frac{pm}{n_i} -1 + (p-1)J\bigr).
\end{equation}

Setting the right hand sides of equations \eqref{Ehurwitz1} and \eqref{Ehurwitz2} equal and multiplying by 
$n/|G|$ yields
\begin{equation} \label{Ehurwitz3}
-2n + n-\frac{n}{pm}+\frac{n(p-1)}{pm}J = 2g_1 -2 + \frac{m-1}{m} + 
\sum_{i=1}^T (1 - \frac{n_i}{pm} + \frac{n_i(p-1)}{pm}J).
\end{equation}

Recall that $\sum_{i=1}^T n_i=n-p$.  Substituting this yields that:
\begin{equation} \label{Ehurwitz4}
-2n + n-\frac{n}{pm}+\frac{n(p-1)}{pm}J = 2g_1-2 + \frac{m-1}{m} +
T - \frac{n-p}{pm} + \frac{p-1}{pm}(n-p)J.
\end{equation}

Thus $\frac{J}{m}(p-1)= 2g_1 - 1 + T + n$ and the first claim follows since $\sigma=J/m$.
The second claim follows from the fact that $m'$ is the smallest positive integer such that $m' \sigma$ is integral.
The third claim follows from the fact that $m=L$ is the order of the tame inertia of $\phi$. 
\end{proof}

This proposition allows one to reinterpret the Inertia Conjecture in terms of the congruence class of 
$g_1$ modulo $p-1$.

\begin{example}
Suppose $G=A_{p+t}$ with $2 \leq t <p$.
Suppose the cycle type of $\beta$ on $\{p+1, \ldots, p+t\}$ is a $t$-cycle.
Then $\sigma = \frac{2g_1 + p+t}{p-1}$.
\end{example}

\subsection{Open problems}

Among the many quasi-$p$ groups that are worthy of investigation, the alternating groups $A_n$ with $p \leq n < 2p$ 
appear the most accessible.
In this case, to prove (the tame part of) Abhyankar's Conjecture, 
one must show that each subgroup $I$ of $N_{A_{n}}(\langle \tau \rangle)$ having the form $\Z/p \rtimes \Z/m$
occurs as the inertia group of an $A_n$-Galois cover of the affine line.  
These subgroups are partially ordered under inclusion; we say that $I$ is {\it a maximal inertia group for $A_n$}
if $I \not \subset I'$ for any $I' \subset N_{A_{n}}(\langle \tau \rangle)$ having the form $\Z/p \rtimes \Z/m$.
To prove that Abhyankar's Conjecture is true for the group $A_n$, it suffices to prove that each
of the maximal $I$-groups for $A_n$ occurs as the inertia group of an $A_n$-Galois cover of the affine line.
As $n$ increases, the number of maximal $I$-groups grows quickly.

The maximal inertia groups $I$ for $A_n$ are of two types.  The first type are those which stabilize an element, say $n$,
and thus are contained in $A_{n-1}$.  One could ask if an $A_{n-1}$ cover of the affine line with inertia $I$ could be used 
to produce an $A_n$ cover of the affine line with inertia $I$.  It appears difficult to do this, using either field theory or 
formal patching.  

The second type are those which do not stabilize an element.  Even for small $s=n-p$, most of these have not 
been constructed explicitly.
For example, the Galois closure of Abhyankar's equation $y^{p+s} - x y^s + 1$ is an $A_{p+s}$ cover of the affine line with 
inertia groups of order $p(p-1)s/{\rm gcd}(p-1, s(s+1))$.  The factor of ${\rm gcd}(p-1, s(s+1))$ is lost from 
applying Abhyankar's Lemma to remove tame ramification.  Even when this factor is trivial, this type of equation 
only produces inertia groups $I \simeq \langle \tau \rangle \rtimes \langle \beta \rangle$ where $\beta$ acts on $\{p+1, \ldots, p+s\}$ 
as an $s$-cycle.

Here is a very special case of Conjecture~\ref{Iconjecture}.
\begin{question}
If $p \geq 5$, is Abhyankar's Inertia Conjecture true for the group $A_{p+1}$? 
\end{question}

We highlight this as a special case because Abhyankar's treatment of the group $A_{p+1}$ 
differed from that of other alternating groups.  
To realize $A_{p+1}$ as a Galois group of a cover of the affine line, Abhyankar constructed an $A_{p+2}$-Galois cover $Y \to \PP^1$
which factors through a degree $p+2$ cover $X=\PP^1 \to \PP^1$.  The $A_{p+1}$ Galois 
subcover $Y \to X$ is ramified at two points, but the 
tame ramification above one point can be removed using Abhyankar's Lemma.  Unfortunately, the application of Abhyankar's lemma
also eliminates the tame ramification at the wildly ramified point.  The inertia groups of the resulting $A_{p+1}$ cover of the affine line are of order $p$.

\section{Arithmetic covers} \label{SConj3}

In the previous sections, we considered curves over 
an algebraically closed field $k$ of characteristic $p > 0$. 
There, we explored how the triviality of the prime-to-$p$ fundamental group places restrictions on the finite groups $G$ which can occur as the Galois group of a cover $\phi:Y \to \PP^1$ branched only at $\infty$ as well as on the inertia groups that can occur over $\infty$.
These restrictions over algebraically closed fields $k$ also limit the finite groups $G$ that can occur as covers of curves defined over non-algebraically closed fields $\ell$.  
As usual, Abhyankar conjectured that the covers that do exist are exactly those that can exist under these limitations.
When working with covers of curves over non-algebraically closed fields, it is convenient to distinguish between the part of the cover coming from an extension of the scalars and the part that would remain the same if the scalars were extended.  
This leads naturally to the definitions of the arithmetic and geometric fundamental group and to Abhyankar's Arithmetical Affine Conjectures.

\subsection{Background on covers over perfect fields.}\label{Sperfectfields}

Let $\ell$ be a perfect field. 
Let $X$ be a smooth projective $\ell$-curve.  
We call a cover of $\ell$-curves $Y \to X$ {\it regular} if the algebraic closure of $\ell$ in the function field $K_Y$ of $Y$ is $\ell$ itself (some authors use ``geometric").  
For algebraically closed fields this is of course always the case, however in general the extension may have some ``arithmetic" component.

Let $\ell$ be a field with $\ell'/\ell$ a finite Galois extension.  
Let $Y$ be a curve defined over $\ell'$ and let $A$ be a subgroup of $\text{Aut}_\ell(Y)$ such that $A$ induces the full group of $\ell$-automorphisms of $\ell'$.  
Then $A$ induces a Galois cover $Y \to X:=Y/A$ with Galois group $A$ where $X$ is a curve defined over $\ell$.
Let $G$ be the normal subgroup of $A$ which is trivial on $\ell'$.  
So $A/G \cong {\rm Gal}(\ell'/\ell)$.  
Then there is a Galois cover $\phi:Y \rightarrow X_{\ell'}:= X \times_\ell \ell'$ that is regular (over $\ell'$) and has Galois group $G$.  
Let $\Delta$ be the set of branch points of the cover $Y \to X$ and note that this set is $A$-invariant and so defined over $\ell$.  
Let $\Delta_G$ be the set of branch points of $Y \to X_{\ell'}$.  This set is defined over $\ell'$.

Given any Galois cover $\phi:Y \to X$ with Galois group $A$ and branch locus ${\Delta}$ defined over $\ell$, let $\ell'$ be the algebraic closure of $\ell$ in $K_Y$.
Then $\phi:Y \to X$ factors through $X_{\ell'} \to X$,  $Y \to X_{\ell'}$ is a Galois cover defined over $\ell'$, and we have a short exact sequence
$$1 \to {\rm Gal}(Y/X_{\ell'}) \to A \to {\rm Gal}(\ell'/\ell) \to 1.$$
Notice that the algebraic closure of $\ell'$ in $K_Y$ is $\ell'$ again. 
Thus $Y \to X_{\ell'}$ is regular and we call $G={\rm Gal}(Y/X_{\ell'})$ the {\it geometric Galois group} of $Y \to X$. 

If we begin with a regular non-Galois cover $Y \to X$ then its Galois closure $\bar{Y} \to X$ may or may not be regular. 
Let $A$ and $G$,  respectively be the arithmetic and geometric Galois groups of the cover $\bar{Y}\to X$, then $A$ is called the {\it arithmetic monodromy  group} of $Y \to X$ and $G$ the {\it geometric monodromy group} of $\bar{Y} \to X$.

This breakdown of the arithmetic and geometric monodromy groups can be extended to the level of fundamental groups. 
Let $U_{\bar \ell} = U \times_\ell {\bar \ell}$.  
Then we have a short exact sequence analogous to the one that exists on a finite level.  Namely,
$$1 \to \pi_1(U_{\bar \ell}) \to \pi_1(U) \to {\rm Gal}({\bar \ell}/\ell) \to 1$$
and it splits if there exists a $\ell$-point on $U$.
The group $\pi_1(U_{\bar \ell})$ is called the {geometric fundamental group}.  
In the earlier parts of this survey article, the curves in question have been defined over an algebraically closed field and thus the fundamental group has
been the geometric fundamental group.

\subsection{Abhyankar's Affine Arithmetical Conjecture}\label{SArithmeticalConj}

The 1990's brought much progress on understanding the set of finite quotients of the arithmetic and geometric fundamental group.  
For curves over algebraically closed fields, where the two groups coincide, Raynaud and Harbater's proof of Abhyankar's Conjecture determined the set completely.
Nevertheless,  because their proof was ``existential," Abhyankar continued to ``to march on with the project of finding explicit nice equations for nice groups...."  \cite{Abh01}.
He set about computing the geometric monodromy groups (Galois groups) of the two families of equations mentioned in his 1955 and 1957 papers.  
For a field $l$ of characteristic $p>0$ (not necessarily algebraically closed), these families were defined by the following equations, 
$${\overline f} = h(X,Y^p)Y^t-aX^rY^t + bX^s$$
and 
$${\tilde f} =  h(X,Y^p)- aX^rY^t + bX^s$$
for non-zero constants $a$ and $b$, and 
where $h(X,Y) \in \ell[X,Y]$ is a monic polynomial of degree $\nu$ in $Y$.
By considering the $Y$-discriminant of these polynomials (viewed as polynomials in $Y$ with coefficients in $K(X)$), one finds the following:
\begin{enumerate}
\item If $r \geq 0$, $s=0$, and $t \not\equiv 0 \pmod{p}$ then the
  equation $\overline f =0$ gives an unramified covering of the affine
  $\ell$-line (i.e., the projective line minus one point).
\item  If $r \geq 0$, $s>0$, and $t \not\equiv 0 \pmod{p}$ then the equation $\overline f =0$ gives an unramified covering of the punctured affine $\ell$-line.
\item  If $h(X,0)=0$, $r \geq 0$, $s\geq 0$, and $t \not\equiv 0 \pmod{p}$ then the equation $\tilde f =0$ gives an unramified covering of the punctured affine $\ell$-line.
\end{enumerate}
In \cite{A22}, Abhyankar showed that for various choices of the parameters $a$, $b$, and $t$ and for suitable primes $p$ and prime powers $q$, the Galois groups realized using these covers included the symmetric and alternating groups $A_n$ and $S_n$ of any degree $n>1$, the linear groups ${\rm GL}(m,q)$, ${\rm SL}(m,q)$, ${\rm PGL}(m,q)$, ${\rm PSL}(mq)$, ${\rm AGL}(1,q)$ and $\F_q^{+}$ for any integer $m>1$, the Mathieu groups $M_{11}$, $M_{12}$, $M_{22}$, $M_{23}$, and $M_{24}$, and the automorphism group of $M_{12}$.  
The geometric monodromy (Galois) groups of covers of the affine line and the punctured affine line found in \cite{A22}, combined with Harbater and Raynaud's proof of Abhyankar's conjecture, lead Abhyankar to state the following conjecture:
\begin{conj}[Early Affine Arithmetical Conjecture] \cite[Conjecture 9.2C]{A22}\label{eaconjecture}
Let $k$ be an algebraically closed field of characteristic $p>0$. For a finite group $A$,  if $\pi_1(\A^1_{k}) \twoheadrightarrow A$
then 
$\pi_1(\A^1_{\F_p}) \twoheadrightarrow A$.  
\end{conj}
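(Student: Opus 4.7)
The plan is to combine the Raynaud--Harbater theorem (Conjecture~\ref{dAbhConj}, proved in Section~\ref{aSabproof}) with a descent argument from $k = \overline{\F}_p$ down to $\F_p$. By that theorem, the hypothesis $\pi_1(\A^1_k) \twoheadrightarrow A$ is equivalent to $A$ being a quasi-$p$ group, so the task reduces to constructing, for an arbitrary quasi-$p$ group $A$, an $A$-Galois \'etale cover of $\A^1_{\F_p}$.

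First, I would apply the Raynaud--Harbater theorem to produce an $A$-Galois \'etale cover $Y \to \A^1_k$. Its defining equations involve only finitely many elements of $k$, so $Y$ is already defined over a finitely generated $\F_p$-subalgebra $R \subset k$, giving an $A$-Galois \'etale cover $Y_R \to \A^1_R$. Standard spreading-out arguments show that on a dense open subset of $\Spec R$ the fibers remain geometrically connected and $A$-Galois. Specializing at any closed point of this open set yields an $A$-Galois \'etale cover $Y_{\F_q} \to \A^1_{\F_q}$ for some finite field $\F_q \supset \F_p$, and hence the surjection $\pi_1(\A^1_{\F_q}) \twoheadrightarrow A$.

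The crux is then to pass from $\F_q$ down to $\F_p$. The inclusion of open subgroups $\pi_1(\A^1_{\F_q}) \subset \pi_1(\A^1_{\F_p})$ shows that $A$ is a quotient of the smaller group, but a quotient of an open subgroup does not automatically extend to a quotient of the ambient profinite group. Concretely, the composition $Y_{\F_q} \to \A^1_{\F_q} \to \A^1_{\F_p}$ has Galois closure with group $B$ fitting in a short exact sequence $1 \to N \to B \to \Gal(\F_q/\F_p) \to 1$ with $N \twoheadrightarrow A$; however, the kernel of $N \to A$ is typically not normal in $B$, so $A$ appears only as a subquotient of $B$ and not as a direct quotient. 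Equivalently, realizing $A$ directly as $\Gal(L/\F_p(x))$ requires descending $Y_{\F_q}$ to $\F_p$ with $A$-structure intact, which amounts to producing a $\Gal(\F_q/\F_p)$-equivariant family of isomorphisms $\sigma^* Y_{\F_q} \isom Y_{\F_q}$ satisfying a cocycle condition, and there is a genuine Galois-cohomological obstruction valued in the center or outer automorphism group of $A$.

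The hard part will be controlling this descent obstruction uniformly in $A$. One natural line of attack is to exploit the flexibility built into the construction, namely by varying the specialization inside $\Spec R$, deforming the cover produced by the Raynaud--Harbater proof, or enlarging $\F_q$, in order to realize \emph{some} $A$-cover whose descent datum is trivial. A more ambitious alternative is to adapt the proofs of Serre, Raynaud, and Harbater directly to the arithmetic setting. This route encounters three distinct obstacles: Serre's cohomological argument for solvable quotients uses that $\pi_1(\A^1_k)$ has cohomological dimension one, which fails over $\F_p$; Raynaud's stable-reduction and rigid-patching techniques are carried out over $k((t))$ with $k$ algebraically closed; and Harbater's formal patching is developed over $k[[t]]$. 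A general proof of the conjecture will almost certainly combine both strategies together with new ideas tailored to the non-algebraically-closed setting.
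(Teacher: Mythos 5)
There is a fundamental problem here: the statement you were asked to prove is an open conjecture, not a theorem, and the paper offers no proof of it. Section~\ref{SConj3} states explicitly that Conjecture~\ref{eaconjecture} (and its refinement, Conjecture~\ref{aconjecture}) remains unproven; the paper only records partial evidence, most notably Theorem~\ref{GS-aconjecture}, which for groups $A$ generated by $p(A)$ and one further element realizes $A$ as an \emph{arithmetic} Galois group over some finite extension $\ell/\F_p$, with geometric group only $p(A)$ --- precisely stopping short of the descent to $\F_p$ that the conjecture demands. So no argument you give could be checked against a proof in the paper, and your own text concedes this: the final paragraph is a research program (``one natural line of attack,'' ``will almost certainly combine both strategies together with new ideas''), not a proof.

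That said, your analysis locates the difficulty in the right place. The first half is sound and standard: by the Raynaud--Harbater theorem the hypothesis is equivalent to $A$ being quasi-$p$, and spreading out plus specialization does produce an $A$-Galois \'etale cover of $\A^1_{\F_q}$ for some finite $q$ --- this is essentially the Fried--V\"olklein/Jarden/Pop result cited after Conjecture~\ref{atconjecture}, which gives regular realizations over $\F_q(X)$ only for $q$ sufficiently large depending on $A$. The genuine gap is the step from $\F_q$ to $\F_p$: as you correctly observe, $A$ arises there only as a subquotient of a group $B$ extending $\Gal(\F_q/\F_p)$, and descending the cover with its $A$-structure requires trivializing a cocycle-type obstruction that nobody knows how to control uniformly in $A$; none of the techniques in the paper (Serre's cohomological-dimension argument, Raynaud's stable reduction, Harbater's formal patching) applies over the non-algebraically-closed base, for exactly the reasons you list. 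One further subtlety worth noting: as literally stated, Conjecture~\ref{eaconjecture} asks only for a surjection $\pi_1(\A^1_{\F_p})\twoheadrightarrow A$, whereas your descent strategy targets the a priori stronger \emph{regular} realization (the form appearing in Conjecture~\ref{Cintro3}); since a quasi-$p$ group can have cyclic $p$-power quotients, the arithmetic part of such a cover need not be trivial, so the weaker form might in principle admit a different attack --- but both forms are open.
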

\noindent Abhyankar went on to ask whether the finite quotients of $\pi_1(\A^1_{\F_p})$ 
might not be exactly the finite quotients of $\pi_1(\A^1_{k} - \{0\})$.

To realize the groups above as Galois groups, Abhyankar used results from several of his 1980's era papers as well as the first paper in a series under the theme ``nice equations for nice groups" (\cite{A21}, \cite{A23}-\cite{A26}).
In that series, he gave natural representations of classical groups as Galois groups of factorizations of generalized Artin-Schreier polynomials defined over an algebraically closed field $k$ of characteristic $p>0$.
In \cite{A27} Abhyankar provided a unified view, or ``mantra" as he preferred to call it, of the groups arising from ``nice" equations.  
These were again Galois groups of covers of the affine line and the punctured affine line defined over an algebraically closed field of characteristic $p>0$, and they included some symplectic, orthogonal and unitary groups.
In the late 1990's, motivated by M.~Fried,  Abhyankar went back to his ``nice" equations, dropping the algebraically closed hypothesis, and computed the {\it arithmetic} monodromy groups of the covers defined in those papers (\cite{A33}, \cite{A34}). 
He noticed that in the cases where the geometric monodromy is a linear or symplectic group, the arithmetic monodromy group is the corresponding semi-linear group. 
For example, in cases where the geometric monodromy group of the cover over $\A^1_{\F_p}$ defined by a nice equation is ${\rm PGL}(m,q)$, the arithmetic monodromy group is the quotient by scalar matrices of the semi-direct product of ${\rm Aut}(\F_q)$ and ${\rm GL}(m,q)$.

The results in \cite[Prop. 4.2.4]{A33} were sufficient for Abhyankar to upgrade his question to a conjecture:
\begin{conj}[Affine Arithmetical Conjecture] \cite[Section 16]{Abh01}\label{aconjecture}
Let $k$ be an algebraically closed field of characteristic $p>0$. For a finite group $A$,  
$\pi_1(\A^1_{\F_p} ) \twoheadrightarrow A$ 
if and only if
$\pi_1(\A^1_{k}- \{0\}) \twoheadrightarrow A$.
\end{conj}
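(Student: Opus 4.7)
For the forward direction, I would use the short exact sequence
\[1 \to \pi_1(\A^1_k) \to \pi_1(\A^1_{\F_p}) \to \Gal(k/\F_p) \to 1\]
from Section~\ref{Sperfectfields}. A surjection $\pi_1(\A^1_{\F_p}) \twoheadrightarrow A$ restricts on the geometric piece to give a normal subgroup $G \trianglelefteq A$ with $A/G$ a quotient of $\Gal(k/\F_p) \cong \hat{\Z}$, hence cyclic. The group $G$ is a finite quotient of $\pi_1(\A^1_k)$, so by the Raynaud half of Abhyankar's Conjecture (Theorems~\ref{aTraynaud1} and~\ref{aTraynaud2}) it is quasi-$p$. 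Thus $G \subseteq p(A)$, so $A/p(A)$ is a cyclic quotient of $A/G$, and Harbater's Theorem~\ref{aTharbater} then yields $\pi_1(\A^1_k - \{0\}) \twoheadrightarrow A$.

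For the reverse direction, suppose $A/p(A)$ is cyclic. Since $p(A)$ is generated by all $p$-subgroups of $A$, the quotient $A/p(A)$ has no nontrivial $p$-subgroup and thus has order prime to $p$. Set $G = p(A)$ and $C = A/G$. The strategy is to rerun Harbater's proof of Theorem~\ref{aTharbater} with $\F_p$ in place of $k$. After a group-theoretic reduction as in loc.~cit., one may assume $A \cong G \rtimes C$. The proof then requires three inputs: a regular unramified $G$-Galois cover of $\A^1_{\F_p}$ with inertia group equal to a prescribed Sylow $p$-subgroup $P$ of $G$ at $\infty$; a regular $P \rtimes C$-Galois cover of $\A^1_{\F_p} - \{0\}$, tame above $0$, with prescribed local behavior at a chosen closed point; and a formal patching argument over $\F_p((t))$ followed by specialization of $t$ to an element of $\F_p$. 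The second and third inputs are direct adaptations of Harbater's original argument, so the difficulty concentrates in the first.

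The principal obstacle is thus realizing every quasi-$p$ group $G$ as the geometric monodromy of a regular unramified $\F_p$-cover of $\A^1_{\F_p}$ with prescribed inertia. The existing Harbater--Raynaud proofs produce such covers over $k$ or over $k((t))$; general descent yields a model over some $\F_{p^n}$, but the conjecture demands $n=1$, and the descent data must be compatible with a prescribed extension $A$ of $G$ by $C$. Two complementary routes look promising. The first is to rerun Raynaud's stable-reduction argument (Theorem~\ref{aTraynaud2}) starting from a branched cover in characteristic zero defined over a number field unramified at $p$, so that the tail component of the stable reduction is automatically $\F_p$-rational, and then to read off the Frobenius action directly from the stable model. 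The second is to extend Abhyankar's explicit ``nice equations'' (Section~\ref{aSAbhEx} and \cite{A22,A33}), which are already defined over $\F_p$ and whose arithmetic monodromy Abhyankar computed directly, to cover enough quasi-$p$ groups to bootstrap the remaining cases via patching. In either route, the final check is that the arithmetic monodromy of the constructed cover realizes the prescribed extension class of $A$ over $G$, which becomes a cohomological computation once the descent has been made explicit.
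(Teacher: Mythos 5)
Your forward direction is correct and is exactly the paper's own observation following Conjecture~\ref{aconjecture}: the fundamental exact sequence of Section~\ref{Sperfectfields} shows that any $A$ with $\pi_1(\A^1_{\F_p}) \twoheadrightarrow A$ is cyclic-by-quasi-$p$, and Abhyankar's Conjecture for the punctured affine line (Theorem~\ref{aTharbater}) then realizes $A$ over $\A^1_k - \{0\}$. One attribution in your write-up is backwards, though: that the geometric part $G$ is quasi-$p$ is not ``the Raynaud half'' of Abhyankar's Conjecture. Theorems~\ref{aTraynaud1} and~\ref{aTraynaud2} prove the hard \emph{existence} direction (every quasi-$p$ group occurs over $\A^1_k$); the fact you need is the elementary direction, due to Grothendieck, that the prime-to-$p$ (indeed tame) fundamental group of the affine line is trivial, so the maximal prime-to-$p$ quotient $G/p(G)$ of any finite quotient $G$ of $\pi_1(\A^1_k)$ is trivial. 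The rest of your reduction ($G \subseteq p(A)$, hence $A/p(A)$ cyclic of prime-to-$p$ order) is fine.

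The reverse direction, however, is precisely the open content of the conjecture: the paper says explicitly that ``the part of the conjecture that remains to be shown is therefore the other direction,'' and neither the paper nor the literature contains a proof, so your proposal cannot close it --- and indeed its load-bearing steps fail as stated. First, patching over $\F_p((t))$ followed by ``specialization of $t$ to an element of $\F_p$'' does not go through: in Harbater's argument the specialization works because $k$-rational points are Zariski dense on the irreducible parameter space of the construction, $k$ being algebraically closed, whereas over $\F_p$ the required $\F_p$-rational point need not exist at all. This is exactly the obstruction the known partial results cannot overcome: the Fried--V\"olklein/Jarden/Pop theorem quoted after Conjecture~\ref{atconjecture} yields realizations over $\F_q(X)$ only for all $q$ exceeding a bound depending on the group, and Theorem~\ref{GS-aconjecture} produces the prescribed arithmetic/geometric pair only over \emph{some} finite extension $\ell/\F_p$, not over $\F_p$ itself. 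Second, in your stable-reduction route, the tail components are not ``automatically $\F_p$-rational'' just because the characteristic-zero cover is defined over a number field unramified at $p$: the stable model of a wildly branched cover generally exists only after a large \emph{ramified} extension of the base, and the wild monodromy action on the stable reduction means neither the tails nor the covers above them carry any a priori $\F_p$-structure; moreover Raynaud's construction applies only when $G$ has no nontrivial normal $p$-subgroup and $G(S) \neq G$, so it could not treat all quasi-$p$ groups in any case. In sum, you have a correct proof of the known implication (matching the paper's) together with a reasonable research program for the open one; as a proof of the biconditional it necessarily has a gap.
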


\noindent Notice that if $A$ satisfies the first condition, then it is cyclic-by-quasi-$p$
by the fundamental exact sequence (given at the end of the previous subsection); and therefore satisfies the second condition by Abhyankar's Conjecture for the punctured affine line.  The part of the conjecture that remains to be shown is therefore the other direction.  In work with Paul Loomis, Abhyankar showed that in many cases the results in the earlier papers still held when the condition that $k$ is algebraically closed is replaced by the condition that the field contain $\F_q$ for an appropriate $p$-power $q$ (\cite{AL1}, \cite{AL2}).
This led Abhyankar in 2001 to extend his conjecture to a function field version, following the ``as large as possible" mantra which is the common thread of all his conjectures.  

To state this version we introduce a little notation (cf.\ Section \ref{SConj1}).  
Let $\pi_A(\A^1_{\F_p,\infty})$ be the set of all Galois groups of finite Galois extensions of the rational function field $\F_p(X)$ (i.e. the finite quotients of the absolute Galois group of $\F_p(X)$).  This notation was meant to convey that $\Spec(\F_p(X))$ is the result of deleting all the closed points from $\A^1_{\F_p}$.

\begin{conj}[Total Arithmetical Conjecture] \cite[Section 16]{Abh01}\label{atconjecture}
Every finite group lies in $\pi_A(\A^1_{\F_p,\infty})$.
\end{conj}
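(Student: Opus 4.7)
The plan is to read the paper's definition of $\pi_A(\A^1_{\F_p,\infty})$ literally—as the set of finite quotients of $\Gal(\overline{\F_p(X)}/\F_p(X))$—and attack the conjecture by reducing to finite simple groups via embedding problems, then handling simple groups by combining Abhyankar's Conjecture for affine curves (Conjecture \ref{dAbhConj}) over $\overline{\F_p}$ with a careful arithmetic descent. For the inductive reduction, I would take $G$ finite, choose a minimal normal subgroup $N \triangleleft G$, realize $G/N$ over $\F_p(X)$ by induction on $|G|$, and then solve the embedding problem $G \twoheadrightarrow G/N$ over $\F_p(X)$. Since $N$ is either an elementary abelian $\ell$-group or a product of copies of a nonabelian simple group, this reduces the existence problem to (i) realizing every finite simple group over $\F_p(X)$ and (ii) solving embedding problems whose kernel has one of these two forms, in the spirit of Serre's cohomological argument (Theorem \ref{aTserre}).

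For (i), let $G$ be finite simple. If $p \nmid |G|$, I would realize $G$ over $\overline{\Q}(X)$ via the Riemann existence theorem, descend to a function field over a number field, and reduce modulo a prime above $p$, using the comparison isomorphism $\pi_1^{p'}(\A^1_{\overline{\F_p}} - \{0\}) \cong \pi_1^{p'}(\A^1_\C - \{0\})$ from Section \ref{Sfundgroups} to ensure the group survives specialization. If $p \mid |G|$, then $G$ is quasi-$p$, so Abhyankar's Conjecture produces a $G$-Galois cover $Y \to \PP^1_{\overline{\F_p}}$ branched only at $\infty$; by standard spreading out, this descends to some $\F_{p^n}$, giving a $G$-Galois extension $\tilde L / \F_{p^n}(X)$. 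The induced extension $\tilde L / \F_p(X)$ is Galois with group an extension of $\Z/n$ by $G$, namely
\[
1 \to G \to \Gal(\tilde L / \F_p(X)) \to \Z/n \to 1.
\]
To extract $G$ itself, I would try to choose the descent so that $\Gal(\F_{p^n}/\F_p)$ acts trivially on $G$ by conjugation, so that $\Gal(\tilde L/\F_p(X)) \cong G \times \Z/n$ and $G$ is the Galois group of the $\Z/n$-fixed subfield; alternatively, one could twist $\tilde L$ by an auxiliary $\Z/n$-Galois constant-field extension of $\F_p(X)$ and isolate $G$ via a diagonal subfield.

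The main obstacle is precisely this arithmetic descent in the quasi-$p$ case: Abhyankar's Conjecture over $\overline{\F_p}$ gives no control over the outer action of $\Gal(\overline{\F_p}/\F_p)$ on the $G$-cover it constructs, so trivializing (or diagonalizing away) this action is the crux. Because $\F_p$ is neither ample nor Hilbertian on its own, the Harbater--Pop patching machinery that settles the inverse Galois problem over $\F_p((t))$ does not transfer directly. Exploiting the fact that $\pi_A(\A^1_{\F_p,\infty})$ permits arbitrary branch loci, one strategy is to enrich the cover by additional tame branch points defined over $\F_p$ and chosen so as to adjust the outer arithmetic action in a controlled way, ultimately reducing it to an inner action that can be killed by a fiber-product or twist. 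Carrying this out uniformly over all finite simple $G$ with $p \mid |G|$, while simultaneously solving the embedding problems in step (ii) by adapting Serre's argument (Theorem \ref{aTserre}) from the geometric to the arithmetic setting, is where the principal technical difficulty lies.
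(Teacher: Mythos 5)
There is a fundamental mismatch here: the statement you are proving is a \emph{conjecture}, and the paper contains no proof of it. Indeed, the paper states explicitly that Conjecture~\ref{atconjecture} ``is equivalent to asking for a positive answer to the inverse Galois problem over $\F_p(X)$'' and that it ``remains open,'' open even over $\F_q(X)$ for every prime power $q$ and indeed over every global field. So no proposal along your lines (or any lines) can currently be checked against a proof in the paper; the best one can do is assess whether your outline isolates the genuine difficulties. It partly does --- you correctly identify the arithmetic descent (controlling the Frobenius action on a geometric cover) as a crux, and you correctly note that $\F_p$ is neither ample nor Hilbertian, so the patching methods that settle the problem over fields like $\F_p((t))$ do not apply. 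The only known partial result in this direction, cited in the paper (Fried--V\"olklein, Jarden, Pop), says that for each finite $G$ there is an $n$ such that $G$ is a Galois group over $\F_q(X)$ for all $q > n$; this works precisely because large $q$ substitutes for ampleness, and no technique is known for fixed small $q = p$.

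Beyond the gap you acknowledge, two steps of your reduction fail for reasons you do not flag. First, the inductive reduction to simple groups via embedding problems cannot be run ``in the spirit of Serre's argument'' (Theorem~\ref{aTserre}): that argument depends on $\pi_1(\A^1_k)$ having cohomological dimension $1$, hence being projective, so that every embedding problem lifts. The absolute Galois group of the global field $\F_p(X)$ has $\cd_\ell = 2$ for $\ell \neq p$, so it is not projective, and embedding problems with elementary abelian $\ell$-kernel carry genuine $H^2$-obstructions; solvability of all finite embedding problems over $\F_p(X)$ is itself an open problem at least as hard as the conjecture. Second, your descent step (i) in the quasi-$p$ case cannot be repaired by constant-field twisting or diagonal subfields: those devices only help when $\Gal(\F_{p^n}/\F_p)$ acts on $G$ by inner automorphisms and the extension $1 \to G \to \Gal(\tilde L/\F_p(X)) \to \Z/n \to 1$ splits compatibly. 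Abhyankar's own computations, discussed in Section~\ref{SArithmeticalConj} of the paper (\cite{A33}, \cite{A34}), show that the arithmetic monodromy of his covers is typically a genuinely larger semi-linear group (e.g.\ geometric group ${\rm PGL}(m,q)$ with arithmetic group a quotient of ${\rm GL}(m,q) \rtimes {\rm Aut}(\F_q)$), i.e.\ Frobenius really can act by outer automorphisms, and ``choosing the descent so the action is trivial'' is not a soft maneuver but the entire substance of Conjecture~\ref{aconjecture} and hence of Conjecture~\ref{atconjecture}. Your prime-to-$p$ case has the same hidden wall: descending a realization from $\overline{\Q}(X)$ (or from $\overline{\F}_p(X)$ via the $\pi_1^{p'}$ comparison) to $\F_p(X)$ requires exactly the regular/arithmetic control that is unavailable; even for $G$ simple of order prime to $p$, realizability over $\F_p(X)$ is open in general.
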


This conjecture is equivalent to asking for a positive answer to the inverse Galois problem over $\F_p(X)$.  Like Conjecture~\ref{aconjecture}, it remains open. In fact, for every prime power $q$, the inverse Galois problem over $\F_q(X)$ remains open as well (and indeed it remains open over every global field).  On the other hand, there is a partial result in this direction, due to 
Fried-V\"olklein, Jarden, and Pop: for each finite group $G$ there is an integer $n$ such that for all prime powers $q > n$ the group $G$ is a Galois group over $\F_q(X)$. See \cite{Har03}, Prop.~3.3.9 and Remark~3.3.11, for more about that result.  The Total Arithmetical Conjecture is also related to the Higher Dimensional Conjecture which is discussed in the next section.

\subsection{A related result}

Let $A$ be a finite group generated by $p(A)$ and one other element $a \not\in p(A)$.  This is a cyclic-by-quasi-$p$ group, so it occurs as a Galois group of a cover of the punctured affine line by Abhyankar's Conjecture for affine curves.  Following Abhyankar's Affine Arithmetical Conjecture, it should also occur as the arithmetic monodromy group over $\A^1_{\F_p}$.  The next result does not prove the Affine Arithmetical Conjecture for this type of group, but it does show that there exists an finite extension $\ell/\F_p$ over which $A$ does occur.

\begin{theorem}\cite[Theorem 4.5]{GS} {\label{GS-aconjecture}}
  Let $A$ be a finite group generated by $p(A)$ and one other element
  $a \not\in p(A)$.  Then there exists a finite extension $\ell/{{\F}_p}$ and a Galois cover $\psi :Z \to {\P}^1_{\ell}$ such
  that the arithmetic Galois group of $\psi$ is $A$, the geometric Galois group
  is $p(A)$, and the branch locus consists of only one $\ell$-rational
  point.  Moreover, the resulting regular Galois cover with Galois
  group $p(A)$ is a cover of the projective $\ell'$-line for some finite
  extension $\ell'/\ell$.
\end{theorem}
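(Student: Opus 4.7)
The plan is to combine Abhyankar's Conjecture for the affine line with a Galois descent argument over finite fields: we first produce the geometric $p(A)$-cover over $\bar{\F}_p$, then supply the extra arithmetic $A/p(A)$ piece by identifying it with a constant field extension $\ell'/\ell$. The key structural observation is that since $A$ is generated by $p(A)$ together with a single element $a\notin p(A)$, the quotient $H := A/p(A)$ is cyclic of some order $m$ prime to $p$. Moreover, in any realization of $A$ as the arithmetic Galois group over $\ell(X)$ with geometric group $p(A)$, the fixed field of $p(A)$ must be $H$-Galois over $\ell(X)$ and tame; after base change to $\bar{\ell}$ it becomes a prime-to-$p$ \'etale cover of $\A^1_{\bar{\ell}}$, hence trivial by Section~\ref{Sfundgroups}. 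So the intermediate cover is forced to be the constant-field extension $\ell'(X)/\ell(X)$ with $\Gal(\ell'/\ell) \cong H$.

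First, I would apply Abhyankar's Conjecture (Theorems \ref{aTraynaud1} and \ref{aTraynaud2}) to the quasi-$p$ group $p(A)$ to obtain a connected regular $p(A)$-Galois cover $Y\to \PP^1_{\bar{\F}_p}$ branched only at $\infty$. By standard spreading-out, $Y$ together with its $p(A)$-action is defined over some finite extension $\ell_0/\F_p$. Since $\Gal(\bar{\F}_p/\F_p)$ is procyclic, I would then enlarge $\ell_0$ to a finite extension $\ell'/\F_p$ whose degree over $\F_p$ is divisible by $m$, and let $\ell\subset\ell'$ be the unique subfield with $\Gal(\ell'/\ell)\cong H$. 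Let $\sigma$ be the generator of $\Gal(\ell'/\ell)$ corresponding to the image of $a$ in $H$.

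Next I would analyze the pullback $\sigma^*Y$, which is another regular $p(A)$-Galois cover of $\PP^1_{\ell'}$ branched only at $\infty$. After possibly replacing $\ell'$ by a further finite extension (still of degree divisible by $m$, and with $\ell$ correspondingly enlarged), I would produce a $p(A)$-equivariant isomorphism $\tau: \sigma^*Y \to Y$ whose induced outer automorphism on $p(A)$ equals the one arising from conjugation by $a$ in the extension $1\to p(A)\to A\to H\to 1$. Using $\tau$ as descent datum, the pair $(Y,\tau)$ descends to a scheme $Z$ over $\ell$, and the $p(A)$-action on $Y$ together with the $\sigma$-action supplied by $\tau$ assembles into an $A$-Galois action on $Z$ over $\PP^1_\ell$. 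The branch locus is the image of $\infty$, which is an $\ell$-rational point, and by construction the geometric Galois group is $p(A)$ and the arithmetic Galois group is $A$.

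The main obstacle is the last construction step: not merely finding some isomorphism $\sigma^*Y\cong Y$ as $p(A)$-covers (a finiteness-of-moduli statement, handled by enlarging $\ell'$ until the Frobenius orbit of the isomorphism class of $Y$ collapses to a point), but aligning the resulting outer automorphism of $p(A)$ with the one coming from conjugation by $a$ in $A$. This is a cocycle compatibility: the ambiguity in $\tau$ lies in the inner automorphisms of $p(A)$, and the discrepancy between the two outer actions is a class in $H^1(\langle\sigma\rangle, p(A)/Z(p(A)))$. I expect to control this by further prime-to-$p$ base change (which multiplies the relevant Frobenius and cycles through representatives, analogously to the use of Abhyankar's Lemma in the proof of Theorem~\ref{aTserre}), and by exploiting that $H$ is cyclic so the obstruction is governed by a single norm condition. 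Once the compatibility is arranged, the moreover clause---that the geometric $p(A)$-cover is itself defined over the projective $\ell'$-line for an appropriate $\ell'/\ell$---is immediate from the construction, since $Y$ was assumed to descend to $\PP^1_{\ell'}$.
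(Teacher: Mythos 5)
Your overall strategy---construct the geometric $p(A)$-cover via Abhyankar's Conjecture and then supply the arithmetic part by descent along a constant cyclic extension---is not the paper's route, and its central step fails. The descent datum you need is a $\sigma$-semilinear isomorphism $\tau:\sigma^*Y\to Y$ intertwining the $p(A)$-action with its twist by $c_a$ (conjugation by $a$ in $A$). The class in ${\rm Out}(p(A))$ induced by such a $\tau$ is an invariant of the pair $(Y,\sigma)$: composing $\tau$ with deck transformations only alters the induced automorphism by inner ones. Once $(Y,\text{action})$ is spread out over a finite field $\ell_0$, the outer action of Frobenius on $p(A)$ determined by $Y$ is \emph{trivial} over any field containing $\ell_0$, and replacing $\sigma$ by a power (i.e., any further constant base change, prime-to-$p$ or otherwise) keeps it trivial, since powers of the trivial outer class are trivial. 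So if $c_a$ is outer in $p(A)$---e.g.\ $A={\rm PGL}_2(p)$, $p(A)={\rm PSL}_2(p)$, with $a$ a nontrivial diagonal class---then no $\tau$ with the required twisted equivariance exists for your $Y$, no matter how the fields are enlarged. The genuine obstruction is the comparison of two homomorphisms $H\to{\rm Out}(p(A))$, one trivial and one sending the generator to $[c_a]$; it does not live in $H^1(\langle\sigma\rangle, p(A)/Z(p(A)))$, which only becomes relevant after the outer classes already agree. Enlarging $\ell'$ ``until the Frobenius orbit collapses'' trivializes the outer action, which is exactly the wrong direction: it yields arithmetic group $p(A)\times H$ (or an extension with trivial outer action), not $A$. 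To repair your argument you would need a cover of $\A^1_{\Fbar_p}$ whose Frobenius twist realizes the prescribed outer class $[c_a]$, and Abhyankar's Conjecture, being a purely geometric existence statement, gives no such arithmetic control.

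The paper sidesteps this issue entirely. Using the refined form of Abhyankar's Conjecture for the punctured line (cf.\ Theorem \ref{aTharbater} and \cite[Prop.~4.1]{Har03}), it realizes $A$ \emph{itself} geometrically over $\Fbar_p$, as a cover branched at two points with inertial description $(S\rtimes\langle a\rangle,\ \langle a^{-1}\rangle)$, where $S$ is a $p$-Sylow subgroup and $A=p(A)\rtimes\langle a\rangle$ by Schur--Zassenhaus. After descending to a finite field $\ell$ with $\ell$-rational branch points, it takes the compositum with the constant extension $\ell'/\ell$ of group $B=A/p(A)$, obtaining Galois group $A\times B$, and passes to the fixed field of the diagonal subgroup $A_0=\{(a,\pi(a))\}$. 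This constant-field twist converts the tame branch point (inertia $\langle a\rangle$, which meets $p(A)$ trivially) into an unramified one and absorbs the cyclic quotient into the arithmetic: the inertia groups of the resulting cover $\psi$ are $A$-conjugate to $(S\rtimes\langle a\rangle)\cap p(A)=S$ at the one remaining branch point and to $\langle a\rangle\cap p(A)=1$ at the other. In other words, the element $a$ is first supplied as a geometric tame inertia generator and only afterwards traded for Frobenius---rather than imposed as an outer descent datum on a one-point-branched cover, which is precisely where your construction breaks down.
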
 

The proof of this theorem uses Harbater's proof of Abhyankar's Conjecture, a descent argument, and elementary group theory.  It provides a nice connection between two of Abhyankar's conjectures, and we give a sketch of it here that summarizes the results found in Section 4 of \cite{GS}.  Before sketching the proof a little notation is needed, in addition to the notation of Section \ref{Sperfectfields}, which we preserve.

Let ${\Delta}_{\bar  \ell}={\Delta} \times_\ell \bar{\ell} = \{\tau_1,...,\tau_r\} $, 
$Y_{\bar{\ell}}= Y \times_\ell \bar{\ell}$ and $X_{\bar{\ell}}= X \times_\ell\bar{\ell}$.  
We say that $Y \to X$ has {\it inertial description} $(H_1,...,H_r)$ if $Y_{\bar{\ell}} \to X_{\bar{\ell}}$ has the property that over every $\tau_i$ there exists a $\hat{\tau_i}$ with inertia group $H_i$.  
This is defined only up to individual conjugation.  
Notice that if $\tau \in {\Delta}$ is a closed point in $X$ and $\tau_i$ and $\tau_j$ in $X_{\bar{\ell}}$ are conjugate geometric points over $\tau$ then the inertia groups of $\hat{\tau_i}$ and $\hat{\tau_j}$ over $\tau$ are $H_i$ and $H_j$ respectively and they are conjugate to one another in $A$, but not necessarily in $G$.

\begin{proof} Let $S$ be a $p$-Sylow subgroup of $A$.  
By the result of  Schur-Zassenhaus, we may assume that $A $ is the semi-direct product $p(A)  \rtimes \langle a \rangle$ with $\langle a \rangle \subset N_A(S)$.  
Let $X_0 = {\mathbb  P}_{\bar{\F}_p}^1$.  
By Abhyankar's Conjecture (as in Proposition 4.1 of \cite{Har03}), we know that $A$ is  the Galois group of a Galois cover $Y_0 \to X_0$ such that  $\Delta_{Y_0} = \{\delta_0, \sigma_0 \}$ is the branch locus.
Moreover, by that result, we may assume that the inertial  description of this cover is $(S \rtimes \langle a \rangle, \langle  a^{-1} \rangle)$.  
Notice that the subcover $Y_{0,p(A)} \to X_0$ induced by $p(A) \triangleleft A$ is $p'$-cyclic, branched at only two points, and is totally ramified over these two points.  
Thus $Y_{0,p(A)} \cong {\P}_{\bar{\F}_p}^1$.

The cover $Y_0 \to X_0$ descends to a cover $Y_1 \to X_1$ defined over a finite extension $\ell$ of ${\F}_p$ with branch
locus $\{\delta_1, \sigma_1 \}$ consisting of $\ell$-rational points  and the same inertial description.  
Moreover, $X_1 = {\P}_{\ell}^1$, and there is 
finite field extension ${\ell}'/{\ell}$ which is 
Galois with Galois group $B:=A/p(A)$.  
The subcover $Y_{1,p(A)} \to X_1$ induced by $p(A)  \triangleleft A$ is $p'$-cyclic, branched at only two points, and is  totally ramified over these two points (since the inertia groups do not change).  
These points will also be ${\ell}$-rational.  
Thus  $Y_{1,p(A)}$ has genus zero and a ${\ell}$-rational point, so  $Y_{1,p(A)}$ is ${\ell}$-rational.
  
Let $M_1$ and $K_1$ be the function fields of $Y_{1} $ and $X_1$, respectively. 
We now consider the field $M = M_1{\ell}'$. 
Since ${\ell}'K_1$ and $M_1$ are linearly disjoint over $K_1$, the extension $M/K_1$ is Galois with 
${\rm Gal}(M/K_1) = A\times B$, and it corresponds to an $A \times B$-cover $Z \to X_1$ with inertial description $(S \rtimes \langle a \rangle, \langle  a^{-1} \rangle)$.

Let $\pi : A \to A/G = B$ be the natural projection where $G=p(A)$.
Consider the injective group homomorphism $\iota:A \to A \times B$ by $a \mapsto (a,\pi(a))$.
Let $A_0$ denote the image of $\iota$.
Thus $A \cong A_0$. 
Let $K$ be the fixed field $M^{A_0}$ of $A_0$ acting on $M$.

The field $K$ is a finite extension of the rational function field ${\ell}(x)$.
Thus, $K$ is the function field of some curve $W$ over ${\ell}$ and $M/K$ corresponds to a cover $\psi:Z \to W$.
Note that ${\ell}'$ is the algebraic closure of ${\ell}$ in $M$.
Let $G_0$ denote the image of $G$ in $A_0$ under $\iota$. 
Since $\pi(G)$ is trivial, $G_0$ is contained in the subgroup of $A_0$ fixing $K{\ell}'$. 
Now  $[K{\ell}': K] = [{\ell}': {\ell}] = [A: G] = [A_0: G_0]$, 
so $G_0$ is the subgroup of $A_0$ fixing ${\ell}'$.
Thus $G_0 \cong G$ is the geometric Galois group of the cover $\psi:Z \to W$.

Now we have constructed a cover $Z \overset{\psi}{\rightarrow} W \to X_1$ in which the cover $\psi$ has arithmetic Galois group $A$ and geometric Galois group $p(A)$.  
Since $\psi$ factors through $Z/(G \times 1) = (Y_{1,p(A)})_{\ell'}$, which is 
$\ell'$-rational, it follows that $W$ is $\ell$-rational.
By \cite[Proposition~4.1(3)]{GS}, for any branch point $\beta \in W$, its inertia group (with respect to $\psi$) is $A$-conjugate to $I \cap p(A)$ where $I$ is an inertia group of $Y_1 \to {\P}_{\ell}^1$.
If $\beta$ lies over $\delta_1 \in X_1$ then its inertia group is  $A$-conjugate to $(S \rtimes \langle a \rangle) \cap p(A) = S$.  
If $\beta$ lies over $\sigma_1$ then its inertia group is $A$-conjugate to $ \langle a \rangle \cap p(A) = e$.
  
Combining all of the above, we have a Galois cover $\psi :Z \to W={\mathbb P}_{\ell}^1 $ with arithmetic Galois group $A$ and geometric Galois group is $G = p(A)$.  
The branch locus consists of only one point with inertia group $S$.  The resulting $p(A)$-Galois cover is a cover of the projective ${\ell}'$-line.
\end{proof}

One could also use the proof of Theorem B from \cite{P95} to prove the above result.  A sketch of that proof can be found in \cite[Remark 4.6]{GS}.

\section{Higher dimensional conjectures} \label{dShighdim}

Following the proof of his conjecture for affine curves over algebraically closed fields of characteristic $p$, Abhyankar considered not only analogous conjectures for affine curves over finite fields (see Section~\ref{SConj3} above) but also analogs for higher dimensional affine varieties over algebraically closed fields of characteristic $p$. 
In doing so, he was guided by the informal form of his conjecture (Conjecture~\ref{dInformalAC} above).  As in the curve case, this conjecture can be viewed as having two parts:
\begin{enumerate}
\item For any finite group $G$, one has $G\in\pi_A(X)$ if and only if $G/p(G)\in\pi_A(X)$. \label{d-qpextension}
\item If $X_0$ is an analogous complex variety, then a prime-to-$p$ group $G$ is in $\pi_A(X)$ if and only if it is in $\pi_A(X_0)$. \label{d-analogous}
\end{enumerate}

Part~(\ref{d-qpextension}) in particular asserts that every quasi-$p$ group is in $\pi_A(X)$.  Of course, this is not the case for all $X$ in characteristic $p$; e.g.\ if $k$ is algebraically closed then 
it fails for $\Spec\bigl(k[[x,y]]\bigr)$ and $\Spec\bigl(k((x))\bigr)$, as well as for smooth projective $k$-curves. Namely, only the trivial group occurs over $k[[x,y]]$, by Hensel's Lemma; only certain solvable groups occur over $k((x))$, since the full Galois group is the inertia group (see Lemma~\ref{1SRH}(\ref{inertia structure part}) and the comment after Definition~\ref{decomp inertia def}); and only certain groups on $2g$ generators occur over a smooth projective curve of genus $g$, as discussed in Section~\ref{Sfundgroups}.  To avoid these cases, Abhyankar said that his generalized conjecture would apply only for spaces in which there is ``enough room'' for all quasi-$p$ groups to arise as Galois groups.  

As for curves, the meaning of part~(\ref{d-analogous}) depends on interpreting what is meant by ``an analogous complex variety''.  Abhyankar considered in particular the following situations, where $k$ is algebraically closed of characteristic $p$: 

\begin{enumerate}
\renewcommand{\theenumi}{\roman{enumi}}
\renewcommand{\labelenumi}{(\roman{enumi})}\item \label{d-highdimlocal}
$X = \Spec\bigl(k[[x_1,\dots,x_n]][x_1^{-1},\dots,x_t^{-1}]\bigr)$, where $n \ge 2$ and $t \ge 1$.
\item \label{d-highdimglobal}
$X$ is the complement of a set of $t+1$ hyperplanes in general position in $\P^n_k$, where $n \ge 2$ and $t \ge 1$  Here, ``general position'' means that the union of these hyperplanes is a normal crossing divisor.
\end{enumerate}

In the situation of (\ref{d-highdimlocal}) the complex analog is 
$\Spec\bigl(\C[[x_1,\dots,x_n]][x_1^{-1},\dots,x_t^{-1}]\bigr)$; while in the situation of (\ref{d-highdimglobal}), the complex analog is the complement of $t+1$ hyperplanes in general position in $\P^n_\C$.  In each of these cases, the groups that occur over the complex analog are precisely the abelian groups that have a generating set of at most $t$ elements.  (See \cite{Zar29}, Section~7, for the global case.)
So in these two cases, Abhyankar's generalized conjecture says
that $G \in \pi_A(X)$ if and only if $G/p(G)$ is an abelian group having a set of at most $t$ generators.  See \cite{Abh97}, Conjectures~2.2 and~3.2.  As Abhyankar said there, in these two cases it had been shown in \cite{Abh55} (resp.\ \cite{Abh59} and sequels) that every group in $\pi_A(X)$ is of this form, and that he had implicitly posed this conjecture in those earlier papers.  Abhyankar discussed these conjectures further in \cite{Abh01} and \cite{Abh02}.

While the above conjectures are stated in the context that $k$ is algebraically closed of characteristic $p$, assertion~(\ref{d-qpextension}) makes sense even if $k$ is finite.  Indeed, Abhyankar's Affine Arithmetical Conjecture~\ref{aconjecture} is simply the statement of this assertion in the case that $X$ is the affine line over $\F_p$.  Similarly, assuming that every prime-to-$p$ group is a Galois group over $\F_p(X)$ (which is known for $p=2$, by the theorems of Shafarevich and Feit-Thompson),  
Abhyankar's Total Arithmetical Conjecture~\ref{atconjecture} is just the statement of~(\ref{d-qpextension}) for $\Spec\bigl(\F_p(X)\bigr)$.  While those conjectures are open, they are believed to hold, and this may be taken to be reason to expect that the above conjectures would hold in the cases (\ref{d-highdimlocal}) and (\ref{d-highdimglobal}).

But in fact, in (\ref{d-highdimlocal}) and (\ref{d-highdimglobal}), it turns out that the classes $\pi_A(X)$ are in general strictly smaller than conjectured, because there are additional conditions that must be satisfied in order for a group to be a Galois group in those settings.  Moreover the same holds in the arithmetic situation, for the punctured affine line.  Specifically, there is the following assertion.  (As usual, a {\em supplement} to a normal subgroup $N$ in a group $G$ is a subgroup $H$ of $G$ such that $G$ is generated by $N$ and $H$.)

\begin{prop} \label{d-highdimgpconditions}
Let $X$ be one of the following:
\begin{enumerate}
\item \label{d-hdproplocal}
$\Spec\bigl(k[[x_1,\dots,x_n]][x_1^{-1},\dots,x_t^{-1}]\bigr)$, with $1 \le t \le n$, where $k$ is algebraically closed of characteristic $p$.
\item \label{d-hdpropglobal}
The complement of a set of $t+1$ hyperplanes in general position in $\P^n_k$,
where $0 \le t \le n$.
\item 
$\A^1_k - \{0\}$, where $k$ is a finite field of characteristic $p$.
\end{enumerate}
If $G \in \pi_A(X)$, then $p(G)$ has a prime-to-$p$ supplement that lies in $\pi_A(X)$.  Moreover this supplement may be chosen so as to normalize a non-trivial $p$-subgroup of $G$ provided that $p$ divides the order of $G$; and the set of $p$-subgroups that can be normalized by such supplements together generate $p(G)$.
\end{prop}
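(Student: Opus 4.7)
The plan is to combine Schur--Zassenhaus with the structural description of $\pi_1(X)$ in each of the three cases.

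\textbf{Step 1 (prime-to-$p$ supplement).} I would first note that $|G/p(G)|$ is prime to $p$: any $p$-element of $G$ lies in a Sylow $p$-subgroup, hence in $p(G)$ by definition. By the Schur--Zassenhaus theorem there is a subgroup $H \le G$ with $G = p(G) \rtimes H$ and $H \cong G/p(G)$; so $H$ is a prime-to-$p$ supplement of $p(G)$. Since $G$ is realized by a Galois cover $Y \to X$, the quotient cover $Y/p(G) \to X$ has Galois group $G/p(G)$, and the abstract group $H \cong G/p(G)$ therefore lies in $\pi_A(X)$.

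\textbf{Step 2 (normalization of a $p$-subgroup).} In cases (\ref{d-hdproplocal}) and (\ref{d-hdpropglobal}) I would exploit the description of the tame fundamental group $\pi_1^t(X) \cong (\hat{\Z}^{p'})^t$ with pro-$p$ kernel $W$. For any finite quotient $G$ of $\pi_1(X)$, the image $W'$ of $W$ is a normal $p$-subgroup of $G$, and $G/W'$ is a quotient of $(\hat{\Z}^{p'})^t$, hence has prime-to-$p$ order. Therefore $W'$ is the unique Sylow $p$-subgroup of $G$ and equals $p(G)$. When $p \mid |G|$, setting $P := p(G)$ gives a nontrivial $p$-subgroup, and any Schur--Zassenhaus complement $H$ from Step~1 normalizes $P$ automatically since $P$ is normal in $G$. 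Case~(3), with $X = \A^1_k - \{0\}$ and $k$ finite, will be more delicate because $p(G)$ need not itself be a $p$-group: the arithmetic Galois group $\Gal(\Fbar_p/k) \cong \hat{\Z}$ may contribute $p$-torsion. There I would pick a Sylow $p$-subgroup $P$ of $G$ (which is Sylow in $p(G)$) and apply the Frattini argument to get $G = p(G) \cdot N_G(P)$; Schur--Zassenhaus applied to $P \triangleleft N_G(P)$ (which has prime-to-$p$ index) produces a complement $K$ with $N_G(P) = P \rtimes K$. Then $K$ is prime-to-$p$, normalizes $P$, and $K \cdot p(G) = K \cdot P \cdot p(G) = N_G(P) \cdot p(G) = G$. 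To verify that $K \in \pi_A(X)$, I would invoke a descent and patching argument in the spirit of Theorem~\ref{GS-aconjecture}, working over a finite extension $\ell/k$ and using the geometric-arithmetic exact sequence for $\pi_1(X)$ to realize $K$ as the arithmetic Galois group of an appropriate regular quotient cover.

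\textbf{Step 3 (generation).} Every Sylow $p$-subgroup of $G$ can play the role of $P$ in Step~2, so each Sylow is normalized by some supplement in $\pi_A(X)$. Since the Sylow $p$-subgroups of $G$ generate $p(G)$ by definition, the normalized $p$-subgroups together generate $p(G)$.

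\textbf{Main obstacle.} The hard part will be case~(3): promoting the purely group-theoretic complement $K$ to an actual Galois group over $X = \A^1_k - \{0\}$. Cases (\ref{d-hdproplocal}) and (\ref{d-hdpropglobal}) are essentially forced by the semi-direct-product structure of the tame fundamental group and reduce the second assertion to the first; but the arithmetic component in case~(3) requires a genuine descent or patching construction, since the interplay of the Frobenius action with the Frattini-Schur--Zassenhaus complement does not come for free from $\pi_1(X_{\bar k})$.
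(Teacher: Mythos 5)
There are two concrete false steps, and together they destroy the proof. First, your Step~1 invocation of Schur--Zassenhaus is invalid: $p(G)$ is almost never a Hall subgroup of $G$. It is generated by the $p$-subgroups of $G$, but its \emph{order} is typically divisible by primes other than $p$, including primes dividing $|G/p(G)|$ (take $G = A_5 \times \Z/3$ with $p = 5$: then $p(G) = A_5$, whose order $60$ shares the factor $3$ with $|G/p(G)| = 3$). So $|p(G)|$ and $|G/p(G)|$ are not coprime, Schur--Zassenhaus does not apply, and the claimed splitting $G = p(G) \rtimes H$ with $H \cong G/p(G)$ is unjustified (and false in general). Second, the structural claim underpinning your Step~2 in cases (\ref{d-hdproplocal}) and (\ref{d-hdpropglobal}) is wrong: the kernel $W$ of $\pi_1(X) \to \pi_1^t(X)$ is not pro-$p$. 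It is topologically generated by the (pro-$p$) wild inertia subgroups, so its image in a finite quotient $G$ is generated by $p$-subgroups --- it is essentially $p(G)$ itself, not a normal $p$-subgroup. If your claim were correct, every $G \in \pi_A(X)$ would have a normal Sylow $p$-subgroup; but $\pi_A(X)$ contains nonsolvable quasi-$p$ groups with non-normal Sylow $p$-subgroups, e.g.\ in case (\ref{d-hdpropglobal}) with $t \le 1$ one gets every quasi-$p$ group by Abhyankar's conjecture for curves together with $\A^n \cong \A^1 \times \A^{n-1}$, and in case (\ref{d-hdproplocal}) with $t=1$ the theorem of \cite{HaSt03} cited in this section says $G \in \pi_A(X)$ if and only if $G/p(G)$ is cyclic, which again includes all simple quasi-$p$ groups such as $A_p$. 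So your reduction of the normalization clause to ``$p(G)$ is normal'' collapses in exactly the cases where you claimed the proposition was easy.

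The deeper problem is that the purely group-theoretic part of the statement is not where the content lies. Your own Frattini-plus-Schur--Zassenhaus argument in Step~2, case~(3), is correct and works verbatim for \emph{every} finite group $G$: with $P$ a Sylow $p$-subgroup one gets $G = p(G)\cdot N_G(P)$ and $N_G(P) = P \rtimes K$ with $K$ of prime-to-$p$ order, so an abstract prime-to-$p$ supplement normalizing a Sylow $p$-subgroup always exists. Hence the entire force of the proposition is the clause you never prove: that the supplement can be chosen to \emph{lie in} $\pi_A(X)$, where the prime-to-$p$ members of $\pi_A(X)$ form a severely restricted class (abelian on at most $t$ generators in cases (\ref{d-hdproplocal}) and (\ref{d-hdpropglobal}); particular metacyclic groups in case~(3)). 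This cannot follow from abstract group theory plus the quotient cover: the discussion surrounding the proposition (Examples~5.2--5.4 and Theorems~6.1 and~6.3 of \cite{HarvdP02}) exhibits groups $G$ with $G/p(G) \in \pi_A(X)$ admitting no supplement with the required properties inside $\pi_A(X)$ --- this is precisely why the proposition refutes the naive higher-dimensional conjectures, and why any argument making it a formal consequence of $G/p(G) \in \pi_A(X)$ must be wrong. The paper itself does not reprove the result but cites \cite{HarvdP02} (Theorem~3.3, Theorem~4.5 and its proof, and Theorem~5.5), where the supplement is produced \emph{geometrically} from the given $G$-cover --- roughly as a decomposition subgroup over the boundary, combined with Frobenius in the arithmetic case --- so that membership in $\pi_A(X)$ is witnessed by an actual subcover rather than asserted. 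Your appeal to ``a descent and patching argument in the spirit of Theorem~\ref{GS-aconjecture}'' for case~(3) gestures in that direction but is not an argument, and for cases (\ref{d-hdproplocal}) and (\ref{d-hdpropglobal}) you are left with no argument at all once the false normal-Sylow claim is removed.
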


The result in these three cases was shown in~\cite{HarvdP02}, at 
Theorem~3.3, Theorem~4.5 (and its proof), and Theorem~5.5.
Actually, the same result holds as well in the case that $X$ is an affine curve over an 
algebraically closed of characteristic $p$ --- i.e.\ in the case of Abhyankar's original conjecture.  But there the extra conditions are automatic for $G$, once we know that $G/p(G)$ is in $\pi_A(X)$, because the class of prime-to-$p$ groups in $\pi_A(X)$ consists of {\em all} prime-to-$p$ groups on a specified number of generators.  See \cite[Example~5.1]{HarvdP02}.  And in fact, these conditions were used in the proof of Abhyankar's conjecture for affine curves (which makes plausible that they are needed in general for a group to lie in $\pi_A(X)$).

On the other hand, in the three cases listed in the proposition above, these conditions are not automatic, provided $t>1$ in the first two cases.  Indeed, there are groups $G$ such that $G/p(G)$ lies in $\pi_A(X)$ but where $G$ does not satisfy those extra conditions.  See Examples~5.2, 5.3, and 5.4 of \cite{HarvdP02}.  Thus the higher dimensional conjectures proposed by Abhyankar in \cite{Abh97}, as well as the ``obvious'' conjecture for the punctured affine line, do not hold.  This does not really contradict Abhyankar's ``maximal'' philosophy, since this simply shows that additional groups can be ruled out.  But the precise condition that must be assumed on $G$ remains open.  See~\cite{HarvdP02} and \cite{HarUnpub02}. 
 
The above Proposition suggests that conditions~(\ref{d-qpextension}) and (\ref{d-analogous}) above may determine $\pi_A(X)$ in those cases where the extra conditions in Proposition~\ref{d-highdimgpconditions} are automatic, as in the case of affine curves over an algebraically closed field.  Whether those conditions are automatic is a group-theoretic problem concerning the class $\calC$ of prime-to-$p$ groups in $\pi_A(X)$.  These conditions are automatic if a class $\calC$ is closed under Frattini extensions \cite[Theorem~6.2]{HarvdP02}.  On the other hand, the conditions are
not automatic if $\calC$ is closed under taking normal subgroups (and quotients) but not under taking Frattini extensions \cite[Theorem~6.3]{HarvdP02}.
They are also not automatic if $\calC$ is a class of abelian groups that contains a rank two elementary abelian group, or if $\calC$ is the class of metacyclic prime-to-$p$ groups \cite[Theorem~6.1]{HarvdP02}.  

In cases (\ref{d-hdproplocal}) and (\ref{d-hdpropglobal}) of Proposition~\ref{d-highdimgpconditions}, the conditions in the conclusion of the proposition impose new restrictions for a group $G$ to be in $\pi_A(X)$ (beyond $G/p(G)$ being in $\pi_A(X)$) only if $t>1$.  If $t=0$ (resp.\ $t=1$) then the prime-to-$p$ groups in $\pi_A(X)$ are just the trivial group (resp.\ the cyclic prime-to-$p$ groups), and so those conditions are automatic.  The above discussion then suggests that Abhyankar's conjecture should hold in (\ref{d-hdproplocal}) if $t=1$ and in (\ref{d-hdpropglobal}) if $t=0,1$ (the case $t=0$ being excluded from (\ref{d-hdproplocal}) because of not having ``enough room'').  That is, a group $G$ is a Galois group over $X$ if and only if $G/p(G)$ is.  
And indeed, this is the case for (\ref{d-hdpropglobal}) if $t=0,1$, using
Abhyankar's conjecture for curves together with the isomorphisms
$\A^n \cong \A^1 \times \A^{n-1}$ and $\A^n - H \cong (\A^1 - \{0\}) \times \A^{n-1}$
(where $H$ is a hyperplane).  The case for (\ref{d-hdproplocal}) with $t=1$ is less obvious, but again it turns out that Abhyankar's conjecture holds.  That is, a finite group $G$ is a Galois group over $\Spec\bigl(k[[x_1,\dots,x_n]][x_1^{-1}]\bigr)$ for $n>1$ if and only if $G/p(G)$ is
cyclic.  This was shown in \cite[Theorem~3.3]{HaSt03}, by deducing the result from a more global assertion.  These results provide evidence that Abhyankar's Conjecture holds for $X$ if and only if the conditions in Proposition~\ref{d-highdimgpconditions} are automatic for the class of prime-to-$p$ groups in $\pi_A(X)$.

Finally, we return to the arithmetic conjectures posed in Section~\ref{SConj3}.  In the case of the affine line over $\F_p$, the prime-to-$p$ groups in $\pi_A(\A^1_{\F_p})$ are precisely those that are cyclic (with generator given by Frobenius), and so the conditions in Proposition~\ref{d-highdimgpconditions} are automatic.  This suggests that the Arithmetical Affine Conjecture~\ref{aconjecture} is correct.  In the function field situation, 
if every prime-to-$p$ group is a Galois group over $\F_p(X)$ (as expected, and as known for $p=2$), then the conditions in Proposition~\ref{d-highdimgpconditions} would be automatic for $\Spec\bigl(\F_p(X)\bigr)$, and this would provide additional evidence for the Abhyankar's Total Arithmetical Conjecture~\ref{atconjecture}.

\section {Projective covers} \label{Sproj}

This section concerns fundamental groups of projective curves over algebraically closed fields. As with affine curves, the fundamental group is known in characteristic $0$.  In the case of a projective curve of genus $g$ over an algebraically closed field of characteristic $0$, the fundamental group is the profinite group generated by elements $a_1, b_1, \dots, a_g, b_g$ subject to the sole relation $\prod_{i=1}^g [a_i,b_i] = 1$.  Notice that this is a finitely generated profinite group, but unlike the affine case, it is not free. 

In characteristic $p>0$, the distinction between affine and projective cases is more pronounced.  For example, the fundamental group (rather than just its finite quotients) is {\it known} in two cases: $\pi_1(\P^1_k)$ is trivial, and the fundamental group of an elliptic curve depends only on the Hasse-Witt invariant.  But much less is known about the fundamental group or its finite quotients for projective curves of genus $\geq 2$. 

As noted earlier, Abhyankar conjecture for affine curves does not extend to the case of projective curves (where $r=0$). For example, given an elliptic curve $E$ in characteristic $p$, the group $(\Z / {p\Z})^2$ is not a finite quotient of $\pi_1(E)$ despite the fact that its maximal prime-to-$p$ quotient (namely, the trivial group) is a quotient of $\pi_1(E)$. Nevertheless, by \cite[XIII, Cor. 2.12]{SGA1} the forward implication of Abhyankar's conjecture does hold for projective $k$-curves;
in fact $\pi_A$ for a projective curve in characteristic $p$ is contained in $\pi_A$ for a curve of the same genus in characteristic zero (see Remark~\ref{dRproj}). 

To discuss this in more detail, we go back to affine curves, and restrict our attention to covers for which the projective completion of the cover is tamely ramified.

Recall the following from Section \ref{Sdecomptame}. 
Let $D$ be a projective $k$-curve of genus $g$.  
Let $\lambda$ be a point on $D$. 
A cover $X \to D$ is called {\it tamely ramified} at a point $\tau$ of $X$ lying over $\lambda$ if the ramification index at $\tau$ is prime to $p$.
We say that $X \to D$ is {\it tame over} $\lambda$ if it is tame at every point lying over $\lambda$, and we say that $X \to D$ is {\it tame} if for every point in $D$ all points lying over it are either unramified or tamely ramified.
Obviously, if $p$ does not divide the order of a group $G$, then any $G$-Galois cover is tame.  
However there are tame covers with Galois group not of order prime to $p$. For example, unramified $\Z/p\Z$-Galois covers of an ordinary elliptic curve are tame.  

Consider the set of all finite groups occurring as Galois groups of Galois covers of $D-\{\lambda_1 , \lambda_2 , ..., \lambda_r\}$ whose projective completion is tame.  
This set can be given the structure of an inverse system.  
The inverse limit is a quotient of $\pi_1(D-\{\lambda_1 , \lambda_2 , ..., \lambda_r\})$ denoted by 
$\pi_1^t(D-\{\lambda_1 , \lambda_2 , ..., \lambda_r\})$.
Comparing tame covers to analogous covers in characteristic zero brings us to our
first necessary condition for a finite group to occur as a Galois group over $D$.

\subsection{Condition I}
Given a projective curve $Y$ of genus $g$ defined over an algebraically closed field of characteristic $0$, let $\{\gamma_1,..., \gamma_r\}$ be closed points on $Y$.  The fundamental group depends only on $g$ and $r$, and we will denote it by $\hat{F}_{g,r}$.
If we fix a prime $p$, we can define $p$-tame covers of $Y$ as above and also form the $p$-tame quotient of the fundamental group.  
This will depend only on $g$, $r$, and $p$, and we denote it by $\hat{F}_{g,r}^{p-tame}$.
Any tame cover of a projective $k$-curve $D$ of genus $g$ can be lifted to a $p$-tame cover of a genus $g$ curve defined over an algebraically closed field of characteristic zero 
\cite{SGA1}.  
As a consequence, one obtains the following result, discussed in Section~\ref{Sfundgroups} above. 

\begin{proposition}\cite[XIII, Corollary 2.12]{SGA1} \label{SGA1Cor212}
Given a (possibly empty) set $\{\lambda_1 , \lambda_2 , ..., \lambda_r\}$ of
closed points of a projective $k$-curve $D$ of genus $g$, the group  $\pi_1^t(D-\{\lambda_1 , \lambda_2 , ..., \lambda_r\})$ is a quotient of $\hat{F}_{g,r}^{p-tame}$.
\end{proposition}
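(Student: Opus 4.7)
The plan is to prove the proposition by means of Grothendieck's specialization theory for tame fundamental groups, producing a lift of each tame cover to characteristic zero and reading off the corresponding finite quotient of $\hat{F}_{g,r}^{p\text{-tame}}$. Since the source group $\pi_1^t(D-\{\lambda_1,\ldots,\lambda_r\})$ is determined by its finite quotients (it is a quotient of a finitely generated profinite group, by the argument sketched earlier in the paper), it suffices to show that every finite quotient of $\pi_1^t(D-\{\lambda_1,\ldots,\lambda_r\})$ also appears as a finite quotient of $\hat{F}_{g,r}^{p\text{-tame}}$.

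First I would set up the deformation. Let $R$ be a complete discrete valuation ring with residue field $k$ and fraction field $K$ of characteristic zero, for instance $R=W(k)$. Using the standard smoothness results for moduli of pointed curves, lift $(D;\lambda_1,\ldots,\lambda_r)$ to a smooth proper relative curve $\mathcal D\to\Spec R$ of genus $g$ equipped with $r$ disjoint sections $\tilde\lambda_1,\ldots,\tilde\lambda_r$ specializing to the given points. Let $\mathcal U=\mathcal D\setminus\bigcup_i\tilde\lambda_i(\Spec R)$; its special and generic fibers are $U:=D-\{\lambda_1,\ldots,\lambda_r\}$ and $U_K$ respectively.

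Next I invoke the specialization theorem for tame fundamental groups (SGA1, Exp.~XIII). Given any finite quotient $G$ of $\pi_1^t(U)$, corresponding to a connected Galois cover $Y\to D$ that is tame along the $\lambda_i$ and \'etale elsewhere, this cover extends uniquely to a finite \'etale cover of $\mathcal U$ whose normalization over $\mathcal D$ is tamely ramified along the sections $\tilde\lambda_i$. Base change to an algebraic closure $\bar K$ yields a connected Galois $G$-cover $Y_{\bar K}\to D_{\bar K}$ branched only in $\{\lambda_1,\ldots,\lambda_r\}$ with ramification indices preserved, hence prime to $p$. Thus $G$ is a finite quotient of $\pi_1^{p\text{-tame}}(D_{\bar K}-\{\lambda_1,\ldots,\lambda_r\})=\hat F_{g,r}^{p\text{-tame}}$.

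Assembling these compatible surjections over the inverse system of finite quotients of $\pi_1^t(U)$ yields a continuous surjection $\hat F_{g,r}^{p\text{-tame}}\twoheadrightarrow \pi_1^t(U)$, as required. The main technical point, and the place where I rely on a nontrivial black box, is the specialization theorem: one needs both the existence of the lift (a deformation-theoretic statement about tame covers, handled by Abhyankar's Lemma plus Artin approximation) and its uniqueness (so that the construction is functorial and actually defines a map of fundamental groups). Both are furnished by Exp.~XIII of SGA1, so the remaining steps are essentially bookkeeping on inverse limits.
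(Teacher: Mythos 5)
Your proposal is correct and follows essentially the same route as the paper: both rest on the SGA1, Exp.~XIII specialization theory for tame covers --- lift each tame cover of $D$ uniquely to characteristic zero and compare with the $p$-tame fundamental group of the geometric generic fiber --- with the paper's sketch (in its lifting-problem section) phrasing the assembly as extending compatible systems of automorphisms rather than assembling epimorphisms onto finite quotients. One small repair: your reduction via ``$\pi_1^t(U)$ is determined by its finite quotients'' is circular as stated, since the paper deduces finite generation of $\pi_1^t(U)$ from this very proposition; but the assembly step actually needs only topological finite generation of $\hat{F}_{g,r}^{p\text{-tame}}$, which is known a priori (it is a quotient of the finitely generated group $\hat{F}_{g,r}$), so for each open normal $N$ the set of epimorphisms $\hat{F}_{g,r}^{p\text{-tame}} \twoheadrightarrow \pi_1^t(U)/N$ is finite and nonempty, and the inverse limit of these sets is nonempty, yielding the desired continuous surjection.
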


Since an unramified cover of $D$ is tame, Proposition \ref{SGA1Cor212} implies that if $G$ is a finite quotient of $\pi_1(D)$ it is necessary that $G$ is a quotient of $\hat{F}_{g,0}^{p-tame}=\hat{F}_{g,0}$.
Let $\pi^t_A(D-\{\tau_1 , \tau_2 , ..., \tau_r\})$ denote the set of isomorphism classes of (continuous) finite quotients of $\pi^t_1(D-\{\tau_1 , \tau_2 , ..., \tau_r\})$. Then $\pi^t_A(D-\{\tau_1 , \tau_2 , ..., \tau_r\})$ contains $\pi_A(D)$. However, this containment is not as useful as it might appear because $\pi^t_A$ is known only for $(g, r)=(0, 0), (0, 1), (0, 2)$ and $(1, 0)$. (Notice that this is despite the fact that we know $\pi_A$ for any genus as long as $r>0$.) 

Nevertheless, Proposition \ref{SGA1Cor212} does give important information on the group structure of $\pi_1^t$ and hence for $\pi_1(D)$.   
In particular, the group $\pi_1^t$ is a finitely generated profinite group, and as such it is determined by its finite quotients \cite[Proposition 15.4]{FJ}. 
This is interesting because in general the same set of groups can form different inverse systems, and hence different inverse limits. 
Indeed, as was mentioned in Section \ref{SAbhHist}, 
$\pi_1$ of an affine curve in characteristic $p$ is not determined by 
$\pi_A$ (which is not finitely generated as a topological group).
For example, while $\pi_A(E-\tau) = \pi_A(\P^1_k - \{0, 1, \infty\})$ where $E$ is an elliptic curve, it is known that one can find points $\lambda$ and $\lambda'$ on the projective line such that
$\pi_1^t(\P^1_k - \{0, 1, \infty, \lambda\}) \neq \pi_1^t(\P^1_k - \{0, 1, \infty, \lambda'\})$ 
(\cite{Bouw}, \cite{Tam03}).
Moreover, according to Grothendieck's Anabelian Conjecture \cite{Gr84} the full $\pi_1$ should determine the curve; and there is no viable conjecture for the set of finite quotients of $\pi_1^t$.
Returning to the projective case, since $\pi_1^t(D) = \pi_1(D)$, we have that $\pi_A(D)$ determines $\pi_1(D)$, and again there is no viable conjecture for the set of finite quotients in terms of discrete invariants.

\subsection{Condition II}
Given a group $G$ lying in $\pi_A(D)$, we consider next a condition on $G$ which arises from the Hasse-Witt invariant. 
For a finite group $G$, let $\sigma(G)$ denote the $p$-rank of the abelianization of $G$, and let $A$
be the maximal elementary abelian $p$-quotient of $G$. 
Then $A$ has rank  $\sigma(G)$.
Suppose that there exists a $G$-Galois cover $X \to D$. 
Then $X \to D$ factors through an $A$-Galois cover of $D$. 
Let $\gamma(D)$ be the Hasse-Witt invariant of $D$. 
This is an integer between $0$ and the genus $g$ of $D$ (equal to the $p$-rank of the Jacobian $J(D)$ of $D$), and the $p$-part of the abelianization of $\pi_1(D)$ is isomorphic to $\Z_p^{\gamma(D)}$. 
Thus if $G \in \pi_A(D)$, it is necessary that $\sigma(G) \leq g$. 

Given $p$-group $G$ of rank $g$, we have a converse to the above condition when we consider $\pi_A$ of a generic curve of genus $g$. To see this, recall that by the Burnside Basis theorem $\sigma(G)=g$ if and only if $G/ \Phi(G)$ has $p$-rank $g$, where $\Phi(G)$ is the Frattini subgroup of $G$. Then since $\pi_1(D)$ 
has $p$-cohomological dimension $1$ \cite[Theorem 5.1 or Corollary 5.2] {SGA4}, it suffices to show (by \cite[I, Section 3.4, Proposition 16]{Ser94}) that the elementary abelian group $G/ \Phi(G)$ lies in $\pi_A$ of the general curve of genus $g$; but this holds since the Hasse-Witt invariant of such a curve is $g$. Also, by \cite{shaf}  the maximal pro-$p$-quotient of $\pi_1(D)$ is a free profinite $p$-group.

\subsection{Condition III}
This brings us to a condition by Nakajima concerning the augmentation ideal. 
Suppose that there exists a $G$-Galois cover $\phi:X \to D$ over $k$. 
In characteristic zero, the existence of the $G$-Galois cover $X \to D$ implies (by Chevalley-Weil \cite{CW}) that $H^0(X,\phi^*(K_D))$ is isomorphic to $k\oplus(k[G])^{g-1}$ as a $k[G]$-module (where $K_D$ is the canonical divisor on $D$). 
In characteristic $p>0$, much less is known about the $k[G]$-module structure of $H^0(X,\phi^*(K_D))$. 
However, there are some results, and these induce further conditions on the group $G$. 
For example, given a finite group $G$, let $t(G)$ denote the minimal number of generators of the augmentation ideal $I_G=\{\sum_{\sigma \in G} a_{\sigma} \sigma ~|~  \sum_{\sigma \in G} a_{\sigma} =0\} \subset k[G]$ as a $k[G]$-module.

\begin{proposition} \cite[Theorem 4]{Na84} \label{Na84}
If G lies in $\pi_A(D)$ then there exists an exact sequence of $k[G]$-modules
$$1 \to H^0(X,\phi^*(K_D)) \to k[G]^g \to I_G \to 1.$$
\end{proposition}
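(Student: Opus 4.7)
Fix an \'etale $G$-Galois cover $\phi\colon X\to D$, which exists by the hypothesis that $G\in\pi_A(D)$. Because $\phi$ is unramified, $\phi^{*}K_{D}=K_{X}$, and so $H^{0}(X,\phi^{*}K_{D})=H^{0}(X,\omega_{X})$. Riemann--Hurwitz then yields $g_{X}=|G|(g-1)+1$, hence $\dim_{k}H^{0}(X,\omega_{X})=g|G|-(|G|-1)=\dim_{k}k[G]^{g}-\dim_{k}I_{G}$, so the dimensions in the asserted sequence are at least numerically consistent.

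The plan is to realise the sequence as (part of) the long exact cohomology sequence of a $G$-equivariant short exact sequence of sheaves on $X$. I would choose a sufficiently general effective divisor $D_{0}=P_{1}+\cdots+P_{g}$ of degree $g$ on $D$. Since $\phi$ is \'etale of degree $|G|$, each fibre $\phi^{-1}(P_{i})$ is a free $G$-orbit of size $|G|$, so the skyscraper sheaf $\omega_{X}|_{\phi^{*}D_{0}}$ has global sections
\[
H^{0}\bigl(X,\,\omega_{X}|_{\phi^{*}D_{0}}\bigr)\;\cong\;\bigoplus_{i=1}^{g}H^{0}\bigl(\omega_{X}|_{\phi^{-1}(P_{i})}\bigr)\;\cong\;k[G]^{g}
\]
as a $k[G]$-module, each summand being the regular representation on the $G$-torsor $\phi^{-1}(P_{i})$. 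The $G$-equivariant short exact sequence
\[
0\to\omega_{X}(-\phi^{*}D_{0})\to\omega_{X}\to\omega_{X}|_{\phi^{*}D_{0}}\to 0
\]
gives (using $H^{1}$ of a skyscraper $=0$ and $H^{1}(X,\omega_X)=k$) the six-term sequence
\[
0\to H^{0}(\omega_{X}(-\phi^{*}D_{0}))\to H^{0}(\omega_{X})\to k[G]^{g}\to H^{1}(\omega_{X}(-\phi^{*}D_{0}))\to k\to 0.
\]
For $D_{0}$ sufficiently general, a Bertini-type argument (the $g$ fibres $\phi^{-1}(P_{i})$ together impose $g_{X}$ independent conditions on $H^{0}(X,\omega_{X})$) gives $H^{0}(\omega_{X}(-\phi^{*}D_{0}))=0$, so $H^{0}(\omega_{X})$ embeds in $k[G]^{g}$ with cokernel equal to $\ker\bigl(H^{1}(\omega_{X}(-\phi^{*}D_{0}))\twoheadrightarrow k\bigr)$.

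It remains to identify this cokernel with $I_{G}$ as a $k[G]$-module. By Serre duality on $X$ it is $k$-linearly dual to the cokernel of the inclusion of constants $k=H^{0}(\mathcal{O}_{X})\hookrightarrow H^{0}(X,\mathcal{O}_{X}(\phi^{*}D_{0}))$. Using $\mathcal{O}_{X}(\phi^{*}D_{0})=\phi^{*}\mathcal{O}_{D}(D_{0})$ together with the projection formula $\phi_{*}\phi^{*}\mathcal{O}_{D}(D_{0})=\mathcal{O}_{D}(D_{0})\otimes\phi_{*}\mathcal{O}_{X}$, and the fact that $h^{0}(\mathcal{O}_{D}(D_{0}))=1$ for generic $D_{0}$, I expect that $H^{0}(X,\mathcal{O}_{X}(\phi^{*}D_{0}))$ is $|G|$-dimensional and $k[G]$-equivariantly isomorphic to the regular representation $k[G]$, with the inclusion of constants corresponding to the inclusion of the norm line $k\cdot N\subset k[G]$. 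Dualising this embedding and using that $0\to k\cdot N\to k[G]\to k[G]/kN\to 0$ is the $k$-linear dual of $0\to I_{G}\to k[G]\xrightarrow{\epsilon}k\to 0$ then produces the desired identification of the cokernel with $I_{G}$.

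\textbf{Main obstacle.} The hard step is the equivariant identification $H^{0}(X,\mathcal{O}_{X}(\phi^{*}D_{0}))\cong k[G]$. Globally, $\phi_{*}\mathcal{O}_{X}$ is only a twisted form of $\mathcal{O}_{D}\otimes_{k}k[G]$---\'etale-locally isomorphic but not so globally in general---so one cannot immediately identify $H^{0}(\mathcal{O}_{D}(D_{0})\otimes\phi_{*}\mathcal{O}_{X})$ with $H^{0}(\mathcal{O}_{D}(D_{0}))\otimes_{k}k[G]$, and in characteristic $p$ one cannot invoke semisimplicity to split the extension. The substantive content of the proof is to show that, for this specific line bundle, the twist is invisible at the level of global sections and, crucially, that the distinguished $G$-invariant element arising from the unique section of $\mathcal{O}_{D}(D_{0})$ corresponds precisely to the norm $N\in k[G]$ rather than to some other $G$-invariant vector; this is what makes the cokernel literally $I_{G}$ and not merely some other $(|G|-1)$-dimensional module of the same dimension. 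Together with the Bertini-style genericity used to kill $H^{0}(\omega_{X}(-\phi^{*}D_{0}))$, securing this equivariant identification constitutes the technical heart of Nakajima's argument.
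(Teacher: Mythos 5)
The survey itself gives no proof of this statement (it is quoted from \cite{Na84}), so your proposal has to be judged on its own merits. Its skeleton is fine: the dimension count, the restriction sequence for $\omega_X$ along $\phi^*D_0$, and the duality bookkeeping at the end (Serre duality plus $\bigl(k[G]/k\cdot N\bigr)^{\vee}\cong I_G$, since $k[G]$ is self-dual) are all correct. The genuine gap is the step you dismiss as ``a Bertini-type argument,'' namely $H^0\bigl(X,\omega_X(-\phi^*D_0)\bigr)=0$ for generic $D_0\in\Sym^g D$. This is not a Bertini statement: your $g|G|$ points are not free to move but are constrained to be full fibres of $\phi$, and the claim is equivalent (projection formula plus duality, using that $\phi_*\mathcal{O}_X$ is self-dual via the trace form) to the generic vanishing $h^1\bigl(D,\mathcal{O}_D(D_0)\otimes\phi_*\mathcal{O}_X\bigr)=0$ for generic $D_0$ --- a statement about twists of the fixed rank-$|G|$, degree-$0$ bundle $\phi_*\mathcal{O}_X$. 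Your parenthetical justification is circular: one fibre $\phi^{-1}(P)$ imposing independent conditions on $H^0(\omega_X)$ is, by Riemann--Roch, \emph{equivalent} to $h^0\bigl(X,\phi^*\mathcal{O}_D(P)\bigr)=1$, which is the assertion to be proved, not an argument for it. Moreover $\phi_*\mathcal{O}_X$ is merely semistable of slope $0$ (as $\phi^*\phi_*\mathcal{O}_X\cong\mathcal{O}_X^{|G|}$), and in characteristic $p$ Raynaud exhibited semistable (even stable) bundles $E$ on curves with $h^0(E\otimes L)\neq 0$ for \emph{every} line bundle $L$ of the critical degree; so uniform non-vanishing along a whole family of twists is a real phenomenon, and your vanishing cannot be had from genericity or semistability alone --- it would require an argument exploiting the specific bundle $\phi_*\mathcal{O}_X$, which you do not supply.

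By contrast, the step you single out as the ``main obstacle'' --- the equivariant identification $H^0\bigl(X,\mathcal{O}_X(\phi^*D_0)\bigr)\cong k[G]$ --- is the fixable one, and fixing it is essentially Nakajima's method: since $\phi$ is \'etale, $\phi_*\mathcal{O}_X$ is Zariski-locally free of rank one over $\mathcal{O}_D[G]$, so whenever $H^1$ of a twist vanishes, $H^0$ is a cohomologically trivial, hence projective, $k[G]$-module; its class in $K_0(k[G])$ is $\chi\cdot\frac{1}{|G|}[k[G]]$ (e.g.\ by the holomorphic Lefschetz formula, the $G$-action being free), and Krull--Schmidt then forces $H^0\cong k[G]$ when $\chi=|G|$. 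Your worry about the norm is then vacuous: $(k[G])^G=k\cdot N$ is one-dimensional, so \emph{any} embedding of the trivial module lands on the norm line. Better still, this machinery bypasses your gap entirely, along the lines of Nakajima's actual argument: twist by $\phi^*A$ with $A$ effective of large degree $a$, where $H^1$ vanishes for degree reasons rather than by genericity, obtaining
\[
0 \to H^0(X,\omega_X) \to k[G]^{\,g-1+a} \to k[G]^{\,a} \to k \to 0;
\]
splicing with $0\to I_G\to k[G]\to k\to 0$ via Schanuel's lemma, and cancelling projectives (the relevant surjection onto $k[G]^a$ splits, and its kernel is projective of class $g[k[G]]$, hence free), yields exactly $0\to H^0(X,\phi^*K_D)\to k[G]^g\to I_G\to 0$ with no generic-position input. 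As written, then, your proposal is an outline whose central geometric input is unproven (and not salvageable by the cited Bertini-type reasoning), rather than a proof.
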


\begin{cor} \cite[Theorem A]{Na87}. 
If $G \in \pi_A(D)$ then $t(G) \leq g$.
\end{cor}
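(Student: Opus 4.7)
The plan is to read the corollary directly off the exact sequence provided by Proposition~\ref{Na84}. Assume $G \in \pi_A(D)$, so there exists a $G$-Galois cover $\phi \colon X \to D$. Proposition~\ref{Na84} then gives a short exact sequence of $k[G]$-modules
\[
1 \to H^0(X,\phi^*(K_D)) \to k[G]^g \to I_G \to 1.
\]
In particular, the rightmost arrow is a surjection of $k[G]$-modules from the free module $k[G]^g$ onto $I_G$.

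First I would fix a standard basis $e_1,\dots,e_g$ of $k[G]^g$ as a free $k[G]$-module, and observe that their images $\bar e_1,\dots,\bar e_g \in I_G$ form a generating set for $I_G$ as a $k[G]$-module, since any element of $I_G$ lifts to $k[G]^g$ and is therefore a $k[G]$-linear combination of the $e_i$. This exhibits a generating set of $I_G$ of cardinality $g$.

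By the very definition of $t(G)$ as the minimal number of $k[G]$-module generators of the augmentation ideal $I_G$, any such generating set bounds $t(G)$ from above. Hence $t(G) \leq g$, which is exactly the claim.

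The only real content is Proposition~\ref{Na84} itself; the corollary is then a formal consequence of the surjectivity of $k[G]^g \twoheadrightarrow I_G$, so there is no genuine obstacle. If anything, one should only make sure the sequence is indeed a sequence of $k[G]$-modules (not merely $k$-vector spaces), so that ``generated by $g$ elements'' is interpreted in the $k[G]$-module sense matching the definition of $t(G)$; this is already part of the statement of Proposition~\ref{Na84}.
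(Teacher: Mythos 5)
Your proposal is correct and is exactly the intended argument: the paper states this as a corollary of Proposition~\ref{Na84} with the proof left implicit, namely that the surjection $k[G]^g \twoheadrightarrow I_G$ of $k[G]$-modules shows $I_G$ is generated by $g$ elements, whence $t(G) \leq g$ by the definition of $t(G)$. Your care in noting that the sequence is one of $k[G]$-modules (not just $k$-vector spaces) is the right point to check, and nothing further is needed.
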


\noindent Also, in the case that $p$ does not divide the order of the
group, Nakajima recovers the classical Chevalley-Weil result that
$H^0(X,\phi^*(K_D))$ is isomorphic to $k\oplus(k[G])^{g-1}$
\cite[Corollary on p.\ 5]{Na84}.

\subsection{Generic $\pi_A$.} 
The conditions above are necessary for a group to lie in $\pi_A(D)$, but they leave room for other conditions that depend on more than just the genus 
of $D$. Therefore, it is useful to consider what groups occur over ``most'' curves of genus $g$. More precisely, let $\pi_A(g)$ be the set of finite groups 
$G$ for which there exists a curve $D$ of genus $g$ such that $G$ lies in $\pi_A(D)$. Proposition 4.1 of \cite{St96} implies that if $G$ lies in $\pi_A(g)$ 
then there exists an open dense subset $U_G$ of the moduli space $M_g$ of curves of genus $g$ such that for every point in $U_G$, the group $G$ 
lies in $\pi_A$ of the corresponding curve. Now we can summarize the results in this section as follows.
If a finite group $G$ lies in $\pi_A(g)$ then the following must all hold:
\begin{enumerate}
\item[I]  $G$ is generated by elements $a_1, ..., a_g, b_1, ..., b_g$, subject to the
relation $\prod_{j=1}^g [a_j , b_j ]=1$ [Prop \ref{SGA1Cor212}].
\item[II] $\sigma(G) \leq g$.
\item[III] $t(G) \leq g$. 
\end{enumerate}
\noindent Notice that for a prime-to-$p$ group, Proposition \ref{SGA1Cor212} implies that (I) is also sufficient for $G$ to lie in $\pi_A(g)$, and for a $p$-group the comments above imply that (II) is sufficient. But for groups $G$ that are neither of order prime-to-$p$ nor a power of $p$, the above are not known to provide a condition that is sufficient to imply that $G$ is in $\pi_A$ of a curve of genus $g$. Moreover, it is not immediately clear what the relationships are between these various conditions.

There are several easy comparisons to be made between Conditions (I) and (II). 
For example, any abelian $p$-group $P$ with $g<\sigma(P)<2g$ will satisfy Condition (I) but not (II). 
And similarly, any prime-to-$p$ group requiring more than $2g$ generators will satisfy Condition (II) but not (I). 
We can also ask whether there are groups satisfying Conditions (I) and (II) but which do not lie in $\pi_A(g)$. The answer is yes (see \cite[Proposition 2.5]{St98}) using Nakajima's condition (Condition (III)). 
Further work along these lines can be found in \cite{PS} and \cite{Borne}.
Additionally, in \cite{Sa} and in \cite{St96} sufficient conditions for groups to occur in $\pi_A(g)$ are given.  These conditions are expressed in terms of generators and relations for the groups.  

\section{Ramification Theory} \label{Sram}

In this section, we describe results, current research, and open problems in ramification theory.
The topic of ramification theory plays a role in the discussion of Abhyankar's 
Inertia Conjecture in Section \ref{SConj2}. 
As seen in Sections \ref{Sbouwpriesconstraint} and \ref{Sconstraint}, 
the structure of inertia groups of covers may depend on 
more subtle information about the ramification filtration.  
Throughout this chapter, $k$ is an algebraically closed field of
characteristic $p$.

Ramification theory also plays a role in understanding the structure of fundamental groups.
The reason for this is that the higher ramification groups of a Galois cover $\phi:Y \to X$ determine the genus of $Y$. 
Thus they give information about the $p$-rank of $Y$, which in turn describes the maximal 
unramified elementary abelian $p$-group cover of $Y$.

These are some of reasons we are interested in 
classifying the filtrations of higher ramification groups of wildly ramified Galois covers,
as defined in Section \ref{Shigherram}.
In particular, one can ask which ramification filtrations and jumps 
occur for covers of the affine line for a given quasi-$p$ group $G$.

\begin{question} \label{1Q2}
Suppose there exists a $G$-Galois cover $\phi : Y \to \PP^1_k$ branched at only one point with inertia $I$. 
Then what are the possible filtrations of higher ramification groups,
what are the possible sequences of jumps in the filtration, and in
particular, what is the smallest possible genus of $Y$?
\end{question}

If $G$ is a $p$-group, note that $\phi$ is totally ramified above the branch point.
If not, then the inertia group would be contained in a normal proper subgroup $N$.
The quotient of $\phi$ by $N$ would then be an unramified cover of the projective line, which is impossible since 
the \'etale fundamental group of the projective line is trivial.

In this situation, it suffices to study extensions of $L_0=k((t))$, because of Corollary~2.4 of \cite{Har80}.  That result states that there is an equivalence of categories between $G$-Galois covers of the projective line ramified only above $\infty$, and $G$-Galois \'etale covers of ${\rm Spec}\bigl(k((t))\bigr)$, given by restriction.  (An important generalization, known as the Katz-Gabber theorem, is discussed in Section~\ref{aSobstructions} below.)

In this section, we describe results for abelian $p$-groups that follow from class field theory.
Then we explain why all sufficiently large prime-to-$p$ integers can occur as the last upper jump.
Finally we describe existence results for covers with small upper jumps.

\begin{remark}
Theorem \ref{BPyesno} indicates that the answer to Question \ref{1Q2} may not be easy to formulate. 
\end{remark}

\subsection{Class field theory}

When $G$ is an abelian $p$-group, the answer to Question \ref{1Q2} is known due to class field theory.  
As mentioned above, there is only one ramification point, with inertia group equal to $G$, and it suffices to study the ramification of $G$-Galois extensions of $L_0=k((t))$ by \cite[Corollary~2.4]{Har80}.
The possible higher ramification filtrations of the inertia group can be determined using norm groups or Witt vector theory.

In general, the upper numbering is easier to analyze since it is preserved under quotients.  
While the upper jumps of an extension are a priori rational numbers, the Hasse-Arf theorem guarantees that the upper
jumps of an abelian extension are integers.
Elementary abelian $p$-group extensions are straightforward since they are fiber products of $\Z/p$-extensions.

\begin{example}
A sequence of positive integers $w_1 \leq \cdots \leq w_n$
occurs as the set of upper jumps of a $({\mathbb Z}/p)^n$-Galois extension of $L_0$
if and only if $p \nmid w_i$ for $1 \leq i \leq n$. 
\end{example}

We include the proof of this well-known example for completeness. 
 
\begin{proof} 
If $w_1 \leq \cdots \leq w_n$ is the set of upper jumps of a $({\mathbb Z}/p)^n$-Galois extension of $L_0$, 
then $w_i$ is the upper jump of one of the ${\mathbb Z}/p$-Galois quotients and thus must be prime-to-$p$.
Conversely, suppose $p \nmid w_i$ for $1 \leq i \leq n$.  Suppose $\{w_1', \ldots, w'_m\}$ is the set of distinct 
integers occurring in the sequence and let $a_i$ be the multiplicity of $w_i'$ in the sequence.
Consider the $({\mathbb Z}/p)^{a_i}$-Galois extension $\psi_i$ of $L_0$ given by the equation
\[y^{p^{a_i}} - y = x^{w_i'}.\]
It has upper jump $w_i'$ occurring with multiplicity $a_i$.  
Also the extensions $\psi_i$ are disjoint since their upper jumps are different.
Then the fiber product of $\psi_i$ is a $({\mathbb Z}/p)^n$-Galois extension of $L_0$ having upper jumps
$w_1 \leq \cdots \leq w_n$
\end{proof}

Here is the main result along these lines.

\begin{example}[\cite{Schmid}]
A sequence of positive integers $w_1 \leq \cdots \leq w_n$
occurs as the set of upper jumps of a ${\mathbb Z}/(p^n)$-Galois extension of $L_0$
if and only if $p \nmid w_1$ and, for $1 < i \leq n$, either
$w_i=pw_{i-1}$ or both $w_i > pw_{i-1}$ and $p \nmid w_i$.
\end{example}

The situation for any abelian $p$-group can be deduced from the previous two examples.
Schmid also studied some non-abelian $p$-groups, 
such as quaternion groups in characteristic $2$, using class field theory.
Necessary and sufficient conditions on the jumps of $\Z/(p^n) \rtimes \Z/m$ extensions of $L_0$ 
are determined in \cite{OP10}.

\subsection{Increasing the conductor}

Define the conductor $J$ of a Galois cover at a wild ramification point to be the largest jump in the upper numbering ramification filtration 
of inertia groups.  Let $P$ be a $p$-group.  
Consider a $P$-Galois extension $\phi: L_0 \hookrightarrow L$.  
By ramification theory  \cite[IV, Proposition 10]{Serr}, 
the last non-trivial ramification group $A$ (namely, the one with index $J$) 
is contained in the center of $P$.  Let $\overline{P}=P/A$.
Then $\phi$ factors as
\[L_0 \stackrel{\overline{\phi}}{\hookrightarrow} \overline{L} \stackrel{\pi}{\hookrightarrow} L,\]
where $\overline{L}$ is the fixed field $L^A$.  Here
$\overline{\phi}$ is a $\overline{P}$-extension and $\pi$ is an $A$-extension.

Geometrically, the extensions of $L_0$ above correspond to \'etale covers of $\hat{U}={\rm Spec}(L_0)$,
or equivalently branched covers of $\hat X = {\rm Spec}(k[[t]])$ 
ramified only over its closed point $b$.
Let $\hat{V}={\rm Spec}(L)$ and $\hat{W}={\rm Spec}(\overline{L})$.
With some abuse of notation, consider the cover
\[\hat{V} \stackrel{\pi}{\to} \hat{W} \stackrel{\overline{\phi}}{\to} \hat{U}.\]

Cohomologically, each $P$-Galois extension of $L_0$ corresponds to an element of 
the set ${\rm Hom}(\pi_1(\hat{U}), P)$, and similarly for the $\bar P$-Galois extensions.
There is then a reduction map 
\[R:{\rm Hom}(\pi_1(\hat{U}), P) \to {\rm Hom}(\pi_1(\hat{U}), \overline{P}).\]
By ideas perhaps best attributed to Serre, 
the fiber of $R$ over $\overline{\phi}$ is a principal homogeneous space for the group 
${\rm Hom}(\pi_1(\hat{U}), A)$.
This allows one to ``twist'' $\phi$ to another cover dominating $\overline{\phi}$ 
by an element $\alpha$ of ${\rm Hom}(\pi_1(\hat{U}), A)$.
The conductor of the twist is typically the maximum of the conductors of $\phi$ and $\alpha$.

These ideas yield the following result.  For a generalization to the case 
$P \rtimes \Z/m$ see \cite[Proposition 2.7]{Pr06:genus}.

\begin{proposition} \label{Pcondtwist}
Let $P$ be a $p$-group.  Every sufficiently large $J \in {\mathbb N}$ with $p \nmid J$ occurs as the conductor of a 
$P$-Galois extension of $k((t))$.
\end{proposition}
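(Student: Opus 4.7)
The plan is to induct on $|P|$, combining the twisting principle described in the preceding discussion with the existence of central subgroups of order $p$ in non-trivial $p$-groups. The base case is $P = \mathbb{Z}/p$, which is handled directly by Artin-Schreier theory: for any $J$ with $p \nmid J$, the extension defined by $y^p - y = t^{-J}$ is $\mathbb{Z}/p$-Galois with unique upper jump (hence conductor) equal to $J$, so in fact \emph{every} such $J$ is realized.

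For the inductive step, suppose the statement holds for all proper quotients of $P$. Pick a subgroup $A \leq Z(P)$ of order $p$ and set $\overline{P} = P/A$, giving a central extension $1 \to A \to P \to \overline{P} \to 1$. Using the inductive hypothesis, fix any $\overline{P}$-Galois extension $\overline{\phi} : L_0 \hookrightarrow \overline{L}$ of some conductor $J_1$ prime to $p$. Since the pro-$p$ Galois group of $L_0$ is a free pro-$p$ group by Shafarevich, every embedding problem with $p$-group kernel over $L_0$ is solvable, so $\overline{\phi}$ lifts to some $\phi_0 \in \mathrm{Hom}(\pi_1(\hat{X} - \{b\}), P)$; let $J_2$ be the largest upper jump of the field extension cut out by $\phi_0$.

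Now fix any $J > J_2$ with $p \nmid J$. By the base case applied to $A \cong \mathbb{Z}/p$, there is $\alpha \in \mathrm{Hom}(\pi_1(\hat{X} - \{b\}), A)$ with conductor exactly $J$. As $A$ is central, the fibre of the reduction map $R$ over $\overline{\phi}$ is a torsor under $\mathrm{Hom}(\pi_1(\hat{X} - \{b\}), A)$; I would twist $\phi_0$ by $\alpha$ to get $\phi := \phi_0 \cdot \alpha$, which still lifts $\overline{\phi}$ and whose $A$-component is $\alpha$, hence surjects onto $A$. Together with the surjectivity of $\overline{\phi}$ onto $\overline{P}$, this forces $\phi$ to be surjective onto $P$. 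For any $u > J_2$, one has $\phi_0(\pi_1^u) = \{1\}$, so $\phi(\pi_1^u) = \alpha(\pi_1^u)$, which equals $A$ precisely for $u \leq J$ and is trivial for $u > J$. Hence the conductor of $\phi$ equals $J$, and every $J > J_P := J_2$ prime to $p$ is realized as required.

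The main obstacle, hidden in the word ``typically'' in the preceding discussion, is precisely the conductor calculation for the twist. The key technical input that resolves it is the centrality of $A$, which ensures both that the pointwise product $\phi_0(g) \cdot \alpha(g)$ defines a homomorphism and that the upper-numbering image $\phi(\pi_1^u)$ can be tracked cleanly through the inclusion $\alpha(\pi_1^u) \subseteq \phi(\pi_1^u)$ for $u > J_2$, preventing any cancellation between the jumps of $\phi_0$ and those of $\alpha$. Without centrality, as in the $P \rtimes \mathbb{Z}/m$ generalization of \cite[Proposition 2.7]{Pr06:genus}, one must work harder and impose additional congruence conditions on $J$ to achieve the same conclusion.
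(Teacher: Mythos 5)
Your proof is correct and follows essentially the same route the paper sketches: realizing the conductor by twisting a lift $\phi_0$ of a $\overline{P}$-extension by a large-conductor Artin--Schreier class $\alpha \in \Hom(\pi_1,A)$ with $A$ central, using that the fiber of the reduction map is a torsor under $\Hom(\pi_1,A)$. The inductive packaging and the choice of $A$ as an arbitrary central order-$p$ subgroup (rather than the last ramification group, as in the paper's discussion) are cosmetic differences; your explicit verification that $\phi(\pi_1^u)=\alpha(\pi_1^u)$ for $u>J_2$ is exactly the content behind the paper's word ``typically.''
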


One can use Proposition \ref{Pcondtwist} and formal patching to increase the conductor at a wildly ramified point 
for a $G$-cover, even when 
$G$ is not a $p$-group or when the cover is branched at more than one point.

\begin{theorem}[\cite{Pr06:genus}, Corollary 3.3] \label{largegenus}
If $G \not= 0$ is a quasi-$p$ group and $J \in {\mathbb N}$ with $p \nmid J$ is sufficiently large, 
then there exists a $G$-Galois cover $\phi: Y \to \PP^1$ branched at only one point with conductor $J$.
\end{theorem}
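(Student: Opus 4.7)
My plan is to combine Theorem \ref{Tsylow} with a global version of the twisting argument used to prove Proposition \ref{Pcondtwist}, glued together using Harbater's formal patching from Section \ref{aSabproof}.

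First, since $G$ is a nontrivial quasi-$p$ group, Theorem \ref{Tsylow} produces a $G$-Galois cover $\phi_0 : Y_0 \to \PP^1_k$ branched only at $\infty$, whose inertia at each ramification point above $\infty$ is a fixed Sylow $p$-subgroup $P$ of $G$. Let $L_0/k((t))$ be the corresponding $P$-Galois local extension and $J_0$ its upper jump. Because $P$ is a nontrivial finite $p$-group, its center $Z(P)$ contains a subgroup $A$ of order $p$, and the inclusion $A \hookrightarrow P \hookrightarrow G$ allows me to view $A$-valued data as living inside $G$.

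For any $J > J_0$ with $p \nmid J$, choose an Artin--Schreier $\Z/p$-cover $\alpha : Z \to \PP^1_k$ of the affine line branched only at $\infty$ with conductor $J$ there (explicitly, one can take $y^p - y = c\,x^J$ with $c \in k^\times$). I would then form the ``twist'' $\phi$ of $\phi_0$ by $\alpha$ along the embedding $A \hookrightarrow G$. Locally at a ramification point above $\infty$, this is exactly Serre's twist used in the proof of Proposition \ref{Pcondtwist}: the new local extension $L_J$ has the same $P/A$-quotient as $L_0$, but its $A$-piece is twisted by the local restriction of $\alpha$, and therefore has upper jump $\max(J_0, J) = J$. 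Globalizing via formal patching as in \cite{Har94, Har03} yields a $G$-Galois cover $\phi : Y \to \PP^1_k$ branched only at $\infty$ with inertia $P$ and conductor $J$.

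The main obstacle is the globalization step, since $A$ is only central in the inertia subgroup $P$ and not in all of $G$. To handle this, one observes that the local twists at the $[G:P]$ ramification points over $\infty$ are permuted compatibly by $G$, because $G$ permutes those ramification points and conjugates the inertia groups (hence their centers) accordingly. The principal-homogeneous-space description in the proof of Proposition \ref{Pcondtwist} identifies the space of local twists with $\mathrm{Hom}(\pi_1(\Spec k((t))), A)$, and the global $\Z/p$-cover $\alpha$ restricts to a coherent element of this group at every ramification point above $\infty$. Formal patching then glues this $G$-equivariant collection of local twists to $\phi_0|_{\A^1_k}$ into a single global $G$-Galois cover; connectedness is preserved because we only modify $\phi_0$ centrally inside each inertia subgroup, and the branch locus remains $\{\infty\}$ because $\alpha$ is unramified over $\A^1_k$. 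The restrictions on $J$ (sufficiently large and prime to $p$) enter only through the local input, exactly as in Proposition \ref{Pcondtwist}.
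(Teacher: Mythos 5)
Your skeleton --- start from Theorem \ref{Tsylow}, raise the conductor by a central twist, then glue --- is the route taken in \cite{Pr06:genus} and sketched in the text surrounding Proposition \ref{Pcondtwist}, but the globalization step, which you yourself flag as the main obstacle, is where the proposal actually breaks. ``Twisting $\phi_0$ by $\alpha$ along $A \hookrightarrow G$'' is not a defined operation: the principal homogeneous space structure on the fiber of the reduction map exists for lifts through the \emph{central} extension $1 \to A \to P \to P/A \to 1$, and centrality in the ambient group is what makes the pointwise product of homomorphisms again a homomorphism. Globally, the monodromy $\rho_0\colon \pi_1(\A^1_k) \to G$ of $\phi_0$ is surjective, and for a general nontrivial quasi-$p$ group $G$ (say $G$ nonabelian simple) the subgroup $A$ is not even normal in $G$; so $\gamma \mapsto \rho_0(\gamma)\alpha(\gamma)$ is not a homomorphism and there is no exact sequence $1 \to A \to G \to G/A \to 1$ to twist along. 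Your fallback --- a ``$G$-equivariant collection of local twists'' at the $[G:P]$ points over $\infty$, glued by patching --- is asserted rather than constructed: the patching theorems glue \emph{covers} of formal patches, not cocycle twists, and the compatibility you invoke is precisely the content that would have to be proved.

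The repair, which is also the actual argument: discard the global cover $\alpha$ entirely --- only its germ at $\infty$, i.e.\ the $\Z/p$-extension $y^p - y = t^{-J}$ of $k((t))$, is ever used --- and perform the twist purely locally, producing a single $P$-Galois extension of $k((t))$ with conductor exactly $J$; this is Proposition \ref{Pcondtwist}, which you may quote rather than reprove. (If you do reprove it: your computation that the twist has jump $\max(J_0,J)=J$ is fine for $J>J_0$, since every $\Z/p$-subquotient of $L_0$ has upper jump at most $J_0$, forcing $L_0$ and the Artin--Schreier extension to be linearly disjoint; but you also owe surjectivity of the twisted homomorphism onto $P$, which can fail when $A \not\subseteq \Phi(P)$ --- e.g.\ $P$ elementary abelian --- and this is one reason the paper's sketch twists along the last nontrivial ramification group rather than an arbitrary order-$p$ central subgroup.) Then apply the formal patching results quoted in Section \ref{SConj2} just before Theorem \ref{Tsylow} (\cite[Theorem 2]{Harb93}, \cite[Theorem 3.6]{Har03}): given the $G$-Galois cover $\phi_0$ with inertia $P$ at $\infty$ and the new local $P$-extension, they produce a $G$-Galois cover $\phi$ agreeing with $\phi_0$ away from $\infty$ and whose germ above $\infty$ is the induced cover $\Ind_P^G$ of the new local extension. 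Equivariance over the disk is automatic for an induced cover, connectedness holds because $\phi$ agrees with the connected cover $\phi_0$ over $\A^1_k$, and the branch locus remains $\{\infty\}$; the hypothesis that $J$ is sufficiently large and prime to $p$ enters only through the local input, as you said. With these two substitutions your argument becomes the proof of \cite[Corollary 3.3]{Pr06:genus}.
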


\begin{theorem}[\cite{Pr06:genus}, Corollary 3.4] 
Suppose $X$ is a smooth projective irreducible $k$-curve and $B \subset X$ is a non-empty finite set of points. 
Suppose $G$ is a finite quotient of $\pi_1(X-B)$ and that $p$ divides $|G|$.  Let $g \in {\mathbb N}$.
Then there exists a $G$-Galois cover $\phi:Y \to X$ branched only above $B$ with genus $Y$ at least $g$.
\end{theorem}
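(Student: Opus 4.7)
The plan is to combine Proposition~\ref{Pcondtwist} (together with its generalization to groups of the form $P \rtimes \Z/m$) with formal patching, in order to modify an existing $G$-Galois cover of $X$ at a single branch point so as to make the conductor there arbitrarily large. Riemann-Hurwitz (Lemma~\ref{1SRH}) will then force the genus to grow without bound.

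By hypothesis, there is a $G$-Galois cover $\phi_0 : Y_0 \to X$ branched only over $B$. Fix a point $b \in B$, which is allowed by nonemptiness of $B$. My goal is to replace the local behavior of $\phi_0$ over a formal neighborhood of $b$ by a new local cover with much larger conductor, while leaving the behavior over $B \setminus \{b\}$ unchanged. The resulting global $G$-Galois cover $\phi : Y \to X$ will then have the same branch locus $B$ but conductor $J$ at a designated point over $b$, with $J$ arbitrarily large.

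I would carry this out in three steps. First, I arrange that some point $\eta_0 \in \phi_0^{-1}(b)$ has inertia of the form $I = P \rtimes \Z/m$ with $P$ a nontrivial $p$-group. If $\phi_0$ is already wildly ramified above $b$, then its inertia already has this form by Lemma~\ref{1SRH}(\ref{inertia structure part}) and I use it directly. Otherwise $\phi_0$ is tame above $b$; in that case I use the deformation result \cite[Theorem~3.6]{Har03} cited in the discussion preceding Theorem~\ref{Tsylow} to enlarge the inertia $I_{\eta_0}$ by a $p$-power factor, inserting a $p$-subgroup of $G$ normalized by $I_{\eta_0}$. Such a subgroup exists because $p \mid |G|$, after possibly replacing $I_{\eta_0}$ by a conjugate. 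The output is a new $G$-Galois cover, still branched only over $B$, whose inertia at $\eta_0$ is of the desired form. Second, I invoke the generalization of Proposition~\ref{Pcondtwist} to groups $P \rtimes \Z/m$ (as in \cite[Proposition~2.7]{Pr06:genus}): for every sufficiently large integer $J$ with $p \nmid J$ there is an $I$-Galois extension of $k((t))$ with conductor $J$, equivalently a local $I$-Galois cover of $\Spec(k[[t]])$ ramified only at the closed point with that conductor. Third, I apply formal patching in the style of Harbater's proof of Theorem~\ref{aTharbater} to glue this local cover into the cover from the first step at $\eta_0$, changing only the local behavior there. The result is a $G$-Galois cover $\phi : Y \to X$ branched only over $B$, agreeing with the previous cover over $B \setminus \{b\}$, and with conductor $J$ at $\eta_0$. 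By Lemma~\ref{1SRH}, the contribution of $\eta_0$ to $2 g_Y - 2$ grows at least linearly in $J$ while all other contributions remain bounded, so $g(Y) \geq g$ once $J$ is sufficiently large.

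The main obstacle is the first step: in the case where $\phi_0$ is entirely tame above $b$, one must realize a $p$-subgroup of $G$ as part of an inertia group of a $G$-Galois cover with the prescribed branch locus $B$. This is delicate because the tame inertia $I_{\eta_0}$ need not a priori normalize a $p$-subgroup of $G$, so the group-theoretic insertion and patching manipulations here are the most subtle; they are of the same flavor as the reductions appearing in the proof of Abhyankar's Conjecture for affine curves (Section~\ref{aSabproof}). Once this first step is in hand, the second and third steps are a routine local-global application of formal patching combined with Riemann-Hurwitz.
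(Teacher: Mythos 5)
Your overall architecture is exactly the one the survey indicates for this result (it is cited from \cite{Pr06:genus}, with the method summarized just before Theorem~\ref{largegenus}): take a $G$-cover with a wildly ramified point, use the $P \rtimes \Z/m$ generalization of Proposition~\ref{Pcondtwist} (i.e.\ \cite[Proposition~2.7]{Pr06:genus}) to produce local $I$-extensions of $k((t))$ with arbitrarily large conductor, splice one in by formal patching without disturbing the rest of the cover, and let Riemann--Hurwitz (Lemma~\ref{1SRH}) drive the genus up. Steps 2 and 3 of your proposal are therefore sound and match the source. The genuine gap is in your Step 1, and specifically in the sentence ``Such a subgroup exists because $p \mid |G|$, after possibly replacing $I_{\eta_0}$ by a conjugate.'' That claim is false. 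The insertion result you invoke (\cite[Theorem~3.6]{Har03}, or \cite[Theorem~2]{Harb93}) requires a subgroup $I'$ with $I_{\eta_0} \subseteq I' \subseteq G$ and $[I' : I_{\eta_0}]$ a power of $p$, and for a given tame cyclic $I_{\eta_0}$ no such $I'$ need exist even when $p$ divides $|G|$: take $G = A_5$ with $p = 5$ and $I_{\eta_0} \cong \Z/3$. Then $A_5$ has no subgroup of order $15$ (or $3 \cdot 5^a$ for any $a \geq 1$), and the normalizer of a Sylow $5$-subgroup is dihedral of order $10$, so no $3$-cycle normalizes a nontrivial $5$-group; conjugating $I_{\eta_0}$ changes nothing. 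Thus if your arbitrary starting cover $\phi_0$ happens to be branched at \emph{every} point of $B$ with tame inertia of this kind, your repair step stalls at every admissible point. (If some $b \in B$ were outside the branch locus you could insert any $\Z/p$ there, since trivial inertia trivially satisfies the hypothesis --- but you cannot guarantee such a $b$ exists.)

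The fix is not to repair an arbitrary cover but to choose the initial cover well. Since $p \mid |G|$, Cauchy's theorem gives $p(G) \neq 1$; and since $G$ is a finite quotient of $\pi_1(X - B)$, the proved Abhyankar Conjecture gives that $G/p(G)$ is generated by $2g_X + r - 1$ elements. Harbater's \emph{constructive} proof of the general case (sketched in Section~\ref{aSabproof}, building on Theorem~\ref{aTharbater}; see also the remark there that the cover produced is tamely ramified above all but one point of $X$) then yields a $G$-Galois cover branched only over $B$ that is \emph{wildly} ramified above a chosen point $b \in B$, with inertia of the form $P \rtimes C$ where $P$ is a nontrivial $p$-group (a Sylow $p$-subgroup of a subgroup $H$ containing $p(G)$). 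With that wildly ramified point secured unconditionally, your Steps 2 and 3 go through as written and complete the proof along the same lines as \cite{Pr06:genus}.
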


\subsection{Realizing small conductors} \label{Srealizingcond}

Because of the results in the previous section, it is most interesting to study which {\it small} conductors occur
for wildly ramified Galois extensions.  
We briefly summarize the results on this topic.

As in Theorem~\ref{aTraynaud1}, let $G(S) \subset G$ be the subgroup generated by all proper quasi-$p$ subgroups $G'$ such that $G' \cap S$ is a Sylow $p$-subgroup of $G'$. The group $G$ is {\it $p$-pure} if $G(S) \not = G$.

\begin{theorem}[\cite{Pr02:cond1}, Theorem 3.5]
Let $p$ be odd.
Let $G$ be a finite $p$-pure quasi-$p$ group whose Sylow $p$-subgroups have order $p$. 
Then there exists a $G$-Galois cover $\phi: Y \to \PP^1$ branched only above $\infty$ with conductor $\sigma< 3$.
In particular, ${\rm genus}(Y) \leq 1 + |G|(p -1)/2p$.
\end{theorem}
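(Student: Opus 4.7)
The plan is to adapt Raynaud's stable-reduction argument from the proof of Theorem~\ref{aTraynaud2}, making the analysis quantitative enough to control the conductor at the resulting ramification point. First I would construct a ``small'' $G$-Galois cover in characteristic zero: because the Sylow $p$-subgroups of $G$ have order $p$, every $p$-element of $G$ has order $1$ or $p$, and because $G=p(G)$ the group $G$ is generated by elements of order exactly $p$. Choose such a generating tuple $(g_1,\dots,g_r)$ with $g_1 \cdots g_r = 1$ and $r$ as small as possible (one can take $r=3$ for non-cyclic $G$). Setting $K_0 := \Frac(W(k))$, Riemann's existence theorem and the Lefschetz principle produce, over any algebraic closure of $K_0$, a connected $G$-Galois cover branched at $r$ points with inertia generators $g_i$ of order $p$; this descends to a cover $f_K: Y_K \to \PP^1_K$ over a finite extension $K/K_0$, with branch points in $\PP^1(K)$ pairwise non-congruent modulo the maximal ideal of $\calO_K$.

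Next I would pass to a stable model $f_R: Y_R \to P_R$ over $R = \calO_K$, enlarging $K$ if necessary. Because $G$ is $p$-pure, Raynaud's analysis of the stable reduction (\cite[Section~6]{Ray94}) guarantees a \emph{new tail} component $D_k \cong \PP^1_k$ in $P_k$ such that $\phi := f_R \times_{P_R} D_k$ is a connected $G$-Galois cover of $\PP^1_k$ branched only at the unique node of $D_k$; this $\phi$ is the desired cover of $\A^1_k$. The upper jump $\sigma$ at the wild ramification point of $\phi$ is controlled by Raynaud's vanishing-cycle formula for new tails, which bounds $\sum_T (\sigma_T - 1)$ across new tails $T$ in terms of the characteristic-zero branching data, whose size is proportional to $r-1$. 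Because $r$ can be kept small independently of $|G|$, one can arrange for the chosen tail to satisfy $\sigma < 3$.

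The stated genus bound then follows from Riemann--Hurwitz (Lemma~\ref{1SRH}(3)). Writing the inertia at the branch point as $I \cong \Z/p \rtimes \Z/m$ and the lower jump as $J=m\sigma$,
\[
2g_Y - 2 \;=\; -2|G| \,+\, \frac{|G|}{mp}\bigl(mp - 1 + (p-1)J\bigr),
\]
and substituting $\sigma < 3$ yields, after simplification, $g_Y \leq 1 + |G|(p-1)/(2p)$.

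The hardest step is the stable-reduction analysis in the second paragraph: one needs not merely the qualitative existence of a tail giving a $G$-cover of $\PP^1_k$ branched at a single point (as invoked in the proof of Abhyankar's conjecture for $\A^1_k$), but also a quantitative bound on the upper jump along that tail. The $p$-pure hypothesis is precisely what excludes the degenerations in which the conductor could blow up, while the order-$p$ hypothesis on the Sylow guarantees the clean inertia structure $\Z/p \rtimes \Z/m$ needed for the final Riemann--Hurwitz computation.
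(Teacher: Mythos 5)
Your overall strategy is in fact the right one: the proof in \cite{Pr02:cond1} does proceed by degenerating a characteristic-zero $G$-cover with order-$p$ inertia and harvesting a tail of the stable reduction, with $p$-purity guaranteeing (via Raynaud's analysis, as in Theorem \ref{aTraynaud2}) that some tail carries a connected $G$-cover branched at one point, and with a vanishing-cycle formula controlling the conductor. But there are two genuine gaps. First, the assertion that ``one can take $r=3$ for non-cyclic $G$'' is unsupported, and it is precisely where the difficulty lies: you need a generating tuple of order-$p$ elements with product $1$ whose length is bounded \emph{independently of $G$}, and nothing in the hypotheses obviously provides a generating triple $(g_1,g_2,g_3)$ of order-$p$ elements with $g_1g_2g_3=1$. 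This matters because the vanishing-cycle formula only bounds the \emph{sum} $\sum_T(\sigma_T-1)$ over new tails by roughly $r-1$; the one tail whose cover is connected with full group $G$ may absorb the entire sum, so for $r>3$ your argument yields nothing like $\sigma<3$. You also must rule out that the full-$G$ tail is a \emph{primitive} tail (primitive tails contain a specialized branch point, so their covers are branched at two points, not one), and you must secure $g(Y_K)\ge 2$ for stable reduction theory while keeping $r$ minimal --- in Raynaud's proof the genus is forced up by \emph{enlarging} $r$, a move you cannot afford. (By contrast, the hypothesis of no nontrivial normal $p$-subgroup, which you did not verify, is harmless: it is automatic for $G\ne\Z/p$ when the Sylow has order $p$ and $G$ is quasi-$p$, and $G=\Z/p$ is handled by Artin--Schreier directly.)

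Second, the closing Riemann--Hurwitz step is arithmetically wrong: $\sigma<3$ does \emph{not} imply the stated genus bound. With $I\cong \Z/p\rtimes\Z/m$ and lower jump $J=m\sigma$, Lemma \ref{1SRH}(3) gives
\[
2g_Y-2 \;=\; |G|\Bigl(\frac{(p-1)\sigma}{p} - 1 - \frac{1}{mp}\Bigr),
\]
so substituting $\sigma<3$ yields only $g_Y < 1 + |G|(2p-3)/2p$, which is strictly weaker than $1+|G|(p-1)/2p$ for all odd $p>3$. Concretely, ramification data such as $p=7$, $m=2$, $J=5$ (so $\sigma=5/2<3$, and consistent with the local congruence $\gcd(J,m)=m''$ of Section \ref{RamGrps}) would give $g_Y = 1+\tfrac{15}{28}|G| > 1+\tfrac{3}{7}|G|$. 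Hence the ``in particular'' clause is not a formal consequence of $\sigma<3$; the genuine argument must extract sharper ramification data from the degeneration --- in effect a bound like $\sigma\le 2$, as one gets when the degenerating cover has three branch points and the new-tail sum $\sum_T(\sigma_T-1)$ is at most $1$ --- and your proposal as written proves neither that sharper bound nor, therefore, the genus assertion.
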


Using Abhyankar's equations or reduction techniques, for a few quasi-$p$ groups 
it is possible to find Galois covers of the affine line 
having small conductors and non-integral conductors.  For these groups, one can be more explicit about which conductors occur.

\begin{hypothesis} \label{Hexplicitconductor}
Let $G$ be one of the following quasi-$p$ groups:\\
(i) $A_p$ (for $p \geq 5$);\\
(ii) ${\rm PSL}_2(p)$ (for $p \geq 5$), or\\ 
(iii) ${\rm PSL}_2(\ell)$ (for $p \geq 7$ and $\ell$ prime such that $p | (\ell^2 - 1)$).\\
Let $I \subset G$ be an extension of a cyclic group of prime-to-$p$ order by a $p$-group.
The group theoretic structure of $I$
and the constraints on the ramification filtrations of $I$-Galois covers of $k((t))$ 
place obvious necessary conditions on the conductor $\sigma \in {\mathbb Q}$ 
of a $G$-Galois cover $f:Y\to \PP^1_k$ branched only at $\infty$ and having inertia group $I$.
As seen in \cite[Theorems 3.2, 3.4, 3.6]{BP} and \cite[Corollary 3.6,
Remark 3.8]{obusabhy}, combined with the Hasse-Arf theorem, these conditions are that
$(p-1)\sigma/2$ is a prime-to-$p$ natural number in cases (i) and (ii)
and that $2 \sigma$ is a prime-to-$p$ natural number in case (iii).
\end{hypothesis}

\begin{theorem}[\cite{BP}, Theorem 2; \cite{obusabhy}, Corollary 4.5]
Let $G$ be one of the quasi-$p$ groups from Hypothesis \ref{Hexplicitconductor}.
Then there exists an (explicit) constant $C$, depending on $G$, such that every
$\sigma \geq C$ satisfying the obvious necessary conditions from Hypothesis \ref{Hexplicitconductor}
is the conductor of a $G$-Galois cover $f:Y\to \PP^1_k$ branched only at $\infty$.
\end{theorem}

\begin{theorem}[\cite{MP12}, Corollary 5.6] \label{Ccor2} 
Suppose $2 \leq s < p$ and ${\rm gcd}(p-1,s+1)=1$.  
Then all but finitely many rational numbers $\sigma$ such 
that $\sigma > 1$ and $(p-1)\sigma$ is a prime-to-$p$ natural number
occur as the conductor of an $A_{p+s}$-Galois cover of $\PP^1$ branched only at $\infty$.
\end{theorem}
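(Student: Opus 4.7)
The plan is to combine Abhyankar's explicit equation with the Serre-style twisting construction. The starting point is the polynomial $g_s(y)=y^{p+s}-xy^s+1$ whose Galois closure over $k(x)$ is $A_{p+s}$ (Theorem \ref{aTabex}). This produces a specific $A_{p+s}$-Galois cover of $\PP^1$ branched only at $\infty$ (possibly after a single application of Abhyankar's Lemma \ref{Lablemma} at the tame branch point) whose conductor $\sigma_0$ can be computed explicitly via Newton polygons. The twist by characters of the last non-trivial ramification subgroup, in the spirit of Proposition \ref{Pcondtwist}, will then be used to sweep out every sufficiently large admissible conductor.

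First I would generalize the ramification analysis of \cite{MP12} (which handles $s=2$) to arbitrary $s$ with $2\leq s<p$. Following Abhyankar's method of throwing away roots (\cite[Section~4]{Abh92}), I would expand the roots of $g_s(y)$ as Puiseux series in $x^{-1}$ at infinity, read off the slopes of the Newton polygon, and deduce both the lower jump $J_0$ and the structure of the inertia group $I\simeq \Z/p\rtimes \Z/m$ of the Galois closure at $\infty$. The hypothesis $\gcd(p-1,s+1)=1$ is used here: since the order of the inertia of the original extension is $p(p-1)s/\gcd(p-1,s(s+1))$ and $\gcd(p-1,s(s+1))=\gcd(p-1,s)$ under this hypothesis, the residual tame part surviving the application of Abhyankar's Lemma is of order exactly $m=p-1$ with faithful part $m'=p-1$ (cf.\ Definition \ref{2Dalpha}). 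In particular, the congruence condition $\gcd(J,m)=m''$ determines the obvious necessary conditions: $\sigma$ must be a positive rational of the form $J/(p-1)$ where $J$ is prime to $p$ and the correct central-part divisibility holds.

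Next, to realize every sufficiently large $\sigma$ satisfying those conditions, I would apply the twisting construction recalled just before Proposition \ref{Pcondtwist}. Restricting the base cover to $\hat X=\Spec(k((t)))$ at $\infty$ via the Katz--Gabber equivalence, I write $I$ as an extension of $\overline I=I/Z$ by its last non-trivial upper-ramification subgroup $Z$, which is central in the Sylow $p$-subgroup. Each character $\alpha\in\Hom(\pi_1(\hat X-\{b\}),Z)$ of prime-to-$p$ conductor $J_\alpha$ produces a new $I$-Galois extension with upper jump $\max(\sigma_0,J_\alpha/m)$. Varying $\alpha$ over all sufficiently large prime-to-$p$ conductors in the allowed congruence class, and then formally patching the modified local cover back to the global cover (as in the proof of Theorem \ref{largegenus}), yields a global $A_{p+s}$-Galois cover of $\PP^1$ branched only at $\infty$ whose conductor is the prescribed $\sigma$.

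The main obstacle is the first step: carrying out the Newton polygon analysis for arbitrary $s$ rather than just $s=2$ as in \cite{MP12}, and in particular verifying that the coprimality hypothesis really forces $m'=p-1$ rather than a proper divisor. This requires a careful case analysis of which slopes appear in the Puiseux expansion and which roots coalesce, and it is here that $\gcd(p-1,s+1)=1$ is used to rule out unwanted cancellations in the leading terms. Once the inertia structure and $\sigma_0$ are identified, the twisting and patching arguments proceed in the same way as in the proofs of Proposition \ref{Pcondtwist} and Theorem \ref{largegenus}, and the finitely many excluded conductors correspond precisely to the small values below the threshold at which the twisting construction dominates $\sigma_0$.
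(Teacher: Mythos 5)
Your overall strategy---Abhyankar's explicit equation $g_s(y)=y^{p+s}-xy^s+1$ plus a Newton polygon analysis to obtain one cover with identified inertia and small conductor, followed by the Serre-style twisting of Proposition \ref{Pcondtwist} (in its $P \rtimes \Z/m$ form, \cite[Proposition 2.7]{Pr06:genus}) and formal patching as in Theorem \ref{largegenus}---is exactly the route of \cite{MP12} as this survey describes it: ``the existence of a cover with small conductor is used to produce another cover with larger conductor.'' One small misreading: \cite{MP12} already carries out the Newton polygon computation for all $2 \leq s < p$, not only $s=2$ (the survey's discussion after Theorem \ref{TinerMP} makes this point), so the ``main obstacle'' you flag is largely already done in the source rather than new work.

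There is, however, a genuine gap in your sweep of conductors. Twisting a single base cover with fixed inertia $I \simeq \Z/p \rtimes \Z/m$ only produces conductors $\sigma = J/m$ with $\gcd(J,m)=m''$, i.e.\ rationals whose denominator in lowest terms is exactly the faithful part $m'$ of that one group. But the obvious necessary conditions allow $\sigma = a/b$ with $p \nmid a$ and $b$ ranging over \emph{all} divisors of $p-1$ compatible with an embedding $\Z/p \rtimes \Z/b \hookrightarrow A_{p+s}$ normalizing a $p$-cycle: in particular all sufficiently large integers prime to $p$ (realized with inertia $\Z/p$; this case is Theorem \ref{largegenus}, which you invoke only for its patching technique) and all large $a/b$ with $b$ a proper divisor of $p-1$. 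Your construction as written misses these infinitely many admissible values, so it does not prove ``all but finitely many.'' The repair lies in your own toolkit: first apply Abhyankar's Lemma \ref{Lablemma} pullbacks by $\Z/n$-covers to the base cover, which replace the tame part $m$ by $m/\gcd(m,n)$ while multiplying the conductor by $n\gcd(m,n)$ (preserving admissibility), thereby producing a base cover for each needed denominator; then run the twisting sweep separately for each resulting inertia group. Relatedly, your identification of the tame part is off: under $\gcd(p-1,s+1)=1$ the inertia order $p(p-1)s/\gcd(p-1,s(s+1))$ equals $p\cdot\mathrm{lcm}(p-1,s)$, so $m=\mathrm{lcm}(p-1,s)$, which strictly exceeds $p-1$ whenever $s \nmid p-1$; only the faithful part $m'$ is bounded by (and, under the hypothesis, expected to equal) $p-1$. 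This misidentification feeds directly into your incorrect statement that the necessary conditions force $\sigma$ to have the form $J/(p-1)$.
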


These results are non-constructive, in that
the existence of a cover with small conductor is used to produce another cover with larger conductor. 

\subsection{Open problems}

Question \ref{1Q2} can be refined and/or generalized in myriad ways.  The most obvious generalization is 
to consider ramification filtrations of arbitrary base curves.  Here are a few other questions about 
ramification filtrations.

\subsubsection{Large non-integral conductors}

For most non-abelian $p$-groups $P$, the conditions on the ramification filtration of $P$-Galois extensions of 
$k((t))$ are not known.

Theorem \ref{largegenus} indicates that all large prime-to-$p$ integers occur as the conductor of a $P$-Galois extension.  For most non-abelian $p$-groups $P$, one can ask which sufficiently large non-integral conductors
occur for $P$-Galois extensions of $k((t))$.  We ask this basic algebraic question:

\begin{question} \label{Qnonintegralconductor}
Given a $p$-group $P$, what is the smallest positive integer $N_P$ such that all but finitely many 
conductors of $P$-Galois covers of $k((t))$ are contained in $\frac{1}{N_P} {\mathbb Z}$?
\end{question}

Using Herbrand's Theorem \cite[IV, Section 3]{Serr}, see Definition \ref{Dherbrand},
one sees that $N_P$ divides $|P|$.
But $N_P$ can be much smaller than $|P|$.
For example, $N_P=1$ if $P$ is abelian by the Hasse-Arf Theorem.

Question \ref{Qnonintegralconductor} can be generalized to quasi-$p$ group covers of the affine line.

\begin{question} \label{Qnonintegralconductor2}
Given a quasi-$p$ group $G$, what is the smallest positive integer $N_G$ such that all but finitely many 
conductors of $G$-Galois covers $\phi: Y \to \PP^1_k$ branched
only at $\infty$ are contained in $\frac{1}{N_G} {\mathbb Z}$?
\end{question}

We remark that the number $N_G$ is well-defined.  
For fixed $G$, there are finitely many possibilities for the inertia groups of $\phi$ and $N_G$ must divide the 
least common multiple of the orders of these.
However, it is not clear if Question \ref{Qnonintegralconductor2} will have a clean answer
when $G$ is not a $p$-group.  It is also not clear if the current techniques of formal patching and semi-stable reduction 
of covers give enough information to answer Question \ref{Qnonintegralconductor2}.

\subsubsection{Moduli problems for covers with fixed ramification filtration}

Suppose $\phi:Y \to X$ is a ramified $G$-Galois cover of curves.  
By Lemma \ref{1SRH}, the genus $g_Y$ of $Y$ depends on the genus $g_X$ of $X$, the order of $G$,
and the ramification filtrations above all the branch points.  
For fixed $g_Y$, $g_X$ and $|G|$, there can be several choices for how to divide the degree of the different 
among the branch points.  

For example, when $G={\mathbb Z}/p$ and $X = \PP^1_k$, then 
\[2g_Y=(p-1)(-2 + \sum_{\xi \in B} (j_\xi +1)),\]
where $j_\xi$ is the lower jump of $\phi$ above the point $\xi$.

There are finitely many ways to divide the degree of the different among the branch points.  
Suppose we fix one of these, and call it a ramification configuration, denoted $\eta$.  
Then one can ask about the geometry of 
the moduli space of covers $\phi$ with ramification configuration $\eta$.  
For example, is it irreducible?  What is the dimension?

In \cite[Theorem 1.1]{PZ12:artschprank}, the authors prove that the moduli space of ${\mathbb Z}/p$-Galois 
covers of $\PP^1_k$ with fixed ramification configuration is irreducible and compute its dimension in terms of the ramification configuration.  A natural question is whether \cite[Theorem 1.1]{PZ12:artschprank}
can be generalized to $p$-group covers of curves of arbitrary genus.

\begin{question}
Fix a $p$-group $P$, integers $g_Y$ and $g_X$, and a ramification configuration $\eta$.  
Consider the moduli space ${\mathcal M}_\eta$ 
of $P$-Galois covers $\phi:Y \to X$ with $Y$ of genus $g_Y$ and $X$ of genus $g_X$ with ramification 
configuration $\eta$.  Then is ${\mathcal M}_\eta$ irreducible?
\end{question}

\section{The lifting problem, the local lifting problem, and the Oort conjecture}\label{aSoort}

In the earlier parts of this paper, we have discussed much about constructing branched 
$G$-covers of algebraic curves over $k$ in characteristic $p$.  In particular, when $p$ does not divide the order of $G$, such covers are in one-to-one correspondence with 
branched $G$-covers of algebraic curves in characteristic zero, once a lift of the branch locus is given  (cf.\ Section \ref{Sfundgroups}); in fact, they arise from the mod $p$ reduction of the corresponding cover in characteristic zero.  However, when $p$ divides the order of $G$,
we have seen that things get more complicated.  In particular (cf.\ Section \ref{Sinertiareduction}), $G$-covers in characteristic zero can have bad reduction, and 
thus do not necessarily give rise to branched covers in characteristic $p$.  Furthermore, there exist many $G$-covers in characteristic $p$ that have no
obvious analogues in characteristic zero.  For example, by Abhyankar's Conjecture \ref{dAbhConj}, there exist $G$-covers of $\proj^1_k$, branched at
one point, whenever $G$ is a finite simple group whose order is divisible by $p$.  In general, it is not easy to see when these covers come from reducing
covers in characteristic zero (clearly such a cover cannot be the reduction of a $G$-cover of $\proj^1$ branched at one point in 
characteristic zero when $G$ is nontrivial!).  This
section addresses this question, namely, \emph{when is a branched $G$-cover of  
$k$-curves the reduction of a $G$-cover of curves in characteristic zero?}.  

The exposition here is necessarily short and limited.  
Much more thorough and lengthy accounts are given in \cite{Ob:ll} and \cite{Ob:lc}.

Throughout Section \ref{aSoort}, $G$ always denotes a finite group, and the field $k$ is assumed to be algebraically
closed of characteristic $p > 0$.  
A $G$-cover of curves over a field is assumed to involve only projective, smooth, geometrically connected curves.  

\subsection{The (global) lifting problem}\label{Sglobal}

We state the problem above more precisely:

\begin{question}[The lifting problem for $G$-covers of curves]\label{aQlifting}
Let $f: Y \to X$ be a $G$-cover of curves over $k$.   
Does there exist a characteristic zero discrete valuation ring $R$ with residue field $k$, along with a flat morphism $f_R: Y_R \to X_R$ of
relative smooth projective curves over $\Spec R$ such that
\begin{enumerate}
\item The special fiber of $f_R$ is $f$, and
\item There is an action of $G$ on $Y_R$ by $R$-automorphisms such that $f_R$ is the quotient map of this action?
\end{enumerate}
\end{question}
If the answers to (1) and (2) are ``yes," then we say that the cover $f$ \emph{lifts to characteristic zero} (or \emph{lifts over $R$}), and that $f_R$ is a 
\emph{lift} of $f$ (with $G$-action).  

\begin{remark} The generic fiber of $f$ is a $G$-cover $f_K: Y_K \to X_K$ over the fraction field $K$ of $R$.  Note
also that it follows from standard deformation theory techniques in \cite[III]{SGA1} that \emph{individual curves} lift to characteristic zero.  If $X_R$ is
such a lift of $X$, with generic fiber $X_K$, then one can solve the lifting problem by exhibiting a $G$-cover 
$f_K: Y_K \to X_K$ such that the normalization of $X_R$ in the function field of $Y_K$ has special fiber $f$.  In this case, we say that $f$ is the
\emph{reduction} of $f_K$.  
\end{remark}

The following is a major result of \cite{SGA1}.  We do not discuss the original proof here, because it will end up being an easy consequence of the 
\emph{local-global principle} (Theorem \ref{aTlocalglobal}).

\begin{thm}\label{aTtamelifting} 
Tame $G$-covers over $k$ (in particular, \'{e}tale $G$-covers over $k$) lift to characteristic zero, and we can take $R = W(k)$, the Witt vectors over $k$.  
\end{thm}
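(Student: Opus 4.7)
The plan is to deduce Theorem~\ref{aTtamelifting} from the local-global principle (Theorem~\ref{aTlocalglobal}), which will reduce the problem to two independent tasks: (i) lifting the smooth projective curve $X$ to a smooth projective relative curve $X_R$ over $R = W(k)$, and (ii) constructing, for each ramification point $\eta \in Y$, a $G$-equivariant lift of the completed local extension $\widehat{\mathcal{O}}_{X,\xi} \hookrightarrow \widehat{\mathcal{O}}_{Y,\eta}$. Task (i) is handled by standard deformation theory: the obstructions to deforming a smooth projective curve vanish because the curve is one-dimensional, so $X$ lifts to $W(k)$, and after choosing such a lift, the (reduced) branch locus lifts to a disjoint union of sections $\Spec R \to X_R$.

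For task (ii), I would exploit the simple structure of tame local extensions. Fix a ramification point $\eta \in Y$ above $\xi \in X$, and let $I$ denote the inertia group at $\eta$. By tameness and Lemma~\ref{1SRH}, $I$ is cyclic of some order $e$ with $\gcd(e,p)=1$, and a standard choice of uniformizers $t \in \widehat{\mathcal{O}}_{X,\xi}$ and $s \in \widehat{\mathcal{O}}_{Y,\eta}$ realizes the local extension as $k[[t]] \hookrightarrow k[[s]]$, $t \mapsto s^e$, with a generator of $I$ acting by $s \mapsto \zeta_e s$ for a primitive $e$-th root of unity $\zeta_e \in k^\times$. Since $\gcd(e,p)=1$, Hensel's lemma lifts $\zeta_e$ uniquely to a primitive $e$-th root of unity $\widetilde{\zeta}_e \in W(k)$. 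Taking $T \mapsto S^e$ in $W(k)[[T]] \hookrightarrow W(k)[[S]]$ and letting $I$ act by $S \mapsto \widetilde{\zeta}_e S$ then yields an $I$-equivariant lift of the required local extension; flatness is automatic since $W(k)[[S]]$ is free of rank $e$ over $W(k)[[T]]$. The lifts at the other points of the $G$-orbit of $\eta$ are obtained by transporting this construction via the $G$-action.

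Applying the local-global principle to these compatible local lifts produces the desired $G$-cover $f_R \colon Y_R \to X_R$ of smooth projective relative curves over $R = W(k)$ with special fiber $f$. The only substantive input is the local-global principle itself, which rests on formal patching and Grothendieck's existence theorem; once this is granted, the proof is essentially formal. The reason the tame case is so easy is that all of the difficulty in lifting a wildly ramified cover is concentrated in the local lifting problem, and for tame extensions this problem reduces to the trivial observation that prime-to-$p$ roots of unity lift canonically from $k$ to $W(k)$. In the wildly ramified setting, by contrast, there is no such clean local model, and the local lifting problem becomes the central obstacle, leading naturally to the Oort conjecture advertised in the section title.
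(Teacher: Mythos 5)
Your proposal is correct and follows essentially the same route as the paper, which likewise derives Theorem~\ref{aTtamelifting} from the local-global principle (Theorem~\ref{aTlocalglobal}) by observing that every tame local extension is, after a change of uniformizer, the Kummer extension $t \mapsto s^e$ and lifts over $W(k)$ using the prime-to-$p$ roots of unity there. Your task~(i) is in fact redundant, since the statement of Theorem~\ref{aTlocalglobal} already subsumes the lifting of $X$ and of the branch locus, but including it does no harm.
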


Theorem \ref{aTtamelifting} shows that every tame $G$-cover is the (good) reduction of a $G$-cover in characteristic zero.  
However, if $G$ does not have 
prime-to-$p$ order, then there are in general $G$-covers in characteristic zero that have bad reduction to characteristic $p$.  
Furthermore, the argument in \cite{SGA1} shows that if $f: Y  \to X$ is a tame $G$-cover of 
smooth, projective, connected $k$-curves and $X_R$ is a lift of $X$ to $R$, then every lift $D_R$ of the branch locus $D$ of $f$ to $X_R$ 
gives a unique lift of $f$ to a $G$-cover $f_R: Y_R \to X_R$ (branched over $D_R$) as above.  For each $R$-point $x$ of $D_R$, the branching index
of $f$ above the closed fiber of $x$ is equal to the branching index of $f_K$ above the generic fiber of $x$.

When one phrases this in terms of fundamental groups, one recovers Proposition \ref{SGA1Cor212}. 
Let $U = X - \Delta$, let $U_R = X_R - \Delta_R$, 
let $U_K$ be the generic fiber of $U_R$, and let $U_{\ol{K}}$ be the base change of $U_K$ to the algebraic closure $\ol{K}$.  Suppose that $X$ has 
genus $g$ and $\Delta$ consists of $r$ points.  Then the $p$-tame fundamental group of
$U_{\ol{K}}$ is isomorphic (say, under some isomorphism $\alpha$) to 
the group $\hat{F}_{g,r}^{p-tame}$ of Section \ref{Sproj}.  Proposition \ref{SGA1Cor212} says that we have a surjection 
$$pr: \hat{F}_{g,r}^{p-tame} \to \pi_1(U)^{t}.$$ 

We can see the surjectivity of $pr$ as follows: An element $s$ of $\pi_1(U)^{t}$ is a compatible system of automorphisms of all finite \'{e}tale Galois
covers of $U$ that extend to tame covers of $X$.  Each of these covers lifts uniquely to a Galois cover of $X_{\ol{K}}$ that restricts to a 
$p$-tame \'{e}tale cover of $U_{\ol{K}}$.  Thus $s$ comes from a compatible system of automorphisms of those $p$-tame \'{e}tale Galois covers of
$U_{\ol{K}}$ which have good reduction.  Such a system can be extended (in many ways) 
to a compatible system of automorphisms of \emph{all} $p$-tame \'{e}tale Galois covers of $U_{\ol{K}}$.  Such a system corresponds (under $\alpha$)
to an element  $s' \in \hat{F}_{g,r}^{p-tame}$ mapping to $s$. 

What about possibly wildly ramified $G$-covers?  The following example shows that the lifting problem need not always have a solution.

\begin{example}\label{aEpxp}
Let $G = \ints/p \times \ints/p$, and consider the $G$-cover $f: \proj^1_k \to \proj^1_k$ given by taking the quotient of $\proj^1_k$ by
an embedding of $G$ into ${\rm PGL}_2(k)$. If $f$ 
lifts over a characteristic zero discrete valuation ring $R$, 
the generic fiber $f_K$ of the lift $f_R$ must be a $G$-cover of genus zero curves, and after possibly extending $R$, we may assume
that $f_K$ is a $G$-cover of $\proj^1_K$'s.  But if $p > 2$, then $G$ cannot act on $\proj^1_K$, as it does not embed into ${\rm PGL}_2(K)$.
So $f$ cannot lift to characteristic zero.
\end{example}

\begin{remark}
In Example \ref{aEpxp}, the cover $f$ will be (wildly) ramified at exactly one point, with inertia group $\ints/p \times \ints/p$.
\end{remark}
  
\subsection{The local-global principle}

As phrased in Question \ref{aQlifting}, the lifting problem is a global question about algebraic curves.  Indeed, the obstruction to lifting in 
Example \ref{aEpxp} is global in nature.  It is thus somewhat surprising that, thanks to the \emph{local-global principle} below, the lifting problem can be 
studied in a completely local way, without losing any information. In the sequel, a Galois extension of rings means a finite extension of integrally
closed integral domains which is Galois on the level of fraction fields.

\begin{theorem}[Local-global principle]\label{aTlocalglobal}
Let $f: Y \to X$ be $G$-cover of $k$-curves, and 
let $x_1, \ldots, x_s \in X$ be the branch points of $f$.  For each $i$, $1 \leq i \leq s$,
let $G_i$ be the inertia group at some point $y_i \in Y$ above $x_i$, and let $\hat{\mc{O}}_{Y, y_i}/\hat{\mc{O}}_{X, x_i} \cong k[[z_i]]/k[[t_i]]$ be the 
corresponding $G_j$-extension of complete local rings.  If $R$ is a finite extension of $W(k)$, then $f$ lifts over $R$ if and only if
each $k[[z_i]]/k[[t_i]]$ lifts over $R$ (that is, iff there is a $G_i$-extension $R[[Z_i]]/R[[T_i]]$ reducing to $k[[z_i]]/k[[t_i]]$ --- our convention is that
capital letters reduce to the respective lowercase letter). 
\end{theorem}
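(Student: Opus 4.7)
The plan is to treat the two directions asymmetrically. The ``only if'' direction is essentially tautological: given a global lift $f_R : Y_R \to X_R$ of $f$, pick any $y_i \in Y$ above $x_i$, and consider the $G$-action on the formal completion of $Y_R$ along the closed subscheme $Gy_i$. The stabilizer of the component containing $y_i$ is the decomposition group at $y_i$, which is the inertia group $G_i$ (since $k$ is algebraically closed, cf.\ Definition~\ref{decomp inertia def}(2)). Taking the completion of $f_R$ at this component yields a $G_i$-Galois extension of complete regular local $R$-algebras $\hat{\mathcal{O}}_{Y_R, y_i} / \hat{\mathcal{O}}_{X_R, x_i} \cong R[[Z_i]]/R[[T_i]]$ whose reduction mod the maximal ideal of $R$ is the given $k[[z_i]]/k[[t_i]]$, so we have produced the required local lifts.

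For the ``if'' direction, the strategy is formal patching. First I would lift $X$ itself to a smooth projective $R$-curve $X_R$ (possible by standard deformation theory \cite[III]{SGA1}, since curves are unobstructed), and lift each branch point $x_i$ to a section $x_{i,R} : \Spec R \to X_R$, producing a divisor $\Delta_R \subset X_R$ with closed fiber $\Delta = \{x_1,\dots,x_s\}$. Write $U = X \setminus \Delta$ and $U_R = X_R \setminus \Delta_R$. The restriction $f|_{f^{-1}(U)} : f^{-1}(U) \to U$ is a finite \'etale $G$-cover, and by Grothendieck's theory of the \'etale fundamental group \cite[I]{SGA1}, this cover lifts uniquely to a finite \'etale $G$-cover $V_R \to U_R$ (one lifts level by level modulo powers of the maximal ideal of $R$; the uniqueness and existence of each step follows from the vanishing of obstructions for \'etale morphisms, and one then algebraizes using finiteness). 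Simultaneously, at each branch point $x_i$, the given local lift $R[[Z_i]]/R[[T_i]]$ is a $G_i$-Galois extension, and the induced $G$-cover $\Spec(R[[Z_i]])^{\sqcup [G:G_i]} \to \Spec R[[T_i]]$ (obtained by inducing from $G_i$ to $G$) is a local $G$-Galois lift of $f$ at $x_i$.

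Next I would glue these pieces along their common formal punctured disks. Concretely, both the \'etale lift $V_R \to U_R$, when restricted to $\Spec R[[T_i]][T_i^{-1}]$, and the induced local lift, when restricted to the same punctured formal disk, give \'etale $G$-covers that reduce to the same \'etale $G$-cover over $k[[t_i]][t_i^{-1}]$. By the uniqueness of \'etale lifts over a complete local base (\cite[I]{SGA1} applied to the formal disk), these two \'etale covers are canonically isomorphic. Then Harbater's formal patching theorem (or equivalently Ferrand-Raynaud / Artin algebraization, or Grothendieck's formal existence theorem \cite[III.5]{SGA1}; see the exposition in \cite[\S4]{Ha:pg}) allows one to glue the \'etale cover $V_R \to U_R$ with the local $G$-covers $\Spec R[[Z_i]]^{\sqcup [G:G_i]} \to \Spec R[[T_i]]$ along these identifications to produce a finite $G$-cover $f_R : Y_R \to X_R$. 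Flatness of $f_R$ follows because flatness is local and holds on each piece, and the closed fiber is $f$ by construction.

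I expect the main technical obstacle to be verifying that the formal patching machinery applies cleanly in this setting; in particular, one must confirm that the identifications between the \'etale lift and the local lifts on the punctured formal disks are canonical (so that the patching data is consistent) and that the resulting algebraization carries a $G$-action extending the ones on the two pieces, with the quotient being $X_R$. Once those compatibilities are in place, the argument goes through, and as a bonus one recovers Theorem~\ref{aTtamelifting} by noting that in the tame case every local $G_i$-extension $k[[z_i]]/k[[t_i]]$ (with $G_i$ cyclic of order prime to $p$) has the obvious Kummer-type lift $R[[Z_i]]/R[[T_i]]$ given by $Z_i^{|G_i|} = T_i$.
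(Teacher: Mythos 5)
Your ``only if'' direction is exactly the paper's argument (the paper proves only that direction itself, deferring the converse to Garuti, Green--Matignon, and Bertin--M\'ezard), and your overall patching strategy for the ``if'' direction is the right shape. But as written it has a genuine gap: the claim that the \'etale $G$-cover $f^{-1}(U) \to U$ ``lifts uniquely to a finite \'etale $G$-cover $V_R \to U_R$ \dots{} and one then algebraizes using finiteness.'' Level-by-level lifting over the infinitesimal thickenings $U_n$ is indeed unique and unobstructed (topological invariance of the \'etale site), but the output is only a formal object, and Grothendieck's existence theorem --- which is what converts formal finite covers into actual schemes --- requires the base to be \emph{proper} over $R$; $U_R = X_R - \Delta_R$ is not proper once $s \geq 1$. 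Moreover the algebraization genuinely fails, not merely this proof of it. Concretely, take $G = \ints/p$ and the Artin--Schreier cover of $\aff^1_k$ given by $y^p - y = x$, branched only at $\infty$, so that $U = \aff^1_k$. If this lifted to a finite \'etale $\ints/p$-cover $V_R \to \aff^1_R$, then $V_R$ would be regular and connected, so its generic fiber $V_K \to \aff^1_K$ would be a connected \'etale $\ints/p$-cover in characteristic zero; since $\pi_1(\aff^1_{\ol{K}})$ is trivial, $V_K$ must come from a degree-$p$ extension of constants $K'/K$, which is totally ramified because the residue field $k$ is algebraically closed. Then $V_R \cong \aff^1_{\mathcal{O}_{K'}}$ has non-reduced special fiber, contradicting \'etaleness. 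So no \'etale lift of the open part over $U_R$ exists, and your intermediate object is unavailable.

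The repair --- and this is what the patching proofs you are implicitly reconstructing (Garuti \cite[Section 3]{Ga:pr}, Green--Matignon \cite{GM:lg}) actually do --- is to keep the \'etale piece \emph{formal}: the compatible system of \'etale covers $V_n \to U_n$ lives over the formal completion of $U_R$ along $U$, and it is this formal cover that gets glued to the local pieces $\Spec R[[Z_i]]$. The glued object, together with its $G$-action, is then algebraized all at once over the \emph{projective} curve $X_R$, where Grothendieck's existence theorem does apply; this is the content of Harbater-style formal patching, not a separate algebraization of the open piece. A smaller point in the same direction: your rigidity argument on the overlaps should be run over the $\pi$-adic completion of $R[[T_i]][T_i^{-1}]$ (with $\pi$ a uniformizer of $R$) rather than over $R[[T_i]][T_i^{-1}]$ itself, since the latter is not $\pi$-adically complete, so the equivalence between its finite \'etale covers and those of its reduction $k((t_i))$ is not directly available; over the completed boundary ring the canonical identification you want does hold, and with these two corrections the rest of your outline matches the cited proofs, including your closing remark recovering Theorem \ref{aTtamelifting} via Kummer-type local lifts, which is exactly how the paper derives tame lifting from the local-global principle.
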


We note that the ``only if" direction above is easy to show, because if $f_R: Y_R \to X_R$ is a lift, we can take $R[[Z_i]]/R[[T_i]]$ simply to be the
extension $\hat{\mc{O}}_{Y_R, y_i}/\hat{\mc{O}}_{X_R, x_i}$.  For the much more difficult ``if" direction, proofs based on patching are given by Garuti in
\cite[Section 3]{Ga:pr} and Green-Matignon in \cite{GM:lg}, and a proof based on deformation theory is given in \cite[Section 3]{BM:df}.

\subsection{The local lifting problem}\label{aSlocallifting}

Using Theorem \ref{aTlocalglobal}, we have reduced Question \ref{aQlifting} (at least over complete discrete valuation rings with residue field $k$) 
to the following \emph{local lifting problem}: 

\begin{question}[Local lifting problem]\label{aQlocallifting}
Let $k[[z]]/k[[t]]$ be a $G$-Galois extension.  Does there exist a complete characteristic zero discrete valuation ring $R$ with residue field $k$,
and a $G$-Galois extension $R[[Z]]/R[[T]]$ lifting $k[[z]]/k[[t]]$?
\end{question}
If such a lift exists, we say that $k[[z]]/k[[t]]$ \emph{lifts to characteristic zero} (or \emph{lifts over $R$}).
By Example \ref{aEpxp} and the local-global principle, we know that the local lifting problem cannot always have a solution 
(indeed, the $\ints/p \times \ints/p$-extension corresponding to the unique ramification point in Example \ref{aEpxp} cannot lift to characteristic zero).

\begin{remark}\label{aRinertiastructure}
By Lemma \ref{1SRH} (1), the group $G$ will 
automatically be of the form $P \rtimes \ints/m$, where $P$ is a $p$-group and $p \nmid m$.  This is one advantage of the local formulation: we can restrict to a 
smaller class of groups. (In particular, all the groups we consider are solvable!)
\end{remark}

Now, in the simplest case, when $G \cong \ints/m$ with $p \nmid m$, it is easy to see that we can solve the local lifting problem 
(and we can even take $R = W(k)$). Indeed, up to a change of variable,
the only $\ints/m$-extension of $k[[t]]$ comes from extracting an $m$th root of $t$, and this simply lifts to the extension of $R[[T]]$ coming from 
extracting an $m$th root of $T$ (note that $W(k)$ contains the $m$th roots of unity).  Via the local-global principle, this gives another proof of tame
lifting (Theorem \ref{aTtamelifting}).
Below, we will discuss both aspects of the local lifting problem: showing that certain extensions lift, and identifying obstructions to lifting.

\subsubsection{The Oort conjecture}

A major driver of research on the local lifting problem over the last couple of decades has been the \emph{Oort conjecture} (now a theorem of
Obus, Wewers and Pop), stated originally in \cite{Oo:ac}. 

\begin{theorem}[(Local) Oort conjecture, \cite{OW:ce}, \cite{Po:lc}]\label{aToort}
Every cyclic $G$-extension $k[[z]]/k[[t]]$ lifts to characteristic zero.
\end{theorem}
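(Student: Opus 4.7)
My plan is to reduce the problem to a purely wild local lifting question and then construct the lift by a combined induction and degeneration argument. First I would exploit the structure of local cyclic extensions from Remark \ref{aRinertiastructure}: since $G$ is cyclic, necessarily $G \cong \ints/p^n \times \ints/m$ with $\gcd(m,p) = 1$. The tame factor lifts over $W(k)$ by extracting an $m$th root of $T$ (as noted at the start of Section \ref{aSlocallifting}), and the compositum of such a lift with any lift of the $\ints/p^n$-factor yields a $G$-lift. So the whole problem reduces to the pure wild case $G = \ints/p^n$, which I would attack by induction on $n$. The base case $n = 1$ is classical Artin-Schreier and can be lifted over $W(k)$ by a direct Kummer-type computation after extracting a $p$th root of unity.

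For the inductive step, I would globalize using the Katz-Gabber correspondence, so that lifting $k[[z]]/k[[t]]$ is equivalent to lifting the unique $\ints/p^n$-Galois cover $f: Y \to \proj^1_k$ branched only over $\infty$. The key idea is to realize the desired lift as a limit: construct, in characteristic zero, an auxiliary $\ints/p^n$-cover of $\proj^1_{\ol K}$ branched over \emph{several} points, chosen so that when these branch points specialize to coalesce at $\infty$ the resulting semi-stable model has a distinguished \emph{tail} component giving back $f$. This uses the machinery of stable reduction of wildly ramified covers: the dual graph of the stable model — the \emph{Hurwitz tree} — decorates each vertex with a character, a conductor, and a differential form, and these data must be coherent with the prescribed upper-numbering ramification filtration of $f$ at $\infty$. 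Once such a degenerating family is produced, the local-global principle (Theorem \ref{aTlocalglobal}) lets one read off the desired lift by focusing on the tail.

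The hardest step, and where I would expect to spend the bulk of the work, is showing that a Hurwitz tree with the prescribed leaf data actually exists. The inductive hypothesis supplies a lift of the $\ints/p^{n-1}$-quotient, so the remaining obligation is to extend this by a compatible $\ints/p$-extension, which on each vertex corresponds to an Artin-Schreier-Witt character whose reduction must realize the prescribed higher ramification jump. There is a genuine cohomological obstruction to this on each vertex, roughly the vanishing of certain residues of the candidate differentials, and one must navigate it globally by choosing the auxiliary branch locus carefully. When the ramification jumps are sufficiently separated, I expect the obstruction can be killed directly by spreading out the auxiliary branch points. In the remaining cases my approach would be to first produce an \emph{equicharacteristic} deformation of $f$ over $k[[s]]$ to a cover whose Hurwitz tree is manifestly unobstructed, lift the generic member of that deformation, and then descend to the original $f$ using flatness of the Hurwitz space of lifts. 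The explicit construction of the Hurwitz tree and the verification of the vanishing of the obstruction on every vertex, rather than any formal patching or descent step, is where I expect the real difficulty to lie.
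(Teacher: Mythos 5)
Your architecture is, in outline, the actual Obus--Wewers--Pop proof as summarized in Section \ref{aSoort}: the reduction to $G = \ints/p^n$ via the tame factor (cf.\ \cite[Proposition 6.3]{Ob:ll}), the Kummer computation over $W(k)[\zeta_p]$ for the base case $n=1$ (\cite{SOS}), the Katz--Gabber globalization, an induction in which a lift of the $\ints/p^{n-1}$-quotient character is extended by a compatible order-$p$ character with the obstruction expressed through differential data attached to the vertices of a Hurwitz tree (in \cite{OW:ce} this is Kato's differential Swan conductor, and the compatibilities are indeed residue conditions), and finally an equicharacteristic deformation over $k[[s]]$ following \cite{Po:lc}. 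Your ``tail'' framing is a harmless repackaging: a lift of the Katz--Gabber cover gives a semistable model of the disc whose dual graph is the Hurwitz tree, with $f$ recovered on the original component and the auxiliary branch points distributed on the tails; \cite{OW:ce} runs the same geometry as an iterative deformation of a Kummer character, measuring the failure of its reduction to be Galois. But two steps of your plan are wrong or seriously underestimated as stated.

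First, your case division is inverted. By the classification of jumps of cyclic extensions of $k((t))$ (see the Schmid example in Section \ref{Sram}), the upper jumps satisfy $u_i \geq p\,u_{i-1}$, and the regime handled directly in \cite{OW:ce} is \emph{no essential ramification}, i.e.\ $u_i < p\,u_{i-1} + p$ --- each jump as close to the forced minimum as possible, \emph{not} ``sufficiently separated.'' Widely separated jumps (essential ramification) are precisely where the vertex obstructions cannot be killed by spreading out auxiliary branch points, and Pop's deformation moves an arbitrary extension \emph{into} the small-jump regime; your plan attempts the direct construction exactly where it fails and reserves the deformation for the cases that are already unobstructed. Second, ``descend to the original $f$ using flatness of the Hurwitz space of lifts'' conceals the main content of \cite{Po:lc}: a lift of the generic member of the $k[[s]]$-family lives over a complete discrete valuation ring whose residue field contains $k((s))$, and extracting from it a lift of the special fiber over a ring with residue field $k$ is a delicate specialization and descent argument --- there is no flat, proper ``Hurwitz space of lifts'' to invoke, and establishing this transfer is the heart of Pop's paper. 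With the case division corrected and the final step replaced by Pop's argument, your outline becomes the proof of Theorem \ref{aToort}; note also that your compositum reduction to $\ints/p^n$ requires the usual different computation (as in \cite[I, Theorem 3.4]{GM:lg}) to see that the normalization of $R[[T]]$ in the compositum is again a power series ring.
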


\begin{remark}
One reason to look at cyclic extensions is because tame extensions of $k[[t]]$ are always cyclic and lift to characteristic zero.  Somehow, one thinks that
cyclic wild extensions should not be as far off from the tame case as arbitrary wild extensions are.
\end{remark}

As was mentioned above, the Oort conjecture is easy when $p \nmid |G|$.  In fact, one can readily reduce to the case of $G \cong \ints/p^n$ (see, e.g.,
\cite[Proposition 6.3]{Ob:ll}).  The case of $G \cong \ints/p$ was proven by Sekiguchi, Oort, and Suwa in \cite{SOS} in the global context.  
The proof is much easier in the local context, and we sketch it here. 

\begin{proof}[Sketch of proof of the Oort conjecture for $\ints/p$]
We can take $R= W(k)[\zeta_p]$. 
Any $\ints/p$-extension $k[[z]]/k[[t]]$ 
is given (after a possible change of variable) by taking the normalization of $k[[t]]$ in the extension of $k((t))$ given by extracting 
a root of $y^p - y = t^{-u_1}$, where $u_1$ is the upper jump of the extension (see Section \ref{Shigherram}).  Let $\lambda = \zeta_p - 1$, which is a 
uniformizer of $R$.  Also, $v(\lambda^{p-1} + p) > 1$.  
Consider the integral closure of $R[[T]]_{(\lambda)}$ in the Kummer extension $L$ of $\Frac(R[[T]])$ 
given by $$W^p = 1 + \lambda^p T^{-u_1}.$$  Making the substitution $W = 1 + \lambda Y$, we obtain
$$(\lambda Y)^p + p\lambda Y  + o(p^{p/(p-1)})= \lambda^p T^{-u_1},$$ where $o(p^{p/(p-1)})$ represents terms with coefficients of valuation
greater than $\frac{p}{p-1}$.  This reduces to $y^p - y = t^{-u_1}$, which gives the correct extension of $k((t))$.  
So we will be done if we can show that the integral closure $A$ of $R[[T]]$ in $L$ 
is in fact isomorphic to a power series ring in one variable over $R$.

By \cite[I, Theorem 3.4]{GM:lg}, it suffices to check that the degree $\delta_s$ of the different of $k[[z]]/k[[t]]$ is equal to the degree $\delta_{\eta}$ 
of the different of 
$(A \otimes_R K) / (R[[T]] \otimes_R K)$, where $K = \Frac(R)$.  One calculates $\delta_s = (u_1+1)(p-1)$, using results of Section \ref{Shigherram}. 
Furthermore, one calculates $\delta_{\eta} = (u_1 + 1)(p-1)$, as $R[[T]] \otimes_R K$ has exactly $u_1 + 1$ (totally) ramified maximal ideals 
($(T)$ and $(T-a)$ for each $u_1$th root $a$ of $-\lambda^p$).  Since the ramification is tame, each of these contributes $p-1$ to the 
degree of the different.
\end{proof}

The proof of the Oort conjecture for $G = \ints/p^2$ is due to Green and Matignon (\cite{GM:lg}).  It is much more complicated than the proof of the
$\ints/p$ case, but it follows the same rough outline:
\begin{enumerate}
\item Let $R = W(k)[\zeta_{p^2}]$.
\item Parameterize all $\ints/p^2$-extensions of $k[[t]]$ explicitly via taking the normalization in extensions of $k((t))$ 
(this uses \emph{Artin-Schreier-Witt theory}).
\item For each such extension, find an explicit Kummer extension 
$L$ of $\Frac(R[[T]])$ such that, if $\pi$ is a uniformizer of $R$, then the normalization of $R[[T]]_{(\pi)}$ in $L$ reduces to the correct extension of 
$k((t))$.
\item Show, by comparing differents, that taking the normalization of $R[[T]]$ in $L$ in fact gives a lift.
\end{enumerate}

The most difficult part above is part (3).  The extensions in \cite{GM:lg} are inspired by the \emph{Kummer-Artin-Schreier-Witt theory}
(also known as the \emph{Sekiguchi-Suwa theory}, see \cite{SS1} and \cite{SS2}).  
This theory gives an explicit exact sequence of group schemes over 
$R = W(k)[\zeta_{p^n}]$ that ``bridges the gap" between the Kummer exact sequence (of degree $p^n$) on the generic fiber and the 
Artin-Schreier-Witt exact sequence (of degree $p^n$) on the special fiber.  Unfortunately, the equations involved become somewhat intractable to
work with once $n \geq 3$.

In contrast to the above proofs, the proof of the Oort conjecture for $\ints/p^n$ with $n \geq 3$ given by Obus, Wewers, and Pop in
\cite{OW:ce} and \cite{Po:lc} is significantly less explicit.  Roughly, instead of taking the
perspective of starting with a $\ints/p^n$-extension in characteristic $p$ and trying to lift it to characteristic zero, one instead tries to write down
a (Kummer) $\ints/p^n$-extension $L/\Frac(R[[T]])$ for some $R$ such that the normalization of $R[[T]]$ in this extension reduces to \emph{some}
Galois extension of $k[[t]]$.  This is achieved by measuring the failure of the reduction to be Galois, and undergoing an iterative deformation process
of $L/\Frac(R[[T]])$ to reduce this failure measurement to zero.  Once one has a lift of one $\ints/p^n$-extension of $k[[t]]$ (which can be chosen to have
minimal jumps in the higher ramification filtration, in some sense), then one can get lifts of many more. 

In particular, \cite{OW:ce} shows that 
any $\ints/p^n$-extension $k[[z]]/k[[t]]$ with \emph{no essential ramification} lifts to characteristic zero (where ``no essential ramification" means that if 
$u_1 < \cdots < u_n$ are the successive jumps in the higher ramification filtration for the upper numbering of $k[[z]]/k[[t]]$, and $u_0 := 0$, then
$u_i < u_{i-1} + p$ for all $i \geq 1$).  The work of \cite{Po:lc} shows that all $\ints/p^n$-extensions of $k[[t]]$ can be equicharacteristically deformed, 
in some sense, to extensions that have no essential ramification, and that a lift of the deformed extension 
gives rise to a lift of the original extension.  This completes the proof of the Oort conjecture.

\begin{remark}
The methods of \cite{OW:ce} and \cite{Po:lc} are not fully explicit.  For instance, in \cite{OW:ce}, one ends up only knowing the 
coefficients of the rational function in $\Frac(R[[T]])$ giving rise to the Kummer extension within some error (measured using the ultrametric of $R$).  In
particular, one does not get any good control over the ring $R$ itself (although in \cite{Po:lc}, it is shown that, for any $N \in \nats$, 
there exists some $R/W(k)$ finite that works for all $\ints/p^n$-extensions of $k[[t]]$ with degree of different less than $N$).
\end{remark}

\subsubsection{Obstructions and local Oort groups}\label{aSobstructions}
A \emph{local Oort group} is a group $G \cong P \rtimes \ints/m$, where $P$ is a $p$-group and $p \nmid m$, such that the local lifting problem
has a solution for all $G$-extensions.  The proof of the Oort
conjecture shows that all cyclic groups are local Oort groups (or
``local Oort'' for short).  In fact, it is known
due to Bouw and Wewers (\cite{BW:ll}) that $D_p$ is local Oort for all odd $p$, due to Pagot (\cite{Pa:ev}) that $\ints/2 \times \ints/2$ is
local Oort for $p = 2$, due to Weaver (\cite{We:D4})) that
$D_4$ is local Oort for $p = 2$, due to Bouw (unpublished) and
Obus (\cite{Ob:A4}) that $A_4$ is local Oort for $p = 2$, and due to
Obus (\cite{Ob:go}) that $D_9$ is local Oort for $p = 3$.

On the other hand, 
we have seen that $\ints/p \times \ints/p$ is not a local Oort group when $p > 2$.  In fact, there is a rather limited list of local Oort groups.  Most
groups not on this list are ruled out via the so-called \emph{Katz-Gabber-Bertin obstruction} (or \emph{KGB obstruction}), 
due to Chinburg, Guralnick, and Harbater (\cite{CGH:ll}), which we describe below.

Let $k[[z]]/k[[t]]$ be a $G$-extension, where $G \cong P \rtimes \ints/m$, with $P$ a $p$-group and $p \nmid m$.  The theorem of Katz and Gabber
(\cite[Theorem 1.4.1]{katzgabber}) states that there exists a unique $G$-cover $Y \to \proj^1_k$ that is \'{e}tale outside $t \in \{0, \infty\}$, tamely ramified of 
index $m$ above $t = \infty$, and
totally ramified above $t = 0$ such that the extension of complete local rings at $t=0$ is given by $k[[z]]/k[[t]]$.  This is called the 
\emph{Katz-Gabber cover associated to $k[[z]]/k[[t]]$}.  By the local-global principle,
the $G$-cover $f: Y \to \proj^1_k$ lifts to characteristic zero iff the extension $k[[z]]/k[[t]]$ does.  

Let $R/W(k)$ be finite, and let $K = \Frac(R)$.  Suppose $f_R: Y_R \to \proj^1_R$ is a lift of $f$ to characteristic zero.  
Since genus is constant in flat families, the genus of $Y$ is equal to that of $Y_K := Y_R \times_R K$.  
Furthermore, if $H \leq G$ is any subgroup, then $Y_R/H$ is a lift of $Y/H$ with generic fiber $Y_K/H$, and the genus of $Y/H$ is equal to that of
$Y_K/H$.  

\begin{def}\label{aDKGB}
Let $k[[z]]/k[[t]]$ be a $G$-extension with associated Katz-Gabber $G$-cover $Y \to \proj^1_k$.  Then the \emph{KGB obstruction 
vanishes for $k[[z]]/k[[t]]$} if there exists a $G$-cover $X \to \proj^1$ over a field of characteristic zero such that, for each subgroup $H \subseteq G$, the
genus of $Y/H$ is equal to the genus of $X/H$.
\end{def} 

If $k[[z]]/k[[t]]$ lifts to characteristic zero, its KGB obstruction must vanish, 
since the associated Katz-Gabber cover lifts.  If $G \cong P \rtimes \ints/m$ as above and
the KGB obstruction vanishes for \emph{all} $G$-extensions $k[[z]]/k[[t]]$, then $G$ is called a \emph{KGB group} for $k$.  Thus any local Oort group
is a KGB group.

\begin{remark}
Note that, even though the KGB obstruction references global objects, its inputs are local.  Indeed, the obstruction can be phrased in an equivalent 
local form, without any reference to Katz-Gabber covers, by comparing degrees of differents of subextensions of $k[[z]]/k[[t]]$ 
to possible degrees of differents of branched covers of $\Spec
R[[T]]$, see \cite[Remark 4.6]{Ob:lc}. 
\end{remark}

\begin{example}
Consider the cover $f: \proj^1_k \to \proj^1_k$ from Example \ref{aEpxp}.  This has one branch point, and is totally ramified above the point.  The 
corresponding extension $k[[z]]/k[[t]]$ of complete local rings does not lift to characteristic zero.  One sees this by noting that $f$ itself is the Katz-Gabber
cover corresponding to $k[[z]]/k[[t]]$ (in the $p$-group special case of \cite[Corollary~2.4]{Har80}.  It is a genus zero $\ints/p \times \ints/p$-cover of a genus zero curve.
The KGB obstruction does not vanish for $k[[z]]/k[[t]]$ when $p > 2$, as there does not exist a $\ints/p \times \ints/p$-cover of genus zero curves in 
characteristic zero.
\end{example}

The following classification of all the KGB groups is due to Chinburg, Guralnick, and Harbater.

\begin{theorem}[\cite{CGH:ll}, Theorem 1.2]\label{aTKGB}
The KGB groups for $k$ consist of the cyclic groups, the dihedral group $D_{p^n}$ for any $n$, the group $A_4$ (for $\text{char}(k) = 2$),
and the generalized quaternion groups $Q_{2^m}$ of order $2^m$ for $m \geq 4$ (for $\text{char}(k) = 2$).
\end{theorem}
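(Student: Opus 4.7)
The plan is to prove both directions of the classification: first, that every group in the list satisfies the KGB property, and second, that every group $G \cong P \rtimes \Z/m$ outside the list admits some $G$-extension whose KGB obstruction is nonzero.

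For the realizability direction, I would fix $G$ on the list and an arbitrary $G$-extension $k[[z]]/k[[t]]$. Forming the associated Katz-Gabber cover $f: Y \to \P^1_k$ (totally ramified of type $G$ over $0$, tame of order $m$ over $\infty$), I would use Lemma~\ref{1SRH} together with the upper/lower ramification filtration computed in Definitions~\ref{Dlowerjump} and~\ref{Dherbrand} to compute the genera $g_H := \mathrm{genus}(Y/H)$ for all $H \leq G$. The task then reduces to producing a $G$-Galois branched cover $X \to \P^1$ in characteristic zero whose subquotient genera match. By Riemann's existence theorem, such an $X$ corresponds to a branch datum $(c_1, \dots, c_r)$ of elements generating $G$ with $\prod c_i = 1$, whose inertia groups are the cyclic subgroups $\langle c_i \rangle$. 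For cyclic $G$ this is immediate. For $D_{p^n}$ one uses $(2,2,p^n)$ triangle data adjusted by a tame cycle of order $m=2$ at $\infty$; for $A_4$ in characteristic $2$ one uses the $(2,3,3)$ presentation; and for $Q_{2^m}$ one uses triples built from the two cyclic subgroups of order $2^{m-1}$ and the central involution. In each case, the flexibility in choosing the number and orders of the $c_i$ suffices to match the finite system of integers $\{g_H\}$ coming from Riemann–Hurwitz applied to $f$.

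For the obstruction direction, I would exploit the fundamental asymmetry between the two sides. In characteristic zero, every branched $G$-cover of $\P^1$ has cyclic inertia groups, so Riemann–Hurwitz for each $X/H$ involves only cyclic tame contributions, yielding linear relations among the $g_H$ parameterized by branch-cycle descriptions. In characteristic $p$, the Katz-Gabber cover has $G$ itself as inertia at the wild point, and higher-ramification contributes genuinely new terms to $g_H$ via the upper jumps. For each $G = P \rtimes \Z/m$ not on the list, I would select a specific $G$-extension with controlled minimal jumps so that the resulting $g_H$-data is rigid, then argue that no cyclic-inertia branch datum in characteristic zero can reproduce it. The case analysis splits by the structure of $P$: when $P$ is cyclic one is driven either to $G$ cyclic or $G \cong D_{p^n}$; when $P$ is noncyclic abelian one invokes the classification of finite subgroups of $\PGL_2(k)$ to handle genus-zero Katz-Gabber covers, forcing $G \cong A_4$ in characteristic $2$; when $P$ is nonabelian a more delicate analysis isolates the generalized quaternion groups $Q_{2^m}$ in characteristic $2$.

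The main obstacle is the obstruction direction. Translating the KGB condition into a finite system of integer constraints uniformly over all $G$-structures is already nontrivial, and ruling out the \emph{near-miss} groups is the hardest part: metacyclic groups $\Z/p^n \rtimes \Z/m$ with $m \notin \{1,2\}$, dihedral groups $D_{2p^n}$ in odd characteristic, and $2$-groups that are neither cyclic, dihedral, nor generalized quaternion. For these, one must combine a careful choice of Katz-Gabber extension (to pin down the higher ramification data) with the full strength of the $\PGL_2$ subgroup classification in both characteristics, and show that no cyclic-inertia branch datum $(c_1, \dots, c_r)$ in characteristic zero can simultaneously realize the $g_H$'s coming from that extension. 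I expect this step to require the most intricate group-theoretic and ramification-theoretic bookkeeping.
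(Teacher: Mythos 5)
This theorem is quoted by the survey from Chinburg--Guralnick--Harbater \cite{CGH:ll} without proof, so your proposal can only be measured against that work. Your overall architecture --- vanishing of the obstruction for listed groups via characteristic-zero covers matching all quotient genera, nonvanishing for off-list groups via a carefully chosen extension whose genus data cannot be matched by any cyclic-inertia branch datum --- has the right general shape, but there are concrete gaps. First, your claim that the positive direction is ``immediate'' for cyclic $G$ is false, and in fact this is the crux of the whole vanishing direction: for $G \cong \ints/p^n$ with admissible upper jumps $u_1 \leq \cdots \leq u_n$ (subject to Schmid's constraints, as in the example quoted in Section~\ref{Sram}), matching the genus of $X/H$ for \emph{every} subgroup $H$ simultaneously amounts to solving, in nonnegative integers, a system of $n$ Riemann--Hurwitz equations whose unknowns are the numbers of branch points of each ramification order $p^i$; showing this system is solvable for every admissible jump sequence is a genuine argument (it is essentially the different criterion that also underlies the Oort conjecture), not a formality. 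Second, your trichotomy keyed to the isomorphism type of $P$ is wrong as stated: in characteristic $2$ the Klein four group $D_2 \cong \ints/2 \times \ints/2$ (noncyclic abelian $P$) and the dihedral $2$-groups $D_{2^n}$ of order $2^{n+1}$ for $n \geq 2$ (nonabelian $P$) are KGB groups, so ``noncyclic abelian forces $A_4$'' and ``nonabelian isolates $Q_{2^m}$'' both fail, and any case analysis organized this way must be redone.

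The deeper missing idea is in the obstruction direction, where you exhibit no actual nonvanishing computation for any off-list group. Your appeal to the classification of finite subgroups of $\PGL_2(k)$ constrains only Katz--Gabber covers of genus zero, and nothing in your setup forces the relevant covers to have genus zero; e.g.\ for $G \cong \ints/p^n \rtimes \ints/m$ with faithful action and $m > 2$, the Katz--Gabber covers generally have large genus and the $\PGL_2$ classification says nothing. The mechanism Chinburg--Guralnick--Harbater actually use is different in kind: from vanishing of the KGB obstruction for all $G$-extensions they extract purely local, structural constraints on $G$ --- centralizer conditions on prime-to-$p$ elements and compatibility of their conjugation characters, in the spirit of Theorem~\ref{aTnotweak} quoted in this survey --- by comparing degrees of differents (equivalently, genera of quotients $Y/H$ versus what tame, cyclic-inertia covers in characteristic zero can achieve) uniformly across subgroups $H$; they then carry out a substantial group-theoretic classification of the groups satisfying those constraints. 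Without some such device for converting genus comparisons into structural restrictions on $G$ itself, your plan of ruling out each off-list group by ad hoc ``rigidity'' of a single extension's $g_H$-data has no uniform engine, and the near-miss cases you correctly flag (metacyclic groups with $m > 2$, dihedral groups $D_{2p^n}$ in odd characteristic, and $2$-groups beyond the cyclic, dihedral, and quaternion ones) would remain out of reach.
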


\begin{remark}
There is another obstruction, called the \emph{Bertin obstruction} (\cite{Be:ol}), that we do not discuss here. 
It was proven in \cite{CGH:ll} to be strictly weaker than the KGB obstruction.
\end{remark}

\begin{remark}\label{aRhurwitz}
There is yet another obstruction, called the \emph{Hurwitz tree obstruction}, discovered by Brewis and Wewers (\cite{BrewisWewers}).  We do not describe it
here, but we note that it applies to the groups $Q_{2^m}$ from Theorem \ref{aTKGB}.  That is, there exist $Q_{2^m}$-extensions with non-vanishing
Hurwitz tree obstruction.
\end{remark}

\begin{remark}\label{aRalmostKGB}
According to Theorem \ref{aTKGB}, the quaternion group $Q_8$ is \emph{not} a KGB group for $p=2$.  However, it is a so-called ``almost KGB group"
(\cite{CGH:ll}), which means that all $Q_8$-extensions have vanishing KGB obstruction as long as the smallest positive ramification jump is sufficiently large.  It is open whether $Q_8$ (and, indeed $Q_{2^m}$ for $m \geq 4$) is an ``almost local Oort group."  That is, we do not know 
whether all $Q_{2^m}$-extensions with sufficiently large smallest ramification jump lift to characteristic zero for $m \geq 3$.
\end{remark}

From Theorem \ref{aTKGB} and Remark \ref{aRhurwitz}, we see that the only possible local Oort groups are cyclic groups, dihedral groups $D_{p^n}$,
 and $A_4$ in characteristic $2$.  Of these, all are known to be local
 Oort groups except for the groups $D_{p^n}$ when $n > 1$ (and in
 fact $D_4$ and $D_9$ are local Oort).

\subsubsection{Weak Oort groups and GM-groups}  

Less strict than the property of being a local Oort group is the property of being \emph{weak local Oort group}.
A group $G$ (of the form $P \rtimes \ints/m$ as above) is called a weak local Oort group (for $k$) 
if there \emph{exists} a $G$-extension $k[[z]]/k[[t]]$ that lifts to characteristic zero.  So all local Oort groups are weak local Oort groups.  
Several more groups have been shown to be weak local Oort.  For instance, Matignon (\cite{Ma:pg}) showed that $(\ints/p)^n$ is a weak local 
Oort group for all $p$ and $n$.  Weaver's proof that $D_4$ is local
Oort for $2$ depends fundamentally on a result of Brewis (\cite{Br:D4}) that $D_4$ is weak local Oort for $2$.  Lastly, it has been shown by 
Obus (\cite{Ob:go}) that the group $\ints/p^n \rtimes \ints/m$ is weak local Oort if and only if the conjugation action of 
$\ints/m$ on $\ints/p^n$ is faithful or trivial (the case $n=1$ is proven in \cite{BW:ll}).

The main negative result on weak local Oort groups is the following:

\begin{theorem}[\cite{CGH:ll}]\label{aTnotweak}
Let $G \cong P \rtimes Y$, where $P$ is a $p$-group and $Y$ is a cyclic group of order not divisible by $p$. Let $B \subseteq Y$ be the maximal subgroup of order dividing $p-1$.  If $G$ is weak local Oort, then the following two properties hold:
\begin{enumerate}
\item For all nontrivial $y \in Y$, the centralizer of $y$ in $P$ is cyclic and equal to the centralizer of $Y$ in $P$.
\item There exists a character $\chi: B \to \ints_p^{\times}$ such that, for all cyclic subgroups $T$ of $P$ with trivial prime-to-$p$ centralizer, we have
$$xyx^{-1} = y^{\chi(x)}$$ for $y \in T$ and $x \in B$.
\end{enumerate}
\end{theorem}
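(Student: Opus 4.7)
The plan is to exploit the constraint that a lift $R[[Z]]/R[[T]]$ over some finite $R/W(k)$ imposes on $G$ by passing to the geometric generic fiber and using the rigidity of ramification in characteristic zero. Assume $G$ is a weak local Oort group, so such a lift exists for some $G$-extension $k[[z]]/k[[t]]$. Base-changing to $\Kbar := \overline{\operatorname{Frac}(R)}$, I would form the associated $G$-Galois cover $\mathcal{Y}_{\Kbar} \to \mathcal{X}_{\Kbar}$ of rigid open disks. The branch locus on the generic fiber is a finite $G$-invariant subset $\mathcal{B} \subset \mathcal{X}_{\Kbar}(\Kbar)$, and at each $\xi \in \mathcal{B}$ the inertia group $I_\xi \subseteq G$ is cyclic: any finite subgroup of $\operatorname{Aut}_{\Kbar}(\Kbar[[V]])$ embeds via linearization of its action on a local uniformizer into $\Kbar^\times$.

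For condition (1), I would fix a nontrivial $y \in Y$ and consider the abelian subgroup $H := \langle y \rangle \times C_P(y)$ (the product is direct since $P \cap Y = \{1\}$ in $P \rtimes Y$). On the special fiber, $H$ fixes the origin $z = 0$, the total ramification locus of $k[[z]]/k[[t]]$. Since $H$ is linearly reductive in characteristic zero, the $H$-fixed locus of the lift is smooth at the origin; a Hensel-style deformation argument then extends this smooth fixed locus to a generic-fiber point $\xi$ specializing to the origin. Because $H$ is nontrivial and $G$ acts freely on fibers over unramified points, $\xi$ must be a branch point with $H \subseteq I_\xi$. Cyclicity of $I_\xi$ forces $H$ itself to be cyclic, so $C_P(y)$ is a cyclic $p$-group. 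For $C_P(y) = C_P(Y)$: only the inclusion $C_P(y) \subseteq C_P(Y)$ needs proof. Given $u \in C_P(y)$, I would rerun the argument with the abelian group $\langle u, Y \rangle$ in place of $H$, obtaining a branch point whose cyclic inertia contains both $u$ and all of $Y$; cyclicity then forces $u$ to commute with every element of $Y$.

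For condition (2), I would analyze the Galois-module structure at the branch locus. Fix a cyclic $p$-subgroup $T \leq P$ with trivial prime-to-$p$ centralizer, generated by $t$, and locate (by the same Hensel argument applied to $\langle T \rangle$) a branch point $\xi$ with $T \leq I_\xi$. The cyclic inertia $I_\xi$ acts on a local uniformizer $V$ at $\xi$ via an injective character $I_\xi \hookrightarrow \Kbar^\times$, sending $t$ to a primitive $|T|$-th root of unity $\zeta$. An element $x \in B$ acts on the branch locus, sending $\xi$ to $x(\xi)$ with inertia $xTx^{-1} \subseteq x I_\xi x^{-1}$; comparing the algebraic identity $xtx^{-1} \in P$ with its geometric action on local uniformizer data at $\xi$ versus $x(\xi)$ yields, first, that $x$ normalizes $T$ (so $xtx^{-1} \in T$), and second, that $xtx^{-1} = t^{\chi_T(x)}$ for a character $\chi_T : B \to (\Z/|T|)^\times$. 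The coherence of the various $\chi_T$ should come from the observation that a compatible system of primitive $p^n$-th roots of unity in $R$ determines a single character $\chi : B \to \Z_p^\times$ whose mod-$|T|$ reduction recovers $\chi_T$ for every eligible $T$. Since $|B| \mid p-1$, the image of $\chi$ is automatically forced into the Teichm\"uller lifts $\mu_{p-1} \subset \Z_p^\times$.

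The main obstacle is the lifting-of-fixed-points step: justifying that an abelian subgroup $H$ fixing the origin on the special fiber gives rise to a branch point on the generic fiber whose inertia contains $H$. This requires a careful deformation-theoretic argument combining linear reductivity of $H$ in characteristic zero with the explicit total-ramification structure of the special fiber in the rigid-analytic setting. A secondary subtlety is the coherence of the characters $\chi_T$; once this is pinned down via a canonical choice of $p$-power roots of unity in $R$, conditions (1) and (2) follow in a unified way from cyclicity of inertia and the Kummer description of tame ramification over $\Kbar$.
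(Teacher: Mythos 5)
The survey states this theorem as a quotation from \cite{CGH:ll} and gives no proof of its own, so your proposal can only be measured against the argument in that source; your general frame --- pass to the geometric generic fiber of a lift and exploit the fact that point stabilizers there are cyclic, via linearization into $\Kbar^\times$ --- is the right setting and agrees with \cite{CGH:ll}. But the engine of your proof, the ``Hensel-style'' claim that an abelian subgroup $H$ fixing the origin of the special fiber produces a generic-fiber point $\xi$ with $H \subseteq I_\xi$, is false, and not reparably so. Your $H = \langle y \rangle \times C_P(y)$ contains the $p$-group $C_P(y)$ (which, incidentally, is not yet known to be abelian at that stage), and the special fiber lives in characteristic $p$, so linear reductivity fails for exactly the part of $H$ that matters. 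Worse, your mechanism would prove that \emph{every} abelian subgroup of a weak local Oort group embeds in a cyclic inertia group and is therefore cyclic; this contradicts Matignon's theorem, quoted in this very survey in Section~\ref{aSoort}, that $(\ints/p)^n$ is weak local Oort for all $n$. Concretely, in a lift of a $(\ints/p)^2$-extension the entire group fixes the closed point of the special fiber, yet the fixed-point sets of the two generators on the generic fiber are forced to be disjoint precisely because stabilizers are cyclic: the dispersal of fixed points of $p$-power elements into the open disk is the phenomenon on which the whole theory rests. The correct mechanism is a counting statement special to \emph{tame} elements: a nontrivial $y \in Y$ lies in $G_0 \setminus G_1$ for the totally ramified extension $k[[z]]/k[[t]]$, so $v(y(z)-z)=1$; by Weierstrass preparation $y(Z)-Z$ has Weierstrass degree one, so $y$ has a \emph{unique} fixed point $\xi$ in the open disk. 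Since $C_G(y)$ permutes $\mathrm{Fix}(y)=\{\xi\}$, it fixes $\xi$ and embeds in the cyclic group $I_\xi$. This yields all of condition (1) at once: $C_P(y) \subseteq C_G(y)$ is cyclic, and since $Y \subseteq C_G(y)$ and $C_G(y)$ is abelian, $C_P(y) \subseteq C_P(Y)$. Note this also replaces your argument for $C_P(y) = C_P(Y)$, which is circular as written: you cannot ``rerun the argument with the abelian group $\langle u, Y\rangle$'' because that group is not known to be abelian until the conclusion you are proving is already in hand.

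For condition (2), getting $T \leq I_\xi$ for cyclic $T = \langle t \rangle$ needs no Hensel step at all ($t(Z)-Z$ has positive Weierstrass degree, and any fixed point of $t$ is fixed by every power of $t$), but the two substantive claims --- that $x \in B$ normalizes $T$, and that the resulting characters $\chi_T$ cohere into a single $\chi: B \to \ints_p^{\times}$ --- occur in your sketch exactly where you write ``comparing \ldots yields,'' with no mechanism supplied. Your proposed repair via a compatible system of $p$-power roots of unity in $R$ addresses a non-issue: each $\chi_T$ is defined purely group-theoretically by conjugation inside $G$, so no normalization of roots of unity in $R$ can manufacture the required compatibility between different subgroups $T$. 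The actual content is a geometric comparison of linearized actions at distinguished fixed points, and the hypothesis that $T$ has trivial prime-to-$p$ centralizer is precisely what pins down the relevant points and stabilizers --- a hypothesis your outline never invokes. As it stands, both halves of the proposal rest on the invalid lifting-of-fixed-points step, and the second half additionally asserts rather than proves its two key claims.
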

In particular, if $G$ has an abelian subgroup that is neither cyclic or a $p$-group, then it cannot be a weak local Oort group (this was an earlier result
of Green and Matignon, see \cite{Gr:af} for a proof).  A group satisfying the criteria of Theorem \ref{aTnotweak} is called a \emph{GM-group}.
 
\subsubsection{Open problems}
We collect and comment on some open problems:

\begin{question}\label{aQeffectiveoort}
The \emph{arithmetic version} of the Oort conjecture states that a cyclic $\ints/r$-extension $k[[z]]/k[[t]]$ should lift over 
$W(k)[\zeta_r]$.  Does this hold?
\end{question}
As we have seen, the proofs of \cite{SOS} and \cite{GM:lg} mentioned above prove this stronger version when $v_p(r) \leq 2$ 
by exhibiting explicit liftings.  Is this true when $v_p(r) > 2$?

\begin{question}\label{aQcyclicsylow}
Is every KGB group other than those of the form $Q_{2^n}$ a local Oort group?
\end{question}
As was mentioned before, the only groups that remain to be checked are
those of the form $D_{p^n}$ for $n \geq 2$.  The second author conjectures
the following:

\begin{conj}\label{aCmyconj}
If $G$ has a cyclic $p$-Sylow subgroup, then for any $G$-extension $k[[z]]/k[[t]]$, the KGB obstruction is the only obstruction to lifting.
\end{conj}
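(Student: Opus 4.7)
The plan is to use the Oort conjecture (Theorem \ref{aToort}) to lift the wild part of the extension in a way that is equivariant for the tame part, and then descend. By Lemma \ref{1SRH}(\ref{inertia structure part}), the cyclic $p$-Sylow hypothesis forces $G = \ints/p^n \rtimes_{\chi} \ints/m$ for some character $\chi : \ints/m \to \Aut(\ints/p^n)$, with $\gcd(m, p) = 1$. Since the kernel of $\chi$ centralizes $P$ and pairs with $P$ to form a cyclic normal subgroup of $G$ to which Theorem \ref{aToort} already applies, after a standard reduction one may treat the case where $\chi$ is faithful, in which case $m \mid p - 1$. Perform the change of variable $t = w^m$ on the base, splitting $k[[z]]/k[[t]]$ as the $\ints/p^n$-extension $k[[z]]/k[[w]]$ followed by the tame $\ints/m$-extension $k[[w]]/k[[t]]$.

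The goal is now to produce a Kummer $\ints/p^n$-extension of $\Frac(R[[W]])$, where $R = W(k)[\zeta_{p^n}]$, of the form $Y^{p^n} = F(W)$ whose integral closure reduces to $k[[z]]/k[[w]]$ and which is \emph{$\chi$-semi-invariant}: the substitution $W \mapsto \zeta_m W$ should carry $F(W)$ to $F(W)^{c} \cdot u^{p^n}$ for some unit $u \in R[[W]]^\times$ and an appropriate integer $c$ representing $\chi$. When this is achieved, semi-invariance promotes the tame $\ints/m$-action on $\Spec R[[W]]$ to a $G$-action on the Kummer cover compatible with the $\ints/p^n$-action; descent to $R[[T]] = R[[W^{m}]]$ then yields a $G$-equivariant cover of $\Spec R[[T]]$ whose special fiber is the Katz--Gabber cover of $k[[z]]/k[[t]]$, and the local-global principle (Theorem \ref{aTlocalglobal}) packages this as a genuine lift. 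The vanishing of the KGB obstruction enters by supplying the numerical data of an honest characteristic-zero $G$-cover, and hence a consistent target for the branch-point configuration of $F$ on the generic fiber of $\Spec R[[W]]$.

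The main obstacle will be to run the Obus--Wewers--Pop deformation argument (\cite{OW:ce}, \cite{Po:lc}) inside the $\chi$-equivariant category. In those papers one corrects the current Kummer approximation at each stage by a class in a certain Artin--Schreier--Witt cohomology group in order to drive a ``different discrepancy" to zero; for the equivariant version one must draw each correction from the appropriate $\chi^{j}$-isotypic summand at Witt level $j$. Since $\gcd(m, p) = 1$, the group algebra $\F_p[\ints/m]$ is semisimple and the relevant cohomology decomposes accordingly, so the key technical question is whether the required isotypic summand is nonzero whenever a correction is demanded. I expect this to follow from careful bookkeeping with Herbrand's function (Definition \ref{Dherbrand}): the cyclic-Sylow hypothesis produces a single strict chain of upper jumps $u_1 < \cdots < u_n$ in the ramification filtration, so at each Witt level only one character can carry the wild ramification data, and the KGB vanishing forces that character to match $\chi^{j}$. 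The subtlest sub-step will be to arrange the branch locus of the Kummer cover on the generic fiber as a union of a single free $\ints/m$-orbit plus the fixed point $W = 0$, as forced by the KGB data together with Hasse--Arf integrality for the abelian subextensions.
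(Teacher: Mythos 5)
You should know at the outset that the paper contains \emph{no proof} of this statement: Conjecture \ref{aCmyconj} is stated as an open conjecture of the second author, recorded precisely because it is unresolved, so there is no proof of record to compare your proposal against. Judged as a research plan, your framing is sensible and matches the known partial results: by Lemma \ref{1SRH}(\ref{inertia structure part}) the group is indeed $G \cong \ints/p^n \rtimes_{\chi} \ints/m$, the idea of splitting off the tame part via $t = w^m$ and seeking a $\chi$-semi-invariant Kummer datum is exactly how the $D_p$ case was handled by Bouw--Wewers \cite{BW:ll}, and grafting this equivariance onto the Obus--Wewers--Pop iteration (\cite{OW:ce}, \cite{Po:lc}) is the natural line of attack. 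But the proposal is not a proof, and it has two genuine gaps.

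First, your ``standard reduction'' to faithful $\chi$ is wrong. When $\chi$ is neither faithful nor trivial, the paper records (Section on weak local Oort groups; the second author's unpublished result, with the $n=1$ case in \cite{BW:ll}) that $\ints/p^n \rtimes \ints/m$ is not even a \emph{weak} local Oort group: no extension with that Galois group lifts at all. In the mixed case the conjecture therefore asserts that the KGB obstruction is nonvanishing for \emph{every} such extension --- a genus computation on Katz--Gabber covers and their quotients that your argument never addresses, and that cannot be extracted from Theorem \ref{aToort}: lifting the extension of the cyclic normal subgroup $\ints/p^n \times \ker\chi$ says nothing about lifting $G$-equivariantly, and the mixed case shows the equivariance requirement is a genuine, sometimes insurmountable, constraint rather than a formality. (Note that $G$ failing to be a KGB group in the sense of Theorem \ref{aTKGB} only yields \emph{some} extension with nonvanishing KGB obstruction, not all, so this case carries real content.) Second, even in the faithful case, the decisive step --- that each correction class in the deformation iteration can be drawn from the $\chi^{j}$-isotypic summand of the relevant cohomology --- is exactly the open problem; your justification (``only one character can carry the wild ramification data, and KGB vanishing forces it to match $\chi^{j}$'') is stated as an expectation, not established. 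Be careful, too, with the Hasse--Arf appeal: integrality of the upper jumps holds only for the cyclic subextension $k[[z]]/k[[w]]$, while relative to $k[[t]]$ the jumps acquire denominator $m$ (cf.\ Section \ref{RamGrps}), so the claimed normalization of the generic branch locus as one free $\ints/m$-orbit together with $W=0$ requires real bookkeeping rather than following formally. In short: correct framing, plausible program, but the equivariant deformation step and the mixed-action case are both unresolved --- which is precisely why the statement appears in the paper as a conjecture.
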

Notice that Conjecture \ref{aCmyconj} would imply that $D_{p^n}$ is a local Oort group for \emph{odd} $p$. 

Similarly to Question \ref{aQcyclicsylow}, we may ask:
\begin{question}\label{aQGM}
Is every GM-group a weak local Oort group?
\end{question}
A positive answer to Question \ref{aQGM} would show, for instance, that every $p$-group is a weak local Oort group.

\begin{question}\label{aQoortdef}
What can we say about the versal deformation ring of a given $G$-extension $k[[z]]/k[[t]]$?  
\end{question}
Question \ref{aQoortdef} in the case $G = \ints/p$ 
is the subject of \cite{BM:df}, although the authors of \cite{BM:df} gave a complete explicit description only when the jump in the higher ramification filtration is~$1$.  Note that this is an interesting question, even when the given extension is not liftable to characteristic zero.  For instance, might the extension
be liftable to characteristic $p^{\ell}$ for some $\ell$ (i.e., might
it lift to some $G$-extension $A[[z]]/A[[t]]$ where $A$ is a local artinian
ring of characteristic $p^l$ with residue field $k$)?  The paper \cite{CM:rr} of Cornelissen and M\'{e}zard states that, if the extension is 
\emph{weakly ramified} (i.e., has trivial second higher ramification group), then liftability to characteristic $p^2$ is equivalent to liftability to characteristic
zero.

%%%%%%%%%%%%%%%%%%%%%%%%%%%%%%%%%%%%%%%%%%%%%%%%%%%%%%%%%%%%%%%%%%%%%%%%%%%%%%%%
%%%%%%%%%%%%%%%%%%				Bibliography			%%%%%%%%%%%%%%%%%%%%%%%%
%%%%%%%%%%%%%%%%%%%%%%%%%%%%%%%%%%%%%%%%%%%%%%%%%%%%%%%%%%%%%%%%%%%%%%%%%%%%%%%%
%%%%

\bibliographystyle{plain}                                    
%\bibliography{unified_file3rev1}

\end{document}